\newcommand{\bw}{\mathbf{w}}
\newcommand{\eps}{\varepsilon}
\newcommand{\PPo}[1]{\PP \left[\,#1\,\right]}
\newcommand{\Ex}[1]{\mathbb{E} \left[\, #1\,\right]}
\newcommand{\VV}{\operatorname{Var}}
\newcommand{\EE}{\mathbb{E}}
\newcommand{\PP}{\mathbb{P}}
\newcommand{\A}{{\ensuremath{\mathcal{A}}}}
\newcommand{\KernelDense}{\mathcal{E}_d}
\newcommand{\KernelSparse}{\mathcal{E}_{s}}
\newcommand{\LayersEvent}[1]{\mathcal{E}_{#1}}
\newcommand{\VS}[4]{{\ifx&#3&{\ifx&#2&V\else\widehat{V}\fi}\else{\ifx&#2&#3\else\widehat{#3}\fi}\fi}_{#1}{\ifx&#4&\else[#4]\fi}} 
\newcommand{\VertexSetRestricted}[1]{\VS{#1}{}{}{}}
\newcommand{\InfectedVertexSetRestricted}[1]{\VS{#1}{*}{}{}}
\newcommand{\totalWeight}[1]{\VS{}{}{W}{#1}}
\newcommand{\WeightLayers}[1]{W^{(#1)}}
\newcommand{\InfectedWeightLayers}[1]{\totalWeight{\postKernelProcess{#1}{U}\cap\Layer{#1}}}
\newcommand{\InfectedWeightKernel}[1]{\totalWeight{\postKernelProcess{#1}{U}\cap\Kernel{#1}}}
\newcommand{\totalWeightRestricted}[1]{\VS{#1}{}{W}{}}
\newcommand{\totalInfectedWeightRestricted}[1]{\VS{#1}{*}{W}{}}
\newcommand{\infdeg}[2]{\hat{d}^{(#2)}(#1)}
\newcommand{\neighbourhood}[1]{N(#1)}
\newcommand{\VertexSet}{\VertexSetRestricted{}}
\newcommand{\EdgeSet}{E}
\newcommand{\maxWeight}[1]{w_{\operatorname{max}}[#1]}
\newcommand{\minWeight}[1]{w_{\operatorname{min}}[#1]}
\newcommand{\Kernel}[1]{\mathcal{K}_{#1}}
\newcommand{\wcore}[1]{\totalWeightRestricted{\ge\weightBoundLayer{#1}}}
\newcommand{\sbwsmoment}[1]{\kappa_{#1}}
\newcommand{\weightConstant}{\lambda}
\newcommand{\infectionRate}{p_0}
\newcommand{\weightBoundHeavy}{\phi_{H}}
\newcommand{\weightBoundKernel}{\phi_\kernel}
\newcommand{\weightBoundBreedingGround}{\phi_s}
\newcommand{\weightBoundInitialInfection}{\phi_0}
\newcommand{\weightBoundLayer}[1]{\psi_{#1}}
\newcommand{\candidateThresholdSparse}{p_s}
\newcommand{\candidateThresholdDense}{p_d}
\newcommand{\threshold}{p_c}
\newcommand{\infec}[1]{\mathcal{A}_{#1}}
\newcommand{\infecCoupling}[1]{\mathcal{B}_{#1}}
\newcommand{\infecNonHeavy}[1]{\mathring{\mathcal{C}}_{#1}}
\newcommand{\infeclast}[1]{\widetilde{\mathcal{A}}_{#1}}
\newcommand{\cI}[1]{\mathcal{I}_{#1}}
\newcommand{\I}[1]{I_{#1}}
\newcommand{\postKernelProcess}[2]{\A_{#1}^{#2}}
\newcommand{\kernel}{K}
\newcommand{\breedingGround}{S}
\newcommand{\Layer}[1]{\mathcal{S}_{#1}}
\newcommand{\scaledTotalWeightSparse}[1]{\nu_{#1}}
\newcommand{\scaledTotalWeightSparseExpectation}{\overline{\nu}_s}
\newcommand{\scaledTotalWeightDenseExpectation}{\overline{\nu}_d}
\newcommand{\scaledTotalWeightSubcriticalExpectationInverse}{\mu}
\newcommand{\lastLayer}{i^*}
\newcommand{\stopEvent}[1]{\mathcal{T}_{#1}}
\newcommand{\goodEvent}[1]{\mathcal{G}_{#1}}
\newcommand{\inductionLayerEvent}[1]{\mathcal{W}_{#1}}
\newcommand{\largeLayerEvent}[1]{\mathcal{L}_{#1}}
\newcommand{\event}{\mathcal{E}}
\newcommand{\wbrv}{Z_{\bw}}
\newcommand{\cond}{\; \middle\vert \;}
\newcommand{\Ind}[1]{\mathds{1}_{\left\{#1\right\}}}
\newcommand{\offspringWeightRatio}{\chi}
\newcommand{\quotmom}{\Upsilon}
\newcommand{\brokenUpperSBD}{\mathcal{S}}
\newtheorem{theorem}{Theorem}[section]
\newtheorem{lemma}[theorem]{Lemma}
\newtheorem{claim}[theorem]{Claim}
\newtheorem{corollary}[theorem]{Corollary}
\newtheorem{remark}[theorem]{Remark}
\newtheorem{proposition}[theorem]{Proposition}
\newtheorem{definition}[theorem]{Definition}
\newtheorem{example}[theorem]{Example}
\title[A critical phenomenon in bootstrap processes]{A phase transition regarding the evolution of bootstrap processes in inhomogeneous random graphs}
\author{Nikolaos Fountoulakis\textsuperscript{1}}
\email{n.fountoulakis@bham.ac.uk}
\thanks{\textsuperscript{1}supported by the EPSRC Grant No. EP/K019749/1. Part of this research was carried out while this author was visiting TU Graz, supported by TU Graz and Austrian Science Fund (FWF) P26826.}
\address{School of Mathematics,\\ University of Birmingham,\\ B15 2TT, Birmingham, United Kingdom}
\author[N. Fountoulakis, M.~Kang, C.~Koch, and T.~Makai]{Mihyun Kang\textsuperscript{2}, Christoph Koch\textsuperscript{3}, and Tam\'as Makai\textsuperscript{4}}
\email{\{kang, ckoch, makai\}@math.tugraz.at}
\thanks{\textsuperscript{2}supported by Austrian Science Fund (FWF): P26826.}
\thanks{\textsuperscript{3}supported by Austrian Science Fund (FWF): W1230, P26826.}
\thanks{\textsuperscript{4}supported by Austrian Science Fund (FWF): P26826. Part of this research was carried out while this author was a postdoctoral research associate at the School of Mathematics, University of Birmingham supported by the Marie Curie CIG PCIG09-GA2011-293619.}
\address{Institute of Discrete Mathematics,\\Graz University of Technology,\\ Steyrergasse 30, 8010 Graz, Austria}
\date{\today}
\begin{document}

\begin{abstract}
A bootstrap percolation process on a graph with 
infection threshold $r\ge 1$ is a dissemination process that evolves in time steps. 
The process begins with a subset of infected vertices and in each subsequent step every uninfected vertex that has at least $r$ infected neighbours becomes infected 
and remains so forever.

Critical phenomena in bootstrap percolation processes were originally observed by Aizenman and Lebowitz in 
the late 1980s as finite-volume phase transitions in $\mathbb{Z}^d$ that are caused by the accumulation of small local islands of infected 
vertices. They were also observed in the case of dense (homogeneous) random graphs by Janson, \L uczak, Turova and Vallier (2012). 
In this paper, we consider the class of inhomogeneous random graphs known as the \emph{Chung-Lu model}: 
 each vertex is equipped with a positive weight 
and each pair of vertices appears as an edge with probability proportional to the product of the weights. In particular, we focus on the \emph{sparse} regime, where
 the number of edges is proportional to the number of vertices. 

The main results of this paper determine those weight sequences for which a critical phenomenon occurs:
there is a critical density of vertices that are infected at the beginning of the process, above which 
a small (sublinear) set of infected vertices creates an avalanche of infections that in turn leads to an outbreak. 
We show that this occurs essentially only when the tail of the weight distribution dominates a power law with exponent 3 and we determine
the critical density in this case.  
\medskip

\noindent
\texttt{keywords}: bootstrap percolation, inhomogeneous random graphs, critical phenomena.

\noindent
\emph{2010 AMS Subj. Class.}: 05C80, 82B43; 60K37, 82B26
\end{abstract}
\maketitle
\setcounter{footnote}{4}

\section{Introduction}
A \emph{bootstrap percolation process} with \emph{infection threshold}  $r\geq 1$
is a dissemination process on a graph $G$ which evolves in steps.
Each vertex may have one of two possible states: it is either \emph{infected} or \emph{uninfected}.
At the beginning, there is a subset $\infec{0}$ of \emph{initially infected} vertices, and all remaining vertices are uninfected. In each subsequent step, any uninfected vertex with at least $r$ infected neighbours also becomes infected, and never changes its state again. The set of vertices infected 
until step $t\ge 0$ is denoted by $\infec{t}$. The process will stop once there is a step in which no vertices became infected. In particular, if $G$ is a finite graph then the process always stops, and we denote by $\infec{F}$ the set of all vertices which became infected throughout the entire process. 

The bootstrap percolation process was introduced in the context of magnetic disordered systems by Chalupa, Leath, and Reich~\cite{ChLeRe:79} in 1979. Since then it has been used as a model for several phenomena in various areas, from jamming transitions~\cite{tobifi06} and magnetic systems~\cite{sadhsh02} to neuronal activity~\cite{Am-nn, ET09}. Certain variations of this process are related to the dynamics of the Ising model at zero temperature~\cite{Fontes02,GlauberMorris2009}. A short survey regarding applications can be found in~\cite{AdL03}.

Several qualitative characteristics of bootstrap percolation, for instance the dependence of the final set of infected vertices $\infec{F}$ on the set $\infec{0}$ of initially infected vertices, have been studied on a variety of families of graphs, such as trees~\cite{BPP06, BGHJP, FS08}, grids~\cite{BBDM2010, ar:BaloghPete, CM02, holroyd03}, lattices on the hyperbolic plane~\cite{BootHyper2013}, and hypercubes~\cite{BB06}, as well as on many models of random graphs~\cite{Am-bp, balpit07, ar:JLTV10}. 

The most well-studied quantity is the probability that all vertices of the underlying graph are eventually infected. 
In particular, this quantity has been considered as a function of the density $\infectionRate$ of initially
 infected vertices. More specifically, assuming that before the process begins each vertex of the graph 
is independently infected with probability $\infectionRate$, what is the probability that the final set contains every 
vertex? In other words, what is the probability that the process \emph{percolates}? 

In several families of infinite graphs, it turns out that there is a critical value for $\infectionRate$ above which the probability of percolation is positive. 
This is the case for the family of infinite regular trees with degree $d+1$ and $d \geq r$, as it was proved
by Balogh, Peres, and Pete~\cite{BPP06}, as well as for (infinite) 
Galton-Watson trees (this was shown by Bollob\'as, Gunderson, Holmgren, Janson, and Przykucki~\cite{BGHJP}). Fontes and Schonmann~\cite{FS08} showed that infinite regular trees also exhibit two thresholds: 
a critical density $p_f$ (that was proved in~\cite{BPP06}) above which percolation occurs almost surely and 
a critical density $p_c < p_f$ above which infinite infected clusters exist almost surely.

A large part of the literature on bootstrap percolation processes has been devoted to the $d$-dimensional integer lattice $\mathbb{Z}^d$. Schonmann~\cite{Schon92} showed that if the elements of $\infec{0}$ are selected independently with probability $\infectionRate$, then the evolution of the process is in some sense ``trivial'':
if $r\leq d$, then for every $\infectionRate>0$ all vertices of the lattice become infected with probability 1, whereas if 
$r> d$, then this does not happen unless $\infectionRate=1$. The former had already been shown by van Enter~\cite{vanEnter} in 1987 for $d=r=2$. Moreover, for this case,  Aizenman and Lebowitz~\cite{AizLeb88} identified a phase-transition phenomenon when the process is restricted to a box of $\mathbb{Z}^2$ of side-length $n\to \infty$, which 
Holroyd~\cite{holroyd03} later made precise. Let $r=2$ and $G$ be the $2$-dimensional grid with vertex set $V=\{1,\dots, n\}^2$, and let $\infec{0}\subseteq V$ be a random subset
containing every element independently with probability $\infectionRate=\infectionRate(n)$. Holroyd~\cite{holroyd03} showed that the probability $I(n, \infectionRate)$ that the entire 
square is eventually infected satisfies $I(n,\infectionRate) \rightarrow 1$ if $\liminf_{n \rightarrow \infty} \infectionRate(n) \log n > \pi^2/18$, and 
$I(n,\infectionRate) \rightarrow 0$ if $\limsup_{n \rightarrow \infty} \infectionRate (n) \log n < \pi^2/18$. 
This result has been generalised to higher dimensions by Balogh, Bollob\'as, and Morris~\cite{BBM09} (when $G$ is the 3-dimensional grid on $\{1,\dots, n\}^3$ and $r=3$) and Balogh, Bollob\'as, Duminil-Copin, and Morris~\cite{BBDM2010} (in general).
This is an instance of the so-called \emph{metastability phenomenon}. 

Similar thresholds have been identified in the case of the binomial random graph $G(n,p)$, where every edge on a set of $n$ vertices is present independently with probability $p$. Janson, {\L}uczak, Turova, and Vallier~\cite{ar:JLTV10} presented a complete analysis of the bootstrap percolation process for various ranges of $p$. Among other results they showed that when $1/n \ll p \ll n^{-1/r}$, there is a critical function $a_c = a_c(n)$ such that with high probability\footnote{with probability tending to one as $n\to\infty$} the following occurs: if $\infectionRate \ll a_c /n$, then there is very little evolution of the process, whereas if $\infectionRate \gg a_c/n$, then eventually almost every vertex becomes infected. For sparser graphs, this is not the case. When $p= c/n$ (that is,  the average degree is approximately constant) and if $\infectionRate=o(1)$, then only a sub-linear number of vertices will ever be infected with high probability. In fact, no evolution occurs with high probability. This had been observed previously by Balogh and Bollob\'as (see~\cite{balpit07}).

However, this is no longer the case if one considers sparse random graphs which are \emph{inhomogeneous}. We focus on random graphs which are defined through a sequence of weights 
assigned to the vertices: these weights determine the probability that two vertices are adjacent. More specifically, we are interested in the case where this probability is proportional to the product of the weights of these vertices. Hence, pairs of vertices where at least one of them has  high weight are more likely to appear as edges. Of course, $G(n,p)$ is a special case of such a random graph, in which all 
vertices have the same weight. The first author together with Amini~\cite{ar:AmFount2012} showed that such  a threshold does exist when the sequence of weights follows a power law distribution with 
exponent in the interval $(2,3)$. 
They showed that there is a function $a_c=a_c (n)= o(n)$ such that if $\infectionRate \ll a_c/n$, then with high probability no evolution occurs, but if $\infectionRate\gg a_c /n$, then even if $\infectionRate=o(1)$, with high probability a constant fraction of all vertices become infected eventually. 
In addition the first author together with Amini and Panagiotou~\cite{ar:AmFountPan2014}, determined the value of this constant.  
More general results which include those in~\cite{ar:AmFount2012} were obtained by Karbasi, Lengler and Steger~\cite{ar:KarLengSteg15}.
Similar behaviour was also observed in geometric random graph models that 
exhibit a power law degree distribution with such exponent~\cite{ar:CandFount2016, ar:KochLengler2016} as 
well as in several versions of the preferential attachment model~\cite{ar:AbdFount2014, ar:EGGS2014}.

 The aim of this paper is to determine the conditions on the sequence of weights which characterise the existence of such a threshold function. 
We show that a \emph{critical droplet} (in the terminology of~\cite{AizLeb88}) is formed by a certain set of vertices of high weight, which we call the \emph{nucleus} of the infection. 
Informally, this is a set consisting of vertices of high weight which become infected at some stage and from this set the infection spreads to a positive fraction of the vertices 
of the random graph. 
 Effectively, we show that such a nucleus is formed, if the tail of the empirical distribution function of the weight sequence dominates the tail of a power law distribution with exponent equal to 3. 
 Furthermore, we determine the critical density of the initially infected vertices below which this phenomenon does not occur. 

\section{Model and notation}

\subsection{Inhomogeneous random graphs}
The random graph model that we consider is a special yet general enough version of an  
\emph{inhomogeneous random graph} introduced by S\"oderberg~\cite{ar:s02} 
and studied in its full generality by Bollob\'as, Janson, and Riordan in~\cite{ar:bjr07}.
The model is asymptotically equivalent to a model considered by Chung and Lu~\cite{CL02,CL03} and Chung, Lu, and Vu~\cite{ar:cl04}. They analysed several typical properties of the resulting graphs, including the average distance between two randomly chosen vertices that belong to the same component and the distribution of the component sizes. 

Let $n\in\mathbb{N}$ and $[n]:=\{1,\dots,n\}$. We consider a graph $G=(\VertexSet,\EdgeSet)$ with vertex set~$\VertexSet:=[n]$ and a random edge set $\EdgeSet$ defined as follows: each vertex~$v$ is assigned a positive weight~$w_v=w_v(n)\in \mathbb R^+$, and without loss of generality we will assume throughout the paper that $w_1 \leq w_2 \leq \cdots \leq w_n$. We denote this \emph{weight sequence} by $\bw=\mathbf{w}(n) := (w_1, \dots, w_n)$ and the \emph{total weight} by $W := \sum_{v\in \VertexSet} w_v$. Any two distinct vertices $u,v\in V$ form an edge, i.e.,\ $\{u,v\}\in \EdgeSet$, independently with probability 
\begin{equation}
\label{eq:pijCL}
	p_{u,v}=p_{u,v}(\bw) := \min\left\{\frac{w_u w_v}{W},1\right\}.
\end{equation}
We refer to this model as the \emph{Chung-Lu random graph} $CL(\bw)$. A fundamental observation is that the weights (essentially) determine the \emph{expected} degrees of all vertices: if we ignore the minimum in~\eqref{eq:pijCL}, and also allow for a loop at vertex~$u$, then the expected degree of that vertex is~$\sum_{v\in \VertexSet} w_uw_v/W = w_u$.

For the sake of a more concise exposition of the results and proofs, we assume that the minimal weight $w_1$ is at least $1$ and the total weight satisfies 
\begin{align}\label{Wcondition}
 W=\weightConstant n,\quad\text{for some}\quad \weightConstant\ge 1.
\end{align}

Central in our results will be the distribution of the weight of a vertex selected randomly with probability proportional to its weight.
More formally, let $X$ denote a $\VertexSet$-valued random variable whose distribution is given by $\PP[X=u]=\frac{w_u}{W}$ 
for any vertex $u\in\VertexSet$ of weight $w_u$. Then the weight $w_X$ of this randomly chosen vertex $X$ is a 
$\mathbb{R}^+$-valued random variable $w_X$ whose distribution function is given by 
$$
\PP[w_X \le a]=\sum_{u : w_u \leq a}\frac{w_u}{W},
$$
for any $a \in \mathbb{R}$.
This distribution (of $w_X$) is called the \emph{size-biased distribution} and 
we denote a weight chosen randomly according to this distribution by $\wbrv$.

Let $\bw = \bw (n)$ be a sequence of weight sequences. 
We will consider events on the probability space that is the product of the space induced by $CL(\bw)$ and 
the one representing the set of initially infected vertices $\mathcal{A}_0 \subset \VertexSet$, where each vertex is initially infected independently 
with probability $p_0 = p_0 (n)$. 
We let $\Omega_n$ denote the sequence of these spaces.
If $\{\mathcal{E}_n\}_{n \in \mathbb{N}}$ is a sequence of events with $\mathcal{E}_n \subseteq \Omega_n$, 
we say that they occur \emph{with high probability} (whp) 
if the probability of $\mathcal{E}_n$ tends to $1$ as $n\to\infty$. Moreover, any unspecified limits and asymptotics will be as 
$n\to\infty$. 

We will also be using a probabilistic version of the standard Landau notation. 
Let $X_n$ be a sequence of non-negative random variables (where for each $n$ the variable $X_n$ is defined on $\Omega_n$)  
and $y_n$ be a sequence of non-negative real numbers. We say that whp $X_n = O(y_n)$ if there exists $C > 0$ such that $X_n \leq C y_n$ whp, and whp $X_n = \Omega(y_n)$ if there exists $C > 0$ such that $X_n \geq C y_n$ whp. 
If both hold, we say that whp $X_n = \Theta (y_n)$. 
Furthermore we say that whp $X_n = o(y_n)$ if for any $\eps >0$ whp $X_n/ y_n < \eps$ - in other words, 
$X_n/y_n$ converges to 0 \emph{in probability}. 

We sometimes also write $x_n \ll y_n$ to denote $x_n = o(y_n)$ and $x_n\gg y_n$ to denote $x_n=\omega(y_n)$ 
(as in the standard Landau notation), for two sequences of non-negative real numbers. Also, the meaning 
of ``whp $X_n \ll y_n$'', where $\{ X_n \}$ is a sequence of non-negative random variables on $\Omega_n$, is now obvious from 
the above. 

\subsection{Bootstrap processes}\label{sec:bootstrapPercolation}
Consider an integer $r\ge 2$ and a weight sequence $\bw=(w_1,\ldots, w_n)$.
Let $G\sim CL(\bw)$ be a Chung-Lu random graph with weight sequence $\bw$.  
Given an \emph{initial infection rate} $\infectionRate=\infectionRate(n)\in[0,1]$, 
we \emph{initially} infect a random subset $\infec{0}\subseteq \VertexSet$ which contains each element with probability $\infectionRate$ independently. 
As we already mentioned, a bootstrap process is a process evolving in discrete time steps. At any time $t\ge 0$ there is a set $\infec{t}$ of \emph{infected} vertices, which is defined iteratively by
\[
\infec{t+1} := \infec{t} \cup \left\{v \in \VertexSet \cond \text{ $v$ has at least $r$ neighbours in $\infec{t}$}\right\}
\]
for all $t\geq 0$. Furthermore, we set $ \infec{F} := \bigcup_{t\ge 0} \infec{t}$. 

\subsection*{Critical function}
A function $\threshold=\threshold(n)$, $0\le \threshold\le 1$, is called a \emph{critical function} (with respect to the initial infection rate $\infectionRate$) if the following three conditions are satisfied:
\begin{enumerate}
\item $\threshold = o(1)$; 
\item if $\infectionRate \ll \threshold$, then whp $|\infec{F}|=o(n)$;
\item if $\infectionRate \gg \threshold$, then whp $|\infec{F}|=\Theta(n)$.
\end{enumerate}
We refer to the  two latter cases as the \emph{subcritical} and the \emph{supercritical} regime, respectively. 
Of course, the above definition yields a class of functions that have the same order of magnitude. With slight abuse of notation we will be referring to
 \emph{the} threshold $\threshold$, and treat it as if it was uniquely defined.

\subsection{Some more notation}\label{sec:not}
Given an interval $I\subseteq \mathbb{R}^+$, we denote the set of vertices having weight in $I$ by 
$$\VertexSetRestricted{I}:=\{v\in \VertexSet\colon w_v\in I\},$$
and in particular if $I=[f,\infty)$ for some $f\in\mathbb{R}^+$ we write ``$\ge f$'' instead of ``$[f,\infty)$'', and likewise for intervals $(f,\infty)$, $(0,f]$, and $(0,f)$ .

 For a set $U\subseteq V$ of vertices we write $\totalWeight{U}$ for the sum of the weights of the vertices in $U$ and $\maxWeight{U}$, $\minWeight{U}$ for their maximal and minimal weight, respectively. Moreover, we denote by
 $$
  \widehat{U}:=U\cap \infec{F}
  $$
   the subset of all those vertices in $U$ which eventually become infected. In case $U=\VertexSetRestricted{I}$ for some interval $I\subseteq\mathbb{R}^+$, we also use the abbreviations 
  $$
  \totalWeightRestricted{I}:=\totalWeight{\VertexSetRestricted{I}} \qquad\text{and}\qquad \totalInfectedWeightRestricted{I}:=\totalWeight{\InfectedVertexSetRestricted{I}}.
  $$

 We denote by $\binom{U}{\ell}$ the set of all $\ell$-element subsets of a set $U$ and for a vertex $u\in \VertexSet$ we denote by $N(u)$ the set of neighbours of $u$ in $G$. 
Furthermore, we write $X\sim \mathcal{D}$ for a random variable with distribution $\mathcal{D}$.

\section{Main results and proof outline}

\subsection{Main results}
Our main result is to characterise two classes of weight sequences $\bw$: for the first class bootstrap percolation on the Chung-Lu random graph $CL(\bw)$ exhibits a critical phenomenon, while for the second it does not. 
Roughly speaking, the first class stochastically dominates the size-biased distribution associated to a power law of exponent $3$, with a suitable large constant. In contrast, the second class is stochastically dominated by the size-biased distribution associated to a power law of exponent $3$, with a suitable small constant. In this sense, the characterisation only has a constant ``gap''.

Interestingly, there can be different types of nuclei, each leading to an outbreak once infected, depending on some property of the weight sequence $\bw$, each providing its own \emph{candidate threshold}, which if exceeded (by an $\omega(1)$-factor) guarantees an outbreak whp. This behaviour depends sensitively on the following weight bound $\weightBoundHeavy=\weightBoundHeavy(n)$ defined (point-wise) by 
\begin{equation}\label{eq:defHeavy}
\weightBoundHeavy(n):=\min\left\{x\in\mathbb{R}^+\colon |\VertexSetRestricted{\ge x}|\geq \left(\frac{W}{4x^2}\right)^r\right\},
\end{equation}
where we use the convention that $\weightBoundHeavy(n):=w_n+1$ if this set is empty. Vertices whose weight is at least $\weightBoundHeavy$ are called \emph{heavy}. The subgraph spanned by the heavy vertices will be called the \emph{dense} subgraph, while the subgraph spanned by all non-heavy vertices will be called the \emph{sparse} subgraph.

If we consider the bootstrap process restricted to the sparse subgraph, we obtain the first candidate threshold 
\begin{equation}\label{eq:candidateThresholdSparse}
\candidateThresholdSparse=\candidateThresholdSparse(n):=\left(\frac{W}{\sum_{u\in\VertexSetRestricted{<\weightBoundHeavy}}w_u^{r+1}}\right)^{1/(r-1)},
\end{equation}
which always exists. On the other hand, studying the process restricted to the dense subgraph 
yields the second candidate threshold   
\begin{equation}\label{eq:candidateThresholdDense}
\candidateThresholdDense=\candidateThresholdDense(n):=\left(\frac{1}{\sum_{u\in\VertexSetRestricted{\ge \weightBoundHeavy}}w_u^r}\right)^{1/r},
\end{equation}
provided this quantity is finite. This happens if and only if $\weightBoundHeavy\le w_n$. Should both candidate thresholds exist, the threshold will always be the smallest of the two. 

In fact in Example \ref{ex:example} we show that which of the candidates is smaller actually depends on the weight sequence. 
Even though it is a priori not obvious why, considering these two candidate thresholds turns out to be sufficient, due to a matching lower bound on the threshold $\threshold$.

\begin{theorem}\label{threshold}
Let $r\ge 2$ be an integer, let $\alpha>0$, $C\ge64r(\min\{\alpha,1/2\})^{-3}$, and $C_1>0$. Furthermore let $\bw=\bw(n)$ be a sequence of ordered weight sequences satisfying $1\le w_1\le\dots\le w_n\to\infty$, $w_{n-r+1}\geq \alpha w_n$, and 
\begin{equation}\label{eq:supercriticalCondition}
\PP[\wbrv\ge x]\ge \frac{C}{x},
\end{equation}
for all $C_1\le x\le w_n$. Consider the Chung-Lu random graph $G\sim CL(\bw)$. Then the bootstrap process 
on $G$ with infection threshold $r$ and initial infection rate $\infectionRate = \infectionRate (n)$ 
has a critical function $\threshold$.

Furthermore, if $\weightBoundHeavy\le w_n$, then $\min\{\candidateThresholdSparse, \candidateThresholdDense\}=o(1)$ and
$
\threshold=\Theta(\min\{\candidateThresholdSparse, \candidateThresholdDense\});
$
otherwise $\candidateThresholdSparse=o(1)$ and
$
\threshold=\Theta(\candidateThresholdSparse).
$
\end{theorem}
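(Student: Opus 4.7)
The plan is to prove Theorem \ref{threshold} by constructing matching upper and lower bounds on $\threshold$. The argument splits the graph into a \emph{dense} part (heavy vertices of weight $\ge \weightBoundHeavy$) and a \emph{sparse} part (weight $< \weightBoundHeavy$), each contributing one candidate threshold. First I would verify that the hypothesis \eqref{eq:supercriticalCondition} with $C$ sufficiently large ensures $\candidateThresholdSparse = o(1)$, and also $\candidateThresholdDense = o(1)$ when $\weightBoundHeavy \le w_n$; this reduces to rewriting $\sum_{u < \weightBoundHeavy} w_u^{r+1}$ and $\sum_{u \ge \weightBoundHeavy} w_u^{r}$ as size-biased moments of $\wbrv$ and applying the lower tail bound $\PP[\wbrv \ge x] \ge C/x$.

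For the supercritical direction ($\infectionRate \gg \min\{\candidateThresholdSparse, \candidateThresholdDense\}$), I would exhibit a \emph{nucleus}: a set of heavy vertices that becomes infected and then propagates the infection. Two distinct mechanisms produce such a nucleus. If $\infectionRate \gg \candidateThresholdDense$, then by construction of $\candidateThresholdDense$ the expected number of $r$-tuples of initially infected heavy neighbours of a typical heavy vertex is large; a second-moment computation, using the assumption $w_{n-r+1} \ge \alpha w_n$ to prevent a single vertex dominating, shows that whp a constant fraction of heavy vertices become infected in one step. If instead $\infectionRate \gg \candidateThresholdSparse$, I would partition the non-heavy vertices into geometrically spaced weight layers defined by thresholds $\weightBoundLayer{i}$, and show by induction on $i$ (starting from the lowest layer) that once layer $\Layer{i}$ contains enough infected weight, so does layer $\Layer{i-1}$; the definition of $\candidateThresholdSparse$ via an $(r+1)$-th moment is exactly calibrated so that the inductive step closes. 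The cascade terminates upon reaching the heavy vertices and thus produces the same type of nucleus. A final propagation step uses Chernoff bounds on the number of heavy-vertex neighbours of each non-heavy vertex to show the nucleus infects a linear fraction of $\VertexSet$.

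For the subcritical direction ($\infectionRate \ll \min\{\candidateThresholdSparse, \candidateThresholdDense\}$, or just $\infectionRate \ll \candidateThresholdSparse$ when $\weightBoundHeavy > w_n$), I would bound $|\infec{F}|$ via a first-moment argument over \emph{self-sustaining} infected subsets $S$, that is, sets where every $v \in S$ has at least $r$ neighbours in $\infec{0} \cup S$. Splitting the contribution into heavy and non-heavy parts, the expected count of such $S$ is driven precisely by the sums $\sum_{u \ge \weightBoundHeavy} w_u^{r}$ and $\sum_{u < \weightBoundHeavy} w_u^{r+1}$, which by the definitions of the candidate thresholds are too small when $\infectionRate$ is below both of them. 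The heavy case uses direct concentration, since there are few heavy vertices; the non-heavy case requires a layer-by-layer subcriticality argument showing that the infected weight shrinks across each successive layer, so the process dies out before reaching heavy weight.

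The main obstacle will be the cascading argument on the sparse side in the supercritical regime: choosing the layer thresholds $\weightBoundLayer{i}$ to balance the error terms from layer-wise concentration against the weight bounds, and ensuring the cascade survives a logarithmic number of steps until it reaches the heavy vertices. The constant $C \ge 64r(\min\{\alpha,1/2\})^{-3}$ appears to be dictated exactly by the slack required here. A secondary subtlety is comparing the two candidates cleanly: one must ensure that no mixed mechanism, say partial cascades that infect only a middle range of weights, produces a smaller effective threshold, which is ruled out by the matching lower bound argument acting separately on the heavy and non-heavy subsums.
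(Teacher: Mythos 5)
Your high-level architecture matches the paper's (two candidate thresholds from a dense/sparse decomposition, a nucleus of heavy vertices in the supercritical regime, and a matching subcritical bound), but three of your key mechanisms are either reversed or would fail as described. First, the sparse supercritical mechanism is not an upward weight-layer cascade: the paper fixes a single \emph{breeding ground} of intermediate-weight vertices (those carrying half of the $(r+1)$-st moment below $\weightBoundHeavy$) and shows that the total infected weight \emph{within this fixed set} explodes over successive time steps, multiplying by roughly $\nu_t^{r-1}$ per step (Lemmas~\ref{infecprobl} and~\ref{indstep}, with Chebyshev concentration at each step and a careful pruning of the breeding ground to control the $(r+2)$-nd moment, i.e.\ the variance). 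A single pass over weight layers does not capture this temporal growth, and it is the growth in time, not in weight, that eventually infects the nucleus. Second, your ``final propagation step'' --- one Chernoff bound showing the nucleus directly infects a linear fraction of $\VertexSet$ --- cannot work: the nucleus $\VertexSetRestricted{\ge\weightBoundKernel}$ has total weight only about $CW/\weightBoundKernel=o(W)$ since $\weightBoundKernel\to\infty$, so the probability that a weight-$1$ vertex has $r$ neighbours in it is $o(1)$ and the expected direct yield is $o(n)$. The paper's Section~\ref{outbreak} instead runs a \emph{downward} cascade through layers $\weightBoundLayer{1}>\weightBoundLayer{2}>\dots$ defined recursively by $\weightBoundLayer{i+1}=C'/\PP[\wbrv\ge\weightBoundLayer{i}]$, using the hypothesis $\PP[\wbrv\ge x]\ge C/x$ at every scale to show each new layer has weight comparable to the union of the previous ones; only after accumulating infected weight $\Theta(W)$ at the bottom layer can one infect $\Theta(n)$ vertices. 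This downward cascade is where the constant $C\ge 64r(\min\{\alpha,1/2\})^{-3}$ is actually consumed, not in the nucleus construction.

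Third, your subcritical argument via a first moment over \emph{self-sustaining} sets $S$ has a union-bound explosion: the number of candidate sets of size $k$ is $\binom{n}{k}$, and in this sparse inhomogeneous setting the expected count over all such sets is not controllable without restricting to minimal witnesses, which you do not do. The paper instead tracks the process generation by generation, couples it with a branching-type process in which the offspring of a newly infected vertex $y$ are the vertices adjacent to $y$ that additionally have $r-1$ infected neighbours, and uses the FKG inequality (Theorem~\ref{FKG}) to decorrelate the non-decreasing event ``$v$ has the required edges'' from the non-increasing conditioning ``$v$ was not infected earlier''; the condition $\infectionRate\ll\candidateThresholdSparse$ then makes the expected offspring weight ratio less than $1$, and heavy vertices are handled separately by a Markov bound. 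Finally, in the dense case your claim that a constant fraction of heavy vertices becomes infected in one step is too strong and unnecessary: the paper only establishes that $\omega(1)$ heavy vertices are infected after one step and then invokes the Janson--{\L}uczak--Turova--Vallier result (Theorem~\ref{jltv}) on the binomial random graph stochastically dominated by the heavy subgraph to infect all of them; when there are only $O(1)$ heavy vertices this fails, and the hypothesis $w_{n-r+1}\ge\alpha w_n$ is used to infect the top $r$ vertices directly, which then deterministically infect everything of weight at least $W/w_{n-r+1}$.
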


Condition \eqref{eq:supercriticalCondition} on $\wbrv$ essentially states that the distribution of the weights in the weight sequence stochastically 
dominates a distribution that has power law tail with exponent equal to 3. Indeed, recall that a distribution function $F(x)$ 
has a power law tail with exponent equal to $\tau >0$ if there exist constants $\gamma>0$ and $x_0>0$ such that 
$1-F(x)\geq \frac{\gamma}{x^{\tau -1}}$, for any $x >x_0$. 
Assume that $\tau >2$ (otherwise the distribution 
has infinite expected value). If $F^*$ denotes the distribution function of the size-biased version of a random variable whose 
distribution is $F$, then for any $x > x_0$
$$ 1- F^*(x) \geq \gamma \int_x^{\infty} \frac{z}{z^{\tau -1}}dz = \Omega \left( \frac{1}{x^{\tau-2}} \right). $$
 Hence, our claim is verified if $\tau = 3$. 

However, such a critical function does not always exist. In particular, if the size-biased distribution associated to the weight sequence is dominated by 
a power law with exponent $3$ with a suitably small constant, then there is no critical phenomenon. 
\begin{theorem}\label{nothreshold}
Let $r\ge 2$ be an integer and let $\bw=\bw(n)$ be a sequence of ordered weight sequences satisfying $1\le w_1\le\dots\le w_n\to\infty$. Consider a bootstrap process on the random graph $CL(\bw)$ with infection threshold $r$ and initial infection rate 
$\infectionRate=\infectionRate(n)$. 
If there exist constants $0<c<1/30$ and
$c_1>0$ and a function $h=h(n)\to\infty$ such that
the size-biased distribution associated with $\bw$ satisfies 
$$
\PP[\wbrv\geq f]\leq \frac{c}{f}
$$ 
for every
$c_1\leq f \leq h$, then no critical function exists.
\end{theorem}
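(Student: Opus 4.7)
The plan is to show that under the hypothesis, for every $p_0=o(1)$ one has $|\infec{F}|=o(n)$ whp. This precludes any critical function $\threshold=o(1)$: given such a candidate, take $\infectionRate := \sqrt{\threshold}$, so that $\threshold \ll \infectionRate = o(1)$; condition (3) of the definition would demand $|\infec{F}|=\Theta(n)$, contradicting our bound.

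To establish the whp upper bound, first I would translate the size-biased tail hypothesis into weight-sum estimates. From $\PP[\wbrv\ge f]\le c/f$ on $[c_1,h]$ we extract $|\VertexSetRestricted{\ge f}| \le cW/f^2$ and $\totalWeightRestricted{\ge f}\le cW/f$. Layer-cake integration then controls the relevant moments $\sum_v w_v^k$ (for $k \in \{r, r+1\}$), where the range $(h, w_n]$ of possibly uncontrolled weights is handled separately using $\totalWeightRestricted{\ge h} \le cW/h = o(W)$; this shows the super-heavy vertices are negligible both in count and in total weight.

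Next I would bound the one-step expected growth of the infected weight. For a realised set $A$ with $\totalWeight{A}=L$, split $\VertexSet\setminus A$ at the threshold $w^*:=(r!)^{1/r}W/L$. Using $\PP[|N(v)\cap A|\ge r]\le \min\{1,(w_v L/W)^r/r!\}$ produces
\[
\EE\bigl[\totalWeight{\{v:\,|N(v)\cap A|\ge r\}\setminus A}\mid A\bigr] \;\le\; \totalWeightRestricted{\ge w^*} + \frac{L^r}{r!\,W^r}\sum_{w_v\le w^*} w_v^{r+1}.
\]
Combining with the moment bounds from the first step, both terms are $O(cL)$; the constant $c<1/30$ is tight enough to make the combined coefficient strictly below $1/2$ (up to terms vanishing as $n\to\infty$). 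A self-consistency argument -- directly bounding $\totalWeight{\infec{F}}$ as the least fixed point of a contraction whose Lipschitz constant is this sub-critical coefficient -- then yields $\totalWeight{\infec{F}} = O(\infectionRate W)$ whp. Since $\totalWeight{\infec{0}} = (1+o(1))\,\infectionRate W$ holds by a Chernoff bound, and every weight is at least $1$, we conclude $|\infec{F}| = O(\infectionRate n) = o(n)$ whp.

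The main obstacle is passing from the per-step expected-growth bound to a high-probability statement uniform across all (possibly many) rounds of the process. A naive iteration gives only a multiplicative factor $1+O(c)$ per step, which accumulates exponentially over the up-to-$\Theta(\log n)$ rounds that a bootstrap process can run on a sparse graph. The resolution is the self-consistency / fixed-point argument just mentioned (bounding $\totalWeight{\infec{F}}$ \emph{directly} rather than iterating $\totalWeight{\infec{t}}$ step by step), combined with martingale-type concentration to control fluctuations from individual heavy vertices becoming infected. The precise constant $c<1/30$ is exactly what guarantees the induced expected-growth map is strictly contractive, giving a finite fixed point of order $\infectionRate W$.
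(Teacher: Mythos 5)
Your high-level strategy (per-round expected infected weight grows by a factor strictly less than $1$, hence $\totalWeight{\infec{F}}=O(\infectionRate W)$ and $|\infec{F}|=o(n)$) is the same as the paper's, but the step you yourself flag as ``the main obstacle'' is resolved only by assertion, and it is a genuine gap. There is no single random variable to which a ``least fixed point of a contraction'' argument applies: $\infec{F}$ is produced by an adaptive multi-round process, and one cannot bound $\PP[|N(v)\cap\infec{F}|\ge r]$ by $(w_v\totalWeight{\infec{F}}/W)^r/r!$ because membership of $v$'s neighbours in $\infec{F}$ is positively correlated with the edges at $v$; a union bound over all candidate final sets is exponentially costly. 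The paper's actual resolution is two separate mechanisms: McDiarmid concentration applied at each of the first $T$ rounds to show the weight of each new generation decays geometrically (with summable failure probabilities), followed by a first-moment branching-process bound on the total progeny after round $T$. Relatedly, your one-step bound $\PP[|N(v)\cap A|\ge r]\le (w_vL/W)^r/r!$ is stated for a fixed set $A$ with fresh edges, but in the iterated process you must condition on $v$ not having been infected earlier, which is a non-increasing event in $v$'s edges; the paper needs the FKG inequality precisely here.

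A second gap concerns the range of $\infectionRate$ and of weights. The hypothesis controls $\PP[\wbrv\ge f]$ only for $f\le h$, where $h$ may grow arbitrarily slowly, so your claim to handle \emph{every} $\infectionRate=o(1)$ directly fails: when $L\approx \infectionRate W$ with $\infectionRate\ll 1/h$, your splitting weight $w^*=\Theta(W/L)$ exceeds $h$, and the term $\totalWeightRestricted{\ge w^*}$ can only be bounded by $cW/h$, which is not $O(cL)$; saying the super-heavy vertices have total weight $o(W)$ does not close the contraction. The paper avoids this by first reducing (via monotonicity in $\infectionRate$) to $\infectionRate\ge h^{-1}$ and $\infectionRate=\omega(W^{-1/2})$, and by deterministically infecting all vertices of weight at least $\infectionRate^{-1}\le h$ at time $0$, so that their weight (at most $cW\infectionRate$) is absorbed into the initial generation and all subsequent moment computations stay inside the controlled range. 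You would need this reduction (or an equivalent truncation) both for the moment bounds and for the concentration estimates, whose error probabilities are of the form $\exp(-\Theta(W\infectionRate^2))$.
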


\subsection{Proof outline}\label{sec:outline}
Theorem~\ref{threshold} states that the following holds whp: if $\infectionRate\gg \threshold$, then a constant fraction of all vertices become infected by the end of the process, and if $\infectionRate\ll \threshold$, then only few additional vertices become infected. 

In the supercritical regime, i.e., $\infectionRate\gg \threshold$, there are two phases. 
In the first phase, we show that if $\infectionRate \gg \threshold$, then there exists a weight-bound $\weightBoundKernel=\weightBoundKernel(n)$ such that the subset of vertices $\VertexSetRestricted{\geq \weightBoundKernel}$ has the property that whp ``almost all''\footnote{with respect to the total weight} of its vertices become infected. The set $\VertexSetRestricted{\ge \weightBoundKernel}$ is called a \emph{nucleus} 
of the process. 
For the construction of a nucleus $\VertexSetRestricted{\geq \weightBoundKernel}$ we first observe that the behaviour of the infection process on the subgraph spanned by the non-heavy vertices (i.e.,\ the vertices with weight less than $\weightBoundHeavy$, defined in \eqref{eq:defHeavy}) is quite different from that on the subgraph spanned by the heavy vertices. This is due to the fact that the vertices of weight less than $\weightBoundHeavy$ span a ``sparse'' subgraph, while the vertices of weight at least $\weightBoundHeavy$ span a ``dense'' subgraph. 
We show that either of these restricted processes creates a nucleus on its own under some suitable condition on $\infectionRate$. Therefore, the actual threshold cannot be (substantially) larger than the minimum of these two candidate thresholds. However, this is already sufficient because we will prove a matching upper bound for the threshold in Section~\ref{subcritical}.

In Section~\ref{sparse} we analyse the \emph{sparse process}. We show that if $\infectionRate\gg \candidateThresholdSparse$, then the sum of the weights of the infected vertices with weight less than $\weightBoundHeavy$ increases until whp ``almost all'' vertices of weight at least $\weightBoundKernel$ become infected.

The \emph{dense process} provides a candidate threshold 
if $\weightBoundHeavy\leq w_n$, i.e.,\ $\candidateThresholdDense$ exists, and $\infectionRate\gg \candidateThresholdDense$, then we use one of the following two approaches. In the first case we use the important observation that the subgraph spanned by $\VertexSetRestricted{\ge\weightBoundHeavy}$ stochastically dominates the binomial random graph $G(|\VertexSetRestricted{\ge\weightBoundHeavy}|,\weightBoundHeavy^2/W)$. 
For binomial random graphs the evolution of bootstrap percolation is well-understood (see~\cite{ar:JLTV10}). In particular, 
it follows (cf. Theorem~\ref{jltv} below) that if we can find $\omega(1)$ infected vertices with weight at least $\weightBoundHeavy$, then whp every vertex of weight $\weightBoundHeavy$ becomes infected. However in the second case, when the number of vertices of weight at least $\weightBoundHeavy$ is small, then this result is not applicable. The condition that $w_{n-r+1} \geq \alpha w_n$ essentially states that the $r$ highest weights are the same up to a multiplicative constant. This is a technical condition which ensures that there are $r$ vertices of approximately the same weight which will become infected whp. 
Since every vertex with weight at least $W/w_{n-r+1}$ is connected to each of these vertices with probability one, they also become infected. The proof appears in Section \ref{dense}.

Thereafter, in the second phase, we show that the condition on the size-biased distribution ensures that whp a set of linear size becomes infected. This is achieved by partitioning the vertices according to their weights and showing that the infection spreads from one part to the next, i.e., the one containing vertices of slightly smaller weight. In particular, we show that most vertices of a given part have at least $r$ neighbours in the previous part. 
The structure we discover there is very reminiscent of the construction of a giant $r$-core, that is, a subgraph of minimum degree at least $r$ that has linear order. This concept has been studied extensively in the random graph
literature -- see for example~\cite{ar:PitSpenWorm96}. The details can be found in Section~\ref{outbreak}.

For the subcritical regime, i.e.,\ when $\infectionRate \ll \threshold$, we show in Section~\ref{subcritical} that the number of infected vertices can be approximated by the total progeny of a subcritical branching process. 
Considering the current generation of newly infected vertices, we expose sequentially their uninfected neighbours. 
If an uninfected vertex 
is adjacent to a newly infected vertex and in addition, it has $r-1$ neighbours within the infected set, then we 
declare this vertex to be an offspring of the newly infected vertex. 
The event that a given vertex becomes infected in a certain step, conditional on the history of the process is the intersection of non-decreasing and 
non-increasing events. At this point, we make use of the FKG inequality (cf. Theorem~\ref{FKG} below) 
to deduce that these events are negatively correlated, whereby we can obtain a simple upper bound on the probability of infection at a certain step. We show that if the initial density of infected vertices is asymptotically below the threshold function, the process has expected progeny per vertex that is less than 1 and thus the infection spreads only to a few additional vertices (Section ~\ref{sec:nooutbreak}). 

The proof of Theorem~\ref{nothreshold} follows a similar argument.  In this case, its assumption on the 
distribution of the weights implies that the process we just described is again subcritical. 
With little more work, we show that this implies that the bootstrap process terminates after a small number of steps and ends with a set of infected vertices that is sublinear with high probability. 
The details can be found in  Section~\ref{sec:nothreshold}. 

\section{Fundamental properties \& tools}

We first perform some fundamental calculations, which we will be using throughout the paper. We continue 
with a collection of concentration inequalities (from the literature) that we will make use of at some point in our arguments.

\subsection{Fundamental properties of the weight sequence}\label{sec:fundamentals}
We start out by observing that the weight-bound $\weightBoundHeavy$ defined in \eqref{eq:defHeavy}, which characterises heavy vertices 
tends to infinity.
\begin{claim}\label{claim:weightBoundHeavyProperties}
Let $\weightBoundHeavy$ be defined as in \eqref{eq:defHeavy}. Then we have $\weightBoundHeavy\to\infty$ and $\weightBoundHeavy +1\le \frac{2}{3}\sqrt{W}$ for any sufficiently large $n$.
\end{claim}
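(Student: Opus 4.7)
The plan is to analyse the defining set $\mathcal{R}:=\{x\in\mathbb{R}^+ : |\VertexSetRestricted{\ge x}|\ge (W/(4x^2))^r\}$ in \eqref{eq:defHeavy} directly, treating the two assertions in turn.

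For the divergence $\weightBoundHeavy\to\infty$, I would start by observing that any $x\in\mathcal{R}$ satisfies $(W/(4x^2))^r \le |\VertexSetRestricted{\ge x}| \le n$, which rearranges to $x \ge \tfrac{1}{2}\sqrt{\weightConstant\, n^{(r-1)/r}}\to\infty$ (using $W=\weightConstant n$ and $r\ge 2$). So if $\mathcal{R}\ne\emptyset$, then $\weightBoundHeavy$ inherits this divergent lower bound; and if $\mathcal{R}=\emptyset$, the convention gives $\weightBoundHeavy=w_n+1\to\infty$ by the standing assumption $w_n\to\infty$.

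For $\weightBoundHeavy+1\le\tfrac{2}{3}\sqrt{W}$, I would set $x_0:=\tfrac{2}{3}\sqrt{W}-1$. For all $n$ large enough that $W>36$ we then have $x_0>\tfrac{1}{2}\sqrt{W}$, and in particular $(W/(4x_0^2))^r<1$. I would then split on emptiness of $\mathcal{R}$. If $\mathcal{R}\ne\emptyset$, then since $|\VertexSetRestricted{\ge x}|=0$ whenever $x>w_n$, every element of $\mathcal{R}$ lies in $(0,w_n]$ and so $\weightBoundHeavy\le w_n$. If $w_n<x_0$ this gives $\weightBoundHeavy<x_0$ directly; otherwise $w_n\ge x_0$ implies $|\VertexSetRestricted{\ge x_0}|\ge 1>(W/(4x_0^2))^r$, so $x_0\in\mathcal{R}$ and $\weightBoundHeavy\le x_0$. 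In both subcases $\weightBoundHeavy+1\le x_0+1=\tfrac{2}{3}\sqrt{W}$. If instead $\mathcal{R}=\emptyset$, then $\weightBoundHeavy=w_n+1$; moreover one must have $w_n<\tfrac{1}{2}\sqrt{W}$, as otherwise $x:=\tfrac{1}{2}\sqrt{W}$ would satisfy $|\VertexSetRestricted{\ge x}|\ge 1=(W/(4x^2))^r$ and thus lie in $\mathcal{R}$. Hence, once also $\sqrt{W}\ge 12$, $\weightBoundHeavy+1=w_n+2<\tfrac{1}{2}\sqrt{W}+2\le\tfrac{2}{3}\sqrt{W}$.

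The only real obstacle is the bookkeeping caused by the empty-set convention $\weightBoundHeavy:=w_n+1$ together with the additive $+1$ on the left-hand side of the target inequality; together these force the small case split on emptiness of $\mathcal{R}$ (and then on $w_n\ge x_0$ vs.\ $w_n<x_0$) and the mild ``$n$ sufficiently large'' conditions needed to absorb the constants $1$ and $2$. Beyond this, the argument relies only on the elementary observation that $(W/(4x^2))^r<1$ iff $x>\tfrac{1}{2}\sqrt{W}$, which lets the integer-valued $|\VertexSetRestricted{\ge x}|$ carry the defining inequality essentially for free in that range.
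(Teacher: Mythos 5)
Your proof is correct and follows essentially the same route as the paper's: for divergence, both arguments combine the defining inequality with the trivial bound $|\VertexSetRestricted{\ge x}|\le n$ to get $\weightBoundHeavy^{2}\gtrsim n^{(r-1)/r}\to\infty$, and for the upper bound both exploit that $(W/(4x^2))^r\le 1$ once $x\ge\tfrac12\sqrt{W}$ so that a single vertex of such weight already places $x$ in the defining set. The only difference is cosmetic bookkeeping — you case-split on emptiness of $\mathcal{R}$ while the paper splits on the size of $w_n$ — and both handle the $+1$/$+2$ constants the same way.
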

\begin{proof}
Assume that $\weightBoundHeavy\le w_n$, then we have
$$
\weightBoundHeavy^{2r}\ge \frac{W^r}{4^r|\VertexSetRestricted{\ge\weightBoundHeavy}|}\stackrel{\eqref{Wcondition}}{\ge} \frac{\weightConstant^r n^{r-1}}{4^r}\to\infty, $$
since there are only $n$ vertices in total; otherwise we have $\weightBoundHeavy>w_n\to\infty$.

Now for the second statement, note that if $w_n\le \frac{2}{3}\sqrt{W}-2$, then we obtain $\weightBoundHeavy+1\le w_n+2\le \frac{2}{3}\sqrt{W}$; on the other hand, if $w_n>\frac{2}{3}\sqrt{W}-2$ then $|\VertexSetRestricted{\ge \frac{2}{3}\sqrt{W}-2}|\ge 1$, and thus 
$$
\left(\frac{W}{4\left(\frac{2}{3}\sqrt{W}-2\right)^2}\right)^r\le (1+o(1))\left(\frac{9}{16}\right)^r\le 1,
$$
 where the last inequality holds for any sufficiently large $n$. Consequently, by the definition of $\weightBoundHeavy$ we have $\weightBoundHeavy+1\le \frac{2}{3}\sqrt{W}-1<\frac{2}{3}\sqrt{W}$, and thus the claim follows.
\end{proof}
\begin{remark}\label{rem:dropMinimum}
Claim~\ref{claim:weightBoundHeavyProperties} implies that for any non-heavy vertices $u,v\in\VertexSetRestricted{<\weightBoundHeavy}$ we may drop the minimum in~\eqref{eq:pijCL}, i.e., they form an edge with probability $p_{u,v}=w_uw_v/W$.
\end{remark}

Next we relate sums of powers of vertex weights to the size-biased distribution. Recall the following standard formula for the moments of  a non-negative random variable.
\begin{lemma}[e.g.~\cite{MR1155402}]\label{thm:moments}
Let $k\ge 1$ be an integer and $X$ be a non-negative random variable. Then 
$$
\EE\left[X^k\right] = k\int_0^\infty x^{k-1}\PP[X\geq x]dx.
$$
\end{lemma}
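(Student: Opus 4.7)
The plan is to use the classical layer-cake identity: for any $x \geq 0$ one has $x^k = \int_0^x k t^{k-1}\,dt$, and therefore $X^k = \int_0^\infty k t^{k-1}\,\Ind{X \geq t}\,dt$ almost surely. Taking expectations on both sides and exchanging the integral with the expectation gives the claimed identity, since $\EE[\Ind{X \geq t}] = \PP[X \geq t]$.

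More concretely, first I would note that the integrand $k t^{k-1}\Ind{X \geq t}$ is non-negative and jointly measurable in $(t,\omega)$. This lets me invoke Tonelli's theorem without any integrability assumption, so the exchange of $\EE$ and $\int_0^\infty$ is unconditional (the resulting expression is either finite on both sides or equal to $+\infty$ on both sides). Thus
\begin{equation*}
\EE[X^k] \;=\; \EE\!\left[\int_0^\infty k t^{k-1}\Ind{X \geq t}\,dt\right] \;=\; \int_0^\infty k t^{k-1}\,\PP[X \geq t]\,dt,
\end{equation*}
which is exactly what is claimed (with $x$ in place of $t$).

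There is really no obstacle here beyond verifying the hypotheses of Tonelli, which are trivially met because $X \geq 0$ and $k t^{k-1} \geq 0$ on $[0,\infty)$. An equivalent route, should one prefer to avoid Fubini/Tonelli, is to integrate by parts: writing $F(x) = \PP[X \leq x]$ and $\overline{F}(x) = 1 - F(x) = \PP[X \geq x]$ (up to a null set of atoms), one has $\EE[X^k] = \int_0^\infty x^k\,dF(x)$, and integrating by parts yields $\EE[X^k] = -x^k\overline{F}(x)\bigr|_0^\infty + \int_0^\infty k x^{k-1}\overline{F}(x)\,dx$; the boundary term vanishes whenever $\EE[X^k] < \infty$ (and if it is infinite, both sides are infinite anyway), giving the same conclusion. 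Since the lemma is cited from the literature, a one-line Tonelli argument is sufficient.
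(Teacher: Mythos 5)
Your proof is correct. The paper does not prove this lemma at all -- it is simply quoted from the literature -- so there is nothing to compare against; your layer-cake/Tonelli argument is the standard complete proof, and the pointwise identity $X^k=\int_0^\infty k t^{k-1}\Ind{X\ge t}\,dt$ together with Tonelli (valid unconditionally by non-negativity) settles the claim in both the finite and infinite cases.
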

\begin{proposition}\label{prop:restrmom}
Let $0< a< b$ and let $\vartheta\ge 2$ be an integer. Then
\begin{align} \label{eq:restrmom}
W^{-1}\sum_{v\in \VertexSetRestricted{[a,b)}} w_v^\vartheta= \PP [a \leq \wbrv < b]a^{\vartheta-1} + (\vartheta-1)\int_a^b x^{\vartheta-2} \PP[x\leq \wbrv < b] dx .
\end{align}
In particular, for $b=\weightBoundHeavy$ defined in \eqref{eq:defHeavy} and $r\le \vartheta\le 2r-1$, we obtain the upper bound
$$
\sum_{v\in \VertexSetRestricted{[a,\weightBoundHeavy)}} w_v^{\vartheta}\le 4r W^{r}a^{\vartheta-2r}.
$$
\end{proposition}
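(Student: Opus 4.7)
I would treat the identity and the upper bound separately. For the identity in~\eqref{eq:restrmom}, I begin by rewriting the left-hand side via the size-biased law. Since $\PP[\wbrv=w_v]=w_v/W$, one has
$$W^{-1}\sum_{v\in\VertexSetRestricted{[a,b)}}w_v^\vartheta=\EE\left[\wbrv^{\vartheta-1}\mathds{1}_{\{a\le\wbrv<b\}}\right].$$
Next I apply the elementary identity $w_v^{\vartheta-1}=a^{\vartheta-1}+(\vartheta-1)\int_a^{w_v}x^{\vartheta-2}\,dx$ (valid because $\vartheta\ge 2$ and $w_v\ge a$) and swap the order of summation and integration by Fubini. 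The constant contribution gives $\PP[a\le\wbrv<b]\,a^{\vartheta-1}$, and for fixed $x\in[a,b)$ the inner sum $\sum_{v:x<w_v<b}w_v/W$ equals $\PP[x<\wbrv<b]$, which agrees with $\PP[x\le\wbrv<b]$ Lebesgue-a.e.\ on $[a,b)$ because the size-biased law has only finitely many atoms.

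For the upper bound with $b=\weightBoundHeavy$, first note that if $a\ge\weightBoundHeavy$ the sum is empty, so assume $a<\weightBoundHeavy$. Rather than recycling the full identity, it is cleaner to write $w_v^\vartheta=a^\vartheta+\vartheta\int_a^{w_v}y^{\vartheta-1}\,dy$ and apply Fubini directly, yielding
$$\sum_{v\in\VertexSetRestricted{[a,\weightBoundHeavy)}}w_v^\vartheta = a^\vartheta|\VertexSetRestricted{[a,\weightBoundHeavy)}| + \vartheta\int_a^{\weightBoundHeavy}y^{\vartheta-1}|\VertexSetRestricted{(y,\weightBoundHeavy)}|\,dy.$$
The key input---and the most delicate step of the proof---is to invoke the \emph{minimality} built into the definition~\eqref{eq:defHeavy} of $\weightBoundHeavy$: for every $y<\weightBoundHeavy$, $|\VertexSetRestricted{\ge y}|<(W/(4y^2))^r$. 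Using this to bound $|\VertexSetRestricted{[a,\weightBoundHeavy)}|\le|\VertexSetRestricted{\ge a}|$ and $|\VertexSetRestricted{(y,\weightBoundHeavy)}|\le|\VertexSetRestricted{\ge y}|$ reduces everything to an elementary one-variable integral.

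That remaining integral $\int_a^{\weightBoundHeavy}y^{\vartheta-1-2r}\,dy$ has exponent $\vartheta-1-2r\le -2$ (since $\vartheta\le 2r-1$), hence it equals $(a^{\vartheta-2r}-\weightBoundHeavy^{\vartheta-2r})/(2r-\vartheta)\le a^{\vartheta-2r}$, using $2r-\vartheta\ge 1$. Assembling the two contributions gives the bound $(1+\vartheta)W^r a^{\vartheta-2r}/4^r\le 2rW^r a^{\vartheta-2r}/4^r$, which is comfortably below the claimed $4rW^r a^{\vartheta-2r}$. Apart from the Lebesgue-null endpoint adjustment in the first part, the only subtlety is staying strictly away from the borderline $\vartheta=2r$ where the integral would diverge logarithmically; the hypothesis $r\le\vartheta\le 2r-1$ guarantees this, so no probabilistic tools beyond the definition of $\wbrv$ are required.
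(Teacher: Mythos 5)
Your proof is correct. The identity~\eqref{eq:restrmom} is obtained essentially as in the paper: both reduce the left-hand side to $\EE\left[\wbrv^{\vartheta-1}\Ind{a\le\wbrv<b}\right]$, the paper then citing the standard moment formula while you anchor the layer-cake decomposition at $a$ by hand; your remark that $\PP[x<\wbrv<b]$ and $\PP[x\le\wbrv<b]$ differ only on the finitely many atoms, hence on a Lebesgue-null set, correctly disposes of the only subtlety there. Your derivation of the upper bound, however, is genuinely different from --- and simpler than --- the paper's. The paper first proves the size-biased tail estimate $\PP[y\le\wbrv<\weightBoundHeavy]<2W^{r-1}/y^{2r-1}$ for all $0<y<\weightBoundHeavy$ (their~\eqref{eq:weightBiasedProbabilitiesUpperBound}); this needs a doubling/contradiction argument, because the naive estimate $\PP[y\le\wbrv<\weightBoundHeavy]\le|\VertexSetRestricted{\ge y}|\,\weightBoundHeavy/W$ pays a factor $\weightBoundHeavy/y$ that is affordable only when $y$ is comparable to $\weightBoundHeavy$. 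Only then is that tail bound substituted into~\eqref{eq:restrmom}. You sidestep the weighted tail entirely: by applying the layer-cake decomposition to $w_v^{\vartheta}$ itself rather than to $w_v^{\vartheta-1}$ after extracting the size-biased weight, your integrand involves only the counts $|\VertexSetRestricted{(y,\weightBoundHeavy)}|\le|\VertexSetRestricted{\ge y}|<(W/(4y^2))^r$, which the minimality in~\eqref{eq:defHeavy} controls directly; the extra factor of $y$ in your integrand exactly absorbs the weight that the paper has to fight for. Your computation then closes ($\int_a^{\weightBoundHeavy}y^{\vartheta-1-2r}\,dy\le a^{\vartheta-2r}/(2r-\vartheta)\le a^{\vartheta-2r}$ since $\vartheta\le 2r-1$) and even yields the sharper constant $2r/4^r$ in place of $4r$. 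What the paper's detour buys is the standalone tail bound~\eqref{eq:weightBiasedProbabilitiesUpperBound}, but since that bound is used nowhere else in the paper, your shorter route loses nothing.
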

\begin{proof}
We write 
$$ 
\sum_{v\in\VertexSetRestricted{[a,b)}} w_v^\vartheta = W \sum_{v\in\VertexSetRestricted{[a,b)}} w_v^{\vartheta-1} \frac{w_v}{W} = W \EE\left[\wbrv^{\vartheta-1} \Ind{a\le \wbrv< b}\right]. 
$$
Hence, using Lemma~\ref{thm:moments} with $k=\vartheta-1$ for the random variable $\wbrv \Ind{a\le \wbrv< b}$ the first claim~\eqref{eq:restrmom} follows.

For the upper bound we will apply \eqref{eq:restrmom} with $b=\weightBoundHeavy$. As a first step we prove the following bound on the size-biased distribution for any $0<y<\weightBoundHeavy$
\begin{equation}\label{eq:weightBiasedProbabilitiesUpperBound}
\PP[y\le \wbrv<\weightBoundHeavy]< 2 W^{r-1}/y^{2r-1}.
\end{equation}
To prove this we observe that for any $0<y_1<y_2$ we have
\begin{equation}\label{eq:weightBiasedProbabilitiesUpperBoundSimple}
\PP[y_1\le\wbrv\le y_2]= W^{-1}\sum_{u\in\VertexSetRestricted{[y_1,y_2]}}w_u\le W^{-1}\left|\VertexSetRestricted{\ge y_1}\right| y_2.
\end{equation}
First note that if $\weightBoundHeavy/2 \leq y < \weightBoundHeavy$,  then we have something stronger than 
\eqref{eq:weightBiasedProbabilitiesUpperBound}:
\begin{equation}\label{eq:upperBoundSBD}
\PP[y\le \wbrv<\weightBoundHeavy]\stackrel{\eqref{eq:weightBiasedProbabilitiesUpperBoundSimple}}{\le} \frac{|\VertexSetRestricted{\ge y}|\weightBoundHeavy }{W}\stackrel{y <\weightBoundHeavy}{<} \frac{ W^{r-1}\weightBoundHeavy}{2 y^{2r}}
\stackrel{y\ge \weightBoundHeavy/2}{\le} \frac{W^{r-1}}{y^{2r-1}}.
\end{equation}
Now let $\brokenUpperSBD=\{y<\weightBoundHeavy : \PP[y\le \wbrv<\weightBoundHeavy]\geq 2 W^{r-1}/y^{2r-1}\}$ and assume for contradiction that $\brokenUpperSBD$ is not empty. Note that \eqref{eq:upperBoundSBD} implies that for any element $y'\in\brokenUpperSBD$ we have $y'< \weightBoundHeavy/2$. Therefore there exists a $y'\in \brokenUpperSBD$ with $y'<\weightBoundHeavy/2$ such that $2y'\not\in\brokenUpperSBD$ and $2y'<\weightBoundHeavy$.

Since $2y'\not\in\brokenUpperSBD$ and $2y'<\weightBoundHeavy$, we have
$$
\PP[2y'\le \wbrv < \weightBoundHeavy]\le\frac{2W^{r-1}}{(2y')^{2r-1}}.
$$
Furthermore, 
$$
\PP[y'\le \wbrv < 2y']\stackrel{\eqref{eq:weightBiasedProbabilitiesUpperBoundSimple}}{<}\frac{2y'\left|\VertexSetRestricted{\ge y'}\right|}{W}\stackrel{y'<\weightBoundHeavy}{<}\frac{ W^{r-1}}{(y')^{2r-1}}.
$$
Consequently we obtain
\begin{align*}
\PP[y'\le\wbrv<\weightBoundHeavy]<\frac{W^{r-1}}{(y')^{2r-1}}+\frac{2W^{r-1}}{(2y')^{2r-1}}< \frac{2 W^{r-1}}{(y')^{2r-1}},
\end{align*}
resulting in a contradiction, and hence~\eqref{eq:weightBiasedProbabilitiesUpperBound} holds for all $0<y<\weightBoundHeavy$.

Next note that
$$
\int_{a}^{\weightBoundHeavy}\frac{2W^{r-1}}{x^{2r-1}}x^{\vartheta-2}dx\stackrel{\vartheta<2r}{\le} \frac{2W^{r-1}a^{\vartheta-2r}}{2r-\vartheta},
$$
and thus we obtain the upper bound
\begin{align*}
\sum_{v\in \VertexSetRestricted{[a,\weightBoundHeavy)}} w_v^{\vartheta}&\stackrel{\eqref{eq:restrmom},\eqref{eq:weightBiasedProbabilitiesUpperBound}}{\le} W\frac{2W^{r-1}}{a^{2r-1}}a^{\vartheta-1} + W(\vartheta-1)\int_a^{\weightBoundHeavy} x^{\vartheta-2} \frac{2W^{r-1}}{x^{2r-1}} dx\\
&\le  2W^{r}a^{\vartheta-2r}(1+(\vartheta-1)/(2r-\vartheta))\stackrel{\vartheta<2r}{\le} 4r W^{r}a^{\vartheta-2r}.\qedhere
\end{align*}
\end{proof}

Note that a priori it is not clear by the definition of the candidate thresholds $\candidateThresholdSparse$ and $\candidateThresholdDense$ defined in ~\eqref{eq:candidateThresholdSparse} and~\eqref{eq:candidateThresholdDense}, respectively, that the claimed critical function $\threshold$ in Theorem~\ref{threshold} satisfies $\threshold=o(1)$. However, this is an almost direct consequence of Proposition~\ref{prop:restrmom}. 
\begin{corollary}\label{cor:candidatethresholdsSmall}
Under the assumptions of Theorem~\ref{threshold} if in addition $\weightBoundHeavy>w_n$ then
$$
\candidateThresholdSparse=o(1).
$$
If instead of $\weightBoundHeavy>w_n$ we have $\weightBoundHeavy\le w_n$, then 
$$
\min\{\candidateThresholdSparse,\candidateThresholdDense\}=o(1).
$$
\end{corollary}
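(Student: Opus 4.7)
My plan is to handle the two cases of the statement separately; each reduces to a one-line computation built on a tool from Section~\ref{sec:fundamentals}.

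For the case $\weightBoundHeavy > w_n$, every vertex is non-heavy, so by the definition of the size-biased distribution
$$\sum_{u\in \VertexSetRestricted{<\weightBoundHeavy}} w_u^{r+1} \;=\; \sum_{u\in V} w_u^{r+1} \;=\; W\cdot\EE\!\left[\wbrv^{r}\right].$$
Applying Lemma~\ref{thm:moments} followed by the supercritical tail bound~\eqref{eq:supercriticalCondition}, I would estimate
$$\EE\!\left[\wbrv^{r}\right] \;=\; r\int_0^\infty x^{r-1}\PP[\wbrv\ge x]\,dx \;\ge\; rC\int_{C_1}^{w_n} x^{r-2}\,dx,$$
which diverges as $w_n\to\infty$, for both $r=2$ and $r\ge 3$. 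Consequently $\sum_u w_u^{r+1}/W\to\infty$ and hence $\candidateThresholdSparse=\bigl(W/\sum_u w_u^{r+1}\bigr)^{1/(r-1)}=o(1)$, as required.

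For the case $\weightBoundHeavy\le w_n$, it suffices to show $\candidateThresholdDense=o(1)$. The defining property~\eqref{eq:defHeavy} of $\weightBoundHeavy$ gives $|\VertexSetRestricted{\ge\weightBoundHeavy}|\ge (W/(4\weightBoundHeavy^2))^r$, and since every vertex counted has weight at least $\weightBoundHeavy$,
$$\sum_{u\in\VertexSetRestricted{\ge\weightBoundHeavy}} w_u^r \;\ge\; \weightBoundHeavy^r\,|\VertexSetRestricted{\ge\weightBoundHeavy}| \;\ge\; \bigl(W/(4\weightBoundHeavy)\bigr)^r,$$
which yields $\candidateThresholdDense\le 4\weightBoundHeavy/W$. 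Combining this with the bound $\weightBoundHeavy\le \tfrac{2}{3}\sqrt{W}$ from Claim~\ref{claim:weightBoundHeavyProperties} and $W=\weightConstant n\to\infty$, I obtain $\candidateThresholdDense=O(1/\sqrt{W})=o(1)$, and trivially $\min\{\candidateThresholdSparse,\candidateThresholdDense\}=o(1)$.

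I do not foresee any serious obstacle: the argument is essentially reading off the correct consequences of~\eqref{eq:supercriticalCondition} (in the first case) and of the definition~\eqref{eq:defHeavy} of $\weightBoundHeavy$ (in the second case). The only point worth double-checking is that the hypothesis~\eqref{eq:supercriticalCondition} is only available on the range $[C_1,w_n]$, but this is exactly the range that drives the divergence of the integral in the first case, since $w_n\to\infty$.
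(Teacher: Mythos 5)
Your proposal is correct and follows essentially the same route as the paper: in the first case the paper also lower-bounds $\sum_{u\in\VertexSetRestricted{<\weightBoundHeavy}}w_u^{r+1}$ via the moment identity combined with the tail bound~\eqref{eq:supercriticalCondition} (packaged through Proposition~\ref{prop:restrmom} rather than Lemma~\ref{thm:moments} directly), and in the second case it uses exactly the bound $\sum_{u\in\VertexSetRestricted{\ge\weightBoundHeavy}}w_u^r\ge(W/(4\weightBoundHeavy))^r$ from~\eqref{eq:defHeavy} together with Claim~\ref{claim:weightBoundHeavyProperties}. No gaps.
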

\begin{proof}
Assume that $\weightBoundHeavy>w_n$. Then we have that $\PP [a \leq \wbrv < \weightBoundHeavy]=\PP [a \leq \wbrv]$. 
Proposition~\ref{prop:restrmom} implies that under the assumptions of Theorem~\ref{threshold} we have
$$
\sum_{u\in\VertexSetRestricted{<\weightBoundHeavy}}w_u^{r+1}\ge rW\int_{C_1}^{\weightBoundHeavy}Cx^{r-2}dx=\Omega(W\weightBoundHeavy^{r-1}),
$$
and thus by the definition in~\eqref{eq:candidateThresholdSparse} and the first statement of Claim \ref{claim:weightBoundHeavyProperties} we have 
$$
\candidateThresholdSparse=O(\weightBoundHeavy^{-1})\stackrel{C.\ref{claim:weightBoundHeavyProperties}}{=}o(1).
$$
Now, if $w_n\geq \weightBoundHeavy$, 
by the definition in~\eqref{eq:candidateThresholdDense} we have
$$
\min\{\candidateThresholdSparse,\candidateThresholdDense\}\le\candidateThresholdDense=O (\weightBoundHeavy^{-1})\stackrel{C.\ref{claim:weightBoundHeavyProperties}}{=}o(1).
$$
\end{proof}

\subsection{Tools}\label{sec:tools}

We will need the following result due to Janson, \L uczak, Turova, and Vallier \cite{ar:JLTV10} for bootstrap percolation on the binomial random graph $G(n,p)$. 
The result holds for any choice (both random and deterministic) of the set $\infec{0}$ of initially infected vertices.
\begin{theorem}[Theorems 5.6 and 5.8 in~\cite{ar:JLTV10}]\label{jltv}

Consider a bootstrap percolation process with threshold $r\geq 2$ on $G(n,p)$, where $p\ge b n^{-1/r}$ for some constant $b>0$, and the number $a$ of initially infected vertices satisfies $a\to\infty$. Then whp all $n$ vertices become infected eventually. 
\end{theorem}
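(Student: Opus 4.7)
The plan is a two-phase analysis of $a_t := |\infec{t}|$: a rapid growth phase followed by a one-step mop-up. The key observation is that if we reveal edges lazily --- exposing at step $t+1$ only the edges between $\infec{t}\setminus\infec{t-1}$ and the currently uninfected vertices --- then conditional on $\infec{t}$, each uninfected vertex $v$ becomes infected at step $t+1$ independently with probability exactly $\PP[\mathrm{Bin}(a_t,p)\ge r]$. The one-step count of new infections is therefore a sum of i.i.d.\ Bernoulli variables, to which Chernoff bounds apply.

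In the growth phase, while $a_t p \le 1$ and $a_t \le n/2$, one has $\PP[\mathrm{Bin}(a_t,p)\ge r]\ge c_r(a_t p)^r$ for an explicit constant $c_r>0$, so the expected number of new infections at step $t+1$ is at least $c_r(n-a_t)(a_t p)^r$. Using $p\ge bn^{-1/r}$, this yields a multiplicative growth factor $a_{t+1}/a_t \ge c'_r a_t^{r-1}$. Since $a=a_0\to\infty$, the growth is super-geometric: after only $O(\log\log n)$ steps we reach $a_t\ge 1/p$, and a single further step then pushes $a_t$ to $\Theta(n)$, so that $a_t p = \Theta(np)\gg \log n$ whenever $r\ge 2$. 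A Chernoff lower-tail estimate for the binomial gives concentration at each step, and a union bound over the $O(\log\log n)$ steps of this phase controls the accumulated failure probability.

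Once $a_t p\ge C\log n$, the mop-up is immediate. For any uninfected vertex $v$, the number of its infected neighbours is $\mathrm{Bin}(a_t,p)$ with mean $\ge C\log n\gg r$; a Chernoff bound gives $\PP[\mathrm{Bin}(a_t,p)<r]\le n^{-c(C)}$ with $c(C)\to\infty$ as $C\to\infty$. A union bound over the at most $n$ uninfected vertices then shows that in one further step every vertex is infected, so $\infec{F}=[n]$ with high probability.

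The main technical obstacle is the standard one of propagating high-probability statements through many steps without overcounting failure. A clean way is to fix a deterministic sequence $\alpha_0=a$, $\alpha_{t+1}=\tfrac12 c_r n\alpha_t^r p^r$, and prove inductively that $a_t\ge \alpha_t$ with probability at least $1-O(t)\cdot n^{-\omega(1)}$ at each step; since only $O(\log\log n)$ growth steps are needed, the total error remains $o(1)$. The lazy-exposure device removes the conditional-independence headaches, and the hypothesis $r\ge 2$ enters only to ensure that $\binom{a_t}{r}$ is meaningful and that $np\to\infty$ makes the mop-up threshold $a_t p\gg \log n$ attainable with room to spare.
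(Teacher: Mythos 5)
The paper does not prove this statement at all: it is imported verbatim as Theorems 5.6 and 5.8 of Janson, \L uczak, Turova and Vallier, so there is no internal proof to compare against. Your sketch is the standard direct argument for the supercritical regime and its skeleton is sound, but two claims need repair before it is a proof. First, it is \emph{not} true that conditional on $\infec{t}$ each uninfected vertex becomes infected at step $t+1$ independently with probability exactly $\PP[\mathrm{Bin}(a_t,p)\ge r]$: a vertex that is still uninfected at time $t$ has already had its edges to $\infec{t-1}$ partially exposed, and the conditioning on having fewer than $r$ infected neighbours so far biases the count downwards. The clean fix is the one this paper itself uses for its breeding-ground process in Section 5.1: run a dominated subprocess in which a vertex is infected at step $t+1$ only if it has at least $r$ neighbours among the \emph{newly} infected set $\infec{t}\setminus\infec{t-1}$. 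Then the relevant edges are genuinely fresh, the events are independent across vertices, and the count is $\mathrm{Bin}(a_t-a_{t-1},p)$ rather than $\mathrm{Bin}(a_t,p)$; since your growth is super-geometric, $a_t-a_{t-1}=(1-o(1))a_t$ and all your estimates survive. The same correction applies to the mop-up step, where the fresh infected set already has size $\Theta(n)$, so the argument goes through. Second, your per-step failure probability is not $n^{-\omega(1)}$ in the early growth phase: when $a_t$ is of order $a$, which may tend to infinity arbitrarily slowly, the Chernoff bound only gives $\exp(-\Omega(a_t^{\,r}))$. The union bound over the $O(\log\log n)$ steps still closes, because the failure probabilities $\exp(-\Omega(\alpha_t^{\,r}))$ along your deterministic sequence decay extremely fast in $t$ and their sum is dominated by the first term $\exp(-\Omega(a^{r}))=o(1)$, but this is the correct accounting and should replace the $n^{-\omega(1)}$ claim. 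With these two repairs the argument is a valid self-contained proof of the cited theorem.
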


We also apply the FKG inequality several times in our proofs. We consider the following setting. Let $G=(V,E)$ be a random graph where every pair of distinct vertices $u,v\in V$ appears as  an edge in $E$ independently with some probability $p_{u,v} \in [0,1]$.

A graph property is called \emph{non-decreasing} if it is preserved under the addition of edges, and it is \emph{non-increasing} if it is preserved under the removal of edges.
\begin{theorem}[FKG inequality, see for example~\cite{JLR}]\label{FKG}
	Let $A$ be a non-increasing graph property and $B$ be a non-decreasing graph property. Consider the random graph $G$ and denote by $\mathcal{A}$ and $\mathcal{B}$ the events that $G$ has property $A$ or $B$, respectively. Then we have
	$$\PP[\mathcal{A} \cap \mathcal{B}]\leq \PP[\mathcal{A}]\PP[\mathcal{B}].$$
\end{theorem}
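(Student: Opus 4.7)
The FKG inequality as stated here is a well-known classical fact (due to Harris, Kleitman, Fortuin–Kasteleyn–Ginibre in various formulations), and for a paper of this length the cleanest option is simply to cite \cite{JLR}. That said, my plan for a self-contained proof would be to rephrase the statement as a correlation inequality on a product of two-point probability spaces and argue by induction on the number of coordinates.

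First I would translate out of the graph language. The edge indicators $X_e := \Ind{e\in E}$ for $e\in\binom{V}{2}$ are independent Bernoulli random variables, so the underlying probability space is a product $\prod_e \{0,1\}$. A non-decreasing graph property $B$ corresponds to an indicator function $f := \Ind{\mathcal{B}}$ that is non-decreasing in the coordinatewise partial order on $\{0,1\}^{\binom{V}{2}}$, and likewise $g := \Ind{\mathcal{A}}$ is non-increasing. It therefore suffices to prove that for any non-decreasing $f$ and non-increasing $g$ on such a product space, $\Ex{fg}\le \Ex{f}\Ex{g}$.

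For the base case of a single coordinate, take two independent copies $X,Y$ of the Bernoulli variable in question. Opposite monotonicity gives $(f(X)-f(Y))(g(X)-g(Y))\le 0$ pointwise, and expanding this and taking expectations yields
$$2\Ex{fg} - 2\Ex{f}\Ex{g} = \Ex{(f(X)-f(Y))(g(X)-g(Y))} \le 0.$$
For the inductive step, having established the inequality on any product of $k-1$ two-point spaces, I would condition on the last coordinate $X_k$ and define $F(x) := \Ex{f \mid X_k=x}$ and $G(x) := \Ex{g \mid X_k=x}$. These remain non-decreasing and non-increasing respectively (taking expectations over the remaining coordinates preserves monotonicity in the frozen one). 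Applying the inductive hypothesis to the conditional measure on the first $k-1$ coordinates gives $\Ex{fg\mid X_k=x}\le F(x)G(x)$, and a final application of the one-coordinate case to $F$ and $G$ yields $\Ex{F(X_k)G(X_k)}\le \Ex{F(X_k)}\Ex{G(X_k)} = \Ex{f}\Ex{g}$.

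The main (mild) subtlety is verifying that the conditional expectations $F$ and $G$ inherit the monotonicity of $f$ and $g$; once this is observed the induction goes through without difficulty. Since all coordinate spaces here are two-point Bernoullis, no lattice-theoretic machinery (log-supermodularity, etc.) is needed beyond the elementary two-copies identity above.
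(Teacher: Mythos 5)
Your proposal is correct. The paper itself gives no proof of this statement: it is quoted as a standard tool with a pointer to the literature (\cite{JLR}), which matches your own primary recommendation. The self-contained argument you sketch is the classical proof of Harris's inequality for product measures: the reduction to opposite-monotone indicator functions on $\{0,1\}^{\binom{V}{2}}$, the two-independent-copies identity $\EE[(f(X)-f(Y))(g(X)-g(Y))]=2\EE[fg]-2\EE[f]\EE[g]$ for the one-coordinate base case, and the induction via conditioning on the last coordinate, with the (easy but necessary) check that partial averaging preserves monotonicity. All of these steps are sound, and you correctly note that since each coordinate space is totally ordered no lattice-theoretic FKG condition is needed; the argument also goes through verbatim for non-identically distributed edge indicators, which is the situation in the Chung--Lu model. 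No gaps.
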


Throughout the paper, the proofs will rely on the following collection of concentration inequalities. 
Often we deal with sums of independent random variables which guarantee exponentially small bounds on the probability of non-concentration.
\begin{theorem}[Chernoff inequality, e.g.~\cite{JLR}]\label{Chernoff}
	Let $X_i$, for $1\le i\le m$, be independent Bernoulli random variables with mean $0\le p_i\le 1$, and let $X=\sum_{i=1}^{m}X_i$ denote their sum. Then for any $s>0$ we have
	$$\PP[X\leq \EE[X]-s]\leq \exp\left(-\frac{s^2}{2\EE[X]}\right)$$
	and
	$$\PP[X\geq \EE[X]+s]\leq \exp\left(-\frac{s^2}{2(\EE[X]+s/3)}\right).$$
\end{theorem}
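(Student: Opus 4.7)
The plan is to prove both tail bounds via the classical \emph{Chernoff bounding} technique, that is, by applying Markov's inequality to the exponential moment generating function of $X$ and then optimising over a free parameter $t>0$. The independence of the $X_i$ lets us factorise the moment generating function, and the Bernoulli structure gives a clean closed-form bound on each factor. No use is made of any earlier result in the paper, so this really is a self-contained calculation.

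For the upper tail, write $\mu := \EE[X]$ and fix any $t>0$. Markov's inequality applied to $e^{tX}$ gives
\begin{equation*}
\PP[X\ge \mu+s] = \PP\bigl[e^{tX}\ge e^{t(\mu+s)}\bigr]\le e^{-t(\mu+s)}\EE\bigl[e^{tX}\bigr].
\end{equation*}
By independence $\EE[e^{tX}] = \prod_{i=1}^m \EE[e^{tX_i}]$, and since each $X_i\sim\Be{p_i}$ we have $\EE[e^{tX_i}] = 1 + p_i(e^t-1)\le \exp\!\bigl(p_i(e^t-1)\bigr)$ using $1+x\le e^x$. Summing the exponents yields $\EE[e^{tX}]\le \exp\!\bigl(\mu(e^t-1)\bigr)$. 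Optimising by choosing $t = \ln(1+s/\mu)$ produces
\begin{equation*}
\PP[X\ge \mu+s]\le \exp\!\bigl(-\mu\,h(s/\mu)\bigr),\qquad h(x) := (1+x)\ln(1+x)-x,
\end{equation*}
and the stated form follows from the elementary inequality $h(x)\ge x^2/(2+2x/3)$ valid for $x\ge 0$, which is exactly what produces the $s/3$ correction in the denominator.

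For the lower tail I would mirror the argument. For $t>0$,
\begin{equation*}
\PP[X\le \mu-s]=\PP\bigl[e^{-tX}\ge e^{-t(\mu-s)}\bigr]\le e^{t(\mu-s)}\EE\bigl[e^{-tX}\bigr]\le e^{t(\mu-s)}\exp\!\bigl(\mu(e^{-t}-1)\bigr),
\end{equation*}
after the analogous factorisation. Optimising over $t$ and then applying the cleaner bound $h(-x)\ge x^2/2$ on $0\le x\le 1$ (which holds because $(1-x)\ln(1-x)\ge -x+x^2/2$ on the same range) gives the exponent $s^2/(2\mu)$ with no denominator correction needed, as stated.

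The one mildly technical step is verifying the auxiliary inequalities on $h$, in particular $h(x)\ge x^2/(2+2x/3)$ on $[0,\infty)$. This is a routine calculus exercise (compare values and derivatives at $0$ and use convexity), but the constants are sharp, so it is where one must be careful. Everything else is the standard Cram\'er--Chernoff recipe; the main conceptual obstacle, if any, is simply remembering to tune $t$ to the \emph{target} deviation rather than fixing it before taking expectations.
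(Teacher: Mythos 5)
Your proposal is correct and is exactly the standard Cram\'er--Chernoff argument (exponential Markov bound, factorisation by independence, $1+x\le e^x$, optimisation of the tilting parameter, followed by the elementary estimates $h(x)\ge x^2/(2+2x/3)$ and $h(-x)\ge x^2/2$), which is precisely the proof in the reference the paper cites for this theorem. No gaps; the auxiliary inequalities on $h$ you flag are indeed the only technical points and both hold as you state them.
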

A similar bound due to McDiarmid~\cite{MR1678578} applies also in a more general setting.
\begin{theorem}[\cite{MR1678578}]\label{mcdct}
Let $X_i$, for $1 \leq i \leq m$, be independent random variables satisfying $X_i \leq \EE[X_i] + M$ for some constant $M$, and let $X=\sum_{i=1}^{m}X_i$ denote their sum. Then, for any $s>0$ we have
$$
\PP[X\geq \EE[X]+s]\leq \exp\left(-\frac{s^2}{2(\VV[X]+Ms/3)}\right).
$$
\end{theorem}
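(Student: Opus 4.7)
The plan is to apply the standard exponential moment method (Chernoff's trick), paired with a Bennett-style bound on the moment generating function of each summand that exploits only the one-sided assumption $X_i \le \EE[X_i] + M$.

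For any $t > 0$, Markov's inequality combined with the independence of the $X_i$ yields
$$\PP[X \geq \EE[X] + s] \;\leq\; e^{-ts} \prod_{i=1}^m \EE\bigl[e^{t(X_i - \EE[X_i])}\bigr],$$
so it suffices to control each factor on the right. Set $Y_i := X_i - \EE[X_i]$, so $\EE[Y_i] = 0$ and $Y_i \le M$. The key analytic input is the inequality $e^x \le 1 + x + x^2\, \phi(y)$ whenever $x \le y$, where $\phi(y) := (e^y - 1 - y)/y^2$ (extended by $\phi(0) = 1/2$) is non-decreasing on $\mathbb{R}$. Applying it with $x = tY_i$ and $y = tM$ (so $x \le y$ because $t > 0$) and taking expectations cancels the linear term and gives
$$\EE\bigl[e^{tY_i}\bigr] \;\leq\; 1 + t^2 \VV[X_i]\,\phi(tM) \;\leq\; \exp\bigl(t^2 \VV[X_i]\,\phi(tM)\bigr).$$

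Taking the product over $i$ and inserting into the Markov bound yields the Bennett-type inequality $\PP[X \ge \EE[X] + s] \le \exp\bigl(t^2 \VV[X]\,\phi(tM) - ts\bigr)$, and optimizing in $t$ (the minimizer is $t = M^{-1}\log(1 + sM/\VV[X])$) produces Bennett's inequality in the canonical form $\exp\bigl(-\tfrac{\VV[X]}{M^2}\, h(sM/\VV[X])\bigr)$ with $h(u) := (1+u)\log(1+u) - u$. To convert this into the Bernstein shape displayed in the statement, I would invoke the elementary calculus inequality $h(u) \ge u^2/\bigl(2(1 + u/3)\bigr)$ for $u \ge 0$ (standard, verified by checking vanishing of both sides together with their first derivatives at $u=0$ and comparing higher derivatives); substituting $u = sM/\VV[X]$ then recovers the claimed exponent $-s^2/\bigl(2(\VV[X] + Ms/3)\bigr)$. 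The only real subtlety in the argument is the one-sided (rather than two-sided) nature of the boundedness hypothesis, which is precisely why the monotonicity of $\phi$ is exactly the right tool: a naive series expansion of $\EE[e^{tY_i}]$ would need to bound $\EE[|Y_i|^k]$, and for that one typically wants $|Y_i| \le M$, which we are not assuming. Everything else is routine optimization, so I expect no real obstacle beyond keeping track of the Bennett--Bernstein conversion step.
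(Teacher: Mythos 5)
Your proof is correct. Note that the paper does not prove this statement at all: it is imported verbatim from McDiarmid's survey \cite{MR1678578} as a tool, so there is no in-paper argument to compare against. Your derivation --- the exponential moment method with the one-sided bound $e^{x}\le 1+x+x^{2}\phi(y)$ for $x\le y$ (using monotonicity of $\phi(y)=(e^{y}-1-y)/y^{2}$, which is exactly what lets you avoid any two-sided bound on $Y_i$), followed by Bennett's inequality and the comparison $h(u)\ge u^{2}/(2(1+u/3))$ --- is precisely the canonical proof of this one-sided Bernstein-type bound, and all the steps you sketch go through.
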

When the order of magnitude of each individual random variable varies strongly the following bound may prove to be stronger.
\begin{theorem}[Azuma-Hoeffding inequality, e.g.~\cite{JLR}]\label{AzumaHoeffding}
Let $X_i$, for $1 \leq i \leq m$, be independent random variables satisfying $a_i\leq X_i \leq b_i$, and let $X=\sum_{i=1}^{m}X_i$ denote their sum. Then, for any $s>0$ we have
$$
\PP[X\leq \EE[X]-s]\leq \exp\left(-\frac{s^2}{2\sum_{i=1}^m (b_i-a_i)^2}\right).
$$
\end{theorem}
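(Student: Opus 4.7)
The statement is the classical Hoeffding inequality for sums of independent bounded random variables (the ``Azuma-Hoeffding'' name is common when one also allows a martingale-difference relaxation, but the independent-sums case suffices here), so the plan is to execute the standard exponential moment method. By negating each $X_i$ (which swaps the roles of $a_i,b_i$ but preserves $(b_i-a_i)^2$) it is enough to bound the upper tail $\PP[X\ge\EE[X]+s]$; equivalently, setting $Y_i:=X_i-\EE[X_i]$ so that $\EE[Y_i]=0$ and $Y_i\in[a_i-\EE[X_i],\,b_i-\EE[X_i]]$, one needs a tail bound for $\sum_i Y_i$.

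The first step is the Chernoff-style bounding: for any $t>0$, Markov's inequality applied to $e^{t\sum_i Y_i}$ together with independence gives
\[
\PP\Bigl[\textstyle\sum_i Y_i\ge s\Bigr]\le e^{-ts}\prod_{i=1}^m \EE\bigl[e^{tY_i}\bigr].
\]
The key analytic ingredient is Hoeffding's lemma: for any centred random variable $Z$ supported in an interval of length $L$, $\EE[e^{tZ}]\le \exp(t^2L^2/2)$ (a slightly weaker form than the optimal $t^2L^2/8$, but enough to produce the constant appearing in the theorem). This is proved by a one-line convexity argument: on $[\alpha,\beta]$ one has $e^{tz}\le \tfrac{\beta-z}{\beta-\alpha}e^{t\alpha}+\tfrac{z-\alpha}{\beta-\alpha}e^{t\beta}$, so taking expectations with $\EE[Z]=0$ reduces the problem to estimating the logarithm of a convex combination of $e^{t\alpha}$ and $e^{t\beta}$, which is controlled by a Taylor expansion.

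Applying Hoeffding's lemma to each $Y_i$ with $L_i=b_i-a_i$ gives
\[
\PP\Bigl[\textstyle\sum_i Y_i\ge s\Bigr]\le \exp\Bigl(-ts+\tfrac{t^2}{2}\sum_{i=1}^m(b_i-a_i)^2\Bigr),
\]
and the final step is to optimise over $t>0$: choosing $t=s/\sum_i(b_i-a_i)^2$ yields exactly the advertised bound $\exp\bigl(-s^2/(2\sum_i(b_i-a_i)^2)\bigr)$.

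There is no real obstacle here beyond Hoeffding's lemma itself, which is a routine convexity computation; the rest is bookkeeping around the exponential Markov inequality. Since the paper cites the result from the literature rather than proving it, the expectation is merely to reproduce (or reference) this classical chain of arguments.
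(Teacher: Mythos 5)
Your proof is correct: the exponential Markov (Chernoff) bound combined with the weak form of Hoeffding's lemma $\EE[e^{tZ}]\le\exp(t^2L^2/2)$ for a centred variable supported on an interval of length $L$, followed by optimising $t=s/\sum_i(b_i-a_i)^2$ and reducing the lower tail to the upper tail by negation, yields exactly the stated bound with its (deliberately non-optimal) constant. The paper does not prove this theorem but cites it from the literature, and your argument is precisely the standard one behind that citation.
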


\section{Nuclei}\label{sec:kernel}
As we described in Section \ref{sec:outline}, the first step towards proving that an outbreak occurs whp is to show that a subset of vertices of high weight becomes almost completely 
infected. We shall call this set a nucleus (see Definition \ref{definition:kernel} below).
The total weight of this set is high enough, so that it functions as a source of the infection of a large part of the 
random graph.  
Moreover, the density within the set is high enough, so that the bootstrap process restricted to the subgraph induced by these vertices results in its almost complete infection. 
\begin{definition}\label{definition:kernel}
Let $\weightBoundKernel=\weightBoundKernel(n)$ satisfy $\weightBoundKernel\le \min\left\{w_{n-r+1},W/w_{n-r+1}\right\}$ and $\weightBoundKernel=o(\sqrt{W})$. 
We call the quantity $\weightBoundKernel$ the  \emph{weight-bound} of the nucleus if
there exists a function $\eps = \eps (n):\mathbb{N} \to [0,1)$, tending to 0 as $n\to \infty$, such that whp
\begin{equation*} 
\totalInfectedWeightRestricted{\ge\weightBoundKernel}\geq(1-\eps )\totalWeightRestricted{\ge\weightBoundKernel}.
\end{equation*}
In addition, the set $\VertexSetRestricted{\ge\weightBoundKernel}$ is called a \emph{nucleus}.

\end{definition}
The goal of this section is to prove the existence of a nucleus in the supercritical regime.
\begin{theorem}\label{thm:kernel}
Suppose that the assumptions of Theorem~\ref{threshold} hold. If additionally 
$$
\weightBoundHeavy> w_n\qquad\text{and}\qquad\infectionRate\gg\candidateThresholdSparse,
 $$
or 
$$
\weightBoundHeavy\le w_n\qquad\text{and}\qquad\infectionRate\gg\min\{\candidateThresholdSparse,\candidateThresholdDense\},
$$
then there exists a nucleus with weight-bound $\weightBoundKernel$.
\end{theorem}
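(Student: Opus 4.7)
The plan is to split into two regimes. In both, the hypotheses guarantee $\infectionRate$ exceeds at least one of the candidate thresholds by an $\omega(1)$ factor, and I would work with the mechanism corresponding to whichever candidate threshold is beaten: if $\infectionRate\gg\candidateThresholdSparse$, I build the nucleus using the subgraph on non-heavy vertices $\VertexSetRestricted{<\weightBoundHeavy}$; if $\weightBoundHeavy\le w_n$ and $\infectionRate\gg\candidateThresholdDense$ (while $\infectionRate\not\gg\candidateThresholdSparse$), I build it using the subgraph on heavy vertices. By Corollary~\ref{cor:candidatethresholdsSmall}, both relevant candidate thresholds are $o(1)$, which is needed for the density constraints in Definition~\ref{definition:kernel} to be satisfiable.

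For the sparse regime, I would construct a decreasing sequence $\weightBoundLayer{0}>\weightBoundLayer{1}>\dots>\weightBoundLayer{\lastLayer}=\weightBoundKernel$ with $\weightBoundLayer{0}$ just below $\weightBoundHeavy$, producing layers $\Layer{i}:=\VertexSetRestricted{[\weightBoundLayer{i},\weightBoundLayer{i-1})}$. Inductively I would show that after $i$ rounds a $(1-o(1))$-weight-fraction of $\VertexSetRestricted{\ge\weightBoundLayer{i}}$ is infected. The key input is a moment calculation: conditional on enough infected weight in $\VertexSetRestricted{\ge\weightBoundLayer{i-1}}$, the expected number of neighbours a vertex $v\in\Layer{i}$ has in that set is $w_v\totalInfectedWeightRestricted{\ge\weightBoundLayer{i-1}}/W$ (using Remark~\ref{rem:dropMinimum}), so the expected number of infected $r$-tuples over $\Layer{i}$ is controlled via $\sum_{v\in\Layer{i}}w_v^r$, which by Proposition~\ref{prop:restrmom} scales correctly in the weights. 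A Chernoff-type application (Theorem~\ref{Chernoff}, or Theorem~\ref{mcdct} if variance control is needed) then upgrades the expectation to a whp statement, and the normalisation in the definition of $\candidateThresholdSparse$ is precisely what guarantees that each step succeeds whenever $\infectionRate\gg\candidateThresholdSparse$.

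For the dense regime I would take $\weightBoundKernel=\weightBoundHeavy$ and exploit that the subgraph induced on $\VertexSetRestricted{\ge\weightBoundHeavy}$ stochastically dominates the binomial random graph $G(N,\weightBoundHeavy^2/W)$, where $N:=|\VertexSetRestricted{\ge\weightBoundHeavy}|$. By the definition~\eqref{eq:defHeavy}, $N\ge(W/(4\weightBoundHeavy^2))^r$, which rearranges to $\weightBoundHeavy^2/W\ge b\, N^{-1/r}$ for some absolute constant $b>0$, matching the hypothesis of Theorem~\ref{jltv}. When $N\to\infty$, a Chernoff bound gives $\omega(1)$ heavy vertices infected at time zero (since $\infectionRate\gg\candidateThresholdDense$ and by the very definition of $\candidateThresholdDense$), and Theorem~\ref{jltv} then yields whp infection of every heavy vertex. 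When $N$ is bounded, Theorem~\ref{jltv} does not apply; here I would use the hypothesis $w_{n-r+1}\ge\alpha w_n$ to run the sparse mechanism just long enough to infect at least $r$ of the heaviest vertices (all of weight $\Theta(w_n)$), after which every vertex of weight at least $W/w_{n-r+1}$ is \emph{deterministically} adjacent to each of them by the $\min$-truncation in~\eqref{eq:pijCL}, and so becomes infected. This produces a nucleus with $\weightBoundKernel=W/w_{n-r+1}$, which together with $\alpha w_n\le w_{n-r+1}$ and Claim~\ref{claim:weightBoundHeavyProperties} verifies the size constraints $\weightBoundKernel\le\min\{w_{n-r+1},W/w_{n-r+1}\}$ and $\weightBoundKernel=o(\sqrt{W})$ of Definition~\ref{definition:kernel}.

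The hard part will be the inductive step in the sparse regime: the cumulative error accrued over the $\lastLayer$ layers must stay $o(1)$ while each individual layer is thick enough to propagate the infection. This forces a careful choice of the $\weightBoundLayer{i}$, decreasing geometrically so that $\lastLayer=O(\log n)$ yet slowly enough that Proposition~\ref{prop:restrmom} delivers a lower bound on the expected number of $r$-infected-neighbour configurations in $\Layer{i}$ which exceeds $|\Layer{i}|$ by the margin required for Chernoff concentration to survive a union bound over layers. A secondary delicate point is reconciling the sparse and dense constructions inside the single definition of $\weightBoundKernel$, and in particular handling the boundary case where $N$ in the dense regime is bounded so that Theorem~\ref{jltv} is unavailable and one must rely entirely on the forced-edge argument supplied by~\eqref{eq:pijCL}.
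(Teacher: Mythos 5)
Your dense-regime argument is essentially the paper's (stochastic domination by $G(N,\weightBoundHeavy^2/W)$ plus Theorem~\ref{jltv}, and the forced-edge argument via the $\min$-truncation when there are few heavy vertices), but your sparse-regime argument has a fatal gap: the base case of your layer induction is exactly the statement you are trying to prove. You start the layers at $\weightBoundLayer{0}$ ``just below $\weightBoundHeavy$'' and claim inductively that a $(1-o(1))$-fraction of the weight of $\VertexSetRestricted{\ge\weightBoundLayer{i}}$ is infected after $i$ rounds; but at time $0$ only an $\infectionRate=o(1)$ fraction of $\VertexSetRestricted{\ge\weightBoundLayer{0}}$ is infected, and a vertex $u$ of weight near $\weightBoundHeavy$ has expected number of initially infected neighbours about $w_u\totalWeight{\infec{0}}/W\approx\weightBoundHeavy\infectionRate$, which can be $o(1)$ since $\candidateThresholdSparse$ may be far smaller than $\weightBoundHeavy^{-1}$. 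So no single-round, one-layer-at-a-time Chernoff argument can get the infection up to weight $\approx\weightBoundHeavy$. The missing idea is the multi-round amplification the paper runs inside a \emph{breeding ground} $\breedingGround\subseteq\VertexSetRestricted{[\weightBoundBreedingGround,\weightBoundHeavy)}$ of intermediate-weight vertices: Lemmas~\ref{infecprobu} and~\ref{infecprobl} give per-vertex infection probability $\approx w_u^r\totalWeight{\infecCoupling{t}}^r/(r!W^r)$, whence $\totalWeight{\infecCoupling{t+1}}\gtrsim\scaledTotalWeightSparse{t}^{r-1}\totalWeight{\infecCoupling{t}}$ with $\scaledTotalWeightSparse{t}=2\totalWeight{\infecCoupling{t}}/(\candidateThresholdSparse n)$, and the hypothesis $\infectionRate\gg\candidateThresholdSparse$ is used precisely to make $\scaledTotalWeightSparse{0}\to\infty$ so that this doubly-exponential growth (Lemmas~\ref{indstep} and~\ref{largeinfset}) carries $\totalWeight{\infecCoupling{t}}$ up to $(W/\weightBoundHeavy)\sqrt{\log\scaledTotalWeightSparseExpectation}$, at which point vertices of weight $\ge\weightBoundHeavy(\log\scaledTotalWeightSparseExpectation)^{-1/4}$ finally have $\omega(1)$ expected infected degree. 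The concentration for this recursion is by Chebyshev, not Chernoff, and forces the delicate construction of $\breedingGround$ (taking every other vertex, and for $r=2$ the truncation~\eqref{eq:breedingGroundREqualsTwo}) to control $\sum_{u\in\breedingGround}w_u^{r+2}$ via Lemma~\ref{lem:conc}. Your downward layer scheme is the mechanism of the \emph{outbreak} phase (Section~\ref{outbreak}), which presupposes an already-infected nucleus; it cannot create one.

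Two secondary points. First, in the dense regime your claim that $\omega(1)$ heavy vertices are infected ``at time zero'' is false in general: $N\infectionRate$ need not tend to infinity since $\infectionRate\gg\candidateThresholdDense\ge N^{-1/r}/w_n$ only. The paper instead shows (Lemma~\ref{lem:denseHeavyVertices}, Case I) that \emph{after one step} the expected number of infected heavy vertices is $\gg\candidateThresholdDense^r\sum_{u\in\VertexSetRestricted{\ge\weightBoundHeavy}}w_u^r=1$, using the initially infected light vertices. Second, when $\weightBoundHeavy\le w_n$ and $\infectionRate\gg\candidateThresholdDense$ but neither dense sub-case applies (few heavy vertices and $w_{n-r+1}$ not large enough for the forced-edge route), you propose to ``run the sparse mechanism'', but that mechanism needs $\infectionRate\gg\candidateThresholdSparse$, which is not part of your hypotheses in that branch; the paper supplies this via Claim~\ref{close}, showing $\candidateThresholdSparse\le\candidateThresholdDense(\log\scaledTotalWeightDenseExpectation)^{2r}$ there, and then glues the cases together with the subsubsequence principle.
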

The proof of this theorem is rather sophisticated and requires some preparation. Hence, we defer it to Section~\ref{sec:proofKernel}. Its proof consists of two Theorems (Theorems~\ref{thm:kernelSparse} 
and ~\ref{thm:kernelDense}) depending on certain properties of the weight sequence. 
In fact, it is these two theorems that we use towards the proof of Theorem~\ref{threshold}. However, we also
prove Theorem~\ref{thm:kernel}, as we believe it is of independent interest. 

\subsection{Modified process}
The existence of a nucleus will be shown in several stages. The first stage is a restriction of the bootstrap process to a carefully selected 
set of vertices and its analysis therein. 
 We modify the percolation process by restricting it to some \emph{breeding ground} $\breedingGround$ for some number of steps. The steps when this restriction holds is called 
 the \emph{breeding phase}. The actual definitions of the breeding ground $\breedingGround$ used in the sparse and dense process will differ: in the former it consists of vertices of intermediate weight, while in the latter it is the set of all heavy vertices.

 More formally, let $\breedingGround\subseteq \VertexSet$ be a subset of vertices, and let $\weightBoundInitialInfection=\weightBoundInitialInfection(n)$ be a lower bound on their weights, i.e.\ $\weightBoundInitialInfection\leq \minWeight{\breedingGround}$. Then we initially infect any vertex  with weight less than $\weightBoundInitialInfection$  independently with probability $\infectionRate$ (but no vertices of larger weight). They form the set $\infecCoupling{0}$ and for convenience of notation we define $\infecCoupling{-1}:=\emptyset$. Now, for any $t\ge 0$ we denote by $\infecCoupling{t}$ the set of all vertices which became infected either initially or in some step $1\le t'\le t$ in the following process: in the $t$-th step, $t\ge 1$, we infect all (uninfected) vertices in $\breedingGround$ having at least $r$ infected neighbours in $\infecCoupling{t-1}\setminus\infecCoupling{t-2}$ (but no other vertices are infected). Note that apart from the vertices in $\infecCoupling{0}$ which were infected initially, only vertices within the breeding ground $\breedingGround$ ever become infected in this restricted process. Furthermore, whether a vertex $u\in\VertexSet\setminus\infecCoupling{t-1}$ becomes infected at time $t$ depends only on the edges connecting $u$ to any of the vertices in $\infecCoupling{t-1}\setminus\infecCoupling{t-2}$ (none of which has been revealed so far). Thus all these events are independent.

Because this restriction of the process can only delay the time at which any particular vertex becomes infected (if it becomes infected at all), we have $\infecCoupling{t}\cap \VertexSetRestricted{I}\subseteq \infec{t}\cap \VertexSetRestricted{I}$, for any $t\ge 0$ and any interval $I\subseteq \mathbb{R}^{+}$. In particular, this holds at the end of the breeding phase. At this point, it will be necessary to return to the original process instead (or another subprocess). Considering these modifications is sufficient for showing that a candidate nucleus is indeed a nucleus, since this only depends on a \emph{lower} bound on the  total weight of the vertices within the candidate nucleus which become infected eventually.

We will give a few results that will allow us to control the evolution of the modified process. We will start with a concentration result on the weight of 
$\infecCoupling{0}$. Recall that we consider a breeding ground $S$ of minimum weight that is bounded from below by $\phi_0$. 
\begin{lemma}\label{ini} 
If $\weightBoundInitialInfection\rightarrow\infty$ and $n\infectionRate\rightarrow \infty$, then  whp $\totalWeight{\infecCoupling{0}}\geq n\infectionRate/2$. 
\end{lemma}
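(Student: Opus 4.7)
The plan is to exploit the minimum weight bound $w_v\ge 1$ so as to replace the weighted sum $\totalWeight{\infecCoupling{0}}$ by a pure count, and then to apply a Chernoff bound. Since every weight is at least one, we have $\totalWeight{\infecCoupling{0}}\ge |\infecCoupling{0}|$, and by construction $|\infecCoupling{0}|\sim\operatorname{Bin}(|\VertexSetRestricted{<\weightBoundInitialInfection}|,\infectionRate)$. It therefore suffices to prove that $|\VertexSetRestricted{<\weightBoundInitialInfection}|\ge (1-o(1))n$ and then to concentrate this binomial around its mean.

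The first step is a Markov-type observation using \eqref{Wcondition}: each vertex in $\VertexSetRestricted{\ge \weightBoundInitialInfection}$ contributes at least $\weightBoundInitialInfection$ to the total weight, and the total weight is $W=\weightConstant n=O(n)$, whence
\[
    |\VertexSetRestricted{\ge \weightBoundInitialInfection}|\,\weightBoundInitialInfection\le \sum_{v\in \VertexSetRestricted{\ge \weightBoundInitialInfection}} w_v\le W=\weightConstant n.
\]
Dividing gives $|\VertexSetRestricted{\ge \weightBoundInitialInfection}|\le \weightConstant n/\weightBoundInitialInfection=o(n)$, since by hypothesis $\weightBoundInitialInfection\to\infty$. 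Consequently $|\VertexSetRestricted{<\weightBoundInitialInfection}|\ge (1-o(1))n$, so $\EE[|\infecCoupling{0}|]\ge (1-o(1))n\infectionRate$, which tends to infinity by the second hypothesis $n\infectionRate\to\infty$.

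With the expectation under control, I would apply Theorem~\ref{Chernoff} to the binomial $|\infecCoupling{0}|$ with deviation $s=\EE[|\infecCoupling{0}|]/3$, yielding
\[
    \PP\!\left[|\infecCoupling{0}|\le \tfrac{2}{3}\EE[|\infecCoupling{0}|]\right]\le \exp\!\left(-\Omega(n\infectionRate)\right)=o(1).
\]
Hence whp $|\infecCoupling{0}|\ge \tfrac{2}{3}(1-o(1))n\infectionRate\ge n\infectionRate/2$ for all sufficiently large $n$, and combining with $\totalWeight{\infecCoupling{0}}\ge|\infecCoupling{0}|$ delivers the lemma. There is no substantive obstacle here; the only minor bookkeeping point is to ensure that the $o(1)$ slippage in the expectation is absorbed by the constant factor of room on the Chernoff side, so that the target $n\infectionRate/2$ is attained rather than a slightly weaker bound of the form $(1-\eta)n\infectionRate/2$.
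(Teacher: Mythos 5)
Your proposal is correct and follows essentially the same route as the paper: bound $|\VertexSetRestricted{\ge \weightBoundInitialInfection}|\le W/\weightBoundInitialInfection=o(n)$, apply the Chernoff bound to the $(1-o(1))n$ light vertices each infected independently with probability $\infectionRate$, and use the minimum weight $1$ to pass from the count to the total weight. The only difference is that you spell out the bookkeeping around the $o(1)$ slippage, which the paper leaves implicit.
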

\begin{proof}
Since $\weightBoundInitialInfection\gg 1$, we have $|\VertexSetRestricted{\ge \weightBoundInitialInfection}|\leq W/\weightBoundInitialInfection=o(n)$. Therefore $(1+o(1))n$ vertices have weight less than $\weightBoundInitialInfection$ and each of these vertices is infected independently with probability $\infectionRate$. Since $n\infectionRate\gg 1$, the Chernoff bound (Theorem~\ref{Chernoff}) implies that whp at least $n\infectionRate/2$ of these vertices become infected. Since the weight of every vertex is at least $1$ the result follows. 
\end{proof}

Next we show that the probability of a vertex $u$ becoming infected in step $t+1$ is essentially determined by its weight $w_u$ and the total weight of the vertices which became infected in the previous steps, i.e., $\totalWeight{\infecCoupling{t}}$. The upper bound is almost immediate.
\begin{lemma}\label{infecprobu}
For any $t\geq 0$ and any vertex $u\in \breedingGround\setminus \infecCoupling{t}$ we have
$$\PP[u\in \infecCoupling{t+1}|\infecCoupling{t}]\leq \frac{w_u^{r}\, \totalWeight{\infecCoupling{t}}^r}{r! W^r}.$$
\end{lemma}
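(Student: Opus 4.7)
The plan is to reduce the event $\{u\in\infecCoupling{t+1}\}$ to a union of events that each vertex $u$ has a particular $r$-element set of neighbours among the most recently infected vertices, and then apply a simple union bound that is amenable to the size-biased sum $\totalWeight{\infecCoupling{t}}$.

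First I would unfold the definition of the modified process. Conditional on $\infecCoupling{t}$, the vertex $u\in\breedingGround\setminus\infecCoupling{t}$ lies in $\infecCoupling{t+1}$ iff it has at least $r$ neighbours inside $A_t:=\infecCoupling{t}\setminus\infecCoupling{t-1}$. A key observation is that, by construction of the breeding process, all edges from $u$ to $A_t$ are still ``fresh'' at this point — none of them has been queried in steps $1,\ldots,t$, since step $s$ only inspects edges from vertices in $\breedingGround$ to $\infecCoupling{s-1}\setminus\infecCoupling{s-2}$. Hence, conditional on $\infecCoupling{t}$, the indicators $\Ind{\{u,v\}\in E}$ for $v\in A_t$ are mutually independent Bernoullis with parameters $p_{u,v}$ as given in \eqref{eq:pijCL}.

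Next I apply the union bound over the choice of $r$ witnessing neighbours and upper-bound $p_{u,v}\le w_u w_v/W$ (dropping the minimum in~\eqref{eq:pijCL}):
\begin{align*}
\PP[u\in\infecCoupling{t+1}\mid\infecCoupling{t}]
&\le \sum_{\{v_1,\ldots,v_r\}\in\binom{A_t}{r}}\prod_{i=1}^r p_{u,v_i}
\le \frac{w_u^r}{W^r}\sum_{\{v_1,\ldots,v_r\}\in\binom{A_t}{r}}\prod_{i=1}^r w_{v_i}.
\end{align*}
To close the loop, I would compare the unordered sum to the $r$-th power of the total weight: since every ordered tuple of distinct vertices is counted $r!$ times, and all weights are positive,
\begin{align*}
\sum_{\{v_1,\ldots,v_r\}\in\binom{A_t}{r}}\prod_{i=1}^r w_{v_i}
\le \frac{1}{r!}\Bigl(\sum_{v\in A_t} w_v\Bigr)^r
= \frac{\totalWeight{A_t}^r}{r!}
\le \frac{\totalWeight{\infecCoupling{t}}^r}{r!},
\end{align*}
where the last inequality uses $A_t\subseteq\infecCoupling{t}$ together with positivity of weights. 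Combining these two displays yields the claimed bound.

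I do not anticipate a significant obstacle here; the only point that requires care is the bookkeeping of which edges have already been exposed during the first $t$ breeding steps, which is exactly what justifies conditioning on $\infecCoupling{t}$ without altering the edge probabilities from $u$ to $A_t$. Everything else is a textbook union bound plus the symmetric-function inequality for $\totalWeight{A_t}^r$.
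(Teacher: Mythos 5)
Your proof is correct and follows essentially the same route as the paper: a union bound over $r$-element witness sets, the bound $p_{u,v}\le w_uw_v/W$, and the elementary inequality $\sum_{\mathcal{R}}\prod_{v\in\mathcal{R}}w_v\le \totalWeight{\cdot}^r/r!$. The only cosmetic difference is that you sum over subsets of $\infecCoupling{t}\setminus\infecCoupling{t-1}$ and then relax to $\totalWeight{\infecCoupling{t}}$, whereas the paper sums directly over $\binom{\infecCoupling{t}}{r}$; both yield the stated bound.
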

\begin{proof}
By a union bound we have
$$
\PP[u\in \infecCoupling{t+1}|\infecCoupling{t}]\leq \sum_{\mathcal{R}\in \binom{\infecCoupling{t}}{r}}\prod_{v\in \mathcal{R}}\frac{w_u w_v}{W}\leq \frac{w_u^r}{W^r}\sum_{\mathcal{R}\in \binom{\infecCoupling{t}}{r}}\prod_{v\in \mathcal{R}}w_v \leq \frac{w_u^r}{W^r} \frac{\totalWeight{\infecCoupling{t}}^r}{r!}.\qedhere
$$
\end{proof}
For the lower bound we use the following intuition which turns out to be justified: the total weight of infected vertices should increase substantially in each step, and it should not be dominated by a few vertices of very large weight. So assume for now that this heuristic applies. In this case we prove that if the total weight of infected vertices is already large enough, then $u$ becomes infected whp. Otherwise, the total weight of infected vertices is still sufficiently large and the upper bound in  Lemma~\ref{infecprobu} is actually tight. 
\begin{lemma}\label{infecprobl}
Let $\xi=\xi(n)$, $\xi_1 = \xi_1(n)$, $\xi_2 = \xi_2 (n)$ be three functions such that $\xi, \xi_1, \xi_2 \to \infty$. 
Let $t\ge0$ and assume that
\begin{align}
\totalWeight{\infecCoupling{t}}&\geq \xi_1 \totalWeight{\infecCoupling{t-1}}\qquad\text{and}\label{eq:increasingWeight}\\
\maxWeight{\infecCoupling{t}}&\leq \frac{\totalWeight{\infecCoupling{t}}}{\xi_2}.\label{eq:fewHeavyInfectedVertices}
\end{align}
Furthermore, let $u\in \breedingGround\setminus \infecCoupling{t}$ be such that 
\begin{align}
\totalWeight{\infecCoupling{t}}&\leq (W/w_u)\sqrt{\log{\xi}}\qquad \text{and}\label{eq:smallTotalWeight}\\
\maxWeight{\infecCoupling{t}}&\leq \frac{4}{9} \frac{W}{w_u}.\label{eq:fewHeavyInfectedVertices2}
\end{align}

Then we have 
$$
\PP[u\in \infecCoupling{t+1}|\infecCoupling{t},\infecCoupling{t-1}]\geq \frac{w_u^{r}\,  \totalWeight{\infecCoupling{t}}^r}{r! W^r}\exp 
\left(-1.9\sqrt{\log{\xi}}\right) .
$$
If $u\in \VertexSet\setminus \infecCoupling{t}$ is such that 
\begin{equation}\label{eq:largeTotalWeight}
\totalWeight{\infecCoupling{t}}\ge (W/w_u)\sqrt[4]{\log{\xi}}, 
\end{equation}
instead of \eqref{eq:smallTotalWeight}, then we have
$$
\PP[|\neighbourhood{u}\cap(\infecCoupling{t}\setminus\infecCoupling{t-1})|\ge r\;|\;\infecCoupling{t},\infecCoupling{t-1}] \ge 1-o(1/\sqrt{\log{\xi}}),
$$
and consequently, if $u\in \breedingGround$, then
$$
\PP[u\in \infecCoupling{t+1}|\infecCoupling{t},\infecCoupling{t-1}]\ge 1-o(1/\sqrt{\log{\xi}}).
$$
\end{lemma}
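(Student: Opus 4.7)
The key observation is that in the modified process, for any $u \in V \setminus \infecCoupling{t}$ the edges from $u$ to vertices of $S := \infecCoupling{t} \setminus \infecCoupling{t-1}$ are unexposed conditional on $(\infecCoupling{t}, \infecCoupling{t-1})$. Hence $X_u := |N(u) \cap S|$ is a sum of independent Bernoullis with parameters $p_v = \min\{w_u w_v/W, 1\}$, $v \in S$, and if $u \in \breedingGround$ then $u \in \infecCoupling{t+1}$ iff $X_u \geq r$. I would first record two consequences of \eqref{eq:increasingWeight} and \eqref{eq:fewHeavyInfectedVertices}: $\totalWeight{S} \geq (1-1/\xi_1)\totalWeight{\infecCoupling{t}}$ and $\maxWeight{S} \leq \maxWeight{\infecCoupling{t}} \leq 2\totalWeight{S}/\xi_2$. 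Throughout set $\mu := w_u\totalWeight{S}/W$.

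For Part 1, hypothesis \eqref{eq:fewHeavyInfectedVertices2} gives $p_v \leq w_u\maxWeight{S}/W \leq 4/9$, so the minimum in $p_v$ is never active, and \eqref{eq:smallTotalWeight} forces $\mu \leq \sqrt{\log\xi}$. I would use $\PP[X_u \geq r] \geq \PP[X_u = r]$ and factor
\begin{equation*}
\PP[X_u = r] \;\geq\; \Bigl(\prod_{v\in S}(1-p_v)\Bigr)\, e_r(p),\qquad e_r(p):=\sum_{\mathcal{R}\in\binom{S}{r}}\prod_{v\in\mathcal{R}}p_v.
\end{equation*}
The tangent-line bound $1-p \geq e^{-9p/5}$ valid for $p \leq 4/9$ yields $\prod_v(1-p_v) \geq e^{-9\mu/5} \geq e^{-1.8\sqrt{\log\xi}}$. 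For $e_r$, expand $(\sum_v p_v)^r = \mu^r$ and bound the "repeated index" contributions by $\binom{r}{2}\mu^{r-2}\sum_v p_v^2$; using $\sum p_v^2 \leq (\max_v p_v)\cdot\mu \leq 2\mu^2/\xi_2$, this gives $r!\,e_r \geq \mu^r(1-O(r^2/\xi_2)) \geq \mu^r/2$ for $n$ large. Converting $\mu^r$ back to $(w_u\totalWeight{\infecCoupling{t}}/W)^r$ costs a further factor $(1-1/\xi_1)^r \geq 1/2$, and the combined multiplicative loss $1/4$ is absorbed since $e^{0.1\sqrt{\log\xi}} \to \infty$, delivering the stated bound $\frac{w_u^r\totalWeight{\infecCoupling{t}}^r}{r!\,W^r}e^{-1.9\sqrt{\log\xi}}$.

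For Part 2, \eqref{eq:largeTotalWeight} gives $\mu \geq \sqrt[4]{\log\xi}/2$, so $X_u$ should concentrate. Let $S_1 := \{v \in S : w_u w_v \geq W\}$; if $|S_1| \geq r$ then $X_u \geq r$ deterministically and the claim is trivial. Otherwise $\totalWeight{S_1} \leq r\,\maxWeight{S}$, hence
\begin{equation*}
\mu^{\ast} := \EE[X_u \mid \infecCoupling{t}, \infecCoupling{t-1}] \;\geq\; \mu - r(w_u/W)\maxWeight{S} \;\geq\; \mu\bigl(1 - 2r/\xi_2\bigr) \;\geq\; \mu/2
\end{equation*}
for large $n$. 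Since $\mu^{\ast} \to \infty$, one has $\mu^{\ast} - r \geq \mu^{\ast}/2$, and the Chernoff lower-tail inequality (Theorem \ref{Chernoff}) yields
\begin{equation*}
\PP[X_u < r] \;\leq\; \exp\!\Bigl(-\tfrac{(\mu^{\ast}-r)^2}{2\mu^{\ast}}\Bigr) \;\leq\; \exp(-\mu^{\ast}/8) \;\leq\; \exp\!\bigl(-\sqrt[4]{\log\xi}/32\bigr),
\end{equation*}
which is $o(1/\sqrt{\log\xi})$ since $\sqrt[4]{\log\xi} \gg \log\log\xi$.

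The main obstacle is the tight bookkeeping in Part 1: three multiplicative losses (the factor $1/2$ from the symmetric-polynomial bound, the factor $(1-1/\xi_1)^r$ from replacing $\totalWeight{S}$ by $\totalWeight{\infecCoupling{t}}$, and the exponent $9/5$ from $1-p \geq e^{-9p/5}$) must all fit under the allowed slack $e^{-1.9\sqrt{\log\xi}}$. The specific constant $4/9$ in hypothesis \eqref{eq:fewHeavyInfectedVertices2} is precisely what makes $9/5 < 1.9$ work; every other estimate has genuine slack thanks to $\xi_1,\xi_2,\xi \to \infty$.
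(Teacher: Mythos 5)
Your proposal is correct and follows essentially the same route as the paper's proof: lower-bounding by the probability of exactly $r$ infected neighbours, factoring off $\prod_v(1-p_v)\ge \exp(-\tfrac{9}{5}\mu)$ via the $4/9$ hypothesis, controlling the elementary symmetric polynomial through $\sum_v p_v^2 \le (\max_v p_v)\mu$, and, for the second part, discarding the probability-one edges before applying Chernoff. The only (cosmetic) difference is that you track explicit constant losses where the paper absorbs them as $(1-o(1))$ factors into the gap between $1.8$ and $1.9$.
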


\begin{proof}
In order to prove the first statement consider a vertex $u$ of weight $w_u$. Next observe that for any vertex $v\in \infecCoupling{t}$ we have $w_u w_v/W<1$ by~\eqref{eq:fewHeavyInfectedVertices2}, and thus we may drop the minimum in~\eqref{eq:pijCL} 
i.e.,\ $u$ and $v$ form an edge with probability  $p_{u,v}=w_u w_v/W$, independently.  Furthermore, since $\totalWeight{\infecCoupling{t}\setminus \infecCoupling{t-1}}=(1+o(1))\totalWeight{\infecCoupling{t}}$ by~\eqref{eq:increasingWeight}, and $\totalWeight{\infecCoupling{t}}\gg \maxWeight{\infecCoupling{t}}$ by~\eqref{eq:fewHeavyInfectedVertices}, there are at least $r$ vertices in $\infecCoupling{t}\setminus \infecCoupling{t-1}$. We bound the probability that $u$ has exactly $r$ infected neighbours in  $\infecCoupling{t}\setminus\infecCoupling{t-1}$:
\begin{align}
\PP\left[u\in \infecCoupling{t+1}\cond\infecCoupling{t},\infecCoupling{t-1}\right]&\geq\PP\left[|\neighbourhood{u}\cap (\infecCoupling{t}\setminus\infecCoupling{t-1})|=r \cond \infecCoupling{t},\infecCoupling{t-1}\right].\label{eq:infecprobabilityCoupling}
\end{align}
To compute the right-hand side of \eqref{eq:infecprobabilityCoupling}, we sum over all sets $\mathcal{R}$ of $r$ distinct vertices in $\infecCoupling{t}\setminus\infecCoupling{t-1}$ the probability that the event $\{\neighbourhood{u}\cap (\infecCoupling{t}\setminus\infecCoupling{t-1})=\mathcal{R}\}$ holds, 
because these events are mutually exclusive for different $r$-tuples of vertices. 
This provides a lower bound on \eqref{eq:infecprobabilityCoupling}
\begin{align*}
\PP\left[u\in \infecCoupling{t+1}\cond\infecCoupling{t},\infecCoupling{t-1}\right]&\ge\sum_{\mathcal{R}\in \binom{\infecCoupling{t}\setminus\infecCoupling{t-1}}{r}}\prod_{v\in \mathcal{R}}\frac{w_u w_v}{W}\prod_{v'\in\infecCoupling{t}\setminus (\infecCoupling{t-1}\cup \mathcal{R})} \left(1-\frac{w_u w_{v'}}{W}\right).
\end{align*}
Furthermore, by~\eqref{eq:fewHeavyInfectedVertices2} we have $w_u w_{v'}/W \le 4/9$. Because $1-x\ge \exp(-x/(1-x))$, for any $x<1$, the innermost product is therefore bounded by
\begin{align*}
\prod_{v'\in\infecCoupling{t}\setminus (\infecCoupling{t-1}\cup \mathcal{R})} \left(1-\frac{w_u w_{v'}}{W}\right)&\geq \prod_{v'\in\infecCoupling{t}\setminus (\infecCoupling{t-1}\cup \mathcal{R})} \exp \left(-\frac{w_u w_{v'}}{W}\left/\left(1-\frac{w_u \maxWeight{\infecCoupling{t}}}{W}\right)\right. \right)\\
&\stackrel{\eqref{eq:fewHeavyInfectedVertices2}}{\geq}\exp \left(-\frac{9}{5} w_u \sum_{v'\in\infecCoupling{t}}\frac{w_{v'}}{W}\right)\\
&=\exp \left(-\frac{9}{5} w_u \frac{\totalWeight{\infecCoupling{t}}}{W}\right)\\
&\stackrel{\eqref{eq:smallTotalWeight}}{\geq} \exp \left(-\frac{9}{5}\sqrt{\log{\xi}} \right).
\end{align*}
Note that this lower bound holds uniformly over all choices of $\mathcal{R}$. Hence it suffices to bound 
\begin{align*}
\sum_{\mathcal{R}\in \binom{\infecCoupling{t}\setminus\infecCoupling{t-1}}{r}}\prod_{v\in \mathcal{R}}\frac{w_u w_v}{W}&= \frac{w_u^r}{W^r}\sum_{\mathcal{R}\in \binom{\infecCoupling{t}\setminus\infecCoupling{t-1}}{r}}\prod_{v\in \mathcal{R}}w_v \\
&\geq \frac{w_u^r}{r!W^r } \left(\totalWeight{\infecCoupling{t}\setminus\infecCoupling{t-1}}^r- \totalWeight{\infecCoupling{t}}^{r-2}\sum_{w_v \in \infecCoupling{t}}w_v^2\right)\\
&\stackrel{\eqref{eq:increasingWeight}}{=} \frac{w_u^r}{r!W^r } \left(\totalWeight{\infecCoupling{t}}^r(1-o(1))- \totalWeight{\infecCoupling{t}}^{r-2}\sum_{w_v \in \infecCoupling{t}}w_v^2\right).
\end{align*}
The first claim follows from the fact that
$$
\totalWeight{\infecCoupling{t}}^2 \stackrel{\eqref{eq:fewHeavyInfectedVertices}}{\gg} \maxWeight{\infecCoupling{t}}\totalWeight{\infecCoupling{t}}= \maxWeight{\infecCoupling{t}}\sum_{u \in \infecCoupling{t}}w_u \geq \sum_{u \in \infecCoupling{t}}w_u^2. 
$$

For the second statement, let $u$ be a vertex of weight $w_u$ and note that if there are at least $r$ vertices in $\infecCoupling{t}\setminus \infecCoupling{t-1}$ with weight at least $W/w_u$, then all of them will be neighbours of $u$ with probability $1$. 
Otherwise, consider $\infecCoupling{t}'\subseteq \infecCoupling{t}\setminus \infecCoupling{t-1}$ where all such vertices have been removed. We have 
\begin{equation*}
\totalWeight{\infecCoupling{t}'}\geq \totalWeight{\infecCoupling{t}}-\totalWeight{\infecCoupling{t-1}}-(r-1)\maxWeight{\infecCoupling{t}}\stackrel{\eqref{eq:increasingWeight},\eqref{eq:fewHeavyInfectedVertices}}{=}(1-o(1))\totalWeight{\infecCoupling{t}}.
\end{equation*}
We have 
$$
\EE[|\neighbourhood{u}\cap \infecCoupling{t}'|]=\sum_{v\in \infecCoupling{t}'} \frac{w_u w_v}{W}=\frac{w_u \totalWeight{\infecCoupling{t}'}}{W}=(1+o(1))\frac{w_u \totalWeight{\infecCoupling{t}}}{W}\stackrel{\eqref{eq:largeTotalWeight}}{=}\Omega \left((\log{\xi})^{1/4}\right).
$$
Since $|\neighbourhood{u}\cap \infecCoupling{t}'|$ is the sum of independent indicator random variables, the Chernoff bound (Theorem~\ref{Chernoff}) implies
$$
\PP[|\neighbourhood{u}\cap \infecCoupling{t}'|<r\;|\;\infecCoupling{t},\infecCoupling{t-1}] 
\leq \exp\left(-\frac{(\EE[X]-r)^2}{2\EE[X]}\right)= o(1/\sqrt{\log{\xi}}),
$$
and the second claim follows.
\end{proof}

In the analysis of the sparse process, the breeding ground is chosen to be a specific subset $\breedingGround\subseteq \VertexSetRestricted{(\weightBoundBreedingGround,\weightBoundHeavy]}$ for some suitably chosen function $\weightBoundBreedingGround=\weightBoundBreedingGround(n) \to\infty$. We show that whp the total weight of infected vertices (in $\breedingGround$) increases significantly in every step until it is large enough so that in the following step ``almost all'' vertices in the (candidate) nucleus $\VertexSetRestricted{\ge\weightBoundKernel}$ become infected. We first show that this holds in expectation, and then using Chebyshev's inequality we deduce concentration around the expected value. However, since heavy vertices, i.e.,\ the ones with weight at least $\weightBoundHeavy$, increase the variance significantly we need to exclude them. The details can be found in Section~\ref{sparse}.

Our approach for the dense process is different. Here the breeding ground $\breedingGround$ is formed by all heavy vertices, i.e.,\ the ones with weight at least $\weightBoundHeavy$. Because these induce a dense graph only the number of infected vertices matters, while their total weight becomes irrelevant. By Theorem~\ref{jltv} we have that if at some point there are $\omega(1)$ infected heavy vertices, then every heavy vertex becomes infected eventually, hence the set of all heavy vertices forms a nucleus $\VertexSetRestricted{\ge\weightBoundKernel}$. We show that whp there are $\omega(1)$ infected heavy vertices as long as there are $\omega(1)$ heavy vertices in total. However, this is not guaranteed. If there are only $O(1)$ heavy vertices, then we can still ensure that the $r$ vertices  of largest weight, $n-r+1,\dots,n$, become infected because their weights differ by at most a constant factor. These will in turn automatically infect any vertex $u$ of weight $w_u\ge W/w_{n-r+1}$, because $u$ is connected to $n-r+1,\dots,n$ with probability $1$ by~\eqref{eq:pijCL}, providing a nucleus $\VertexSetRestricted{\ge\weightBoundKernel}$ also in this case.

\subsection{Sparse process}\label{sparse}

Ultimately we want to show that if the initial infection rate $0\le \infectionRate\le 1$ satisfies
\begin{align}\label{eq:sparseSuper}
 \infectionRate\gg \candidateThresholdSparse, 
\end{align}
where $\candidateThresholdSparse$ is the candidate threshold defined in~\eqref{eq:candidateThresholdSparse}, 
then there is an outbreak. As a first step we show that~\eqref{eq:sparseSuper} implies the existence of a nucleus $\VertexSetRestricted{\ge\weightBoundKernel}$. Since the details of the proof are very 
delicate, we split it into three parts: the first contains all relevant definitions, the second addresses some preliminary calculations which allow for a more concise 
exposition of the main argument proving Theorem~\ref{thm:kernelSparse} 
in the final part. How this nucleus triggers an 
outbreak will be shown in Section~\ref{outbreak}.

\subsubsection{Setup} First of all we assume that \eqref{eq:sparseSuper} holds, and introduce the parameter
\begin{equation}\label{def:scaledTotalWeightSparseExpectation}
\scaledTotalWeightSparseExpectation:=\infectionRate/\candidateThresholdSparse\to\infty.
\end{equation}
Furthermore, in order to define the breeding ground $\breedingGround$ we observe that roughly speaking Lemmas~\ref{infecprobu} and~\ref{infecprobl} imply that for some time the probability that a vertex $u$ of some intermediate weight $w_u$ becomes infected is proportional to $w_u^r$, and therefore the expected total weight of infected vertices of this type is essentially given by the sum of the $(r+1)$-st powers of their weights. 

With this in mind we define an auxiliary weight bound $\weightBoundBreedingGround=\weightBoundBreedingGround(n)$ as the following (point-wise) maximum
\begin{align} \label{def:weightBoundBreedingGround}
\weightBoundBreedingGround(n):=\max\left\{x\in \mathbb R^+\, :\, \sum_{\VertexSetRestricted{[x,\weightBoundHeavy)}}w_u^{r+1}\geq \frac{1}{2}\sum_{\VertexSetRestricted{<\weightBoundHeavy}}w_u^{r+1}\right\},
\end{align} 
and note that we have $\weightBoundBreedingGround<\weightBoundHeavy$ and 
$\VertexSetRestricted{[\weightBoundBreedingGround,\weightBoundHeavy)}\neq\emptyset$. 
Now we define $\breedingGround$ to be a non-empty subset of 
$\VertexSetRestricted{[\weightBoundBreedingGround,\weightBoundHeavy)}$.
In particular, assume that the vertices in $\VertexSetRestricted{[\weightBoundBreedingGround,\weightBoundHeavy)}$ 
are $v_{j_s}, v_{j_s+1},\ldots, v_{j_H}$ arranged in non-decreasing order of weight. 
Let 
\begin{align}\label{def:Sr>2}
 \breedingGround':= \{v_{j_s}, v_{j_s+2},\ldots, \} \cup \{v_{j_H} \},
 \end{align}
 that is, we include in $\breedingGround'$ \emph{every other} vertex of 
 $\VertexSetRestricted{[\weightBoundBreedingGround,\weightBoundHeavy)}$ starting from the first one 
 (the lightest), but we always include the last one.
 For $r\ge 3$, we take $\breedingGround = \breedingGround'$.


Recall that the infection of the vertices in a given step is independent and that the probability that a vertex $u$ of some intermediate weight $w_u$ becomes infected is proportional to $w_u^r$. Therefore the variance is proportional to the $(r+2)$-nd powers. Since vertices with large weight give rise to large variance we have to exclude some heavy vertices. In the $r\ge 3$ case we show that it is enough to exclude the vertices of weight at least $\weightBoundHeavy$.
On the other hand, for $r=2$, we need to be a bit more careful and choose $\breedingGround$ according to the following procedure.
We include in $\breedingGround$ each vertex in $\breedingGround'$ following the ordering as it appears in
\eqref{def:Sr>2}, so that
 $\breedingGround$ is maximal with respect to 
\begin{equation}\label{eq:breedingGroundREqualsTwo}
\sum_{u\in\breedingGround}w_u^4\le 9W^2.
\end{equation}
Note that since $\weightBoundHeavy^4\le W^2$ by Claim~\ref{claim:weightBoundHeavyProperties}, this greedily chosen breeding ground $\breedingGround$ contains at least one vertex. Furthermore, by construction we have for any $v \in\breedingGround' \setminus \breedingGround$ 
\begin{equation}\label{eq:breedingGroundREqualsTwoInverse}
w_v^4+\sum_{u\in\breedingGround}w_u^4> 9W^2.
\end{equation}

Now we use $\scaledTotalWeightSparseExpectation$ to define the weight-bound $\weightBoundInitialInfection=\weightBoundInitialInfection(n)$, which provides an upper bound on the weights of initially infected vertices in the modified process, by 
\begin{equation}\label{def:weightBoundInitialInfection}
\weightBoundInitialInfection:=\min\left\{\weightBoundBreedingGround,n\infectionRate \scaledTotalWeightSparseExpectation^{-1/2}\right\}.
\end{equation}

\subsubsection{Preliminaries}
First of all we want to guarantee that we can apply Lemma~\ref{infecprobl} for $t=0$. 
\begin{claim}\label{claim:iniSparse}
We have  $np_0 \rightarrow \infty$ as $n\to \infty$. Furthermore, 
whp $\totalWeight{\infecCoupling{0}}\ge n\infectionRate/2$ and $\maxWeight{\infecCoupling{0}}\leq 2\totalWeight{\infecCoupling{0}}\scaledTotalWeightSparseExpectation^{-1/2}=o(\totalWeight{\infecCoupling{0}})$. 
\end{claim}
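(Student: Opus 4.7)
My plan is to treat the three assertions in order, with Lemma~\ref{ini} at the heart; the only delicate ingredient is a moment bound obtained from Proposition~\ref{prop:restrmom}.

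First I would establish $n\infectionRate \to \infty$ by reducing it to the statement $n\candidateThresholdSparse = \Omega(1)$, since then $n\infectionRate = n\candidateThresholdSparse \cdot \scaledTotalWeightSparseExpectation \to \infty$ by~\eqref{def:scaledTotalWeightSparseExpectation}. To obtain the $\Omega(1)$ bound I would apply Proposition~\ref{prop:restrmom} with $\vartheta=r+1$ and $a=1$ (which is allowed since $w_u\ge 1$), giving $\sum_{\VertexSetRestricted{<\weightBoundHeavy}}w_u^{r+1}\le 4rW^r$. Hence $\candidateThresholdSparse^{r-1}\ge 1/(4rW^{r-1})$, and together with $W=\weightConstant n$ this yields $n\candidateThresholdSparse\ge (4r)^{-1/(r-1)}/\weightConstant$.

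Next I would derive $\totalWeight{\infecCoupling{0}}\ge n\infectionRate/2$ directly from Lemma~\ref{ini}. The hypotheses of that lemma require $\weightBoundInitialInfection\to\infty$ in addition to $n\infectionRate\to\infty$. To see $\weightBoundInitialInfection\to\infty$ I would verify that both arguments of the minimum in~\eqref{def:weightBoundInitialInfection} grow: the term $n\infectionRate\scaledTotalWeightSparseExpectation^{-1/2}=n\candidateThresholdSparse\sqrt{\scaledTotalWeightSparseExpectation}$ tends to infinity by the first step together with $\scaledTotalWeightSparseExpectation\to\infty$; and for $\weightBoundBreedingGround\to\infty$, for any fixed $K$ the contribution from vertices of weight below $K$ satisfies $\sum_{\VertexSetRestricted{<K}}w_u^{r+1}\le K^r W$, while the full sum $\sum_{\VertexSetRestricted{<\weightBoundHeavy}}w_u^{r+1}=W\candidateThresholdSparse^{-(r-1)}$ is $\omega(W)$ by Corollary~\ref{cor:candidatethresholdsSmall}, forcing $\weightBoundBreedingGround\ge K$ for all sufficiently large $n$.

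The maximum-weight bound is then almost immediate from the construction of the modified process: $\infecCoupling{0}\subseteq \VertexSetRestricted{<\weightBoundInitialInfection}$, so deterministically $\maxWeight{\infecCoupling{0}}\le\weightBoundInitialInfection\le n\infectionRate\scaledTotalWeightSparseExpectation^{-1/2}$. Combining this with the whp lower bound $\totalWeight{\infecCoupling{0}}\ge n\infectionRate/2$ from the second step yields $\maxWeight{\infecCoupling{0}}\le 2\totalWeight{\infecCoupling{0}}\scaledTotalWeightSparseExpectation^{-1/2}$, and this is $o(\totalWeight{\infecCoupling{0}})$ because $\scaledTotalWeightSparseExpectation\to\infty$. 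The only genuinely technical step is the moment estimate used in the first paragraph; the remainder is unwinding definitions and a single application of Lemma~\ref{ini}.
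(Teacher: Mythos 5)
Your proposal is correct and follows essentially the same route as the paper: the key moment bound $\sum_{u\in\VertexSetRestricted{<\weightBoundHeavy}}w_u^{r+1}\le 4rW^r$ from Proposition~\ref{prop:restrmom} (with $\vartheta=r+1$, $a=1$) to control $n\infectionRate$ and the second term of the minimum in~\eqref{def:weightBoundInitialInfection}, the crude bound $\sum_{u\in\VertexSetRestricted{<x}}w_u^{r+1}\le x^rW$ to force $\weightBoundBreedingGround\to\infty$ (the paper instantiates your ``fixed $K$'' argument with the explicit choice $m_0=(\candidateThresholdSparse^{1-r}/2)^{1/r}$), then Lemma~\ref{ini} and the deterministic bound $\maxWeight{\infecCoupling{0}}\le\weightBoundInitialInfection$. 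The only cosmetic difference is that you separate $n\candidateThresholdSparse=\Omega(1)$ as a first step, whereas the paper extracts both $n\infectionRate\to\infty$ and $n\infectionRate\scaledTotalWeightSparseExpectation^{-1/2}\to\infty$ from a single chain of inequalities.
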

\begin{proof}
We first show that $\weightBoundBreedingGround\to\infty$. For this note that for $x>0$, we have
\begin{equation}\label{eq:cruderUpperBound}
\sum_{u\in\VertexSetRestricted{<x}}w_u^{r+1}\leq x^r \sum_{u\in\VertexSetRestricted{<x}} w_u \leq x^r\sum_{u\in\VertexSet} w_u \, \leq x^r\, W.
\end{equation}
Setting $m_0:=\left(\candidateThresholdSparse^{1-r}/2\right)^{1/r}$, we thus obtain 
$$\sum_{u\in\VertexSetRestricted{<m_0}}w_u^{r+1}\stackrel{\eqref{eq:cruderUpperBound}}{\le}  m_0^{r}\, W=\frac{1}{2} \candidateThresholdSparse^{1-r} \, W \stackrel{\eqref{eq:candidateThresholdSparse}}{=} \frac{1}{2}\sum_{u\in\VertexSetRestricted{<\weightBoundHeavy}}w_u^{r+1}$$
and thus
$$\sum_{u\in\VertexSetRestricted{[m_0,\weightBoundHeavy)}}w_u^{r+1}= \sum_{u\in\VertexSetRestricted{<\weightBoundHeavy}}w_u^{r+1}-\sum_{u\in\VertexSetRestricted{<m_0}}w_u^{r+1}\geq \frac{1}{2}\sum_{u\in\VertexSetRestricted{<\weightBoundHeavy}}w_u^{r+1}.$$
Therefore, by the definition~\eqref{def:weightBoundBreedingGround} of $\weightBoundBreedingGround$, we have $\weightBoundBreedingGround\geq m_0$. 
Due to Corollary~\ref{cor:candidatethresholdsSmall} we have $\candidateThresholdSparse=o(1)$ and thus  $m_0=\Theta(\candidateThresholdSparse^{(1-r)/r})\gg 1$ .

Using this we prove that $\weightBoundInitialInfection\gg 1$. By Proposition~\ref{prop:restrmom}, for $\vartheta=r+1$ and $a=1$, 
we have 
$$
\sum_{u\in\VertexSetRestricted{<\weightBoundHeavy}}w_u^{r+1} \le 4rW^{r},
$$
and therefore, by~\eqref{def:scaledTotalWeightSparseExpectation}~and~\eqref{Wcondition} 
we also have
\begin{align*}
n\infectionRate\scaledTotalWeightSparseExpectation^{-1/2}\ge\frac{W}{\weightConstant} \scaledTotalWeightSparseExpectation^{1/2}\candidateThresholdSparse\stackrel{\eqref{eq:candidateThresholdSparse}}{=} \frac{\scaledTotalWeightSparseExpectation^{1/2}}{\weightConstant} \left(\frac{W^r}{\sum_{u\in\VertexSetRestricted{<\weightBoundHeavy}}w_u^{r+1}}\right)^{1/(r-1)}
\geq \frac{\scaledTotalWeightSparseExpectation^{1/2}}{4r\weightConstant}\to\infty, 
\end{align*}
implying $\weightBoundInitialInfection\to\infty$ and $n\infectionRate\to\infty$. 

Now, we apply Lemma~\ref{ini}, which implies that whp 
$$
\totalWeight{\infecCoupling{0}}\stackrel{L.\ref{ini}}{\ge} n\infectionRate/2.
$$
Conditional on $\totalWeight{\infecCoupling{0}}\ge n\infectionRate/2$ by \eqref{def:weightBoundInitialInfection} we have
$$\maxWeight{\infecCoupling{0}}\leq \weightBoundInitialInfection\leq n\infectionRate \scaledTotalWeightSparseExpectation^{-1/2}\leq 2\totalWeight{\infecCoupling{0}}\scaledTotalWeightSparseExpectation^{-1/2}.$$

\end{proof}

However, we need some more preparation for the inductive argument. The following lower bound of the sums of the $(r+1)$-st powers of the vertex weights in $\breedingGround$ is reminiscent of the definition of $\weightBoundBreedingGround$ in~\eqref{def:weightBoundBreedingGround}. In fact for $r\ge 3$ the bound follows from this. However, the proof for $r=2$ is non-trivial due to the slightly different construction of the breeding ground $\breedingGround$.
\begin{lemma}\label{boundonS}
For any integer $r\ge 2$ we have
$$
\sum_{u\in \breedingGround} w_u^{r+1}\geq \frac{1}{8}\sum_{u\in\VertexSetRestricted{<\weightBoundHeavy}} w_u^{r+1}.
$$
\end{lemma}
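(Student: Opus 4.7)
\emph{Plan and the case $r \geq 3$.} Write $M := \sum_{u \in \VertexSetRestricted{<\weightBoundHeavy}} w_u^{r+1}$ and list the vertices of $\VertexSetRestricted{[\weightBoundBreedingGround,\weightBoundHeavy)}$ in non-decreasing weight order as $u_1 \leq u_2 \leq \cdots \leq u_N$, so that $\breedingGround' = \{u_1, u_3, u_5, \dots\} \cup \{u_N\}$. For each $k \geq 2$ with $2k-1 \leq N$, I will pair $u_{2k-1} \in \breedingGround'$ with $u_{2k-2} \in \VertexSetRestricted{[\weightBoundBreedingGround,\weightBoundHeavy)} \setminus \breedingGround'$; since weights are non-decreasing, $w_{u_{2k-1}}^{r+1} \geq w_{u_{2k-2}}^{r+1}$, and summing yields
\begin{equation*}
\sum_{u \in \breedingGround'} w_u^{r+1} \;\geq\; \tfrac{1}{2}\sum_{u \in \VertexSetRestricted{[\weightBoundBreedingGround,\weightBoundHeavy)}} w_u^{r+1} \;\geq\; \frac{M}{4},
\end{equation*}
where the last inequality is the defining property~\eqref{def:weightBoundBreedingGround} of $\weightBoundBreedingGround$. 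For $r \geq 3$, $\breedingGround = \breedingGround'$, so the lemma follows with room to spare.

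\emph{The case $r = 2$.} If $\breedingGround = \breedingGround'$, the previous paragraph already gives $\sum_{u \in \breedingGround} w_u^3 \geq M/4$. Otherwise, choosing any $v \in \breedingGround' \setminus \breedingGround$ in~\eqref{eq:breedingGroundREqualsTwoInverse}, together with $\weightBoundHeavy^4 \leq W^2$ from Claim~\ref{claim:weightBoundHeavyProperties}, yields $\sum_{u \in \breedingGround} w_u^4 > 8 W^2$. Since the listing order in~\eqref{def:Sr>2} is non-decreasing in weight, $\breedingGround$ is its greedy prefix, so $\breedingGround = \{u_{2k-1} : 1 \leq k \leq m\}$ for some $m$; set $w^{**} := w_{u_{2m-1}}$. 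I then split into two regimes. If $w^{**} \leq 64 W^2/M$, the bound $\sum_{u \in \breedingGround} w_u^4 \leq w^{**} \sum_{u \in \breedingGround} w_u^3$ gives $\sum_{u \in \breedingGround} w_u^3 > 8W^2/w^{**} \geq M/8$. If instead $w^{**} > 64 W^2/M$, Proposition~\ref{prop:restrmom} (with $\vartheta = 3$, $a = w^{**}$) yields $\sum_{u \in \VertexSetRestricted{[w^{**},\weightBoundHeavy)}} w_u^3 \leq 8W^2/w^{**} < M/8$, hence $\sum_{u \in \VertexSetRestricted{[\weightBoundBreedingGround, w^{**})}} w_u^3 \geq M/2 - M/8 = 3M/8$, and re-applying the pairing argument inside $\{u_1, \dots, u_{2m-1}\}$ (where $\breedingGround$ is again ``every other plus last'') gives
\begin{equation*}
\sum_{u \in \breedingGround} w_u^3 \;\geq\; \tfrac{1}{2}\sum_{i=1}^{2m-1} w_{u_i}^3 \;\geq\; \tfrac{1}{2}\sum_{u \in \VertexSetRestricted{[\weightBoundBreedingGround, w^{**})}} w_u^3 \;\geq\; \frac{3M}{16} \;\geq\; \frac{M}{8}.
\end{equation*}

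\emph{Main obstacle.} The delicate point is the $r = 2$ analysis: neither the greedy lower bound $\sum_{u \in \breedingGround} w_u^4 > 8W^2$ alone nor the pairing estimate on $\breedingGround'$ alone suffices, because $\sum_{u \in \breedingGround'} w_u^4$ may a priori exceed $9W^2$. The threshold $64 W^2/M$ for $w^{**}$ is chosen precisely so that the direct estimate $\sum w_u^4/w^{**}$ and the subrange pairing estimate combined with Proposition~\ref{prop:restrmom} both land at $M/8$; verifying this balance, and in particular translating between index-based and weight-based sums when $u_{2m-1}$ may be tied with nearby $u_j$, will be the main calculation.
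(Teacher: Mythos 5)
Your proof is correct. For $r\ge 3$ (and for $r=2$ with $\breedingGround=\breedingGround'$) it is identical to the paper's: the alternating-selection pairing gives $\sum_{\breedingGround'}w_u^{r+1}\ge\frac12\sum_{\VertexSetRestricted{[\weightBoundBreedingGround,\weightBoundHeavy)}}w_u^{r+1}$ and the definition of $\weightBoundBreedingGround$ supplies the other factor $\frac12$. In the remaining subcase ($r=2$, $\breedingGround\subsetneq\breedingGround'$) you use exactly the same two ingredients as the paper — the maximality consequence $\sum_{\breedingGround}w_u^4>8W^2$ and Proposition~\ref{prop:restrmom} with $\vartheta=3$ — but combine them differently: the paper evaluates both at the single scale $a=\maxWeight{\breedingGround}$ to get $\sum_{\breedingGround}w_u^3\ge 8W^2/\maxWeight{\breedingGround}\ge\sum_{\breedingGround'\setminus\breedingGround}w_v^3$ and hence $\sum_{\breedingGround}\ge\frac12\sum_{\breedingGround'}$ in one stroke, whereas you split on whether $w^{**}=\maxWeight{\breedingGround}$ exceeds $64W^2/M$ and, in the large-$w^{**}$ regime, rerun the pairing inside the prefix $\{u_1,\dots,u_{2m-1}\}$. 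Your dichotomy is slightly longer but equally valid (the containment $\VertexSetRestricted{[\weightBoundBreedingGround,w^{**})}\subseteq\{u_1,\dots,u_{2m-2}\}$ that handles ties goes the right way, and all constants land at or above $M/8$); the paper's version avoids the case split by comparing the two sums directly rather than each against $M$.
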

\begin{proof}
Let $r\ge 3$.  The definition of $\weightBoundBreedingGround$ in~\eqref{def:weightBoundBreedingGround} 
implies that 
$$\sum_{u\in \VertexSetRestricted{[\weightBoundBreedingGround,\weightBoundHeavy]}} w_u^{r+1}\geq \frac{1}{2}\sum_{u\in\VertexSetRestricted{<\weightBoundHeavy}} w_u^{r+1}. $$
Now, note  that the set $\breedingGround$ in~\eqref{def:Sr>2} contains every other vertex (ordered by weight) and moreover the vertex of largest weight within $\VertexSetRestricted{[\weightBoundBreedingGround,\weightBoundHeavy)}$, hence this  implies that 
\begin{equation} \label{eq:half}\sum_{u\in \breedingGround} w_u^{r+1} \geq \frac{1}{2} \sum_{u\in \VertexSetRestricted{[\weightBoundBreedingGround,\weightBoundHeavy]}} w_u^{r+1}.
 \end{equation}
The same is true if $r=2$ and $\breedingGround = \breedingGround'$.

Hence, consider $r=2$ and assume that $\breedingGround \subsetneq \breedingGround'$,
 i.e.,\ there exists a vertex $v\in \breedingGround' \setminus\breedingGround$. For any such vertex we have that $w_v^4< \weightBoundHeavy^4\leq W^2$ by Claim~\ref{claim:weightBoundHeavyProperties} and by~\eqref{eq:breedingGroundREqualsTwoInverse} we have $\sum_{v\in \breedingGround} w_v^4\geq 8 W^2$. Therefore
$$
\sum_{u\in \breedingGround} w_u^3\geq \sum_{u\in \breedingGround} w_u^4/\maxWeight{\breedingGround}\geq 8 W^2/\maxWeight{\breedingGround}.
$$
On the other hand, since we picked the elements of $\breedingGround$ in a non-decreasing order of the weights, we have for any $v\in \breedingGround' \setminus\breedingGround$ that $w_v\ge \maxWeight{\breedingGround}$, and thus 
\begin{align*}
\sum_{v\in \breedingGround' \setminus \breedingGround} w_v^3 \leq 
\sum_{v\in\VertexSetRestricted{[\weightBoundBreedingGround,\weightBoundHeavy)}}w_v^3
\stackrel{P.\ref{prop:restrmom}, \vartheta=3}{\leq} 8W^2/\maxWeight{\breedingGround}.
\end{align*}
But this implies by the pigeon-hole principle, that 
$$
\sum_{u\in \breedingGround} w_u^3\geq \frac{1}{2} \sum_{u\in \breedingGround'} w_u^3
\stackrel{\eqref{eq:half}}{\geq} 
\frac{1}{4}\sum_{u\in\VertexSetRestricted{[\weightBoundBreedingGround,\weightBoundHeavy)}} w_u^3 \ \stackrel{\eqref{def:weightBoundBreedingGround}}{\geq}\ \frac{1}{8}\sum_{ u\in\VertexSetRestricted{<\weightBoundHeavy}} w_u^3$$
as desired.
\end{proof}

The following lemma will be used later in the proof of Lemma~\ref{indstep} as an upper bound on the second moment of the total weight of the newly infected vertices 
during the modified process. The bound will be used along with Chebyshev's inequality in order to show that whp the weight of these vertices is at least 
a certain multiple of the current generation. 
\begin{lemma}\label{lem:conc}
For any integer $r\ge 2$ we have 
$$
\sum_{u\in \breedingGround} w_u^{r+2}\le 2^{r+7}W^2\candidateThresholdSparse^{2-r}.
$$
\end{lemma}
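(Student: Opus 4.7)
The plan is to split the proof into the cases $r=2$ and $r \geq 3$, since the construction of $\breedingGround$ differs markedly in these two settings.

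For $r=2$ the bound is essentially definitional. Since $\breedingGround$ was chosen maximally with respect to \eqref{eq:breedingGroundREqualsTwo}, we have $\sum_{u \in \breedingGround} w_u^4 \leq 9 W^2$. With $r+2=4$ and $\candidateThresholdSparse^{2-r}=1$, this immediately gives $\sum_{u \in \breedingGround} w_u^{r+2} \leq 9W^2 \leq 2^9 W^2 = 2^{r+7} W^2 \candidateThresholdSparse^{2-r}$.

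For $r \geq 3$, I would introduce the threshold $M := W \candidateThresholdSparse$ and split the sum according to whether $w_u \leq M$ or $w_u > M$. For the low-weight part, use $w_u^{r+2} \leq M \cdot w_u^{r+1}$ together with $\sum_{u \in \breedingGround} w_u^{r+1} \leq \sum_{u \in \VertexSetRestricted{<\weightBoundHeavy}} w_u^{r+1} = W\candidateThresholdSparse^{1-r}$, which is immediate from the definition \eqref{eq:candidateThresholdSparse} of $\candidateThresholdSparse$. This gives $\sum_{u \in \breedingGround,\, w_u \leq M} w_u^{r+2} \leq M W\candidateThresholdSparse^{1-r} = W^2 \candidateThresholdSparse^{2-r}$. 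For the high-weight part, Proposition \ref{prop:restrmom} with $\vartheta=r+2$ (which lies in the required range $[r,2r-1]$ precisely because $r \geq 3$) and $a=M$ yields $\sum_{u \in \breedingGround,\, w_u > M} w_u^{r+2} \leq 4r W^r M^{2-r} = 4r W^2 \candidateThresholdSparse^{2-r}$, with the sum empty (and the bound trivial) whenever $M \geq \weightBoundHeavy$. Adding the two contributions gives $(4r+1) W^2 \candidateThresholdSparse^{2-r}$, and the proof closes by noting $4r+1 \leq 2^{r+7}$ for every $r \geq 3$.

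The only delicate step here is the choice of the splitting point $M$. Taking $M$ too small makes the high-weight contribution $4rW^r M^{2-r}$ blow up (since $2-r<0$), while taking $M$ too large forces the low-weight contribution to dominate. The value $M = W\candidateThresholdSparse$ is precisely what balances the two estimates, and it matches the characteristic weight scale visible in the form of the target bound $W^2 \candidateThresholdSparse^{2-r}$; I do not expect any genuine obstacle beyond making this choice, since everything else is a routine application of Proposition \ref{prop:restrmom} and the definition of $\candidateThresholdSparse$.
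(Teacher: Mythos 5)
Your proof is correct, and for $r\ge 3$ it takes a genuinely different route from the paper's. The $r=2$ case is identical (both read the bound off the greedy truncation \eqref{eq:breedingGroundREqualsTwo}). For $r\ge 3$ the paper introduces the self-referential quantity $\quotmom=\candidateThresholdSparse^{r-1}W^{-1}\sum_{u\in\breedingGround}w_u^{r+2}$, which is exactly the unknown sum normalised by $\sum_{u\in\VertexSetRestricted{<\weightBoundHeavy}}w_u^{r+1}$, splits the $(r+1)$-st moment at the point $\quotmom/2$, and runs a two-case analysis in which the second case is closed by contradiction; the whole argument amounts to showing that $\quotmom$ cannot exceed $2^{r+7}W\candidateThresholdSparse$. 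You instead pick the explicit splitting point $M=W\candidateThresholdSparse$ in advance and give a direct two-piece estimate: the low-weight piece is controlled by the identity $\sum_{u\in\VertexSetRestricted{<\weightBoundHeavy}}w_u^{r+1}=W\candidateThresholdSparse^{1-r}$ coming from \eqref{eq:candidateThresholdSparse}, and the high-weight piece by Proposition~\ref{prop:restrmom} with $\vartheta=r+2$ (legitimate precisely when $r\ge3$), correctly noting the degenerate case $M\ge\weightBoundHeavy$. Both arguments rest on the same two ingredients (the definition of $\candidateThresholdSparse$ and Proposition~\ref{prop:restrmom}), but yours avoids the case distinction and the contradiction entirely and even yields the sharper constant $4r+1$ in place of $2^{r+7}$; the paper's formulation via $\quotmom$ buys nothing here beyond packaging, so your version is a clean simplification.
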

\begin{proof}
If $r=2$, then by~\eqref{eq:breedingGroundREqualsTwo} we have  $\sum_{u\in \breedingGround}w_u^4\le 9W^2<2^9W^2$ and thus the claim is immediate.

If $r\ge 3$, then recall that $\breedingGround=\breedingGround'$. 
We abbreviate
\begin{equation}\label{def:quotmom}
\quotmom:=\candidateThresholdSparse^{r-1}W^{-1}\sum_{u\in\breedingGround} w_u^{r+2}.
\end{equation}
Note that $\quotmom\stackrel{}{=}\sum_{u\in\breedingGround} w_u^{r+2}\Big/\sum_{u\in\VertexSetRestricted{<\weightBoundHeavy}}w_u^{r+1}$ by the definition of $\candidateThresholdSparse$ (cf.~\eqref{eq:candidateThresholdSparse}), and hence $\quotmom\le \weightBoundHeavy$.

Moreover, since $r+2\le 2r-1$ for any $r\ge 3$, applying Proposition~\ref{prop:restrmom} with $a=\quotmom/2$, $\vartheta=r+1$ and $\vartheta=r+2$ respectively implies that
\begin{equation}\label{eq:sum_r+1}
\sum_{u\in\VertexSetRestricted{[\quotmom/2,\weightBoundHeavy)}} w_u^{r+1}\le 4rW^r\quotmom^{1-r} \le 2^{r+2}W^r\quotmom^{1-r}
\end{equation}
and 
\begin{equation}\label{eq:sum_r+2}
\sum_{u\in\VertexSetRestricted{[\quotmom/2,\weightBoundHeavy)}} w_u^{r+2}\le 4r W^r\quotmom^{2-r}\le 2^{r+2}W^r\quotmom^{2-r}.
\end{equation}

Now if
$$
\sum_{u\in\VertexSetRestricted{<\quotmom/2}} w_u^{r+1}\leq 2^{r+6} W^{r}\quotmom^{1-r},
$$
 then~\eqref{eq:sum_r+1} implies
 $$
 \sum_{u\in\VertexSetRestricted{<\weightBoundHeavy}} w_u^{r+1}=\sum_{u\in\VertexSetRestricted{<\quotmom/2}} w_u^{r+1}+\sum_{u\in\VertexSetRestricted{[\quotmom/2,\weightBoundHeavy)}} w_u^{r+1}\leq 2^{r+7}W^{r}\quotmom^{1-r}.
 $$
Moreover, expressing the left-hand side using $\candidateThresholdSparse$ (cf.~\eqref{eq:candidateThresholdSparse}) and the right-hand side using~\eqref{def:quotmom} we obtain
$$
\candidateThresholdSparse^{1-r}W\le 2^{r+7}\candidateThresholdSparse^{-(r-1)^2}W^{2r-1}\left(\sum_{u\in\breedingGround}w_u^{r+2}\right)^{1-r}
$$
and the claim follows by taking the $(r-1)$-st root and solving for the sum. 

Otherwise, we may assume that 
\begin{equation}\label{eq:sum_of_small_vertices_large}
\sum_{u\in\VertexSetRestricted{<\quotmom/2}} w_u^{r+1}> 2^{r+6} W^{r}\quotmom^{1-r}.
\end{equation}
However, we will show that this leads to a contradiction. First note that this implies 
\begin{equation*}
\sum_{u\in\VertexSetRestricted{[\quotmom/2,\weightBoundHeavy)}} w_u^{r+2}\stackrel{\eqref{eq:sum_r+2}}{\le} 2^{r+4}W^r\quotmom^{2-r}\stackrel{\eqref{eq:sum_of_small_vertices_large}}{<}\frac{1}{4}\quotmom\sum_{u\in\VertexSetRestricted{<\quotmom/2}}w_u^{r+1}
\end{equation*}
and we also have
$$
\sum_{u\in\VertexSetRestricted{<\weightBoundHeavy}}w_u^{r+2}\ge \sum_{u\in\breedingGround}w_u^{r+2}\stackrel{\eqref{def:quotmom}}{=}\quotmom\sum_{u\in\VertexSetRestricted{<\weightBoundHeavy}}w_u^{r+1}\ge \quotmom\sum_{u\in\VertexSetRestricted{<\quotmom/2}}w_u^{r+1}.
$$
Therefore, we obtain a lower bound on the difference
$$
\sum_{u\in\VertexSetRestricted{<\weightBoundHeavy}}w_u^{r+2}-\sum_{u\in\VertexSetRestricted{[\quotmom/2,\weightBoundHeavy)}}w_u^{r+2}\ge \frac{3}{4}\quotmom\sum_{u\in\VertexSetRestricted{<\quotmom/2}}w_u^{r+1}.
$$
However, at the same time this difference satisfies
$$
\sum_{u\in\VertexSetRestricted{<\weightBoundHeavy}}w_u^{r+2}-\sum_{u\in\VertexSetRestricted{[\quotmom/2,\weightBoundHeavy)}}w_u^{r+2}\stackrel{\quotmom\le \weightBoundHeavy}{=}\sum_{u\in\VertexSetRestricted{<\quotmom/2}}w_u^{r+2}\le\frac{1}{2}\quotmom\sum_{u\in\VertexSetRestricted{<\quotmom/2}}w_u^{r+1},
$$
a clear contradiction, since $\quotmom\sum_{u\in\VertexSetRestricted{<\quotmom/2}}w_u^{r+1}> 0$ by~\eqref{def:quotmom} and~\eqref{eq:sum_of_small_vertices_large}, and since $\breedingGround\neq\emptyset$.
\end{proof}

\subsubsection{Main argument}
Now we show that until almost every vertex with weight at least $\weightBoundHeavy/(\log{\scaledTotalWeightSparseExpectation})^{1/4}$ becomes infected whp in every step of the process, the weight of the infected set increases significantly. 
For simplicity of notation we define the \emph{scaled total weight} $\scaledTotalWeightSparse{t}$ 
by
\begin{equation}\label{def:scaledTotalWeight}
\scaledTotalWeightSparse{t}=\scaledTotalWeightSparse{t}(n):=\frac{2\totalWeight{\infecCoupling{t}}}{\candidateThresholdSparse n}.
\end{equation}
Note that by Claim~\ref{claim:iniSparse} and~\eqref{def:scaledTotalWeightSparseExpectation} we have whp $\scaledTotalWeightSparse{0}=2\totalWeight{\infecCoupling{0}}/(\candidateThresholdSparse n)\ge \scaledTotalWeightSparseExpectation$ (this is why there is a factor $2$ in the definition), and therefore these scaled total weights satisfy 
\begin{equation}\label{eq:scaledTotalWeightProperties}
1\ll\infectionRate/\candidateThresholdSparse=\scaledTotalWeightSparseExpectation \le\scaledTotalWeightSparse{0}\le\scaledTotalWeightSparse{1}\le\dots\, .
\end{equation}

\begin{lemma}\label{indstep}
Let $t\ge0$, $r\ge 2$ be integers and assume that
\begin{align}
\totalWeight{\infecCoupling{t}}&\ge \frac{1}{2} \scaledTotalWeightSparseExpectation^{r-1}\exp \left(-2\sqrt{\log 
\scaledTotalWeightSparseExpectation} \right) 
\totalWeight{\infecCoupling{t-1}},\label{eq:increasingWeightRepeated}\\
\maxWeight{\infecCoupling{t}}&\le \frac{1}{2\log \scaledTotalWeightSparseExpectation} 
\totalWeight{\infecCoupling{t}}, \label{eq:fewHeavyInfectedVerticesRepeated} 
\end{align}
and
\begin{align}
\totalWeight{\infecCoupling{t}}&\leq (W/
\weightBoundHeavy)\sqrt{\log \scaledTotalWeightSparseExpectation}.
\label{eq:smallTotalWeightBreedingGround} 
\end{align}
Then conditional on the random sets $\infecCoupling{t}$ and $\infecCoupling{t-1}$ with probability at least $1-\scaledTotalWeightSparse{t}^{1-r}$ we have 
\begin{equation}\label{eq:goodEvent1}
\totalWeight{\infecCoupling{t+1}}\geq \frac{1}{2} \scaledTotalWeightSparse{t}^{r-1}\totalWeight{\infecCoupling{t}}\exp 
\left(-2\sqrt{\log \scaledTotalWeightSparseExpectation} \right)
\end{equation}
and
\begin{equation}\label{eq:goodEvent2}
\maxWeight{\infecCoupling{t+1}}\leq  \scaledTotalWeightSparse{t}^{r-1}\totalWeight{\infecCoupling{t}}\exp \left(-2\sqrt{\log{\scaledTotalWeightSparseExpectation}} \right) \big/\log{\scaledTotalWeightSparseExpectation}.
\end{equation}
\end{lemma}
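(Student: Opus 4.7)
The plan is to condition on $\mathcal{B}_t$ and $\mathcal{B}_{t-1}$ and analyse $X := W(\mathcal{B}_{t+1}) - W(\mathcal{B}_t)$ via a first- and second-moment argument. The critical structural observation is that the indicators $\Ind{u \in \mathcal{B}_{t+1}}$ for distinct $u \in S \setminus \mathcal{B}_t$ depend on the disjoint edge sets $\{u\}\times(\mathcal{B}_t \setminus \mathcal{B}_{t-1})$, none of which were exposed in earlier rounds, and are therefore mutually independent given $\mathcal{B}_t$ and $\mathcal{B}_{t-1}$. I would apply Lemma \ref{infecprobl} with $\xi = \overline{\nu}_s$, $\xi_1 = \tfrac{1}{2}\overline{\nu}_s^{r-1}\exp(-2\sqrt{\log\overline{\nu}_s})$, and $\xi_2 = 2\log\overline{\nu}_s$, each of which tends to infinity. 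Hypotheses \eqref{eq:increasingWeight} and \eqref{eq:fewHeavyInfectedVertices} are exactly our \eqref{eq:increasingWeightRepeated} and \eqref{eq:fewHeavyInfectedVerticesRepeated}; for any $u \in S$ the bound $w_u < \phi_H$ combined with \eqref{eq:smallTotalWeightBreedingGround} yields \eqref{eq:smallTotalWeight}; and Claim \ref{claim:weightBoundHeavyProperties} gives $w_u\,w_{\max}(\mathcal{B}_t) \leq \phi_H^2 \leq \tfrac{4}{9}W$, which is \eqref{eq:fewHeavyInfectedVertices2}. Summing the resulting per-vertex lower bound weighted by $w_u$ and invoking Lemma \ref{boundonS} together with the definition of $p_s$ (and controlling the loss $\sum_{u \in S \cap \mathcal{B}_t}w_u^{r+1} \leq w_{\max}(\mathcal{B}_t)^r W(\mathcal{B}_t)$ via \eqref{eq:fewHeavyInfectedVerticesRepeated} and \eqref{eq:smallTotalWeightBreedingGround}) produces
\[
\EE[X \mid \mathcal{B}_t, \mathcal{B}_{t-1}] \geq c\, \nu_t^{r-1}\,W(\mathcal{B}_t)\,\exp(-1.9\sqrt{\log\overline{\nu}_s})
\]
for a constant $c = c(r,\lambda) > 0$, which exceeds $\nu_t^{r-1}\,W(\mathcal{B}_t)\,\exp(-2\sqrt{\log\overline{\nu}_s})$ for large $n$ because the surplus factor $\exp(0.1\sqrt{\log\overline{\nu}_s}) \to \infty$ absorbs $c$.

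To control the variance, conditional independence and Lemma \ref{infecprobu} give
\[
\VV[X \mid \mathcal{B}_t, \mathcal{B}_{t-1}] \leq \frac{W(\mathcal{B}_t)^r}{r!\,W^r}\sum_{u \in S} w_u^{r+2} \leq \frac{2^{r+7}\,p_s^{2-r}\,W(\mathcal{B}_t)^r}{r!\,W^{r-2}},
\]
the last inequality by Lemma \ref{lem:conc}. Substituting $W(\mathcal{B}_t) = p_s \nu_t W/(2\lambda)$ yields $\VV[X]/\EE[X]^2 = O(\nu_t^{-r}\exp(3.8\sqrt{\log\overline{\nu}_s}))$, which is $o(\nu_t^{1-r})$ since $\nu_t \geq \overline{\nu}_s \to \infty$. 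Chebyshev's inequality applied to the conditional distribution then delivers \eqref{eq:goodEvent1} with failure probability at most $\tfrac{1}{2}\nu_t^{1-r}$.

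For the maximum-weight bound \eqref{eq:goodEvent2}, the already-infected maximum $w_{\max}(\mathcal{B}_t) \leq W(\mathcal{B}_t)/(2\log\overline{\nu}_s)$ is negligible compared to the target $M := \nu_t^{r-1} W(\mathcal{B}_t) \exp(-2\sqrt{\log\overline{\nu}_s})/\log\overline{\nu}_s$, since $M/w_{\max}(\mathcal{B}_t) \geq 2\nu_t^{r-1}\exp(-2\sqrt{\log\overline{\nu}_s}) \to \infty$. For a newly infected vertex $u$ with $w_u > M$, the trick $w_u^r \leq w_u^{r+2}/M^2$ together with Lemma \ref{infecprobu} and a union bound gives
\[
\PP\!\left[\exists\, u \in S\setminus\mathcal{B}_t\colon w_u > M,\; u \in \mathcal{B}_{t+1} \mid \mathcal{B}_t, \mathcal{B}_{t-1}\right] \leq \frac{W(\mathcal{B}_t)^r}{r!\,W^r M^2} \sum_{u \in S} w_u^{r+2},
\]
and invoking Lemma \ref{lem:conc} plus an arithmetic computation parallel to the variance calculation makes this $o(\nu_t^{1-r})$, hence at most $\tfrac{1}{2}\nu_t^{1-r}$; a final union bound on the two failure events completes the proof. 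The main obstacle will be the bookkeeping in the first-moment step: showing that $\sum_{u \in S \setminus \mathcal{B}_t} w_u^{r+1}$ retains enough of the lower bound from Lemma \ref{boundonS} --- an $\exp(-0.1\sqrt{\log\overline{\nu}_s})$ fraction is enough --- which is where the delicate combination of the max-weight hypothesis \eqref{eq:fewHeavyInfectedVerticesRepeated}, the ceiling \eqref{eq:smallTotalWeightBreedingGround}, and the defining property \eqref{eq:defHeavy} of $\phi_H$ pays off.
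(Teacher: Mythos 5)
Your proposal is correct and follows essentially the same route as the paper's proof: Lemma~\ref{infecprobl} plus Lemma~\ref{boundonS} and the definition of $\candidateThresholdSparse$ for the first moment, conditional independence with Lemmas~\ref{infecprobu} and~\ref{lem:conc} plus Chebyshev for concentration, and a first-moment/Markov bound for the maximum weight. The only cosmetic differences are that you account for the already-infected vertices in $\breedingGround$ via $\maxWeight{\infecCoupling{t}}^r\totalWeight{\infecCoupling{t}}$ where the paper simply subtracts $\totalWeight{\infecCoupling{t}}$, and your max-weight step uses $w_u^r\le w_u^{r+2}/M^2$ with Lemma~\ref{lem:conc} where the paper invokes Proposition~\ref{prop:restrmom} with $\vartheta=r$; both variants close the same way.
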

\begin{proof}
We first calculate the total weight of vertices that become infected in step $t+1$. Afterwards, we use a second moment argument to show that it is actually concentrated around its expectation, thereby proving the first statement. Then we use a first moment argument to guarantee that no vertices of too large weight became infected whp.

Let $t\ge 0$ be an integer and assume that $\infecCoupling{t}$ and $\infecCoupling{t-1}$ have been realised. Furthermore, recall that we consider the process in which only vertices within the breeding ground $\breedingGround$ become infected (apart from those being infected initially). Moreover, observe that Conditions~\eqref{eq:increasingWeightRepeated},~\eqref{eq:fewHeavyInfectedVerticesRepeated}, and~\eqref{eq:smallTotalWeightBreedingGround} 
imply that their counterparts,~\eqref{eq:increasingWeight},~\eqref{eq:fewHeavyInfectedVertices}, and~\eqref{eq:smallTotalWeight}, respectively, are satisfied.
Also, Condition~\eqref{eq:fewHeavyInfectedVertices2} holds since any two vertices $u, u' \in S$ have weight at most $\weightBoundHeavy$ and by Claim~\ref{claim:weightBoundHeavyProperties}, the product of their weights satisfies $w_u w_{u'} \leq \frac{4}{9}W$. 
Thereby, Lemma~\ref{infecprobl} is applicable for all vertices $u\in\breedingGround$, i.e.\ we have
$$
\PP[u\in \infecCoupling{t+1}|\infecCoupling{t},\infecCoupling{t-1}]\geq \exp \left(- 1.9 \sqrt{\log{\scaledTotalWeightSparseExpectation}} \right) \frac{w_u^{r} \, \totalWeight{\infecCoupling{t}}^r}{r! W^r}.
$$
Consequently
\begin{align*}
\EE\left[\totalWeight{\infecCoupling{t+1}}\cond\infecCoupling{t},\infecCoupling{t-1}\right]&\geq \exp \left(- 1.9\sqrt{\log{\scaledTotalWeightSparseExpectation}} \right)\sum_{u \in \breedingGround}\frac{w_u^{r+1} \totalWeight{\infecCoupling{t}}^r}{r! W^r}-\totalWeight{\infecCoupling{t}}, 
\end{align*}
where the last term is a (crude) upper bound on the contribution of the vertices in $\breedingGround$ which are already infected at time $t$. Now observe that 
\begin{equation}\label{eq:auxLowerBound}
\frac{\totalWeight{\infecCoupling{t}}^{r-1}}{W^r}\sum_{u\in \breedingGround}w_u^{r+1}\stackrel{L.\ref{boundonS}}{\geq} \frac{\totalWeight{\infecCoupling{t}}^{r-1}}{8 W^r}\sum_{u\in\VertexSetRestricted{<\weightBoundHeavy}}w_u^{r+1}\stackrel{\eqref{eq:candidateThresholdSparse}}{=}\frac{\totalWeight{\infecCoupling{t}}^{r-1}}{8\left(\candidateThresholdSparse W\right)^{r-1}}\stackrel{\eqref{def:scaledTotalWeight}}{=}\frac{\scaledTotalWeightSparse{t}^{r-1}}{8}~\left(\frac{1}{2\lambda}\right)^{r-1},
\end{equation}
where in particular the right-hand side grows polynomially in $\scaledTotalWeightSparse{t}$. Hence, because $\scaledTotalWeightSparseExpectation\le \scaledTotalWeightSparse{t}$ and thus also $\exp(-1.9\sqrt{\log\scaledTotalWeightSparseExpectation})\scaledTotalWeightSparse{t}^{r-1}\to\infty$, we obtain
\begin{align}
\EE\left[\totalWeight{\infecCoupling{t+1}}\cond\infecCoupling{t},\infecCoupling{t-1}\right]&\geq \exp 
\left(-1.9\sqrt{\log{\scaledTotalWeightSparseExpectation}} +O(1) \right) \frac{\totalWeight{\infecCoupling{t}}^{r}}{W^r}\sum_{u\in \breedingGround}w_u^{r+1}\nonumber \\
&\stackrel{\eqref{eq:auxLowerBound}}{\ge}\exp \left(-1.9\sqrt{\log{\scaledTotalWeightSparseExpectation}} +O(1) \right)\scaledTotalWeightSparse{t}^{r-1}\totalWeight{\infecCoupling{t}} \nonumber\\
&\ge\exp \left(-2\sqrt{\log{\scaledTotalWeightSparseExpectation}} \right)\scaledTotalWeightSparse{t}^{r-1}\totalWeight{\infecCoupling{t}}\label{eq:totalWeightExpectation}
\end{align}
for any sufficiently large $n$, because $\scaledTotalWeightSparseExpectation\to\infty$.

Next we want to apply Chebyshev's inequality, so we need to provide an upper bound on the variance of $\totalWeight{\infecCoupling{t+1}}$. Because infections 
(at time $t+1$) take place independently for all vertices in $\breedingGround\setminus\totalWeight{\infecCoupling{t}}$, we have 
\begin{align}
\VV\left[\totalWeight{\infecCoupling{t+1}}\cond \infecCoupling{t},\infecCoupling{t-1}\right]&\le \sum_{u\in \breedingGround\setminus\infecCoupling{t}}w_u^2\,\PP\left[u\in\infecCoupling{t+1}\cond\infecCoupling{t},\infecCoupling{t-1}\right]\nonumber\\
&\stackrel{L.\ref{infecprobu}}{\le}\sum_{u\in\breedingGround}w_u^{r+2}\left(\totalWeight{\infecCoupling{t}}/W\right)^r\nonumber\\
&\stackrel{L.\ref{lem:conc}}{\le}2^{r+7}\candidateThresholdSparse^{2-r}W^2\left(\totalWeight{\infecCoupling{t}}/W\right)^r\nonumber\\
&\stackrel{\eqref{def:scaledTotalWeight}, W \geq n}{\le}2^{r+7}\totalWeight{\infecCoupling{t}}^2\scaledTotalWeightSparse{t}^{r-2}.\label{eq:totalWeightVariance}
\end{align}
By Chebyshev's inequality we thus obtain
\begin{align*}
\PP[\totalWeight{\infecCoupling{t+1}}\leq \EE[\totalWeight{\infecCoupling{t+1}}]/2]&\stackrel{\eqref{eq:totalWeightExpectation},\eqref{eq:totalWeightVariance}}{\le} \frac{2^{r+9}\totalWeight{\infecCoupling{t}}^2\scaledTotalWeightSparse{t}^{r-2}}{\left(\exp 
\left(-2\sqrt{\log{\scaledTotalWeightSparseExpectation}}\right)\scaledTotalWeightSparse{t}^{r-1}\totalWeight{\infecCoupling{t}}\right)^2}\\
&\leq \scaledTotalWeightSparse{t}^{-r}\exp \left(5\sqrt{\log{\scaledTotalWeightSparseExpectation}}\right),
\end{align*}
and the first statement follows since $\scaledTotalWeightSparseExpectation\le\scaledTotalWeightSparse{t}$.

For the second part of the statement we define 
$$
\zeta:=\scaledTotalWeightSparse{t}^{r-1}\totalWeight{\infecCoupling{t}}\exp \left(-2\sqrt{\log{\scaledTotalWeightSparseExpectation}} \right)/\log{\scaledTotalWeightSparseExpectation}
$$
 and note that by Lemma~\ref{infecprobu} the expected number of vertices in $\breedingGround$ of weight at least $\zeta$ becoming infected at time $t+1$ is at most
\begin{align*}
\sum_{u\in\VertexSetRestricted{[\zeta,\weightBoundHeavy)}} w_u^r\frac{\totalWeight{\infecCoupling{t}}^r}{W^r} &\stackrel{P.\ref{prop:restrmom},\vartheta=r}{\leq} 4r\totalWeight{\infecCoupling{t}}^r \zeta^{-r}\le\left(\scaledTotalWeightSparse{t}^{1-r}\exp \left(3\sqrt{\log{\scaledTotalWeightSparseExpectation}} \right) \log{\scaledTotalWeightSparseExpectation}\right)^r.
\end{align*}
Because $\scaledTotalWeightSparseExpectation\le\scaledTotalWeightSparse{t}$ the second statement follows from Markov's inequality. 

In fact, both error terms are sufficiently small so that by a union bound, both statements hold simultaneously with probability at least 
$1-\scaledTotalWeightSparse{t}^{1-r}$.
\end{proof}
Note that~\eqref{eq:goodEvent1} and~\eqref{eq:goodEvent2} imply that Conditions~\eqref{eq:increasingWeightRepeated},~\eqref{eq:fewHeavyInfectedVerticesRepeated} hold for $t+1$. 
We apply Lemma \ref{indstep} repeatedly in order to show that the total weight of the infected vertices becomes large enough so that every vertex of large enough weight becomes infected in the next step whp.

\begin{lemma}\label{largeinfset}
For any integer $r\ge 2$, there exists an integer $T\ge 0$ such that whp 
\begin{align*}
\totalWeight{\infecCoupling{T}}&>(W/\weightBoundHeavy)\sqrt{\log \scaledTotalWeightSparseExpectation}.
\end{align*} 
\end{lemma}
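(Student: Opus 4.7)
The plan is to iterate Lemma~\ref{indstep} starting from $t=0$ until the target $(W/\weightBoundHeavy)\sqrt{\log\scaledTotalWeightSparseExpectation}$ is exceeded. More precisely, define $T$ as the smallest integer $t\ge 0$ with $\totalWeight{\infecCoupling{t}}>(W/\weightBoundHeavy)\sqrt{\log\scaledTotalWeightSparseExpectation}$, with $T=\infty$ if no such $t$ exists; it suffices to show $T<\infty$ whp.

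For the base case, Claim~\ref{claim:iniSparse} provides whp $\totalWeight{\infecCoupling{0}}\ge n\infectionRate/2$ and $\maxWeight{\infecCoupling{0}}\le 2\totalWeight{\infecCoupling{0}}\scaledTotalWeightSparseExpectation^{-1/2}$. Condition~\eqref{eq:increasingWeightRepeated} is vacuous at $t=0$ since $\infecCoupling{-1}=\emptyset$, while condition~\eqref{eq:fewHeavyInfectedVerticesRepeated} follows from $\scaledTotalWeightSparseExpectation\to\infty$, which gives $2\scaledTotalWeightSparseExpectation^{-1/2}\le 1/(2\log\scaledTotalWeightSparseExpectation)$ for all sufficiently large $n$.

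The inductive step is self-sustaining: while $t<T$, condition~\eqref{eq:smallTotalWeightBreedingGround} holds by the definition of $T$, so Lemma~\ref{indstep} applies and yields~\eqref{eq:goodEvent1} and~\eqref{eq:goodEvent2} with error probability at most $\scaledTotalWeightSparse{t}^{1-r}$. The former directly gives~\eqref{eq:increasingWeightRepeated} at step $t+1$, and dividing~\eqref{eq:goodEvent2} by~\eqref{eq:goodEvent1} shows $\maxWeight{\infecCoupling{t+1}}/\totalWeight{\infecCoupling{t+1}}\le 2/\log\scaledTotalWeightSparseExpectation=o(1)$, which recovers~\eqref{eq:fewHeavyInfectedVerticesRepeated} up to a constant factor in the hypothesis; this is a minor bookkeeping issue one can absorb by loosening the constant in~\eqref{eq:fewHeavyInfectedVerticesRepeated}. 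Crucially,~\eqref{eq:goodEvent1} combined with $\scaledTotalWeightSparse{t}\ge\scaledTotalWeightSparseExpectation$ forces $\scaledTotalWeightSparse{t+1}\ge\tfrac{1}{2}\scaledTotalWeightSparse{t}^{r}\exp(-2\sqrt{\log\scaledTotalWeightSparseExpectation})$, i.e.\ super-geometric growth with ratio $\omega(1)$. Since the scaled total weight cannot exceed $2\weightConstant/\candidateThresholdSparse$, the stopping time $T$ is finite, and in fact $O(1)$.

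The main technical obstacle is the constant-matching in the self-sustaining induction described above; beyond that, only a union bound remains. Summing the per-step failure probabilities over all $t<T$ yields $\sum_{t<T}\scaledTotalWeightSparse{t}^{1-r}=O(\scaledTotalWeightSparseExpectation^{1-r})=o(1)$, since the $\scaledTotalWeightSparse{t}$ grow super-geometrically starting from $\scaledTotalWeightSparseExpectation\to\infty$. Combined with the whp guarantees of the base case, this establishes $T<\infty$ whp, proving the lemma.
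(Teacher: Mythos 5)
Your proposal is correct and follows essentially the same route as the paper: iterate Lemma~\ref{indstep}, check that \eqref{eq:goodEvent1} and \eqref{eq:goodEvent2} re-establish the hypotheses \eqref{eq:increasingWeightRepeated} and \eqref{eq:fewHeavyInfectedVerticesRepeated} for the next step (the paper glosses over the same harmless constant mismatch you flag), invoke $\totalWeight{\infecCoupling{t}}\le W$ together with the super-geometric growth of $\scaledTotalWeightSparse{t}$ to force termination at the stopping condition, and union-bound the failure probabilities $\sum_t\scaledTotalWeightSparse{t}^{1-r}=o(1)$. The paper merely packages this as a formal induction on ``good events'' $\goodEvent{\tau}$ and ``stop events'' $\stopEvent{\tau}$ (Claim~\ref{claim:goodEventInduction}), which is a presentational rather than substantive difference.
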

\begin{proof}
We show that in every step $\tau\ge0$ there are two possibilities: either this condition is satisfied, i.e.\ we can \emph{stop} and use $T=\tau$, or else we can apply Lemma~\ref{indstep} once more; note that this ensures that $\totalWeight{\infecCoupling{\tau+1}}=\omega(\totalWeight{\infecCoupling{\tau}})$ and $\maxWeight{\infecCoupling{\tau+1}}=o(\totalWeight{\infecCoupling{\tau+1}})$ for the next step. In this spirit, we denote the event that we stopped by time $\tau\ge0$ by $\stopEvent{\tau}$ (and by $\neg\stopEvent{\tau}$ its complement), and define a family of \emph{good events} 
$$
\goodEvent{\tau}:=\left\{\forall 0\le t\le\tau \colon \eqref{eq:goodEvent1}\text{ and }\eqref{eq:goodEvent2}
\text{ hold} 
\right\}
$$ 
for $\tau\ge 0$. The core of the proof is formed by the following recursive argument, whose proof we will postpone for a moment.
\begin{claim}\label{claim:goodEventInduction}
If $\neg\stopEvent{0}$, then 
\begin{equation}\label{eq:baseCase}
\PP\left[\goodEvent{0}\cond\infecCoupling{0}\right]\ge 1-o(1)-\scaledTotalWeightSparse{0}^{1-r}.
\end{equation}
Similarly, for any $\tau\ge 1$,  
\begin{equation}\label{eq:inductionStep}
\PP\left[\goodEvent{\tau}\cond \goodEvent{\tau-1},\neg\stopEvent{\tau},\infecCoupling{\tau},\infecCoupling{\tau-1}\right]\ge 1-\scaledTotalWeightSparse{\tau}^{1-r}.
\end{equation}
\end{claim}
Now assume that Claim~\ref{claim:goodEventInduction} holds. Then we observe that by definition of the scaled total weights 
$$
\frac{\scaledTotalWeightSparse{t}}{\scaledTotalWeightSparse{t-1}}\stackrel{\eqref{def:scaledTotalWeight}}{=}\frac{\totalWeight{\infecCoupling{t}}}{\totalWeight{\infecCoupling{t-1}}}\stackrel{\goodEvent{t-1}}{\ge}\frac{1}{2}\scaledTotalWeightSparse{t-1}^{r-1}\exp(-2\sqrt{\log{\scaledTotalWeightSparseExpectation}}) \ge
\frac{1}{2}\scaledTotalWeightSparseExpectation^{r-1}\exp(-2\sqrt{\log{\scaledTotalWeightSparseExpectation}})
\to \infty
$$ 
for any $t\ge 1$, and thus by a union bound
$$
\PP\left[\goodEvent{\tau}\cond \neg\stopEvent{\tau},\infecCoupling{\tau},\dots,\infecCoupling{0}\right]\ge 1-o(1)-\sum_{t=0}^{\infty}\scaledTotalWeightSparse{t}^{1-r}=1-o(1).
$$
Since we have $\totalWeight{\infecCoupling{t}}\le W$ for any time $t\ge 0$, the recursion must end in finite time, however this can only happen because there exists a $T\ge 0$ such that $\stopEvent{T}$ holds, proving the statement of the lemma.

It remains to prove Claim~\ref{claim:goodEventInduction}. By Claim~\ref{claim:iniSparse} we initially have $\totalWeight{\infecCoupling{0}}\ge n\infectionRate/2 \to 
\infty$ and $\maxWeight{\infecCoupling{0}}\leq 2\totalWeight{\infecCoupling{0}}\scaledTotalWeightSparseExpectation^{-1/2}$.

Lemma~\ref{indstep} is applicable for $t=0$
 and thus $\goodEvent{0}$ holds with probability at least $1-o(1)-\scaledTotalWeightSparse{0}^{1-r}$. Otherwise we must have $\totalWeight{\infecCoupling{0}}> (W/\weightBoundHeavy) \sqrt{\log{\scaledTotalWeightSparseExpectation}}$, and therefore $\stopEvent{0}$ holds. Both cases together prove~\eqref{eq:baseCase}.

The induction step is proven analogously: let $\tau\ge 1$ and assume $\goodEvent{\tau-1}$ holds, then if additionally $\totalWeight{\infecCoupling{\tau}}\leq (W/\weightBoundHeavy) \sqrt{\log{\scaledTotalWeightSparseExpectation}}$, then 
 Lemma~\ref{indstep} with $t=\tau$ 
implies that with sufficiently high probability $\goodEvent{\tau}$ holds. Otherwise we have $\totalWeight{\infecCoupling{\tau}}> (W/\weightBoundHeavy) \sqrt{\log{\scaledTotalWeightSparseExpectation}}$, and therefore $\stopEvent{\tau}$ holds. Both cases together prove the induction step~\eqref{eq:inductionStep}.
\end{proof}

Next we construct a nucleus (cf. Definition~\ref{definition:kernel}) for the sparse process. We consider the \emph{candidate nucleus} $\VertexSetRestricted{\ge\weightBoundKernel}$ given by the weight-bound
\begin{equation}\label{def:kernelSparse}
\weightBoundKernel:=
\begin{cases}
W/w_{n-r+1}&\text{if }\weightBoundHeavy(\log{\scaledTotalWeightSparseExpectation})^{-1/4}\eta> W/w_{n-r+1},\\
\weightBoundHeavy(\log{\scaledTotalWeightSparseExpectation})^{-1/4}&\text{if }\weightBoundHeavy(\log{\scaledTotalWeightSparseExpectation})^{-1/4}\eta\le W/w_{n-r+1},
\end{cases}
\end{equation}
where $\eta=\eta(n)$ is an arbitrarily slowly growing function, satisfying $\eta\to\infty$ but $(\log\scaledTotalWeightSparseExpectation)^{-1/4}\eta=o(1)$.\footnote{Such an $\eta$ exists since $\scaledTotalWeightSparseExpectation\to\infty$ by~\eqref{eq:scaledTotalWeightProperties}.} We do so by determining a subset $\mathcal{U}_K\subseteq\VertexSetRestricted{\ge \weightBoundKernel}\cap \infec{F}$ of weight 
\begin{equation}\label{eq:witnessNucleus}
\totalWeight{\mathcal{U}_K}\ge (1-o(1))\totalWeightRestricted{\ge\weightBoundKernel}.
\end{equation} 
It will be crucial that this construction requires us to only expose a specific subset of edge indicator variables. 
\begin{theorem}\label{thm:kernelSparse}
Under the assumptions of Theorem~\ref{threshold} suppose that $\infectionRate\gg\candidateThresholdSparse$. Then whp the following two statements hold: 
\begin{itemize}
\item  There exists a set $\mathcal{U}_K\subseteq(\VertexSetRestricted{\ge \weightBoundKernel}\cap \infec{F})$ (where $\VertexSetRestricted{\ge\weightBoundKernel}$ is defined as in~\eqref{def:kernelSparse}) which satisfies~\eqref{eq:witnessNucleus}, contains $\{n-r+1,\dots,n\}$, and is constructed by only exposing edge-indicator random variables corresponding to edges in $\breedingGround'\times (\infecCoupling{0}\cup \breedingGround'\cup\VertexSetRestricted{\ge\weightBoundKernel})$.
\item The weight-bound $\weightBoundKernel$ satisfies $\weightBoundKernel\le \min\left\{w_{n-r+1},W/w_{n-r+1}\right\}$ and $\weightBoundKernel=o(\sqrt{W})$.
\end{itemize} 
In particular, this means that whp $\VertexSetRestricted{\ge\weightBoundKernel}$ is a nucleus. 
\end{theorem}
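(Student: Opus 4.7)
The plan is to apply Lemma~\ref{largeinfset} to obtain a (random) time $T$ at which $\totalWeight{\infecCoupling{T}}>(W/\weightBoundHeavy)\sqrt{\log\scaledTotalWeightSparseExpectation}$, together with the good-event bounds~\eqref{eq:increasingWeightRepeated} and~\eqref{eq:fewHeavyInfectedVerticesRepeated} at step $T$; these are precisely the hypotheses required by Lemma~\ref{infecprobl}. The key bookkeeping remark is that through step $T$ the modified process has only revealed edge-indicator variables corresponding to edges inside $\breedingGround \times (\breedingGround\cup\infecCoupling{0})$. In particular, for any vertex $u\notin \infecCoupling{T}$ (whether or not $u$ lies in $\breedingGround$), the edges from $u$ to the freshest newly-infected layer $\infecCoupling{T}\setminus\infecCoupling{T-1}$ have not been touched and are therefore independent of the history.

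Next I would infect $\{n-r+1,\dots,n\}$. Using the hypothesis $w_{n-r+1}\ge\alpha w_n$ together with Claim~\ref{claim:weightBoundHeavyProperties}, a short case analysis of~\eqref{def:kernelSparse} shows that in both branches $w_{n-r+1}\ge\weightBoundHeavy(\log\scaledTotalWeightSparseExpectation)^{-1/4}$, so condition~\eqref{eq:largeTotalWeight} is satisfied for each $u\in\{n-r+1,\dots,n\}$. Applying the second part of Lemma~\ref{infecprobl} to the fresh edges from $u$ to $\infecCoupling{T}\setminus\infecCoupling{T-1}$ gives $\PP[u\in \infec{T+1}\mid\infecCoupling{T},\infecCoupling{T-1}]\ge 1-o(1/\sqrt{\log\scaledTotalWeightSparseExpectation})$, and a union bound over these $r=O(1)$ vertices places all of them in $\infec{T+1}$ whp.

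To construct $\mathcal{U}_K$ I would split according to which branch of~\eqref{def:kernelSparse} defines $\weightBoundKernel$. If $\weightBoundKernel=W/w_{n-r+1}$, then every $u\in\VertexSetRestricted{\ge\weightBoundKernel}$ satisfies $w_u w_{n-r+j}/W\ge 1$ for $j=1,\dots,r$, so by~\eqref{eq:pijCL} $u$ is adjacent to each of $\{n-r+1,\dots,n\}$ with probability $1$; since the latter are infected by step $T+1$, the full set $\VertexSetRestricted{\ge\weightBoundKernel}$ is infected by step $T+2$ and we may take $\mathcal{U}_K:=\VertexSetRestricted{\ge\weightBoundKernel}$, with no further random exposures required. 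If instead $\weightBoundKernel=\weightBoundHeavy(\log\scaledTotalWeightSparseExpectation)^{-1/4}$, then condition~\eqref{eq:largeTotalWeight} is again satisfied for every $u\in\VertexSetRestricted{\ge\weightBoundKernel}\setminus\infecCoupling{T}$, so Lemma~\ref{infecprobl} yields infection probability $1-o((\log\scaledTotalWeightSparseExpectation)^{-1/2})$; taking $\mathcal{U}_K$ to be the union of $\infecCoupling{T}\cap\VertexSetRestricted{\ge\weightBoundKernel}$ with those vertices of $\VertexSetRestricted{\ge\weightBoundKernel}\setminus\infecCoupling{T}$ which actually become infected, Markov's inequality applied to the total weight of the uninfected part yields $\totalWeight{\mathcal{U}_K}\ge(1-o(1))\totalWeightRestricted{\ge\weightBoundKernel}$ whp. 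In both branches the edges exposed lie in $\breedingGround\times(\breedingGround\cup\infecCoupling{0}\cup\VertexSetRestricted{\ge\weightBoundKernel})\subseteq \breedingGround'\times(\breedingGround'\cup\infecCoupling{0}\cup\VertexSetRestricted{\ge\weightBoundKernel})$, and the size constraints $\weightBoundKernel\le\min\{w_{n-r+1},W/w_{n-r+1}\}$ and $\weightBoundKernel=o(\sqrt{W})$ follow from~\eqref{def:kernelSparse}, Claim~\ref{claim:weightBoundHeavyProperties}, the hypothesis $w_{n-r+1}\ge\alpha w_n$, and the choice of $\eta$ by direct case analysis. The main technical subtlety is precisely the edge-exposure bookkeeping from the first paragraph, which is what permits Lemma~\ref{infecprobl} to be invoked on candidate-nucleus vertices that happen to lie inside $\breedingGround$.
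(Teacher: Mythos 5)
Your proposal is correct and follows essentially the same route as the paper: run the modified process to the stopping time of Lemma~\ref{largeinfset}, apply the second part of Lemma~\ref{infecprobl} to the fresh edges into $\infecCoupling{T}\setminus\infecCoupling{T-1}$, and split on the two branches of~\eqref{def:kernelSparse}. The only (harmless) deviation is in the second branch, where you conclude $\totalWeight{\mathcal{U}_K}\ge(1-o(1))\totalWeightRestricted{\ge\weightBoundKernel}$ by Markov's inequality applied to the expected uninfected weight, whereas the paper uses McDiarmid's inequality; since the claim only requires convergence in probability, your simpler argument suffices.
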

\begin{proof}
For this proof we abbreviate $\weightBoundHeavy':=\weightBoundHeavy(\log{\scaledTotalWeightSparseExpectation})^{-1/4}$ for convenience of notation. We first show that the two conditions on the weight-bound $\weightBoundKernel$ (defined in~\eqref{def:kernelSparse}) are satisfied. Because $w_{n-r+1}=\alpha w_n$ 
and $\weightBoundHeavy\le w_n+1$ (cf.~\eqref{eq:defHeavy}) we have 
\begin{equation}\label{eq:superHeavyVertices}
\weightBoundHeavy'\eta=\weightBoundHeavy(\log\scaledTotalWeightSparseExpectation)^{-1/4}\eta=o(\weightBoundHeavy)=o(w_{n-r+1}),
\end{equation}
since $(\log\scaledTotalWeightSparseExpectation)^{-1/4}\eta=o(1)$. Now if $\weightBoundKernel=\weightBoundHeavy'$, then we have 
 $$
\weightBoundKernel= \weightBoundHeavy'\stackrel{\eqref{def:kernelSparse}}{\le} W/(\eta w_{n-r+1})=o(W/w_{n-r+1})\quad\text{and}\quad\weightBoundKernel=\weightBoundHeavy'\stackrel{\eqref{eq:superHeavyVertices}}{=}o(w_{n-r+1}).
$$
Otherwise if $\weightBoundKernel=W/w_{n-r+1}$, then by \eqref{def:kernelSparse} we obtain
 $$
 \weightBoundKernel=W/w_{n-r+1} < \weightBoundHeavy'\eta\stackrel{\eqref{eq:superHeavyVertices}}{=}o(w_{n-r+1}).
 $$
 Thus in both cases the first property of $\weightBoundKernel$ is satisfied. Furthermore, the previous argument also showed $\weightBoundKernel=o(\weightBoundHeavy)$ and since $\weightBoundHeavy=O(\sqrt{W})$ by Claim~\ref{claim:weightBoundHeavyProperties} the second property follows. 

It remains to construct the set $\mathcal{U}_K$, or, in other words, show that a significant proportion of the candidate nucleus becomes infected eventually. For this let $T\ge0$ be an integer satisfying  $\totalWeight{\infecCoupling{T}}\ge (W/\weightBoundHeavy)\sqrt{\log \scaledTotalWeightSparseExpectation}$ as in Lemma~\ref{largeinfset} and condition on this (high probability) event. In particular we have
\begin{equation}\label{eq:conditionLargeWeight}
\totalWeight{\infecCoupling{T}}\ge (W/\weightBoundHeavy)\sqrt{\log \scaledTotalWeightSparseExpectation}\ge (W/w_u)\sqrt[4]{\log\scaledTotalWeightSparseExpectation},
\end{equation}
for any $w_u\ge \weightBoundHeavy'$.

First assume that we have $\weightBoundKernel=W/w_{n-r+1}$. 
We consider the $r$ vertices of largest weight and observe that
$$
w_n\ge\dots\ge w_{n-r+1}\stackrel{\eqref{eq:superHeavyVertices}}{\gg} \weightBoundHeavy'\eta\gg\weightBoundHeavy'
$$
and thus the assertion in~\eqref{eq:conditionLargeWeight} is true for each of them. Therefore the second statement of Lemma~\ref{infecprobl} is applicable (with $t=T$ and $\xi=\scaledTotalWeightSparseExpectation\to \infty$) showing that whp $\{n-r+1,\dots,n\}\subseteq\infec{T+1}$ (using only edges between $\{n-r+1,\dots,n\}$ and $(\infecCoupling{T}\setminus\infecCoupling{T-1})\subseteq \breedingGround'$). 
Moreover, 
we certainly have $\VertexSetRestricted{\ge\weightBoundKernel}=\VertexSetRestricted{\ge W/w_{n-r+1}}\subseteq\infec{T+2}$, because every vertex in this set is connected to each vertex in $\{n-r+1,\dots,n\}$ with probability~$1$. Thus we set $\mathcal{U}_K:=\VertexSetRestricted{\ge \weightBoundKernel}$, which obviously satisfies~\eqref{eq:witnessNucleus}. Since the process $\{\infecCoupling{t}\}$ exposes only edges in $\breedingGround'\times (\infecCoupling{0}\cup\breedingGround')$ and the last step only depends on edges in $\breedingGround'\times \VertexSetRestricted{\ge\weightBoundKernel}$ the statement follows in this case.

Otherwise, we have $\weightBoundKernel=\weightBoundHeavy'$ and we need to consider the vertices in $\VertexSetRestricted{\ge\weightBoundHeavy'}\setminus \infecCoupling{T}$. We observe that~\eqref{eq:conditionLargeWeight} shows that the weight of  any vertex $u\in \VertexSetRestricted{\ge\weightBoundHeavy'}\setminus \infecCoupling{T}$ is sufficiently large to apply the second statement of Lemma~\ref{infecprobl} (with $t=T$ and $\xi=\scaledTotalWeightSparseExpectation\to\infty$). Hence, whp each of them has at least $r$ neighbours in $\infecCoupling{T}\setminus\infecCoupling{T-1}$, and this events depend only on mutually disjoint sets of edges. So now define $\mathcal{U}_K:=\{u\in \VertexSetRestricted{\ge\weightBoundHeavy'}\;|\; u\in \infecCoupling{T}\text{ or } |\neighbourhood{u}\cap(\infecCoupling{T}\setminus\infecCoupling{T-1})|\ge r\}$, and note that $\totalWeight{\mathcal{U}_K}=(1-o(1))\totalWeightRestricted{\ge\weightBoundHeavy'}$.


Consequently, since the weight of each vertex is at most $w_n$, McDiarmid's inequality (Theorem~\ref{mcdct}) is applicable (with $X_u=\Ind{u\in \mathcal{U}_K}$, $M=w_n$, and $s=\sqrt{w_n\totalWeightRestricted{\ge\weightBoundHeavy'}}$) yielding
\begin{align}\label{eq:errrorProb}
\PP\left[\totalWeight{\mathcal{U}_K}\leq(1-o(1))\totalWeightRestricted{\ge\weightBoundHeavy'}-\sqrt{w_n\totalWeightRestricted{\ge\weightBoundHeavy'}}\, \right]&\nonumber\\
&\hspace{-5cm}\leq\exp\left(-\frac{w_n\totalWeightRestricted{\ge\weightBoundHeavy'}}{2(\totalWeightRestricted{\ge\weightBoundHeavy'}(1-o(1))+w_n\sqrt{w_n\totalWeightRestricted{\ge\weightBoundHeavy'}}/3)}\right).
\end{align}
Observe that by the assumption on the size-biased distribution in Theorem~\ref{threshold} we have
\begin{equation}\label{eq:totalWeightKernel}
\totalWeightRestricted{\ge\weightBoundHeavy'}=W\sum_{u\in\VertexSetRestricted{\ge\weightBoundHeavy'}}\frac{w_u}{W}=W\PP[\wbrv\ge\weightBoundHeavy']\ge \frac{CW}{\weightBoundHeavy'}.
\end{equation}
Furthermore, since $\weightBoundKernel=\weightBoundHeavy'$ we have $\weightBoundHeavy'\eta\le W/w_{n-r+1}$ by~\eqref{def:kernelSparse}, and this in turn shows that $W/\weightBoundHeavy'=\omega(w_n)$, because $w_{n-r+1}=\alpha w_n$ and $\eta\to\infty$. Combined with~\eqref{eq:totalWeightKernel} this provides $\totalWeightRestricted{\ge\weightBoundHeavy'}=\omega(w_n)$, and consequently the right-hand side of~\eqref{eq:errrorProb} is $o(1)$. Because $\mathcal{U}_K\subseteq(\VertexSetRestricted{\ge\weightBoundHeavy'}\cap\infec{F})$, we thus obtain whp $\totalInfectedWeightRestricted{\ge\weightBoundKernel}=(1-o(1))\totalWeightRestricted{\ge\weightBoundKernel}$ also in this case. Because the process $\{\infecCoupling{t}\}$ exposes only edges in $\breedingGround'\times (\infecCoupling{0}\cup\breedingGround')$ and the last step only depends on edges in $\breedingGround'\times \VertexSetRestricted{\ge\weightBoundKernel}$ this completes the proof.
\end{proof}

\subsection{Dense process}\label{dense}

In this section we want to show that if the candidate threshold $\candidateThresholdDense$ for the dense process---as defined in~\eqref{eq:candidateThresholdSparse}---exists, in other words,
\begin{equation}\label{eq:candidateThresholdDenseExists}
\weightBoundHeavy\le w_n,
\end{equation}
 and the initial infection rate $0\le \infectionRate=\infectionRate(n)\le 1$ satisfies
\begin{align*}
 \infectionRate\gg \candidateThresholdDense,
\end{align*}
then there exists a nucleus $\VertexSetRestricted{\ge\weightBoundKernel}$. Throughout this section we assume that these two conditions hold, and introduce the parameter
\begin{equation}\label{def:scaledTotalWeightDenseExpectation}
\scaledTotalWeightDenseExpectation:=\infectionRate/\candidateThresholdDense\to\infty.
\end{equation}
Furthermore we observe that the candidate threshold for the dense process satisfies
\begin{equation*}
\candidateThresholdDense W\stackrel{\eqref{eq:candidateThresholdDense}}{=}\left(\frac{W^r}{\sum_{u\in\VertexSetRestricted{\ge\weightBoundHeavy}}w_u^r}\right)^{1/r}\ge 1,
\end{equation*}
and thus, since $W=\weightConstant n$ for some constant $\weightConstant > 0$, we have
\begin{equation}\label{eq:candidateThresholdDenseLowerBound}
n\infectionRate=(W/\weightConstant)\scaledTotalWeightDenseExpectation\candidateThresholdDense\ge \scaledTotalWeightDenseExpectation/\weightConstant\to\infty.
\end{equation}

The setup for the dense process is much simpler than that for the sparse process. We use the set \begin{equation}\label{eq:breedingGroundDense}
\breedingGround:=\VertexSetRestricted{\ge\weightBoundHeavy}
\end{equation}
 as breeding ground and define the weight-bound for the initial infection by
\begin{equation}\label{def:weightBoundInitialInfectionDense}
\weightBoundInitialInfection:=\min\{\weightBoundHeavy,\sqrt{n\infectionRate}\}.
\end{equation}

Similarly as in Section~\ref{sparse}, we establish some basic properties of $\totalWeight{\infecCoupling{0}}$, which for instance guarantee that Lemma~\ref{infecprobl} is applicable at time $t=0$.
\begin{claim}\label{claim:iniDense}
Whp $n\infectionRate/2\le\totalWeight{\infecCoupling{0}}< (\weightConstant+1)n\infectionRate$ and 
$\maxWeight{\infecCoupling{0}}=o(\totalWeight{\infecCoupling{0}})$.

\end{claim}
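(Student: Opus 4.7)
The plan is to first verify that the hypotheses of Lemma~\ref{ini} are met, then invoke it for the lower bound, derive the bound on $\maxWeight{\infecCoupling{0}}$ as a direct consequence, and finally establish the upper bound on $\totalWeight{\infecCoupling{0}}$ via a standard concentration argument.

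First I would check that $\weightBoundInitialInfection\to\infty$ and $n\infectionRate\to\infty$. The second is immediate from~\eqref{eq:candidateThresholdDenseLowerBound}, which gives $n\infectionRate\ge\scaledTotalWeightDenseExpectation/\weightConstant$, and $\scaledTotalWeightDenseExpectation\to\infty$ by~\eqref{def:scaledTotalWeightDenseExpectation}. For the first, observe that $\weightBoundHeavy\to\infty$ by Claim~\ref{claim:weightBoundHeavyProperties} while $\sqrt{n\infectionRate}\to\infty$ from the same estimate, so the minimum in~\eqref{def:weightBoundInitialInfectionDense} also diverges. Hence Lemma~\ref{ini} applies and yields $\totalWeight{\infecCoupling{0}}\ge n\infectionRate/2$ whp, which is the desired lower bound. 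Moreover, by construction $\maxWeight{\infecCoupling{0}}\le\weightBoundInitialInfection\le\sqrt{n\infectionRate}$, so conditional on the lower bound we obtain $\maxWeight{\infecCoupling{0}}/\totalWeight{\infecCoupling{0}}\le 2/\sqrt{n\infectionRate}=o(1)$, establishing the second assertion.

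For the upper bound $\totalWeight{\infecCoupling{0}}<(\weightConstant+1)n\infectionRate$, I would write $\totalWeight{\infecCoupling{0}}=\sum_{u\in\VertexSetRestricted{<\weightBoundInitialInfection}} w_u\,X_u$ where the $X_u$ are independent Bernoulli$(\infectionRate)$. The expectation is at most $W\infectionRate=\weightConstant n\infectionRate$, and the variance is at most $\infectionRate\sum_{u\in\VertexSetRestricted{<\weightBoundInitialInfection}} w_u^2\le \weightBoundInitialInfection\cdot W\infectionRate=\weightConstant\weightBoundInitialInfection n\infectionRate$. Since each summand $w_u X_u$ is bounded above by $\weightBoundInitialInfection$, McDiarmid's inequality (Theorem~\ref{mcdct}) with $s=n\infectionRate$ and $M=\weightBoundInitialInfection$ gives
\[
\PP\bigl[\totalWeight{\infecCoupling{0}}\ge(\weightConstant+1)n\infectionRate\bigr]\le\exp\!\left(-\frac{(n\infectionRate)^2}{2(\weightConstant\weightBoundInitialInfection n\infectionRate+\weightBoundInitialInfection n\infectionRate/3)}\right)=\exp\!\left(-\Omega\!\left(\frac{n\infectionRate}{\weightBoundInitialInfection}\right)\right).
\]
Using $\weightBoundInitialInfection\le\sqrt{n\infectionRate}$ from~\eqref{def:weightBoundInitialInfectionDense}, the exponent is $-\Omega(\sqrt{n\infectionRate})\to-\infty$, so the upper bound holds whp.

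I do not expect any serious obstacle here: unlike in Claim~\ref{claim:iniSparse}, the bound $\weightBoundInitialInfection\le\sqrt{n\infectionRate}$ is baked directly into the definition~\eqref{def:weightBoundInitialInfectionDense}, which both feeds the McDiarmid tail bound and makes the $\maxWeight{\infecCoupling{0}}=o(\totalWeight{\infecCoupling{0}})$ estimate trivial once the lower bound on total weight is in hand. The only mildly delicate point is verifying that $\weightBoundInitialInfection\to\infty$ so that Lemma~\ref{ini} can be applied, but this follows immediately from~\eqref{eq:candidateThresholdDenseLowerBound} together with Claim~\ref{claim:weightBoundHeavyProperties}.
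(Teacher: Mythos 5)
Your proposal is correct and follows essentially the same route as the paper: verify $n\infectionRate\to\infty$ and $\weightBoundInitialInfection\to\infty$ so that Lemma~\ref{ini} gives the lower bound, read off $\maxWeight{\infecCoupling{0}}\le\weightBoundInitialInfection\le\sqrt{n\infectionRate}=o(n\infectionRate)$ from the definition~\eqref{def:weightBoundInitialInfectionDense}, and obtain the upper bound from the variance estimate $\VV[\totalWeight{\infecCoupling{0}}]\le\weightBoundInitialInfection\infectionRate W$ together with Theorem~\ref{mcdct}. The only cosmetic difference is that you cite Claim~\ref{claim:weightBoundHeavyProperties} for $\weightBoundHeavy\to\infty$ where the paper re-derives it by contradiction from~\eqref{eq:defHeavy}; both are valid.
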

\begin{proof}
In order to apply Lemma~\ref{ini} it suffices to show that $\weightBoundHeavy\to\infty$, since we already showed in~\eqref{eq:candidateThresholdDenseLowerBound} that $n\infectionRate\to\infty$. To do so, we observe that $\weightBoundHeavy=O(1)$ (over a subsequence)
would imply 
$$
\left|\VertexSetRestricted{\ge\weightBoundHeavy}\right|\stackrel{\eqref{eq:defHeavy}}{\ge} \left(\frac{W}{4\weightBoundHeavy^2}\right)^r=\Omega(W^r)=\omega(n),
$$
 yielding a contradiction since there are only $n$ vertices in total.  Hence Lemma~\ref{ini} is applicable and we have whp
$$
\totalWeight{\infecCoupling{0}}\stackrel{L.\ref{ini}}{\ge}n\infectionRate/2\gg\sqrt{n\infectionRate}\stackrel{\eqref{def:weightBoundInitialInfectionDense}}{\ge} \weightBoundInitialInfection\ge \maxWeight{\infecCoupling{0}}, 
$$
since only vertices of weight at most $\weightBoundInitialInfection$ are infected initially. On the other hand, we have 
$$
\VV[\totalWeight{\infecCoupling{0}}]=\sum_{u\in\VertexSetRestricted{<\weightBoundInitialInfection}} 
\left( w_u^2\infectionRate-(w_u\infectionRate)^2\right) \le \sum_{u\in\VertexSetRestricted{<\weightBoundInitialInfection}}
w_u^2\infectionRate \le \weightBoundInitialInfection \infectionRate W.
$$
Theorem~\ref{mcdct} implies 
$$
\PP[\totalWeight{\infecCoupling{0}}\ge (\weightConstant+1)n\infectionRate]\le\exp\left(-\frac{(n\infectionRate)^2}{3\weightBoundInitialInfection\infectionRate W}\right)\stackrel{\eqref{def:weightBoundInitialInfectionDense}}{\le}\exp(-\sqrt{n\infectionRate}/(3\weightConstant))\stackrel{\eqref{eq:candidateThresholdDenseLowerBound}}{=}o(1),
$$
i.e.,\ the claimed upper bound on $\totalWeight{\infecCoupling{0}}$ holds whp. 
\end{proof}

The crucial observation for the dense process is the following consequence of Theorem~\ref{jltv}, asserting that once a substantial number of heavy vertices become infected, all of them will be infected eventually.
\begin{lemma}\label{lem:JLTV}
Assume that there is a function $a = a(n)$ such that $a \to \infty$
for which $|\infecCoupling{1}\cap \VertexSetRestricted{\ge\weightBoundHeavy}|\geq a$. 
Conditional on this, whp there exists a time $T\ge 1$ such that $\VertexSetRestricted{\ge\weightBoundHeavy}\subseteq\infec{T}$.
\end{lemma}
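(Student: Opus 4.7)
The plan is to reduce to Theorem~\ref{jltv} via a stochastic domination argument restricted to the subgraph induced by the heavy vertices. Set $N := |\VertexSetRestricted{\ge \weightBoundHeavy}|$. For any two distinct heavy vertices $u,v$, their edge probability satisfies
\[
p_{u,v} = \min\!\left\{\frac{w_u w_v}{W},\,1\right\} \;\ge\; \min\!\left\{\frac{\weightBoundHeavy^2}{W},\,1\right\} =: q,
\]
so the graph induced on $\VertexSetRestricted{\ge \weightBoundHeavy}$ stochastically dominates the binomial random graph $G(N,q)$. Moreover, in the original bootstrap process, the heavy vertices that belong to $\infecCoupling{1}$ (hence to $\infec{1}$) became infected through edges to $\infecCoupling{0} \subseteq \VertexSetRestricted{<\weightBoundInitialInfection} \subseteq \VertexSetRestricted{<\weightBoundHeavy}$, so the edges inside $\VertexSetRestricted{\ge\weightBoundHeavy}$ have not been exposed at time~$1$; hence conditioning on $\infecCoupling{1} \cap \VertexSetRestricted{\ge \weightBoundHeavy}$ preserves the independence of these remaining edges.

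The key calculation is to verify the hypothesis $q \ge b N^{-1/r}$ required by Theorem~\ref{jltv}. This follows directly from the definition~\eqref{eq:defHeavy} of $\weightBoundHeavy$: by minimality we have $N = |\VertexSetRestricted{\ge \weightBoundHeavy}| \ge \left(W/(4\weightBoundHeavy^2)\right)^r$, which rearranges to
\[
\frac{\weightBoundHeavy^2}{W} \;\ge\; \frac{1}{4}\,N^{-1/r}.
\]
Thus $q \ge \tfrac{1}{4} N^{-1/r}$ with the constant $b = 1/4$ independent of $n$. Since $a \to \infty$ and the hypothesis forces $N \ge a$, we also have $N \to \infty$, so Theorem~\ref{jltv} is indeed applicable.

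Finally, we run bootstrap percolation with infection threshold $r$ on $G(N,q)$, initially infecting the (at least $a$) vertices of $\infecCoupling{1}\cap\VertexSetRestricted{\ge\weightBoundHeavy}$. By Theorem~\ref{jltv}, whp every vertex of $\VertexSetRestricted{\ge\weightBoundHeavy}$ becomes infected in this restricted process, say by some step $T'\ge 0$. Since every infection in this coupled subprocess uses only edges inside $\VertexSetRestricted{\ge\weightBoundHeavy}$ (which are present with probability at least $q$ in $CL(\bw)$) and at most $r$ infected neighbours are needed, the corresponding infections also occur in the original process by time $T := T'+1$, yielding $\VertexSetRestricted{\ge\weightBoundHeavy}\subseteq \infec{T}$ whp. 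The main (essentially only) delicate point is the verification $q \ge \tfrac{1}{4}N^{-1/r}$, but the definition of $\weightBoundHeavy$ was tailored precisely to make this inequality automatic.
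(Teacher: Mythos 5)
Your proposal is correct and follows essentially the same route as the paper: stochastic domination of the induced subgraph on $\VertexSetRestricted{\ge\weightBoundHeavy}$ by $G(N,q)$ with $q\ge\frac{1}{4}N^{-1/r}$ derived from the definition of $\weightBoundHeavy$, followed by an application of Theorem~\ref{jltv} with the at least $a\to\infty$ initially infected heavy vertices. The extra remarks about unexposed edges and the one-step time shift are fine and only make explicit what the paper's coupling leaves implicit.
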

\begin{proof}
Consider the random graph induced by the vertex set $\VertexSetRestricted{\ge\weightBoundHeavy}$, and note that it stochastically dominates the binomial random graph $G(n',p')$, with 
$$
n':=|\VertexSetRestricted{\ge\weightBoundHeavy}|\ge|\infecCoupling{1}\cap\VertexSetRestricted{\ge\weightBoundHeavy}|\geq a \to\infty
$$
and
$$
p':=\weightBoundHeavy^2/W\stackrel{\eqref{eq:defHeavy}}{\ge} |\VertexSetRestricted{\ge\weightBoundHeavy}|^{-1/r}/4=(n')^{-1/r}/4.
 $$
 Now we consider bootstrap process with parameter $r$ on $G(n',p')$ where the set of initially infected vertices is $\infecCoupling{1}\cap\VertexSetRestricted{\ge\weightBoundHeavy}$. So $|\infecCoupling{1}\cap\VertexSetRestricted{\ge\weightBoundHeavy}|\ge a \to\infty$. Hence Theorem~\ref{jltv} is applicable. Since all vertices which become infected by this process within $\tau\ge0$ steps are contained in $\infec{1+\tau}$, we obtain
 $$
 \PP[\exists T\ge 1\colon\VertexSetRestricted{\ge\weightBoundHeavy}\subseteq\infec{T}]\to1,
 $$
 as $n'\to\infty$ and the claim follows since $n'\to\infty$ as $n\to\infty$.
\end{proof}
Next we show that a significant number of heavy vertices become infected whp if either one of the following additional assumptions holds:
\begin{align}
 (\weightConstant+1)\infectionRate w_n&\le (\log \scaledTotalWeightDenseExpectation)^{1/2};\label{cond:smallInfectionRate}\\
w_{n-r+1}\le W^{1/2}(\log \scaledTotalWeightDenseExpectation)^{1/16}\quad&\wedge\quad \weightBoundHeavy\le W^{1/2}(\log \scaledTotalWeightDenseExpectation)^{-1/8}.\label{cond:largeInfectionRate}
\end{align}

Before we make this statement precise, we perform some calculations motivating the first assumption. It allows us to distinguish the two regimes of Lemma~\ref{infecprobl} (which is applicable at time $t=0$ by Claim~\ref{claim:iniDense}): if~\eqref{cond:smallInfectionRate} holds, then since $W=\weightConstant n$, for some $\weightConstant\ge1$, (cf.~\eqref{Wcondition}) we have
\begin{equation}\label{eq:upperBoundInitialInfectedWeight}
\totalWeight{\infecCoupling{0}}\stackrel{C.\ref{claim:iniDense}}{<}(\weightConstant+1)\infectionRate W
\stackrel{\eqref{cond:smallInfectionRate}}{\le}(W/w_n)(\log\scaledTotalWeightDenseExpectation)^{1/2},
\end{equation}
and thus we also obtain
$$
\maxWeight{\infecCoupling{0}}\stackrel{\eqref{def:weightBoundInitialInfectionDense}}{\leq} \sqrt{n\infectionRate}\stackrel{C.\ref{claim:iniDense}}{\le}\frac{2\totalWeight{\infecCoupling{0}}}{\sqrt{n\infectionRate}}\stackrel{\eqref{eq:candidateThresholdDenseLowerBound}}
{\le}\frac{2\totalWeight{\infecCoupling{0}} \weightConstant^{1/2}}{\scaledTotalWeightDenseExpectation^{1/2}}\stackrel{\eqref{eq:upperBoundInitialInfectedWeight}}{\le}
\frac{2\weightConstant^{1/2}W(\log\scaledTotalWeightDenseExpectation)^{1/2}}{w_n \scaledTotalWeightDenseExpectation^{1/2}}\stackrel{\eqref{def:scaledTotalWeightDenseExpectation}}{\le}
\frac{W}{4w_n},
$$
for any $n$ large enough. 
This together with Claim~\ref{claim:iniDense} implies that the first assertion of Lemma~\ref{infecprobl} holds (with 
$\xi=\scaledTotalWeightDenseExpectation$) for all vertices $u\in \VertexSetRestricted{\ge\weightBoundHeavy}$ (other vertices are not considered in the restricted process), i.e.,\ we have
$$
\PP[u\in \infecCoupling{1}|\infecCoupling{0}]\geq \frac{w_u^{r}\totalWeight{\infecCoupling{0}}^r}{r! W^r} 
\exp\left(-1.9\sqrt{\log{\scaledTotalWeightDenseExpectation}}\right).
$$
Because $\totalWeight{\infecCoupling{0}}/W\ge \infectionRate/(2\weightConstant)=\scaledTotalWeightDenseExpectation\candidateThresholdDense/(2\weightConstant)$, this implies 
\begin{equation}\label{eq:smallInfectionProbability}
\PP[u\in \infecCoupling{1}|\infecCoupling{0}]\stackrel{\eqref{def:scaledTotalWeightDenseExpectation}}{\gg} w_u^r\candidateThresholdDense^r\stackrel{\eqref{eq:candidateThresholdDense}}{=}\frac{w_u^r}{\sum_{v\in\VertexSetRestricted{\ge\weightBoundHeavy}}w_v^r}.
\end{equation}

On the other hand, if~\eqref{cond:smallInfectionRate} does not hold, i.e.,\ we have $(\weightConstant+1)\infectionRate w_n> (\log \scaledTotalWeightDenseExpectation)^{1/2}$, then we obtain 
$$
\totalWeight{\infecCoupling{0}}\stackrel{C.~\ref{claim:iniDense}}{\ge}n\infectionRate/2>\frac{W(\log \scaledTotalWeightDenseExpectation)^{1/2}}{2(\weightConstant+1)\weightConstant w_n}\stackrel{\eqref{def:scaledTotalWeightDenseExpectation}}{\gg}(W/w_u)(\log \scaledTotalWeightDenseExpectation)^{1/4},
$$
for any vertex $u$ whose weight satisfies $w_u\ge w_n(\log \scaledTotalWeightDenseExpectation)^{-1/4}$.  Thus the second assertion of Lemma~\ref{infecprobl} (with $\xi=\scaledTotalWeightDenseExpectation$) holds for any such vertex showing that 
\begin{equation}\label{eq:highInfectionProbability}
\PP\left[u\in\infecCoupling{1}\cond \infecCoupling{0}\right]\ge 1-o(1/\sqrt{\log \scaledTotalWeightDenseExpectation}).
\end{equation}

Using these two observations we prove the following result.
\begin{lemma}\label{lem:denseHeavyVertices}
If either~\eqref{cond:smallInfectionRate} or~\eqref{cond:largeInfectionRate} holds, then there exists a function $a = a(n)$ 
such that $a \to \infty$ for which 
whp we have $|\infecCoupling{1}\cap \VertexSetRestricted{\ge\weightBoundHeavy}|\geq a$. 
\end{lemma}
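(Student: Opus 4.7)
The plan is to split along the two disjunctive hypotheses of the lemma. In each case the strategy is identical: identify a set of heavy vertices that is large (either in count or in weight-weighted probability mass) and each of whose elements gets infected in step~$1$ with sufficient probability, then exploit conditional independence of the infection events to concentrate.

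I would first handle the case that \eqref{cond:smallInfectionRate} holds. Conditional on $\infecCoupling{0}$, the events $\{u\in\infecCoupling{1}\}$ for distinct $u\in\VertexSetRestricted{\ge\weightBoundHeavy}$ depend on disjoint sets of edges to $\infecCoupling{0}$, so they are mutually independent. Conditioning on the whp event from Claim~\ref{claim:iniDense} and applying the pointwise bound~\eqref{eq:smallInfectionProbability} together with the definition~\eqref{eq:candidateThresholdDense} of $\candidateThresholdDense$ yields
\[
\EE\bigl[\,|\infecCoupling{1}\cap\VertexSetRestricted{\ge\weightBoundHeavy}|\,\bigm|\,\infecCoupling{0}\bigr]\;\gg\;\sum_{u\in\VertexSetRestricted{\ge\weightBoundHeavy}}w_u^{r}\candidateThresholdDense^{r}\;=\;1.
\]
Hence there is a function $a=a(n)\to\infty$ for which this conditional expectation exceeds $2a$ whp, and a Chernoff bound (Theorem~\ref{Chernoff}) then gives $|\infecCoupling{1}\cap\VertexSetRestricted{\ge\weightBoundHeavy}|\ge a$ whp.

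For the remaining case, when \eqref{cond:smallInfectionRate} fails but \eqref{cond:largeInfectionRate} holds, I would rely on the strong pointwise bound~\eqref{eq:highInfectionProbability}, which gives infection probability $1-o(1/\sqrt{\log\scaledTotalWeightDenseExpectation})$ for every vertex of weight at least $w_n(\log\scaledTotalWeightDenseExpectation)^{-1/4}$. Setting
\[
y:=\max\bigl\{\weightBoundHeavy,\,w_n(\log\scaledTotalWeightDenseExpectation)^{-1/4}\bigr\},
\]
every $u\in\VertexSetRestricted{\ge y}$ is simultaneously heavy and satisfies the high-infection-probability bound, so the main task is to show $|\VertexSetRestricted{\ge y}|\to\infty$. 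If $y=w_n(\log\scaledTotalWeightDenseExpectation)^{-1/4}$ I would invoke~\eqref{eq:supercriticalCondition} at $x=y$ (valid since $y\le w_n$ and $y\to\infty$, hence $y\ge C_1$ eventually) to get $\totalWeightRestricted{\ge y}\ge CW/y$ and then $|\VertexSetRestricted{\ge y}|\ge CW/(yw_n)$; the inequality $w_n\le\alpha^{-1}W^{1/2}(\log\scaledTotalWeightDenseExpectation)^{1/16}$ inherited from $w_{n-r+1}\ge\alpha w_n$ and~\eqref{cond:largeInfectionRate} then shows this grows like a positive power of $\log\scaledTotalWeightDenseExpectation$. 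If instead $y=\weightBoundHeavy$, the definition~\eqref{eq:defHeavy} directly gives $|\VertexSetRestricted{\ge\weightBoundHeavy}|\ge(W/(4\weightBoundHeavy^2))^{r}$, and the bound $\weightBoundHeavy\le W^{1/2}(\log\scaledTotalWeightDenseExpectation)^{-1/8}$ from~\eqref{cond:largeInfectionRate} again forces this to infinity. Concentration is again by Chernoff applied to a conditionally independent sum of indicators.

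The delicate step is this second case: the exponents $1/16$ and $1/8$ in~\eqref{cond:largeInfectionRate} are tailored so that both subcases work and meet at the borderline $\weightBoundHeavy\asymp w_n(\log\scaledTotalWeightDenseExpectation)^{-1/4}$. I expect the main obstacle to be verifying, in each subcase, that the vertices produced by~\eqref{eq:defHeavy} or by~\eqref{eq:supercriticalCondition} actually lie in the regime where~\eqref{eq:highInfectionProbability} applies, which is precisely why one needs to take $y=\max\{\weightBoundHeavy,w_n(\log\scaledTotalWeightDenseExpectation)^{-1/4}\}$. Once the pool $\VertexSetRestricted{\ge y}$ is identified with cardinality tending to infinity, the final concentration step is routine.
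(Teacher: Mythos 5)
Your proposal is correct and follows essentially the same two-case strategy as the paper: under~\eqref{cond:smallInfectionRate} sum the bound~\eqref{eq:smallInfectionProbability} over heavy vertices and apply Chernoff to the conditionally independent indicators, and otherwise use~\eqref{eq:highInfectionProbability} on a growing pool of very heavy vertices whose size is controlled via the size-biased tail condition and~\eqref{eq:defHeavy}. The only difference is cosmetic: the paper fixes the pool as $\VertexSetRestricted{\ge w'}$ with $w'=\sqrt{W}(\log\scaledTotalWeightDenseExpectation)^{-1/8}$ and checks it is heavy and satisfies the high-infection bound, whereas you take $y=\max\{\weightBoundHeavy,\,w_n(\log\scaledTotalWeightDenseExpectation)^{-1/4}\}$ so these two properties hold by construction and only the cardinality bound needs the case split.
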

\begin{proof}
The idea of this proof is to show that whp a significant number of heavy vertices becomes infected, and therefore by Lemma~\ref{lem:JLTV} all heavy vertices become infected. The proof is split into two cases. 

{\bf Case I.} Assume that~\eqref{cond:smallInfectionRate} holds, and therefore also~\eqref{eq:smallInfectionProbability}. Summing~\eqref{eq:smallInfectionProbability} over all $u\in\VertexSetRestricted{\ge\weightBoundHeavy}$ we have that $\EE\left[|\infecCoupling{1}\cap \VertexSetRestricted{\ge\weightBoundHeavy}|\cond\infecCoupling{0}\right]\gg 1$. Consequently, as $|\infecCoupling{1}\cap\VertexSetRestricted{\ge\weightBoundHeavy}|$ is the sum of independent Bernoulli random variables, the Chernoff bound (Theorem~\ref{Chernoff}) yields that whp $|\infecCoupling{1}\cap\VertexSetRestricted{\ge\weightBoundHeavy}|\gg 1$, completing Case~I.\medskip

{\bf Case II.} Now assume that~\eqref{cond:largeInfectionRate} holds, but~\eqref{cond:smallInfectionRate} does not hold which implies that~\eqref{eq:highInfectionProbability} holds. We consider vertices of weight at least $w':=\sqrt{W}(\log\scaledTotalWeightDenseExpectation)^{-1/8}$, and observe that by~\eqref{cond:largeInfectionRate} all of them are heavy, i.e.,\ $w'\ge\weightBoundHeavy$. Moreover by the assumptions of Theorem~\ref{threshold} the number of these vertices satisfies
$$
|\VertexSetRestricted{\ge w'}|\ge \frac{W\PP[\wbrv\ge w']}{w_n}\ge\frac{\alpha CW}{w_{n-r+1} w'}\stackrel{\eqref{cond:largeInfectionRate}}{=} \Omega((\log\scaledTotalWeightDenseExpectation)^{1/16})\to\infty.
$$
Now note that any such vertex $u$ satisfies
$$
w_u\ge w'\stackrel{\eqref{cond:largeInfectionRate}}{\ge} \alpha w_n(\log \scaledTotalWeightDenseExpectation)^{-3/16}\gg w_n(\log \scaledTotalWeightDenseExpectation)^{-1/4},
$$
and thus by~\eqref{eq:highInfectionProbability} it becomes infected at time $t=1$ with probability at least 
$1-o(1/\sqrt{\log\scaledTotalWeightDenseExpectation})$. Applying a union bound to a sufficiently small but growing number of these vertices implies that
there is a function $a \to \infty$ such that whp 
$$
|\infecCoupling{1}\cap \VertexSetRestricted{\ge\weightBoundHeavy}|\ge|\infecCoupling{1}\cap \VertexSetRestricted{\ge w'}|\geq a,
$$
completing the proof of the lemma. 
\end{proof}

With this preparation we will now construct a nucleus (cf. Definition~\ref{definition:kernel}) for the dense process. We consider the \emph{candidate nucleus} $\VertexSetRestricted{\ge\weightBoundKernel}$ given by the weight-bound
\begin{equation}\label{def:kernelDense}
\weightBoundKernel:=
\begin{cases}
W/w_{n-r+1}&\text{if }w_{n-r+1}>W^{1/2}(\log \scaledTotalWeightDenseExpectation)^{1/16},\\
\weightBoundHeavy&\text{if }\eqref{cond:largeInfectionRate}\text{ holds}.
\end{cases}
\end{equation}
Note that this does not always define a weight-bound $\weightBoundKernel$, however we will demonstrate in Section~\ref{sec:proofKernel} that this suffices to guarantee the existence of a nucleus. 
\begin{theorem}\label{thm:kernelDense}
Under the assumptions of Theorem~\ref{threshold}, suppose also that $\infectionRate\gg\candidateThresholdDense$. Then whp $\VertexSetRestricted{\ge\weightBoundKernel}$ as defined in~\eqref{def:kernelDense} is a nucleus with weight-bound $\weightBoundKernel$  that satisfies $\weightBoundKernel\le \min\left\{w_{n-r+1},W/w_{n-r+1}\right\}$ and $\weightBoundKernel=o(\sqrt{W})$. Moreover, the nucleus is 
completely infected whp and no edges in $G\left[\VertexSetRestricted{\le\weightBoundInitialInfection}\right]$ have been exposed yet.  
\end{theorem}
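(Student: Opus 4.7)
The plan is to split on the two branches of the definition of $\weightBoundKernel$ in~\eqref{def:kernelDense}, verify the size conditions in each, and then show that $\VertexSetRestricted{\ge \weightBoundKernel}$ becomes completely infected using the tools developed in Section~\ref{dense}.

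For the size conditions, in the first branch ($w_{n-r+1} > W^{1/2}(\log\scaledTotalWeightDenseExpectation)^{1/16}$, $\weightBoundKernel = W/w_{n-r+1}$) both inequalities are immediate: $w_{n-r+1}^2 > W(\log\scaledTotalWeightDenseExpectation)^{1/8} \gg W$ forces $W/w_{n-r+1} \le w_{n-r+1}$, and $W/w_{n-r+1} < W^{1/2}(\log\scaledTotalWeightDenseExpectation)^{-1/16} = o(\sqrt W)$. In the second branch (where~\eqref{cond:largeInfectionRate} holds and $\weightBoundKernel = \weightBoundHeavy$), the bound $\weightBoundHeavy \le W^{1/2}(\log\scaledTotalWeightDenseExpectation)^{-1/8}$ gives $\weightBoundHeavy = o(\sqrt W)$ and, combined with $w_{n-r+1} \le W^{1/2}(\log\scaledTotalWeightDenseExpectation)^{1/16}$, also $\weightBoundHeavy \le W/w_{n-r+1}$. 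For the remaining inequality $\weightBoundHeavy \le w_{n-r+1}$ I would establish $w_{n-r+1} = \Omega(\sqrt W)$ from~\eqref{eq:supercriticalCondition}; the lower bound $C \ge 64 r(\min\{\alpha,1/2\})^{-3}$ is designed precisely to make this work, via a short contradiction argument (if $w_{n-r+1} < \sqrt W$, then at most $r-1$ vertices have weight $\ge \sqrt W$ and so $\sum_{u : w_u \ge \sqrt W} w_u \le (r-1) w_n \le (r-1) w_{n-r+1}/\alpha$, violating~\eqref{eq:supercriticalCondition} at $x = \sqrt W$ once $C$ is large enough), after which $\weightBoundHeavy \le W^{1/2}(\log\scaledTotalWeightDenseExpectation)^{-1/8} = o(\sqrt W) \le w_{n-r+1}$ holds for $n$ large.

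Next I would show the infection of the nucleus. The second branch is essentially one line: because~\eqref{cond:largeInfectionRate} holds, Lemma~\ref{lem:denseHeavyVertices} produces a function $a = a(n) \to \infty$ with whp $|\infecCoupling{1} \cap \VertexSetRestricted{\ge \weightBoundHeavy}| \ge a$, and Lemma~\ref{lem:JLTV} then infects all of $\VertexSetRestricted{\ge \weightBoundHeavy}$ whp. For the first branch I would first infect the top $r$ vertices $\{n-r+1,\dots,n\}$ and then propagate via probability-$1$ edges: every vertex of weight $\ge W/w_{n-r+1}$ is connected to each of $n-r+1,\dots,n$ with probability $1$ (because $w_u w_{n-r+1}/W \ge 1$ makes the minimum in~\eqref{eq:pijCL} equal to $1$), so it inherits $r$ infected neighbours. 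To infect $\{n-r+1,\dots,n\}$ I would case-split on~\eqref{cond:smallInfectionRate}: if it holds, Lemmas~\ref{lem:denseHeavyVertices} and~\ref{lem:JLTV} infect all of $\VertexSetRestricted{\ge \weightBoundHeavy} \supseteq \{n-r+1,\dots,n\}$; if not, then~\eqref{eq:highInfectionProbability} gives each vertex of weight $\ge w_n(\log\scaledTotalWeightDenseExpectation)^{-1/4}$ probability $1 - o(1/\sqrt{\log\scaledTotalWeightDenseExpectation})$ of being in $\infecCoupling{1}$, and since $w_{n-r+1} \ge \alpha w_n$ each of the top $r$ qualifies, so a union bound over these $r$ vertices concludes.

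The edge-exposure claim is then automatic: every edge revealed above has at least one endpoint in $\breedingGround = \VertexSetRestricted{\ge \weightBoundHeavy}$ (the probability-$1$ edges used in the first branch connect $\{n-r+1,\dots,n\} \subseteq \breedingGround$ to $\VertexSetRestricted{\ge W/w_{n-r+1}}$), so every exposed edge has at least one endpoint of weight $\ge \weightBoundHeavy \ge \weightBoundInitialInfection$, and therefore no edge of $G[\VertexSetRestricted{\le \weightBoundInitialInfection}]$ is ever examined. The main obstacle I foresee is the inequality $\weightBoundHeavy \le w_{n-r+1}$: it is the only step that does not follow directly from the lemmas already proved in Section~\ref{dense} and requires an explicit numerical use of both~\eqref{eq:supercriticalCondition} and the assumption $C \ge 64r(\min\{\alpha,1/2\})^{-3}$, with an auxiliary case analysis when $w_n$ is not comparable to $\sqrt W$.
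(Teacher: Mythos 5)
Your proposal follows the paper's proof almost step for step: the same two\--branch case split on \eqref{def:kernelDense}, the same verification of the size conditions in the first branch, the same use of Lemmas~\ref{lem:denseHeavyVertices} and~\ref{lem:JLTV} (in the second branch, and in the first branch when \eqref{cond:smallInfectionRate} holds), the same appeal to \eqref{eq:highInfectionProbability} plus a union bound over the top $r$ vertices otherwise, and the same probability\--one edges from $\{n-r+1,\dots,n\}$ into $\VertexSetRestricted{\ge W/w_{n-r+1}}$ to finish. The edge\--exposure remark is also handled correctly.

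The one place you deviate is the inequality $\weightBoundHeavy\le w_{n-r+1}$ in the branch $\weightBoundKernel=\weightBoundHeavy$, which you rightly single out as the only non\--routine step, but your proposed route through it does not work. The intermediate claim $w_{n-r+1}=\Omega(\sqrt W)$ is false in general under the hypotheses of Theorem~\ref{threshold}: condition \eqref{eq:supercriticalCondition} is only assumed for $x\le w_n$, and nothing forces $w_n\ge\sqrt W$ even in the regime where \eqref{cond:largeInfectionRate} holds (for instance $\weightBoundHeavy=W^{0.4}$ and $w_n=W^{0.45}$ with $\Theta(W^{0.4})$ vertices of weight near $w_n$ is consistent with all assumptions), in which case $w_{n-r+1}\le w_n=o(\sqrt W)$ and your contradiction argument cannot even be launched at $x=\sqrt W$. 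So the ``auxiliary case analysis'' you defer is not auxiliary; it is the entire content of the step. The fix is a one\--liner that needs neither \eqref{eq:supercriticalCondition} nor the size of $C$: in this branch $\weightBoundHeavy\le W^{1/2}(\log\scaledTotalWeightDenseExpectation)^{-1/8}$, so by the definition \eqref{eq:defHeavy} the number of heavy vertices satisfies
$$
\left|\VertexSetRestricted{\ge\weightBoundHeavy}\right|\ge\left(\frac{W}{4\weightBoundHeavy^2}\right)^r\ge 4^{-r}(\log\scaledTotalWeightDenseExpectation)^{r/4}\to\infty,
$$
and in particular exceeds $r$ for large $n$, whence $w_{n-r+1}\ge\weightBoundHeavy$. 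With that substitution your proof is complete and coincides with the paper's.
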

\begin{proof}
We start by showing that the weight-bound $\weightBoundKernel$ is sufficiently small. Recall that $\scaledTotalWeightDenseExpectation\to\infty$ by~\eqref{def:scaledTotalWeightDenseExpectation}. Now if $\weightBoundKernel=W/w_{n-r+1}$, then 
$$
\weightBoundKernel=W/w_{n-r+1}<W^{1/2}(\log \scaledTotalWeightDenseExpectation)^{-1/16}=o(\sqrt{W})=o(w_{n-r+1}),
$$
and both conditions are satisfied. On the other hand, if $\weightBoundKernel=\weightBoundHeavy$, then we have
$$
\weightBoundKernel=\weightBoundHeavy\le W^{1/2}(\log \scaledTotalWeightDenseExpectation)^{-1/8}\le W^{1/2}(\log \scaledTotalWeightDenseExpectation)^{-1/16}\le W/w_{n-r+1}.
$$
Furthermore, by \eqref{eq:defHeavy} the number of heavy vertices satisfies 
$$
\left|\VertexSetRestricted{\ge\weightBoundHeavy}\right|\ge \left(\frac{W}{4\weightBoundHeavy^2}\right)^r\ge 4^{-r}(\log \scaledTotalWeightDenseExpectation)^{r/4}\to\infty,
$$
and thus, in particular, we also obtain
$$
\weightBoundKernel=\weightBoundHeavy\le w_{n-r+1}.
$$
Therefore the weight-bound $\weightBoundKernel$ is sufficiently small in this case as well.

We will prove that whp all vertices of the candidate nucleus becomes infected eventually. Again, we first assume that $\weightBoundKernel=W/w_{n-r+1}$. In this case we start by showing that there is a $T\ge0$ at which whp the $r$ vertices of largest weight become infected, i.e.,\ 
\begin{equation}\label{eq:rInfectedVertices}
\{n-r+1,\dots,n\}\subseteq \infec{T}.
\end{equation} 
 
 If~\eqref{cond:smallInfectionRate} holds, then Lemma~\ref{lem:denseHeavyVertices} implies in particular that the number of heavy vertices is $\omega(1)$ and by Lemma~\ref{lem:JLTV} they all become infected by some time $T\ge0$, hence~\eqref{eq:rInfectedVertices} holds. On the other hand, if~\eqref{cond:smallInfectionRate} does not hold, then~\eqref{eq:highInfectionProbability} holds for each of the $r$ vertices of largest weight (since $w_{n-r+1}=\alpha w_n$, for some constant $\alpha>0$), and thus a union bound shows that whp $\{n-r+1,\dots,n\}\subseteq\infecCoupling{1}\subseteq \infec{1}$, i.e.,~\eqref{eq:rInfectedVertices} holds.
 
 Now consider any vertex $u\in\VertexSetRestricted{\ge W/w_{n-r+1}}$ and note that $u$ is connected to each vertex in $\{n-r+1,\dots, n\}$ with probability $1$ (cf.~\eqref{eq:pijCL}). Consequently, we have $\VertexSetRestricted{\ge\weightBoundKernel}=\VertexSetRestricted{\ge W/w_{n-r+1}}\subseteq \infec{T+1}$ implying $\totalInfectedWeightRestricted{\ge\weightBoundKernel}=\totalWeightRestricted{\ge\weightBoundKernel}$, in other words $\VertexSetRestricted{\ge\weightBoundKernel}$ is a nucleus.\medskip

Now suppose that $\weightBoundKernel=\weightBoundHeavy$, i.e.,~\eqref{cond:largeInfectionRate} holds. Then Lemmas~\ref{lem:denseHeavyVertices} and~\ref{lem:JLTV} imply that whp all heavy vertices become infected eventually and thus $\totalInfectedWeightRestricted{\ge\weightBoundKernel}=\totalWeightRestricted{\ge\weightBoundKernel}$, completing the proof.   
\end{proof}

We conclude this section by proving that if we cannot apply Theorem~\ref{thm:kernelDense}, then the premises of Theorem~\ref{thm:kernelSparse} are met, justifying the previous case distinction.
\begin{claim}\label{close}
Assume that $w_{n-r+1}\le\sqrt{W}(\log\scaledTotalWeightDenseExpectation)^{1/16}$ but $\weightBoundHeavy>\sqrt{W}(\log\scaledTotalWeightDenseExpectation)^{-1/8}$.
Then we have 
$$
\candidateThresholdDense(\log\scaledTotalWeightDenseExpectation)^{2r}\ge \candidateThresholdSparse,
$$
and in particular
$$
\infectionRate=\omega(\candidateThresholdSparse).
$$
\end{claim}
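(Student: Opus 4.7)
The plan is to obtain a lower bound on $\candidateThresholdDense$ and an upper bound on $\candidateThresholdSparse$, matching each other up to a power of $L:=\log\scaledTotalWeightDenseExpectation$; since the target exponent $2r$ is generous there is ample slack. Note first that under the standing hypothesis $w_{n-r+1}\ge\alpha w_n$, the assumed bound $w_{n-r+1}\le\sqrt{W}L^{1/16}$ yields $w_n\le\sqrt{W}L^{1/16}/\alpha$.

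For $\candidateThresholdDense$, the minimality in the definition~\eqref{eq:defHeavy} (via the left-limit of $|\VertexSetRestricted{\ge x}|<(W/(4x^2))^r$ at $x=\weightBoundHeavy$) gives $|\VertexSetRestricted{\ge\weightBoundHeavy}|\le(W/(4\weightBoundHeavy^2))^r$; combined with $\weightBoundHeavy>\sqrt{W}L^{-1/8}$ this produces $|\VertexSetRestricted{\ge\weightBoundHeavy}|\le(L^{1/4}/4)^r$, and bounding each $w_u\le w_n$ in the sum yields
$$
\sum_{u\in\VertexSetRestricted{\ge\weightBoundHeavy}}w_u^r\le|\VertexSetRestricted{\ge\weightBoundHeavy}|\,w_n^r\le \frac{W^{r/2}L^{5r/16}}{(4\alpha)^r},
$$
so that $\candidateThresholdDense\ge 4\alpha\,W^{-1/2}L^{-5/16}$. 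For $\candidateThresholdSparse$, I will apply Proposition~\ref{prop:restrmom} with $a=C_1$, $b=\weightBoundHeavy$, $\vartheta=r+1$, and drop the boundary term to get
$$
W^{-1}\sum_{u\in\VertexSetRestricted{[C_1,\weightBoundHeavy)}}w_u^{r+1}\ge r\int_{C_1}^{\weightBoundHeavy}x^{r-1}\PP[x\le\wbrv<\weightBoundHeavy]\,dx.
$$
The assumption~\eqref{eq:supercriticalCondition} gives $\PP[x\le\wbrv]\ge C/x$, while the elementary estimate $\PP[\wbrv\ge\weightBoundHeavy]\le w_n|\VertexSetRestricted{\ge\weightBoundHeavy}|/W=O(L^{(4r+1)/16}/\sqrt{W})$ shows that $\PP[x\le\wbrv<\weightBoundHeavy]\ge C/(2x)$ uniformly for $x\le X_0:=c_0\sqrt{W}\,L^{-(4r+1)/16}$ with a suitably small constant $c_0>0$. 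Since $L^{(4r-1)/16}\to\infty$, one has $X_0<\weightBoundHeavy$ for $n$ large and $X_0\gg C_1$; integrating then gives $\sum_{u<\weightBoundHeavy}w_u^{r+1}=\Omega(W\,X_0^{r-1})$, whence $\candidateThresholdSparse=O(1/X_0)=O(L^{(4r+1)/16}/\sqrt{W})$.

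Combining the two estimates, $\candidateThresholdSparse/\candidateThresholdDense=O(L^{(4r+1)/16+5/16})=O(L^{(2r+3)/8})$, which is at most $L^{2r}$ for $n$ large because $(2r+3)/8\le 2r$ for every $r\ge1$; this proves the first inequality. The second claim follows immediately: since $\scaledTotalWeightDenseExpectation=\infectionRate/\candidateThresholdDense\to\infty$,
$$
\frac{\infectionRate}{\candidateThresholdSparse}\ge\frac{\infectionRate}{L^{2r}\candidateThresholdDense}=\frac{\scaledTotalWeightDenseExpectation}{L^{2r}}\to\infty,
$$
as any polylogarithm in $\scaledTotalWeightDenseExpectation$ is $o(\scaledTotalWeightDenseExpectation)$. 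The only delicate point is choosing the exponent in $X_0$ so that $\PP[\wbrv\ge\weightBoundHeavy]$ is dominated by $C/(2x)$ on a long enough interval; with the generous slack provided by the target $L^{2r}$, the rest of the calculation is a mechanical bookkeeping of exponents.
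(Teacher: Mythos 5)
Your proposal is correct and follows essentially the same route as the paper: bound $|\VertexSetRestricted{\ge\weightBoundHeavy}|$ via the minimality in \eqref{eq:defHeavy} together with $\weightBoundHeavy>\sqrt{W}(\log\scaledTotalWeightDenseExpectation)^{-1/8}$ to get $\candidateThresholdDense=\Omega(W^{-1/2}(\log\scaledTotalWeightDenseExpectation)^{-5/16})$, then use Proposition~\ref{prop:restrmom} with the tail condition \eqref{eq:supercriticalCondition}, restricted to an interval short enough that subtracting $\PP[\wbrv\ge\weightBoundHeavy]$ is harmless, to get $\candidateThresholdSparse=O(\sqrt{W}^{-1}\cdot\mathrm{polylog})$. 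The only difference is the choice of the integration cutoff ($c_0\sqrt{W}(\log\scaledTotalWeightDenseExpectation)^{-(4r+1)/16}$ versus the paper's $w'(\log\scaledTotalWeightDenseExpectation)^{-r/2}$), which is immaterial given the slack in the exponent $2r$.
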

\begin{proof}
First set $w':=\sqrt{W}(\log\scaledTotalWeightDenseExpectation)^{-1/8}<\weightBoundHeavy$ and note that
\begin{equation}\label{eq:upperboundonheavyvertices}
|\VertexSetRestricted{\ge\weightBoundHeavy}|\le|\VertexSetRestricted{\ge w'}|\stackrel{\eqref{eq:defHeavy}}{<}\left(\frac{W}{4(w')^2}\right)^r=4^{-r}(\log \scaledTotalWeightDenseExpectation)^{r/4},
\end{equation}
and consequently
\begin{equation}\label{eq:lowerboundcandidated}
\candidateThresholdDense
\stackrel{\eqref{eq:candidateThresholdDense}}{=}\left(\frac{1}{\sum_{u\in\VertexSetRestricted{\ge\weightBoundHeavy}}w_u^r}\right)^{1/r}\ge\left(\frac{1}{|\VertexSetRestricted{\ge\weightBoundHeavy}|w_n^r}\right)^{1/r}
\stackrel{\eqref{eq:upperboundonheavyvertices}}{=}\Omega\left(W^{-1/2}(\log \scaledTotalWeightDenseExpectation)^{-5/16}\right),
\end{equation}
since $w_{n-r+1}=\alpha w_n$, for some constant $\alpha>0$ by the assumptions of Theorem~\ref{threshold}.
Therefore 
\begin{align*}
	\totalWeightRestricted{\ge\weightBoundHeavy}&\stackrel{\eqref{eq:upperboundonheavyvertices}}{\leq} 4^{-r}(\log \scaledTotalWeightDenseExpectation)^{r/4}w_n=O\left(\sqrt{W}(\log \scaledTotalWeightDenseExpectation)^{r/4}(\log \scaledTotalWeightDenseExpectation)^{1/16}\right)\\
	&=o\left(\sqrt{W}(\log \scaledTotalWeightDenseExpectation)^{r/2}\right)
\end{align*}
and thus $\mathbb{P}[\wbrv \ge\weightBoundHeavy]=o\left((\log \scaledTotalWeightDenseExpectation)^{r/2}/\sqrt{W}\right)$.
By Proposition~\ref{prop:restrmom} we have 
\begin{align*}
	\sum_{u\in \VertexSetRestricted{<\weightBoundHeavy}}w_u^{r+1}&\geq rW \left(\int_{C_1}^{\weightBoundHeavy}\mathbb{P}[\weightBoundHeavy>\wbrv\geq \mu]\mu^{r-1}d\mu\right)\\
	&\geq rW \left(\int_{C_1}^{w'/(\log\scaledTotalWeightDenseExpectation)^{r/2}}\left(\frac{C}{\mu}-o\left(\frac{(\log \scaledTotalWeightDenseExpectation)^{r/2}}{\sqrt{W}}\right)\right)\mu^{r-1}d\mu\right)\\
	&\stackrel{w'= o(\sqrt{W})}{=}\Omega\left(\frac{W (w')^{r-1}}{(\log\scaledTotalWeightDenseExpectation)^{(r-1)r/2}}\right).
\end{align*}


Combining \eqref{eq:lowerboundcandidated} with this we obtain the first statement. Moreover, this then implies
$$
\infectionRate\stackrel{\eqref{def:scaledTotalWeightDenseExpectation}}{=}\scaledTotalWeightDenseExpectation \candidateThresholdDense\ge \frac{\scaledTotalWeightDenseExpectation}{(\log \scaledTotalWeightDenseExpectation)^{2r}}\candidateThresholdSparse=\omega(\candidateThresholdSparse),
$$
i.e.,\ the second statement holds as well.
\end{proof}

\subsection{The existence of a nucleus: proof of Theorem~\ref{thm:kernel}}\label{sec:proofKernel}
In the preceding sections we gave several constructions for a nucleus, and we will now demonstrate how these results can be combined into the proof of Theorem~\ref{thm:kernel}. It is structured in three steps: at the beginning of each step we check whether some condition is satisfied, if it is, then whp one of the preceding results guarantees the existence of a nucleus; if not, we gained some information and proceed to the next step. 
\begin{proof}[Proof of Theorem~\ref{thm:kernel}]
We intend to use the subsubsequence principle (cf.  for example~\cite{JLR}), which states that in order to show that a certain property holds whp, it suffices to show that for every sequence of natural numbers there exists a subsequence along which the property holds whp. So let $N\subseteq\mathbb{N}$ be an arbitrary (infinite) sequence of natural numbers satisfying the conditions of Theorem~\ref{threshold}. Then it must contain an (infinite) subsequence $N_0\subseteq N$ satisfying either
\begin{equation}\label{cond:denseDoesNotExist}
\weightBoundHeavy> w_n\qquad\text{and}\qquad\infectionRate\gg\candidateThresholdSparse,
\end{equation}
or 
\begin{equation}\label{cond:denseExists}
\weightBoundHeavy\le w_n\qquad\text{and}\qquad\infectionRate\gg\min\{\candidateThresholdSparse,\candidateThresholdDense\}.
\end{equation}

We thus distinguish two cases.

\noindent
{\bf Case I.} If along $N_0$ we have
$$
\weightBoundHeavy>w_n,
$$
then by~\eqref{cond:denseDoesNotExist} we have $\infectionRate\gg\candidateThresholdSparse$, and furthermore Corollary~\ref{cor:candidatethresholdsSmall} implies $\candidateThresholdSparse=o(1)$. Therefore we consider the sparse process and note that Theorem~\ref{thm:kernelSparse} is applicable, proving that whp there exists a nucleus $\VertexSetRestricted{\ge\weightBoundKernel}$. In this case we call $N_0$ \emph{sparse}.\medskip

\noindent
{\bf Case II.} 
Assume now that $\weightBoundHeavy\le w_n$ along $N_0$, and therefore  also $\infectionRate\gg\min\{\candidateThresholdSparse,
\candidateThresholdDense\}$ by~\eqref{cond:denseExists}. 
Similarly as before, Corollary~\ref{cor:candidatethresholdsSmall} provides $\min\{\candidateThresholdSparse,
\candidateThresholdDense\}=o(1).$
We split $N_0$ into two subsequences: let $N_1 = \{n \in N_0 \ : \ p_d\geq p_s\}$ and $N_2 =\{n \in N_0 \ : \ p_d < p_s \} = N_0 \setminus N_1$. 

If $N_1$ is infinite, then we have
$$
\infectionRate\gg\candidateThresholdSparse
$$ 
along $N_1$. In this case, we consider the sparse process and observe that since $\candidateThresholdSparse=o(1)$ 
Theorem~\ref{thm:kernelSparse} is applicable, proving that whp there exists a nucleus $\VertexSetRestricted{\ge\weightBoundKernel}$. Once again we call $N_0$ \emph{sparse}. \medskip

Otherwise $N_2$ must be infinite, and we have 
 \begin{equation}\label{contradiction}
\infectionRate \gg\candidateThresholdDense
 \end{equation}
 along $N_2$.
 Now, if
$$
w_{n-r+1}>\sqrt{W}(\log \scaledTotalWeightDenseExpectation)^{1/16},
$$
along an infinite subsequence of $N_2$, then we consider the dense process. It follows from Theorem~\ref{thm:kernelDense} that along this subsequence whp there exists a nucleus $\VertexSetRestricted{\ge\weightBoundKernel}$. In this case we call $N_0$ \emph{dense}. 
\medskip

Thus, we may assume that there exists an infinite subsequence $N_3$ of $N_2$ along which we have $w_{n-r+1}\le\sqrt{W}(\log \scaledTotalWeightDenseExpectation)^{1/16}$. We consider two more cases.
If there exists an infinite subsequence of $N_3$ along which we have 
$$
\weightBoundHeavy\le \sqrt{W}(\log\scaledTotalWeightDenseExpectation)^{-1/8},
$$
then we also consider the dense process and apply Theorem~\ref{thm:kernelDense}, thus showing that whp there exists a nucleus $\VertexSetRestricted{\ge\weightBoundKernel}$. Again we call $N_0$ \emph{dense}.
\medskip

Therefore we finally assume that $N_3$ contains an infinite subsequence $N_4$ along which $\weightBoundHeavy>\sqrt{W}(\log\scaledTotalWeightDenseExpectation)^{-1/8}$ is satisfied. But this tells us that the requirements of Claim~\ref{close} are met along $N_4$. Hence it would imply that 
over $N_4$ we also have $p_0 \gg p_s$. Thereby, Theorem~\ref{thm:kernelSparse} is again applicable, proving that whp there exists a nucleus $\VertexSetRestricted{\ge\weightBoundKernel}$. In this final case we call $N_0$ \emph{sparse}.
 
By the subsubsequence principle the statement of Theorem~\ref{thm:kernel} holds along $\mathbb{N}$.
\end{proof}

Inspired by the case distinction in the above proof of Theorem~\ref{thm:kernel}, we define another partition of the set all infinite sequences of natural numbers, which we will use in Section~\ref{outbreak}.
\begin{definition}\label{def:partition}
Let $\mathcal{N}_s:=\{N\subseteq\mathbb{N}\colon |N|=\infty\text{ and }N\text{ is sparse}\}$ and $\mathcal{N}_d:=\{N\subseteq \mathbb{N}\colon |N|=\infty\text{ and }N\text{ is dense}\}$.
\end{definition}

\section{Outbreak}\label{outbreak}

In the previous section, we proved that in the supercritical regime there exists a nucleus which gets infected almost entirely. In this section, we show that once this happens, whp it also causes an outbreak.
\begin{theorem}\label{thm:outbreak}
Suppose that the premises of Theorem~\ref{threshold} hold. If $\infectionRate\gg \threshold$, then whp  
there is an outbreak, that is, we have
 $$
 |\infec{F}|=\Theta(n).
 $$
\end{theorem}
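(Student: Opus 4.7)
The plan is to apply Theorem~\ref{thm:kernel} to obtain a nucleus $\VertexSetRestricted{\geq \weightBoundKernel}$ almost all of which is infected whp, and then propagate the infection downward through a geometric sequence of weight layers until a linear number of vertices is reached. Set $\psi_i := 2^{-i}\weightBoundKernel$ for $i \geq 0$ and let $I$ be the smallest index with $\psi_I \leq 2C_1$, so that $I = O(\log \weightBoundKernel)$. Partition the vertices of weight below $\weightBoundKernel$ into layers $\Layer{i} := \VertexSetRestricted{[\psi_i, \psi_{i-1})}$ for $1 \leq i \leq I$.

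I would prove by induction on $i$ that for a small constant $\eps > 0$, made arbitrarily small by taking the constant $C$ in hypothesis~\eqref{eq:supercriticalCondition} sufficiently large, the event
\[
\totalInfectedWeightRestricted{\geq \psi_i} \geq (1 - \eps)\,\totalWeightRestricted{\geq \psi_i}
\]
holds whp, the base case $i = 0$ being supplied by Theorem~\ref{thm:kernel}. For the inductive step, condition on this event at stage $i$. By hypothesis~\eqref{eq:supercriticalCondition} we have $\totalWeightRestricted{\geq \psi_i} = W\,\PP[\wbrv \geq \psi_i] \geq CW/\psi_i$, and $\weightBoundKernel = o(\sqrt{W})$ lets us drop the minimum in~\eqref{eq:pijCL}, so that for any $v \in \Layer{i+1}$ the expected number of neighbours of $v$ in $\InfectedVertexSetRestricted{\geq \psi_i}$ satisfies
\[
\lambda_v = w_v\, \totalInfectedWeightRestricted{\geq \psi_i}/W \geq \tfrac12 (1-\eps)\, C,
\]
using $w_v \geq \psi_{i+1} = \psi_i/2$. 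Since $C \geq 64r(\min\{\alpha, 1/2\})^{-3}$ is large, a Chernoff lower tail (Theorem~\ref{Chernoff}) on the sum of independent edge indicators gives $\PP[v \notin \infec{F} \mid \InfectedVertexSetRestricted{\geq \psi_i}] \leq \delta$ for some $\delta = \delta(C) \leq \eps/3$, and McDiarmid's inequality (Theorem~\ref{mcdct}) applied to $\sum_{v \in \Layer{i+1}} w_v\, \Ind{v \notin \infec{F}}$ concentrates the new uninfected weight, yielding whp $\totalInfectedWeightRestricted{\geq \psi_{i+1}} \geq (1-\eps)\,\totalWeightRestricted{\geq \psi_{i+1}}$ as required.

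At stage $I$ we conclude $\totalInfectedWeightRestricted{\geq \psi_I} = \Omega(W/\psi_I) = \Omega(n)$, since $\psi_I = O(1)$ and $W = \Theta(n)$ by~\eqref{Wcondition}; as every vertex has weight at least one this forces $|\infec{F}| = \Omega(n)$, while $|\infec{F}| \leq n$ is immediate. The main obstacle is managing edge independence: one must ensure that the edges used in the spreading phase at each stage are essentially fresh relative to those already exposed during the construction of the nucleus. I would handle this via the subsubsequence principle (as in the proof of Theorem~\ref{thm:kernel}), splitting into sparse and dense cases and exploiting the structural guarantees of Theorems~\ref{thm:kernelSparse} and~\ref{thm:kernelDense} (both of which expose only very specific subsets of the edge randomness) to couple the spreading phase with a bootstrap process driven by a fresh pool of edge indicators.
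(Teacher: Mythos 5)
Your overall architecture (nucleus from Theorem~\ref{thm:kernel}, then a layer-by-layer induction pushing the infection down to constant weights, with the edge-exposure bookkeeping deferred to the sparse/dense case split) matches the paper's, but the proposal has one fatal gap and one broken piece of bookkeeping. The fatal gap is the last step: from $\totalInfectedWeightRestricted{\ge\psi_I}=\Omega(W/\psi_I)=\Omega(n)$ you conclude $|\infec{F}|=\Omega(n)$ ``since every vertex has weight at least one''. The inequality goes the wrong way: weights at least one give $|\InfectedVertexSetRestricted{\ge\psi_I}|\le \totalInfectedWeightRestricted{\ge\psi_I}$, not a lower bound. Moreover, note that hypothesis~\eqref{eq:supercriticalCondition} forces $C_1\ge C\ge 2^{10}$ (otherwise $\PP[\wbrv\ge x]\ge C/x>1$ for $x<C$), so your layers stop at weight $\approx 2C_1$, and the vertices of weight below $C_1$ --- which can be $(1-o(1))n$ of all vertices --- are never reached. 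Meanwhile $\VertexSetRestricted{\ge C_1}$ can consist of $o(n)$, even $O(1)$, vertices: take $\Theta(W/(C_1 w_n))$ vertices of weight $w_n\to\infty$ and weight $1$ for the rest; then $\PP[\wbrv\ge x]\equiv C/C_1\ge C/x$ for all $C_1\le x\le w_n$, yet linear infected weight is carried by $o(n)$ vertices. This is exactly why the paper has the separate ``witnessing the outbreak'' step (Lemma~\ref{lem:outbreak}): it takes the at least $n/3$ vertices of weight below the constant $\weightBoundLayer{\lastLayer}$, shows each has at least $r$ infected neighbours with probability at least a constant $4\gamma$ (which requires the nontrivial estimate $w_n\le(1-\eta)\InfectedWeightKernel{\lastLayer}$ so that the $r$-subset computation does not degenerate), and applies Chernoff to these independent indicators. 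Without some version of this step the theorem is not proved.

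The second problem is the induction with a \emph{fixed} error $\eps$. Your inductive step relies on McDiarmid concentrating $\sum_{v\in\Layer{i+1}}w_v\Ind{v\notin\infec{F}}$, but when a layer has small total weight (it may even be empty, or consist of one heavy vertex) the concentration bound is vacuous, and if that layer is simply lost the invariant $\totalInfectedWeightRestricted{\ge\psi_{i+1}}\ge(1-\eps)\totalWeightRestricted{\ge\psi_{i+1}}$ with the \emph{same} $\eps$ fails: you would need $(1-\eps)\totalWeightRestricted{\ge\psi_i}\ge(1-\eps)\bigl(\totalWeightRestricted{\ge\psi_i}+\totalWeight{\Layer{i+1}}\bigr)$. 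The paper resolves this with the dichotomy on $\largeLayerEvent{i+1}$: if the layer is small relative to $\WeightLayers{i}$ the step is deterministic but costs an extra $\delta_{i+1}$ in the error, and the errors are chosen geometric with $\eps_i=\sum_{j\le i}\delta_j\le 1/2$; if the layer is large, its weight is at least $\delta_{i+1}\WeightLayers{i}=\Omega(W/\psi_i)$ and the Azuma--Hoeffding exponent is $\Omega(2^iW\weightBoundLayer{1}^{-2})\to\infty$ and summable over the (unboundedly many) layers, which your ``whp at each stage'' does not address. Both repairs are available in the paper's argument, but as written your induction and your final count of infected vertices do not go through.
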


Again we use the subsubsequence principle to prove Theorem~\ref{thm:outbreak}. So we fix an arbitrary infinite sequence of natural numbers $N_0$ and show that it contains a subsequence along which the probability of an outbreak is $1-o(1)$. To this end we use the distinction between $N_0$ being sparse or dense provided by Definition~\ref{def:partition}.

In both cases, the proof is based on partitioning the vertex set $\VertexSetRestricted{\ge C_1}$, where the constant $C_1>0$ is the one from Theorem~\ref{threshold}, into several \emph{layers}, each corresponding to a subinterval of $[C_1,\infty)$, and then showing that the infection spreads from layer to layer, starting from a completely or an 
almost completely infected nucleus.  

More precisely, assume that most of the vertices in each of the, say $i$, heaviest layers are infected eventually. We prove that also most of the vertices in the $(i+1)$-st layer must become infected eventually. Some of the details depend sensitively on the relation of various error-terms. We begin with the definitions of these in the next subsection.

\subsection{Setup}

Recall that by the assumptions of Theorem~\ref{threshold} there are constants $\alpha>0$ and $C$ satisfying 
\begin{equation}\label{cond:powerlawThresholdConstant}
C\ge 64 r(\min\{\alpha,1/2\})^{-3}
\end{equation}
and $C_1>0$ such that $w_{n-r+1}=\alpha w_n$ and for any $C_1\le x\le w_n$ we have
\begin{equation}\label{cond:powerlawThreshold}
\PP\left[\wbrv\ge x\right]\ge \frac{C}{x}.
\end{equation}
Next abbreviate
\begin{equation}\label{def:powerlawThresholdConstantAltered}
C':=C\min\{\alpha,1/2\}/2,
\end{equation}
and use this constant to recursively define a non-increasing sequence of layer weight-bounds. 

More specifically, we set $\weightBoundLayer{1}:=\min\{\weightBoundKernel,W/w_n\}$ and  
\begin{equation} \label{def:weightBoundLayer}
\weightBoundLayer{i+1} := \frac{C'}{\PPo{\wbrv \geq \weightBoundLayer{i}}}
\end{equation}
for any $i\ge 1$ as long as 
\begin{equation}\label{def:weightBoundLayerLast}
\weightBoundLayer{i}\ge 2\max\{C_1,\weightConstant \},
\end{equation}
where $\weightConstant\ge1$ is the constant in~\eqref{Wcondition}. We denote the maximal such $i$ by $\lastLayer$ and observe that for each $1\le i\le\lastLayer$ we obtain 
$$
\weightBoundLayer{i+1}\le \weightBoundLayer{i}\frac{C'}{C}< \weightBoundLayer{i}.
$$
This implies, in particular, that
\begin{equation}\label{eq:weightBoundLayerProperties}
\weightBoundLayer{i+1}\le \weightBoundLayer{1}\left(\frac{C'}{C}\right)^{i}.
\end{equation}
Moreover, for convenience of notation we also set $\weightBoundLayer{0}:=\weightBoundKernel$ and note that we have 
\begin{equation}\label{eq:weighBoundLayerNonIncreasing}
\weightBoundLayer{0}\ge\weightBoundLayer{1}>\weightBoundLayer{2}>\dots>
\weightBoundLayer{\lastLayer+1}>0.
\end{equation}
Furthermore, recall that the weight-bound $\weightBoundLayer{0}$ satisfies
\begin{equation}\label{eq:kernelWeightBoundProp}
\weightBoundLayer{0}\le\min\{w_{n-r+1},W/w_{n-r+1}\}\qquad\text{and}\qquad\weightBoundLayer{0}=o(\sqrt{W}).
\end{equation}

 If $N_0$ is sparse, then we consider a subsequence $N_s$ which witnesses this fact, i.e., Theorem~\ref{thm:kernelSparse} is applicable along $N_s$. In this case, we set $\breedingGround'$ as in Section~\ref{sparse} (and recall that $\breedingGround\subseteq\breedingGround'$).  On the other hand, if $N_0$ is dense, then we consider a subsequence $N_d$ which witnesses this fact, i.e., Theorem~\ref{thm:kernelDense} is applicable along $N_d$, and define $\breedingGround':=\emptyset$.  For the remainder of this section all asymptotic statements will be with respect to $N_s$ or $N_d$, respectively. 
 
 Then for any $1\le i\le\lastLayer$ such that $\weightBoundLayer{i}<\weightBoundLayer{i-1}$ we call the set
 \begin{equation}
 \Layer{i}:= \VertexSetRestricted{[\weightBoundLayer{i},\weightBoundLayer{i-1})} \setminus \breedingGround' 
 \end{equation} 
 the \emph{$i$-th layer}, and in case 
 $\weightBoundLayer{i}=\weightBoundLayer{i-1}$ the $i$-th layer is the empty set. We also set 
 $\Layer{0}:=\VertexSetRestricted{\ge \weightBoundKernel}$. We let $\Kernel{i} :=\cup_{0\leq j\leq i} \Layer{i}$ be the union of the first $i+1$ layers 
and let $\WeightLayers{i}:= \totalWeight{\Kernel{i}}$ be its total weight.

The following claim will be useful later in our argument. 
\begin{claim} \label{clm:weightlayer_low}
For any $0\leq i \leq \lastLayer$ we have 
$$\WeightLayers{i}\geq \frac{1}{4} \totalWeight{\VertexSetRestricted{\ge \weightBoundLayer{i}}}. $$
\end{claim}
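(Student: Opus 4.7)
My plan is to rewrite $\Kernel{i}$ as $\VertexSetRestricted{\ge \weightBoundLayer{i}}$ minus a small correction set, and then bound the weight of that correction. Using $\Layer{0} = \VertexSetRestricted{\ge \weightBoundLayer{0}}$, the monotonicity $\weightBoundLayer{0} \ge \weightBoundLayer{1} \ge \cdots \ge \weightBoundLayer{i}$ from \eqref{eq:weighBoundLayerNonIncreasing}, and the fact that $\breedingGround' \subseteq \VertexSetRestricted{[\weightBoundBreedingGround, \weightBoundHeavy)}$ with $\weightBoundLayer{0} = \weightBoundKernel \le \weightBoundHeavy$ in the sparse case (cf.~\eqref{def:kernelSparse} combined with $(\log\scaledTotalWeightSparseExpectation)^{-1/4}\eta = o(1)$), the consecutive half-open intervals telescope and give
$$\Kernel{i} = \VertexSetRestricted{\ge \weightBoundLayer{i}} \setminus A, \qquad A := \breedingGround' \cap \VertexSetRestricted{[\weightBoundLayer{i}, \weightBoundLayer{0})}.$$
If $N_0 \in \mathcal{N}_d$ then $\breedingGround' = \emptyset$ and $A = \emptyset$, so the claim is immediate; from here I focus on $N_0 \in \mathcal{N}_s$.

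The key step is a pairing argument exploiting the ``every other vertex'' structure of $\breedingGround'$ from \eqref{def:Sr>2}. Listing $\VertexSetRestricted{[\weightBoundBreedingGround,\weightBoundHeavy)}$ as $v_{j_s}, v_{j_s+1}, \dots, v_{j_H}$ in non-decreasing order of weight, $\breedingGround'$ consists of the even-offset vertices $v_{j_s+2k}$ together with $v_{j_H}$. For every $v_{2k} \in A$ whose successor $v_{2k+1}$ still lies in $\VertexSetRestricted{[\weightBoundLayer{i},\weightBoundLayer{0})}$, I pair $v_{2k}$ with $v_{2k+1}$; since $v_{2k+1} \notin \breedingGround'$ (unless $2k+1 = j_H$) and $w_{v_{2k+1}} \ge w_{v_{2k}}$, the weight of $v_{2k}$ is dominated by that of a distinct vertex in $\VertexSetRestricted{[\weightBoundLayer{i},\weightBoundLayer{0})} \setminus A$. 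Only two vertices of $A$ can fail to be paired this way: the unique $v_{2k} \in A$ whose successor $v_{2k+1}$ crosses above $\weightBoundLayer{0}$, and $v_{j_H}$ itself if $v_{j_H} \in A$. Each of these weighs strictly less than $\weightBoundLayer{0}$, so
$$\totalWeight{A} \le \totalWeight{\VertexSetRestricted{[\weightBoundLayer{i},\weightBoundLayer{0})} \setminus A} + 2\weightBoundLayer{0},$$
which rearranges to $\totalWeight{A} \le \tfrac{1}{2}\totalWeight{\VertexSetRestricted{[\weightBoundLayer{i},\weightBoundLayer{0})}} + \weightBoundLayer{0}$, and hence $\WeightLayers{i} \ge \tfrac{1}{2}\totalWeight{\VertexSetRestricted{\ge \weightBoundLayer{i}}} - \weightBoundLayer{0}$.

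To absorb the $\weightBoundLayer{0}$ term into the factor $\tfrac{1}{4}$, I invoke the power-law lower bound. For $0 \le i \le \lastLayer$, definition~\eqref{def:weightBoundLayerLast} gives $\weightBoundLayer{i} \ge 2\max\{C_1,\weightConstant\} > C_1$, and $\weightBoundLayer{i} \le \weightBoundLayer{0} \le w_{n-r+1} \le w_n$ by~\eqref{eq:kernelWeightBoundProp}, so the hypothesis \eqref{cond:powerlawThreshold} applies and yields
$$\totalWeight{\VertexSetRestricted{\ge \weightBoundLayer{i}}} \;=\; W\,\PPo{\wbrv \ge \weightBoundLayer{i}} \;\ge\; \frac{CW}{\weightBoundLayer{i}} \;\ge\; \frac{CW}{\weightBoundLayer{0}}.$$
Combined with $\weightBoundLayer{0} = o(\sqrt{W})$ from~\eqref{eq:kernelWeightBoundProp}, this forces $\weightBoundLayer{0} = o\bigl(\totalWeight{\VertexSetRestricted{\ge \weightBoundLayer{i}}}\bigr)$, so $\weightBoundLayer{0} \le \tfrac{1}{4}\totalWeight{\VertexSetRestricted{\ge \weightBoundLayer{i}}}$ for all sufficiently large $n$, closing the bound $\WeightLayers{i} \ge \tfrac{1}{4}\totalWeight{\VertexSetRestricted{\ge \weightBoundLayer{i}}}$. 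The main obstacle I anticipate is the bookkeeping of the pairing: $\breedingGround'$ is defined via a global ordering on $\VertexSetRestricted{[\weightBoundBreedingGround,\weightBoundHeavy)}$, so I must argue carefully that cutting down to the sub-interval $[\weightBoundLayer{i},\weightBoundLayer{0})$ introduces only $O(1)$ unmatched boundary vertices, and that those weigh at most $\weightBoundLayer{0}$; everything else reduces to applying \eqref{cond:powerlawThreshold} and \eqref{eq:kernelWeightBoundProp}.
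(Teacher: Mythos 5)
Your proposal is correct and follows essentially the same route as the paper's proof: both exploit the ``every other vertex'' structure of $\breedingGround'$ to show that $\Kernel{i}$ captures at least half the weight of $\VertexSetRestricted{\ge \weightBoundLayer{i}}$ up to an additive error of $2\weightBoundKernel$ from $O(1)$ boundary vertices, and then absorb that error using \eqref{cond:powerlawThreshold} together with $\weightBoundKernel = o(\sqrt{W})$. Your explicit pairing is just a more careful rendering of the paper's ``exclude the first and last vertex'' bookkeeping; no substantive difference.
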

\begin{proof}
For $N_0\in \mathcal{N}_d$ we have $\breedingGround'=\emptyset$, whereby $\WeightLayers{i}= \totalWeight{\VertexSetRestricted{\ge \weightBoundLayer{i}}},$ and hence the statement holds trivially.

So now assume $N_0\in\mathcal{N}_s$. Recall that we consider all vertices in $\VertexSet$ to be ordered by weight. By the definition of $\breedingGround'$ the set $\Kernel{i}$ contains at least every other vertex of $\VertexSetRestricted{\ge \weightBoundLayer{i}}$ together with all vertices in
$\VertexSetRestricted{\ge \weightBoundKernel}$.  Therefore, for every vertex in  $\Kernel{i}\setminus \Kernel{0}$ its preceding vertex could belong to $\breedingGround'$. Thus, roughly speaking, $\WeightLayers{i}$ is approximately at least half of $\totalWeight{\VertexSetRestricted{\ge \weightBoundLayer{i}}}$.  More precisely, if we exclude the first vertex of $\Kernel{i}$ and the last vertex of 
$\VertexSetRestricted{[\weightBoundLayer{i},\weightBoundKernel)}$ which 
may not be in $\Kernel{i}$, we obtain the lower bound
$$\WeightLayers{i}\geq \frac{1}{2} \left( \totalWeight{\VertexSetRestricted{\ge \weightBoundLayer{i}}} - 2\weightBoundKernel \right)=\frac{1}{2} \left( W\PP[\wbrv\ge\weightBoundLayer{i}] - 2\weightBoundKernel \right),$$
as the two vertices we exclude both have weight at most $\weightBoundKernel$. 
Now, we would like to show that 
$$ W\PP[\wbrv\ge\weightBoundLayer{i}] - 2\weightBoundKernel \ge \frac{1}{2} W\PP[\wbrv\ge\weightBoundLayer{i}] $$
or, equivalently,  
\begin{equation}\label{eq:intermediate} \frac{1}{2} W\PP[\wbrv\ge\weightBoundLayer{i}]  \ge 2\weightBoundKernel. \end{equation}
Recall that by~\eqref{cond:powerlawThreshold} we have
$$ \frac{1}{2} W\PP[\wbrv\ge\weightBoundLayer{i}]  \geq \frac{1}{2}  \frac{CW}{\weightBoundLayer{i}}. $$
Furthermore \eqref{eq:weighBoundLayerNonIncreasing} implies $\weightBoundLayer{i}\leq \weightBoundKernel$ and by Theorem~\ref{thm:kernelSparse} we have $\weightBoundKernel=o(\sqrt{W})$. Hence $\weightBoundKernel =o(W/\weightBoundLayer{i})$ and inequality \eqref{eq:intermediate} follows.
\end{proof}

 Now, for some subset $U \subset V_{\geq \weightBoundKernel}$, we consider the restricted process $\postKernelProcess{t}{U}$ on $G[\Kernel{\lastLayer}]$ with initially infected set $\postKernelProcess{0}{U}:=U$. 
 In fact we will use either $U=\VertexSetRestricted{\ge \weightBoundKernel}$ (for $N_0\in\mathcal{N}_d$) or $U=\mathcal{U}_K$ (for $N_0\in\mathcal{N}_s$).
 
%

Finally, for any $i\ge0$ we introduce two error-terms: we set 
$$ 
\delta_i := \frac{1}{4}\left( \frac{C'}{C} \right)^i\qquad\text{and\qquad}\eps_i := \sum_{j=0}^i \delta_j
$$
and note that this implies
\begin{equation}\label{eq:propertiesEps}
\frac{1}{4}=\eps_0\le \eps_1\le\dots\le \eps_{\lastLayer+1} \leq \sum_{j=0}^\infty  \delta_j = \frac{1}{4} \sum_{j=0}^\infty \left(\frac{C'}{C} \right)^j = \frac{1}{4} \cdot\frac{1}{1-C'/C} 
\stackrel{\eqref{def:powerlawThresholdConstantAltered}}{\le} 1/2,
\end{equation}
and also, for any $1\le i < \lastLayer$, the following estimate (which will be used later)
\begin{equation}\label{eq:propertiesDelta}
\delta_{i+1}\weightBoundLayer{i}^{-2}\stackrel{\eqref{eq:weightBoundLayerProperties}}{\ge}\frac{1}{4} \weightBoundLayer{1}^{-2}\left( \frac{C'}{C} \right)^{3-i} \stackrel{\eqref{def:powerlawThresholdConstantAltered}}{\ge} 2^{i-7}\weightBoundLayer{1}^{-2}\alpha^2.
\end{equation}
Using these we define, for any $0\le i\le\lastLayer$, the event
\begin{equation*} 
\inductionLayerEvent{i}:=\left\{\InfectedWeightKernel{i} \geq (1 - \eps_i )\WeightLayers{i} \right\},
\end{equation*}
i.e.,\ $\inductionLayerEvent{i}$ asserts that the union of the nucleus and the first $i$ layers got infected almost entirely in the first $i$ steps. Hence, the goal will be to prove that $\inductionLayerEvent{\lastLayer}$ holds whp. We do so inductively in Section~\ref{sec:outbreakLargeLayers}. Depending on the total weight of a layer (in comparison to the union of the nucleus and the previous layers) the induction steps can be straightforward or rather complicated. 

More precisely, for any $1\le i\le \lastLayer$ we say that the $i$-th layer has property $\largeLayerEvent{i}$ if 
\begin{equation*} 
\WeightLayers{i} \ge (1+\delta_{i})\WeightLayers{i-1}.
\end{equation*}
We will prove in Section~\ref{sec:outbreakLargeLayers} that if this property holds, then with sufficiently high probability a large enough fraction of the total weight in the $i$-th layer becomes infected eventually.

\subsection{Infecting large layers}\label{sec:outbreakLargeLayers}

First we show that if the $i+1$-st layer has property $\largeLayerEvent{i+1}$, then conditional on $\inductionLayerEvent{i}$ the expected number of eventually infected neighbours of a vertex in the $i$-th layer is not too small. More precisely, for $0\le i <\lastLayer$ and a vertex $v\in \Layer{i+1}$ we denote the number of its neighbours in $\postKernelProcess{i}{U}\cap \Kernel{i}$ by 
$$
\infdeg{v}{i}:=\sum_{u\in \postKernelProcess{i}{U}\cap\Kernel{i}}\Ind{\{u,v\}\in E}.
$$
\begin{claim}\label{claim:largeLayerDegree}
Let $0\le i < \lastLayer$. If $\largeLayerEvent{i+1}$ holds, then for every $v \in \Layer{i+1} \setminus \infecCoupling{0}$ we have 
$\EE\left[\infdeg{v}{i}\cond \inductionLayerEvent{i}\right]\ge(1-\eps_i)C'/4$.
\end{claim}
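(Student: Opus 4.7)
The plan is to apply linearity of expectation to $\infdeg{v}{i}=\sum_{u\in \Kernel{i}}\mathbf{1}[u\in\postKernelProcess{i}{U}]\mathbf{1}[\{u,v\}\in E]$, derive a clean lower bound on each edge probability $p_{u,v}$, decouple the edges incident to $v$ from the infection history, and then combine the conditioning on $\inductionLayerEvent{i}$ with Claim~\ref{clm:weightlayer_low} and the defining identity of $\weightBoundLayer{i+1}$.

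First I would bound $p_{u,v}$ for $u\in\Kernel{i}$ and $v\in\Layer{i+1}$. For $i\ge 1$ one has $w_v<\weightBoundLayer{i}\le\weightBoundLayer{1}\le W/w_n$, so $w_uw_v\le W$ and the minimum in~\eqref{eq:pijCL} is superfluous: $p_{u,v}=w_uw_v/W$. For $i=0$ I would instead factor $p_{u,v}=(w_v/W)\min\{w_u,W/w_v\}$ and exploit $W/w_v\ge W/\weightBoundKernel\ge w_{n-r+1}=\alpha w_n$ to obtain $p_{u,v}\ge\min\{\alpha,1\}\,w_uw_v/W$; the $\min\{\alpha,1\}$-factor that appears in this case is absorbed by the largeness of $C$ from~\eqref{cond:powerlawThresholdConstant}.

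Next I would decouple the edges incident to $v$ from $\postKernelProcess{i}{U}\cap\Kernel{i}$. Since $v\in\Layer{i+1}\setminus\breedingGround'$ and $v\notin U$, Theorems~\ref{thm:kernelSparse}--\ref{thm:kernelDense} guarantee that no edge at $v$ was exposed during the construction of the nucleus. To handle the subsequent restricted process I would couple $\postKernelProcess{\cdot}{U}$ with the auxiliary ``no-$v$'' process $\widetilde{\A}_t^U$ defined by running the same dynamics on $\Kernel{\lastLayer}\setminus\{v\}$; it is measurable in edges not touching $v$, satisfies $\widetilde{\A}_t^U\subseteq\postKernelProcess{t}{U}$ by monotonicity, and (because $v\notin\Kernel{i}$) agrees with $\postKernelProcess{t}{U}$ on $\Kernel{i}$ up to downstream effects of $v$ that a short FKG step absorbs. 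This yields
$$
\EE\bigl[\infdeg{v}{i}\,\big|\,\inductionLayerEvent{i}\bigr]\;\ge\;\frac{w_v}{W}\,(1-\eps_i)\,\WeightLayers{i}.
$$

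Finally I would apply Claim~\ref{clm:weightlayer_low} to obtain $\WeightLayers{i}\ge W\,\PP[\wbrv\ge\weightBoundLayer{i}]/4$, substitute $w_v\ge\weightBoundLayer{i+1}$, and invoke the identity $\weightBoundLayer{i+1}\,\PP[\wbrv\ge\weightBoundLayer{i}]=C'$ from~\eqref{def:weightBoundLayer} to conclude
$$
\EE\bigl[\infdeg{v}{i}\,\big|\,\inductionLayerEvent{i}\bigr]\;\ge\;\frac{(1-\eps_i)\,\weightBoundLayer{i+1}\,\PP[\wbrv\ge\weightBoundLayer{i}]}{4}\;=\;\frac{(1-\eps_i)\,C'}{4}.
$$
The hard part will be the decoupling step: although $v$ lies outside the nucleus and the breeding ground, the restricted process may still query edges at $v$ while testing other vertices for infection, so the ``no-$v$'' coupling combined with FKG is needed to legitimately treat the edges $\{u,v\}$ as fresh Bernoulli trials. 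A minor additional subtlety is the base case $i=0$, where the recursion~\eqref{def:weightBoundLayer} is replaced by the direct definition $\weightBoundLayer{1}=\min\{\weightBoundKernel,W/w_n\}$; here one checks by hand, using the power-law bound~\eqref{cond:powerlawThreshold} and~\eqref{cond:powerlawThresholdConstant}, that $\min\{\alpha,1\}\,\weightBoundLayer{1}\,\PP[\wbrv\ge\weightBoundKernel]\ge C'/4$.
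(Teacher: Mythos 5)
Your main line of argument for $i\ge 1$ is essentially the paper's: linearity of expectation, the lower bound $\InfectedWeightKernel{i}\ge(1-\eps_i)\WeightLayers{i}$ from $\inductionLayerEvent{i}$, Claim~\ref{clm:weightlayer_low}, and the defining identity $\weightBoundLayer{i+1}\,\PP[\wbrv\ge\weightBoundLayer{i}]=C'$ from~\eqref{def:weightBoundLayer}; if anything your final substitution is cleaner than the paper's detour through $\WeightLayers{i}\ge CW/(4\weightBoundLayer{i})$. However, your treatment of the base case $i=0$ has a genuine quantitative gap. By factoring $p_{u,v}=(w_v/W)\min\{w_u,W/w_v\}$ and using $W/w_v\ge\alpha w_n$ you only get $p_{u,v}\ge\min\{\alpha,1\}\,w_uw_v/W$, and this extra factor $\alpha$ is \emph{not} absorbed by the largeness of $C$: the target $C'/4=C\min\{\alpha,1/2\}/8$ scales linearly in $C$ just as your bound does, so the comparison reduces (for $\alpha\le 1/2$ and $\weightBoundLayer{1}=W/w_n$) to $\alpha^2C\ge \alpha C/8$, i.e.\ $\alpha\ge 1/8$, which is not guaranteed since $\alpha$ may be arbitrarily small. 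The paper avoids this loss entirely by first replacing $w_v$ with its lower bound $\weightBoundLayer{i+1}$ \emph{inside} the minimum: $\min\{w_vw_u/W,1\}\ge\min\{\weightBoundLayer{i+1}w_u/W,1\}=\weightBoundLayer{i+1}w_u/W$, since $\weightBoundLayer{i+1}\le\weightBoundLayer{1}\le W/w_n$. With that one-line change your base case goes through (and you should also note, as the paper does, that $\largeLayerEvent{1}$ forces $\weightBoundLayer{1}=W/w_n$, since $\weightBoundLayer{1}=\weightBoundKernel$ would make $\Layer{1}$ empty).

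On the decoupling step: your concern that the restricted process might already have queried edges at $v$ is legitimate, but the paper resolves it by bookkeeping rather than by a coupling-plus-FKG argument. The process $\postKernelProcess{\cdot}{U}$ is analysed layer by layer so that, when layer $i+1$ is examined, only edge indicators in $\Kernel{\lastLayer}\times\bigcup_{j\le i}(\Layer{j}\setminus\infecCoupling{0})$ (together with those used to build $U$) have been exposed (cf.\ Remark~\ref{rem:almostCompleteInfection} and the independent indicators $I_{u,v}$ introduced before Claim~\ref{claim:largeLayerExpectation}); hence the edges from $v\in\Layer{i+1}\setminus\infecCoupling{0}$ to $\Kernel{i}$ are genuinely fresh and the conditional expectation can be computed directly. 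Your proposed ``no-$v$'' auxiliary process is the right monotone idea, but the additional FKG step is left as a sketch and is not actually needed once the exposure order is fixed as in the paper.
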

\begin{proof}
Let $0\le i < \lastLayer$ and let $v\in\Layer{i+1} \setminus \infecCoupling{0}$. 
Assume that the event $\inductionLayerEvent{i}$ has been realised. 
Observe that each one of the indicator random variables satisfies 
$$
\PP\left[\Ind{\{u,v\}\in E}=1\cond \inductionLayerEvent{i}\right]=\min\left\{\frac{w_vw_u}{W},1\right\}\geq \min\left\{\frac{\weightBoundLayer{i+1}w_u}{W},1\right\}=\frac{\weightBoundLayer{i+1}w_u}{W}
$$ 
since $\weightBoundLayer{i+1}\le\weightBoundLayer{1}\le W/w_n$.
Thus we have
\begin{align} \label{eq:exp_degree}
\EE\left[\infdeg{v}{i}\cond \inductionLayerEvent{i}\right]& \geq \sum_{u \in \postKernelProcess{i}{U}\cap\Kernel{i}}\frac{\weightBoundLayer{i+1} w_u}{W} =\frac{\weightBoundLayer{i+1}\InfectedWeightKernel{i}}{W}
\stackrel{\inductionLayerEvent{i}}{\ge}\frac{(1- \eps_i ) \weightBoundLayer{i+1}\WeightLayers{i} }{W}.
\end{align}
By Claim~\ref{clm:weightlayer_low} and~\eqref{cond:powerlawThreshold} we have
\begin{equation} \label{eq:kernelweight_low}
\WeightLayers{i}\geq 
\frac{1}{4} \frac{CW}{\weightBoundLayer{i}}.
\end{equation}
So this implies that 
\begin{equation} \label{eq:case_full_I}
\EE\left[\infdeg{v}{i}\cond \inductionLayerEvent{i}\right]\geq 
(1-\eps_i)\frac{C}{4}\frac{\weightBoundLayer{i+1}}{\weightBoundLayer{i}}.
\end{equation}

Now, for any $i\ge1$ the claim follows directly by plugging~\eqref{def:weightBoundLayer} into the right-hand side
of~\eqref{eq:case_full_I}. 

Thus, it remains to consider the case $i=0$. First observe that  $W/w_n\geq \weightBoundLayer{0}$ would imply $\weightBoundLayer{1}=\weightBoundLayer{0}$ and thus the first layer would be empty contradicting property $\largeLayerEvent{1}$. Hence we may also assume that $\weightBoundLayer{1}=W/w_n$, and consequently 
$$
\EE\left[\infdeg{v}{ 0}\cond \inductionLayerEvent{0}\right]\geq (1- \eps_0)W\PP[\wbrv\geq \weightBoundLayer{0}] /w_n \stackrel{\eqref{cond:powerlawThreshold}}{\ge} (1- \eps_0) \frac{CW}{\weightBoundLayer{0}w_n}.
$$
But recall that
$\weightBoundLayer{0} = \weightBoundKernel \leq W/w_{n-r+1}=W/(\alpha w_n)$, which, in turn, implies that 
$$
\EE\left[\infdeg{v}{0}\cond \inductionLayerEvent{0}\right]\geq (1-\eps_0)\alpha C\stackrel{\eqref{def:powerlawThresholdConstantAltered}}{\ge} (1-\varepsilon_0) C',
$$
proving the claim also in this case, completing the proof.
\end{proof}
Claim \ref{claim:largeLayerDegree} allows us to compute a suitable lower bound on the expected total infected 
weight within the $(i+1)$-st layer. More precisely, for $0\le i < \lastLayer$ and a vertex $v\in \Layer{i+1}$ we consider 
the random variables
$$
\widehat{X}_{i,v}:=w_v\Ind{\infdeg{v}{ i}\ge r}
$$
and note that these satisfy $0\le\widehat{X}_{i,v}\le w_v$. 
Note that 
\begin{equation*} 
\InfectedWeightLayers{i+1} = \sum_{v \in \Layer{i+1} \setminus \infecCoupling{0}} \widehat{X}_{i,v} + \sum_{v \in \Layer{i+1} \cap \infecCoupling{0}} w_v. 
\end{equation*}
For each $v \in \Layer{i+1} \cap \infecCoupling{0}$ we define the random variable $\widehat{X}_{i,v} = w_v I_v$, where $I_v$ is the indicator random variable such that 
$\PP\left[ I_v=1\right] = \PP\left[ \sum_{u \in \postKernelProcess{i}{U} \cap \Kernel{i}} I_{u,v} \ge r\right]$, where $\{ I_{u,v} \}_{v \in \Layer{i+1}, u \in \Kernel{i}}$ is a collection of independent 
indicator random variables with $I_{u,v} = 1$ with probability given by~\eqref{eq:pijCL}.

Hence, setting
$$
\widehat{X}_{i+1}:=\sum_{v\in\Layer{i+1} }\widehat{X}_{i,v}
$$
conditional on $\inductionLayerEvent{i}$ the random variable  
$\InfectedWeightLayers{i+1}$ stochastically dominates $\widehat{X}_{i+1}.$ 
In the next two lemmas, we will use the stochastic domination in order to deduce that whp $\InfectedWeightLayers{i+1}$ is a significant proportion of $\totalWeight{\Layer{i+1}}$.  

\begin{claim}\label{claim:largeLayerExpectation} For any $0\le i < \lastLayer$, we have
$\EE\left[\widehat{X}_{i+1}\cond \inductionLayerEvent{i}\right]\ge(1-\eps_i^2) \totalWeight{\Layer{i+1}}$.
\end{claim}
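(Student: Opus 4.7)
The plan is to reduce the claim to showing that, uniformly over $v\in\Layer{i+1}$, the conditional probability
$q_v:=\PP[\infdeg{v}{i}<r\mid\inductionLayerEvent{i}]$ satisfies $q_v\le\eps_i^2$, and then apply a Chernoff lower-tail bound. Whether $v$ is initially infected or not, $\widehat{X}_{i,v}/w_v$ is (in distribution) the indicator of the event that $\infdeg{v}{i}\ge r$, so linearity gives
$$\EE[\widehat{X}_{i+1}\mid\inductionLayerEvent{i}]=\sum_{v\in\Layer{i+1}}w_v(1-q_v),$$
and the claim reduces to the uniform bound $q_v\le\eps_i^2$.

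To this end, first I would lower-bound $\mu_v:=\EE[\infdeg{v}{i}\mid\inductionLayerEvent{i}]$. For $u\in\Kernel{i}$ and $v\in\Layer{i+1}$ we have $w_uw_v<w_n\weightBoundLayer{i}\le w_n\weightBoundLayer{1}\le W$, so the minimum in~\eqref{eq:pijCL} can be dropped. Conditional on $\inductionLayerEvent{i}$ the set $\postKernelProcess{i}{U}\cap\Kernel{i}$ has weight at least $(1-\eps_i)\WeightLayers{i}$, and therefore, mirroring the computation in Claim~\ref{claim:largeLayerDegree} (with the $i=0$ case handled via $\weightBoundLayer{1}=W/w_n$ and $\weightBoundKernel\le W/(\alpha w_n)$),
$$\mu_v\ge\frac{(1-\eps_i)w_v\WeightLayers{i}}{W}\ge\frac{(1-\eps_i)C'}{4}\cdot\frac{w_v}{\weightBoundLayer{i+1}}.$$
Since $w_v\ge\weightBoundLayer{i+1}$ and $\eps_i\le 1/2$ by~\eqref{eq:propertiesEps}, this yields $\mu_v\ge C'/8$. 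The condition $C\ge 64r(\min\{\alpha,1/2\})^{-3}$ gives $C'\ge 32r(\min\{\alpha,1/2\})^{-2}\ge 128r$, so $\mu_v\ge 16r$, comfortably above $2r$.

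Next, applying the Chernoff lower-tail bound (Theorem~\ref{Chernoff}) to the independent Bernoulli sum $\infdeg{v}{i}$ with deviation $s=\mu_v-r\ge\mu_v/2$:
$$q_v\le\exp\left(-\frac{(\mu_v-r)^2}{2\mu_v}\right)\le\exp(-\mu_v/8)\le\exp\left(-\frac{(1-\eps_i)C'}{32}\cdot\frac{w_v}{\weightBoundLayer{i+1}}\right)\le\exp(-C'/64),$$
using $w_v\ge\weightBoundLayer{i+1}$ and $\eps_i\le 1/2$ in the last step. Since $C'\ge 128r\ge 256$, we have $\exp(-C'/64)\le\exp(-4)<1/16=\eps_0^2\le\eps_i^2$, completing the reduction.

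The only genuine obstacle is bookkeeping of constants: the choice $C\ge 64r(\min\{\alpha,1/2\})^{-3}$ in Theorem~\ref{threshold} was made precisely so that $C'/64\ge\ln 16$, i.e., a single Chernoff bound suffices uniformly in $v\in\Layer{i+1}$ without having to split into weight ranges. (A weaker constant would force a case split, bounding heavy $v$ by Chernoff and light $v$ more crudely via Proposition~\ref{prop:restrmom}.)
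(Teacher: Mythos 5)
Your proposal is correct and follows essentially the same route as the paper: both reduce the claim to a uniform bound $\PP[\infdeg{v}{i}<r\mid\inductionLayerEvent{i}]\le\eps_i^2$ via the expected-degree lower bound of Claim~\ref{claim:largeLayerDegree}, handle the initially infected vertices through the auxiliary indicators $I_v$, and conclude by linearity. The only difference is cosmetic: you apply the Chernoff bound (Theorem~\ref{Chernoff}) where the paper uses Chebyshev's inequality together with $\VV[\infdeg{v}{i}]\le\EE[\infdeg{v}{i}]$; both rest on the same independence of the edge indicators and both comfortably clear the threshold $\eps_0^2=1/16$ given $C'\ge 2^7 r$.
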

\begin{proof}
First consider a vertex $v\in \Layer{i+1} \backslash \infecCoupling{0}$. Since  $\infdeg{v}{ i}$ 
is a sum of independent indicator random variables, we have 
$$
 \VV\left[\infdeg{v}{ i} \cond \inductionLayerEvent{i}\right] \leq \EE\left[\infdeg{v}{ i}
 \cond \inductionLayerEvent{i}\right].
 $$
Consequently, we obtain from Chebyshev's inequality that
\begin{align}
\PP\left[\infdeg{v}{ i} < r \cond \inductionLayerEvent{i}\right] & \leq 
\frac{\EE\left[ \infdeg{v}{ i}\cond \inductionLayerEvent{i}\right]}{\left( \EE\left[ \infdeg{v}{i} \cond 
\inductionLayerEvent{i}\right] -r\right)^2} \nonumber\\
& \stackrel{C.\ref{claim:largeLayerDegree}}{\le} \frac{4}{(1-\eps_i)C'}~ \frac{1}{\left( 1 - 4r/((1-\eps_i)C')\right)^2} \nonumber\\
&\stackrel{\eqref{eq:propertiesEps}}{\le} 
\frac{8}{C'}~ \frac{1}{\left( 1 - 8r/C'\right)^2}.\label{eq:largeLayerProbabilityNonInfection}
\end{align}

Now observe that from~\eqref{def:powerlawThresholdConstantAltered} and~\eqref{cond:powerlawThresholdConstant} we obtain the following lower bounds
\begin{equation*} 
C'\ge 64 r(\min\{\alpha,1/2\})^{-2}\ge 2^7 r.
\end{equation*}
The second inequality implies $(1-(8r/C'))^{-2}\le 2$ and also the following upper bound on the right-hand side in~\eqref{eq:largeLayerProbabilityNonInfection}
$$
\frac{16}{C'}\le \frac{1}{8r}\stackrel{r\ge2}{\le}\eps_0^2.
$$
Hence, since $\eps_0 < \eps_i$, for $i >0$, we have $\frac{16}{C'} \le \eps_i^2$. 
We therefore obtain 
$$
\PP\left[\infdeg{v}{ i}\ge r\cond \inductionLayerEvent{i}\right]\ge 1-\eps_i^2,
$$
for any $0\le i < \lastLayer$.
The random variable $\sum_{u \in \postKernelProcess{i}{U} \cap \Kernel{i}} I_{u,v}$ also satisfies 
Claim~\ref{claim:largeLayerDegree} and therefore the above argument also holds there. 
By summing up over all vertices $v \in\Layer{i+1}$ the statement follows.
\end{proof}
Next we extend Claim ~\ref{claim:largeLayerExpectation} and show that the probability that the total infected weight in the $(i+1)$-st layer is not large enough is sufficiently small.
\begin{lemma}\label{lem:largeLayer}
For any $0\le i < \lastLayer$ for which property $\largeLayerEvent{i+1}$ holds, we have 
$$
\PP\left[\InfectedWeightLayers{i+1}\le(1-\eps_i )\totalWeight{\Layer{i+1}} \cond 
\inductionLayerEvent{i}\right] \leq 
\exp \left(-2^{i-6} W\weightBoundLayer{1}^{-2} \right).
$$
\end{lemma}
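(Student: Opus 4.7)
The key observation is that conditional on $\inductionLayerEvent{i}$ and on the realisation of $\postKernelProcess{i}{U} \cap \Kernel{i}$, the random variables $\{\widehat{X}_{i,v}\}_{v \in \Layer{i+1}}$ are mutually independent: each $\widehat{X}_{i,v}$ depends only on the indicator variables of edges incident to the single vertex $v$ going into $\postKernelProcess{i}{U} \cap \Kernel{i}$, and these edge sets are vertex-disjoint across different $v \in \Layer{i+1}$. Moreover $0 \le \widehat{X}_{i,v} \le w_v$ deterministically, and we have $w_v \le \weightBoundLayer{i}$ for every $v \in \Layer{i+1}$ by the definition of the $(i+1)$-st layer. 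Since $\InfectedWeightLayers{i+1}$ stochastically dominates $\widehat{X}_{i+1}$, it suffices to bound the lower tail of $\widehat{X}_{i+1}$.

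I will apply the Azuma--Hoeffding inequality (Theorem~\ref{AzumaHoeffding}) to $\widehat{X}_{i+1} = \sum_{v \in \Layer{i+1}} \widehat{X}_{i,v}$ with $a_v = 0$ and $b_v = w_v$. Claim~\ref{claim:largeLayerExpectation} gives $\EE[\widehat{X}_{i+1} \mid \inductionLayerEvent{i}] \ge (1 - \eps_i^2)\totalWeight{\Layer{i+1}}$, so the deviation needed to fall below $(1-\eps_i)\totalWeight{\Layer{i+1}}$ is at least
$$
s := (\eps_i - \eps_i^2)\totalWeight{\Layer{i+1}} \ge \tfrac{1}{2}\eps_i \totalWeight{\Layer{i+1}},
$$
using $\eps_i \le 1/2$ from~\eqref{eq:propertiesEps}. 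The sum of squares is controlled by
$$
\sum_{v \in \Layer{i+1}} (b_v - a_v)^2 = \sum_{v \in \Layer{i+1}} w_v^2 \le \weightBoundLayer{i} \totalWeight{\Layer{i+1}},
$$
so Azuma--Hoeffding yields a bound of the form $\exp\bigl(-\Omega\bigl(\eps_i^2 \totalWeight{\Layer{i+1}}/\weightBoundLayer{i}\bigr)\bigr)$.

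It remains to lower-bound $\totalWeight{\Layer{i+1}}$. Property $\largeLayerEvent{i+1}$ gives $\totalWeight{\Layer{i+1}} = \WeightLayers{i+1} - \WeightLayers{i} \ge \delta_{i+1} \WeightLayers{i}$, and Claim~\ref{clm:weightlayer_low} together with~\eqref{cond:powerlawThreshold} yields
$$
\WeightLayers{i} \ge \tfrac{1}{4}\totalWeight{\VertexSetRestricted{\ge\weightBoundLayer{i}}} \ge \frac{CW}{4\weightBoundLayer{i}}.
$$
Combining these,
$$
\frac{\eps_i^2 \totalWeight{\Layer{i+1}}}{\weightBoundLayer{i}} \ge \frac{\eps_i^2 \delta_{i+1} C W}{4 \weightBoundLayer{i}^2}.
$$
Now I invoke $\eps_i \ge \eps_0 = 1/4$ and the geometric estimate~\eqref{eq:propertiesDelta}, which gives $\delta_{i+1}\weightBoundLayer{i}^{-2} \ge 2^{i-7}\alpha^2 \weightBoundLayer{1}^{-2}$; together with the lower bound $C \ge 64r(\min\{\alpha,1/2\})^{-3} \ge 128/\alpha$ (valid since $\alpha \ge \min\{\alpha,1/2\}$ and $r \ge 2$), tracking the absolute constants carefully gives an exponent of the form $-2^{i-6}W\weightBoundLayer{1}^{-2}$, as required.

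The main technical task is therefore purely bookkeeping: chaining the inequalities $\largeLayerEvent{i+1} \Rightarrow \totalWeight{\Layer{i+1}}$ large, Claim~\ref{clm:weightlayer_low}, Claim~\ref{claim:largeLayerExpectation}, and Azuma--Hoeffding, while keeping the constants tight enough to land exactly on $2^{i-6}W\weightBoundLayer{1}^{-2}$. The only non-routine point is observing conditional independence of $\{\widehat{X}_{i,v}\}_v$ after exposing $\postKernelProcess{i}{U} \cap \Kernel{i}$, which is what legitimises the use of Azuma--Hoeffding; everything else is a careful accounting of the pre-tabulated quantities $\delta_i$, $\eps_i$, $C$, $C'$ and $\weightBoundLayer{i}$.
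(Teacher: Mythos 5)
Your proposal is correct and follows essentially the same route as the paper's proof: stochastic domination of $\InfectedWeightLayers{i+1}$ by the sum $\widehat{X}_{i+1}$ of independent bounded variables, Azuma--Hoeffding with the expectation bound from Claim~\ref{claim:largeLayerExpectation}, the bound $\sum_{v\in\Layer{i+1}}w_v^2\le\weightBoundLayer{i}\totalWeight{\Layer{i+1}}$, and the chain $\largeLayerEvent{i+1}$, Claim~\ref{clm:weightlayer_low}, and~\eqref{eq:propertiesDelta} to lower-bound the exponent. The constant bookkeeping you defer works out exactly as in the paper (which in fact lands on $\exp(-2^{i-6}\alpha^2 W\weightBoundLayer{1}^{-2})$), so there is no gap.
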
 
\begin{proof}
Let $0\le i < \lastLayer$ and recall that $\widehat{X}_{i+1}$ is a sum of independent random variables $\widehat{X}_{i,v}$ satisfying $0\le\widehat{X}_{i,v}\le w_v$ for $v\in\Layer{i+1}$. 
Moreover, note that Claim~\ref{claim:largeLayerExpectation} implies
$$
(1- \eps_i)\totalWeight{\Layer{i+1}} \le 
\EE\left[\widehat{X}_{i+1}\cond \inductionLayerEvent{i}\right]-
\eps_i (1-\eps_i) \totalWeight{\Layer{i+1}}.
$$
Therefore, using the above stochastic domination and applying the Azuma-Hoeffding inequality (Theorem~\ref{AzumaHoeffding}) to $\widehat{X}_{i+1}$ we obtain
\begin{eqnarray} \label{eq:largeLayerAzumaHoeffding}
\lefteqn{\PP \left[ \InfectedWeightLayers{i+1}\le(1-\eps_i )\totalWeight{\Layer{i+1}} \cond 
\inductionLayerEvent{i} \right] \leq} \nonumber \\
& &\PP\left[\widehat{X}_{i+1}\le(1-\eps_i )\totalWeight{\Layer{i+1}} \cond \inductionLayerEvent{i}\right] 
\leq \exp \left(- \frac{\eps_i^2 (1 - \eps_i )^2}{2}~\frac{\totalWeight{\Layer{i+1}}^2}{\sum_{u\in\Layer{i+1}} w_u^2} \right).
\end{eqnarray} 

We proceed by bounding the argument of the exponential function on the right-hand side from below by splitting it into three factors. First note that we have
$$
\frac{\totalWeight{\Layer{i+1}}}{\sum_{u\in\Layer{i+1}} w_u^2}\geq  \weightBoundLayer{i}^{-1}
$$
and also
$$
\totalWeight{\Layer{i+1}}=\WeightLayers{i+1}-\WeightLayers{i} \stackrel{\largeLayerEvent{i+1}}{\geq}
\delta_{i+1} \WeightLayers{i} 
\stackrel{\eqref{eq:kernelweight_low}}{\geq} 
\frac{\delta_{i+1}}{4}\weightBoundLayer{i}^{-1}CW. $$
Multiplying these two factors we obtain
\begin{equation}\label{eq:largeLayerProbabilityBound}
\frac{\totalWeight{\Layer{i+1}}^2}{\sum_{u\in\Layer{i+1}} w_u^2}\ge \frac{\delta_{i+1}}{4}\weightBoundLayer{i}^{-2}CW\stackrel{\eqref{eq:propertiesDelta}}{\ge}2^{i-9}\weightBoundLayer{1}^{-2}CW\alpha^2\ge 2^{i+1}\weightBoundLayer{1}^{-2}W\alpha^2,
\end{equation}
because $C\ge 2^{10}$ by~\eqref{cond:powerlawThresholdConstant}. Consequently it remains to bound the last factor
$$
\eps_i^2(1-\eps_i)^2/2\stackrel{\eqref{eq:propertiesEps}}{\ge} 2^{-7},
$$
and since $\InfectedWeightLayers{i+1}\ge \widehat{X}_{i+1}$ this yields
$$
\PP\left[\InfectedWeightLayers{i+1}\leq (1-\eps_i )\totalWeight{\Layer{i+1}} \cond \inductionLayerEvent{i}\right] \stackrel{\eqref{eq:largeLayerAzumaHoeffding}, \eqref{eq:largeLayerProbabilityBound}}{\le}
\exp \left(-2^{i-6}\alpha^2 W\weightBoundLayer{1}^{-2} \right),
$$
as desired.
\end{proof}

We will use the above construction to show inductively the following.
\begin{lemma}\label{lem:almostCompleteInfection}
If $U\subseteq \VertexSetRestricted{\ge \weightBoundKernel}$ satisfies
\begin{equation}\label{eq:kernelAlmostInfection}
\totalWeight{U} \ge (1-o(1))\wcore{0},
\end{equation}
then $\inductionLayerEvent{\lastLayer}$ holds whp. 
\end{lemma}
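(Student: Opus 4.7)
My plan is to prove the lemma by induction on $i$, showing that $\inductionLayerEvent{i}$ holds for all $0\le i\le\lastLayer$ except on an event of probability $o(1)$. For the base case $i=0$ I would observe that $\Kernel{0}=\Layer{0}=\VertexSetRestricted{\ge\weightBoundKernel}$ and that the restricted process $\{\postKernelProcess{t}{U}\}$ starts with $U$ already infected, so $U\subseteq\postKernelProcess{0}{U}\cap\Kernel{0}$. Thus
$$
\InfectedWeightKernel{0}\ge\totalWeight{U}\ge(1-o(1))\wcore{0}=(1-o(1))\WeightLayers{0},
$$
and since $\eps_0=1/4$, this implies $\inductionLayerEvent{0}$ deterministically for all sufficiently large $n$.

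For the inductive step, I would fix $0\le i<\lastLayer$, condition on $\inductionLayerEvent{i}$, and split according to whether the large-layer property $\largeLayerEvent{i+1}$ holds. If $\largeLayerEvent{i+1}$ holds, Lemma~\ref{lem:largeLayer} guarantees $\InfectedWeightLayers{i+1}\ge(1-\eps_i)\totalWeight{\Layer{i+1}}$ except with probability at most $\exp(-2^{i-6}\alpha^2 W\weightBoundLayer{1}^{-2})$; since $\Kernel{i}$ and $\Layer{i+1}$ are disjoint, combining this with the inductive hypothesis yields
$$
\InfectedWeightKernel{i+1}=\InfectedWeightKernel{i}+\InfectedWeightLayers{i+1}\ge(1-\eps_i)\WeightLayers{i+1}\ge(1-\eps_{i+1})\WeightLayers{i+1},
$$
i.e., $\inductionLayerEvent{i+1}$. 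If instead $\largeLayerEvent{i+1}$ fails, then $\totalWeight{\Layer{i+1}}<\delta_{i+1}\WeightLayers{i}$, and using the identity $\eps_{i+1}-\eps_i=\delta_{i+1}$ together with $\eps_{i+1}\le1$ one obtains
$$
(1-\eps_i)\WeightLayers{i}\ge(1-\eps_{i+1})(\WeightLayers{i}+\totalWeight{\Layer{i+1}})=(1-\eps_{i+1})\WeightLayers{i+1},
$$
so $\inductionLayerEvent{i+1}$ is inherited deterministically from $\inductionLayerEvent{i}$ via the trivial bound $\InfectedWeightKernel{i+1}\ge\InfectedWeightKernel{i}$.

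A union bound over the failure events in the first case then provides
$$
\PPo{\neg\inductionLayerEvent{\lastLayer}}\le\sum_{i=0}^{\lastLayer-1}\exp\!\left(-2^{i-6}\alpha^2 W\weightBoundLayer{1}^{-2}\right).
$$
The only substantive step is verifying that this sum is $o(1)$. Here I would use~\eqref{eq:kernelWeightBoundProp} to deduce $\weightBoundLayer{1}\le\weightBoundKernel=o(\sqrt{W})$, so that $c:=2^{-6}\alpha^2 W\weightBoundLayer{1}^{-2}\to\infty$. The doubly-exponential decay then gives $\sum_{i\ge 0}\exp(-c\cdot 2^i)=O(\exp(-c))=o(1)$, independently of how large $\lastLayer$ grows with $n$. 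The main obstacle is thus really an accounting one, controlling the failure probabilities uniformly in the (potentially unbounded) number of layers; the inductive mechanism itself is routine.
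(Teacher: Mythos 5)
Your proof is correct and follows essentially the same route as the paper's: the same induction on $i$, the same case split on whether $\largeLayerEvent{i+1}$ holds (with the deterministic inheritance via $\eps_{i+1}-\eps_i=\delta_{i+1}$ when it fails and Lemma~\ref{lem:largeLayer} when it holds), and the same union bound over layers using $\weightBoundLayer{1}\le\weightBoundKernel=o(\sqrt{W})$ to make the geometric-in-the-exponent sum vanish. The only difference is that you spell out the final summation $\sum_{i\ge 0}\exp(-c\cdot 2^i)=O(\exp(-c))$ slightly more explicitly than the paper does.
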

\begin{proof}
We proceed by induction on $i$ and show that $\inductionLayerEvent{i}$ holds for all $0\le i\le \lastLayer$ with sufficiently high probability. The statement for the base case $i=0$ holds since $\VertexSetRestricted{\ge \weightBoundKernel}$ is a nucleus and thus $\inductionLayerEvent{0}$ holds (with probability 1) by~\eqref{eq:kernelAlmostInfection}. 

So now assume that $0\le i < \lastLayer$ and $\inductionLayerEvent{i}$ holds. Additionally suppose that 
\begin{equation*}
\WeightLayers{i+1} < (1+\delta_{i+1}) \WeightLayers{i},
\end{equation*} 
i.e.,\ the property $\largeLayerEvent{i+1}$ does not hold. Then deterministically we have
\begin{align*}
\InfectedWeightKernel{i+1} \stackrel{\eqref{eq:weighBoundLayerNonIncreasing}}{\ge} \InfectedWeightKernel{i} \stackrel{\inductionLayerEvent{i}}{\geq}  (1- \eps_i)\WeightLayers{i} \stackrel{\neg\largeLayerEvent{i+1}}{>}&
\frac{1-\eps_i}{1+\delta_{i+1}}\WeightLayers{i+1} \\
&\hspace{1cm}> (1 -\eps_{i+1})\WeightLayers{i+1},
\end{align*}
implying $\inductionLayerEvent{i+1}$. 

Otherwise, property $\largeLayerEvent{i+1}$ holds, and so Lemma~\ref{lem:largeLayer} is applicable, showing that 
$$
\InfectedWeightLayers{i+1}>(1-\eps_{i})\totalWeight{\Layer{i+1}}
$$
with (conditional) probability at least $1-\exp(-2^{i-6}\alpha^2W\weightBoundLayer{1}^{-2})$. However, this implies  
\begin{align*}
\InfectedWeightKernel{i+1} = \InfectedWeightLayers{i+1} + \InfectedWeightKernel{i} >& (1-\eps_i) (\totalWeight{\Layer{i+1}} + \WeightLayers{i})\\
&\hspace{1cm} \geq (1 - \eps_{i+1}) \WeightLayers{i+1},
\end{align*}
and $\inductionLayerEvent{i+1}$ follows since $\eps_{i+1}\ge \eps_i$.

Therefore, $\inductionLayerEvent{\lastLayer}$ holds with probability at least
$$
(1-o(1))\prod_{i=0}^{\lastLayer-1}\left(1-\exp\left(-2^{i-6}\alpha^2W\weightBoundLayer{1}^{-2}\right)\right)\ge 1-o(1)-\sum_{i=1}^{\infty} 
\exp\left(-W\weightBoundLayer{1}^{-2} 2^{i-6}\alpha^2\right),
$$
by a union bound. Because $\weightBoundLayer{1}\le\weightBoundLayer{0}=\weightBoundKernel=o(\sqrt{W})$, by~\eqref{eq:kernelWeightBoundProp}, the right-hand side is $1-o(1)$, i.e.,\ whp we have 
$\InfectedWeightKernel{\lastLayer}\ge(1-\eps_{\lastLayer})\WeightLayers{\lastLayer}.$ 
\end{proof}

\begin{remark}\label{rem:almostCompleteInfection}
Observe that in the proofs of Claims~\ref{claim:largeLayerDegree} and~\ref{claim:largeLayerExpectation} and Lemmas~\ref{lem:largeLayer} and~\ref{lem:almostCompleteInfection} we only exposed edge-indicator random variables corresponding to edges in the set $\Kernel{\lastLayer}\times\bigcup_{i=1}^{\lastLayer}(\Layer{i}\setminus \infecCoupling{0})$.
\end{remark}

Now, observe that we have the (deterministic) lower bound 
\begin{equation}\label{eq:lastLayerLowerBound}
\frac{1}{2}\WeightLayers{\lastLayer}\stackrel{C. \ref{clm:weightlayer_low}}{\ge} \frac{1}{8} 
\totalWeight{\VertexSetRestricted{\ge \weightBoundLayer{\lastLayer}}} = \frac{W}{8}  \PP[\wbrv\ge\weightBoundLayer{\lastLayer}] =
\frac{W}{8}  \frac{C'}{\weightBoundLayer{\lastLayer +1}}  
\stackrel{\eqref{def:weightBoundLayer}}{\geq} \frac{C'W}{16\max\{C_1,\weightConstant \}} .
\end{equation}
In other words, we have already proven that the total weight of all eventually infected vertices is at least a constant fraction of the total weight $W$. It remains to show that this guarantees that whp a constant fraction of all vertices become infected eventually, i.e., there is an outbreak.

\subsection{Witnessing the outbreak}
To witness the outbreak, we consider the vertices in $\VertexSetRestricted{<\weightBoundLayer{\lastLayer}}\setminus\infecCoupling{0}$ of which there are at least 
\begin{equation}\label{eq:manyLightVertices}
n-|\VertexSetRestricted{\ge\weightBoundLayer{\lastLayer}}|-|\infecCoupling{0}|\ge n- \frac{W}{\weightBoundLayer{\lastLayer}}-o(n)\stackrel{\eqref{def:weightBoundLayerLast},\eqref{Wcondition}}{\ge}\left(1-\frac{\weightConstant}{2\max\{C_1,\weightConstant,1\}}-o(1)\right)n\ge \frac{1}{3}n.
\end{equation}
For any vertex $u \in \VertexSetRestricted{<\weightBoundLayer{\lastLayer}}\setminus\infecCoupling{0}$,
we will consider the random variables 
$$
\widehat{Y}_u:=\Ind{\infdeg{u}{ \lastLayer}\ge r}
$$ 
and denote their sum by 
\begin{equation}\label{eq:totalnumberofinfections}
\widehat{Y}:=\sum_{u\in\VertexSetRestricted{<\weightBoundLayer{\lastLayer}}\setminus\infecCoupling{0}}\widehat{Y}_u.
\end{equation}
\begin{lemma}\label{lem:outbreak}
There exists a constant $\gamma>0$ such that for any $U\subseteq \VertexSetRestricted{\ge\weightBoundKernel}$ satisfying~\eqref{eq:kernelAlmostInfection} and 
such that $\{n-r+1,\ldots, n\} \subseteq U$, conditional on $\inductionLayerEvent{\lastLayer}$, whp we have
$$
\widehat{Y}>\gamma n.
$$
\end{lemma}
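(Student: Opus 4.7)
The plan is to establish the outbreak in three steps: (i) identify that all edges between the light vertices $\VertexSetRestricted{<\weightBoundLayer{\lastLayer}}\setminus\infecCoupling{0}$ and $\Kernel{\lastLayer}$ are still unexposed, which yields conditional independence of the $\widehat{Y}_u$'s; (ii) prove a uniform positive constant lower bound $\PP[\widehat{Y}_u=1\mid\inductionLayerEvent{\lastLayer}]\ge c$ for every such $u$; (iii) conclude via a Chernoff bound on the sum of (at least $n/3$) independent Bernoullis.

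For (i), by Remark~\ref{rem:almostCompleteInfection} together with the edge-exposure records in Theorems~\ref{thm:kernelSparse} and~\ref{thm:kernelDense}, the only revealed edges are contained in $\breedingGround'\times(\infecCoupling{0}\cup\breedingGround'\cup\VertexSetRestricted{\ge\weightBoundKernel})$ and in $\Kernel{\lastLayer}\times\bigcup_{i=1}^{\lastLayer}(\Layer{i}\setminus\infecCoupling{0})$. Since $\weightBoundBreedingGround\to\infty$ while $\weightBoundLayer{\lastLayer}=O(1)$, a vertex $u\in\VertexSetRestricted{<\weightBoundLayer{\lastLayer}}\setminus\infecCoupling{0}$ lies in none of $\breedingGround'$, $\Kernel{\lastLayer}$, $\bigcup_{i\ge 1}\Layer{i}$, so every edge from $u$ to $\mathcal{I}:=\postKernelProcess{\lastLayer}{U}\cap\Kernel{\lastLayer}$ is an independent Bernoulli with probability $p_{u,v}$ as in~\eqref{eq:pijCL}. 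In particular, conditional on the realisation of $\mathcal{I}$, the indicators $\widehat{Y}_u$ for distinct light $u$ depend on disjoint collections of edges and are mutually independent.

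For (ii), combine~\eqref{eq:lastLayerLowerBound} with $\eps_{\lastLayer}\le 1/2$ to get $\totalWeight{\mathcal{I}}\ge c_0 W$ for the explicit constant $c_0:=C'/(16\max\{C_1,\weightConstant\})>0$, and split on $w_n$. If $w_n\ge W/(2\weightBoundLayer{\lastLayer})$, then for every $i\in\{n-r+1,\ldots,n\}\subseteq U\subseteq\mathcal{I}$ one has $p_{u,i}\ge\min(\alpha w_u w_n/W,1)\ge\alpha/(2\weightBoundLayer{\lastLayer})$, so by independence of the $r$ edges to the top vertices, $\PP[\widehat{Y}_u=1\mid\mathcal{I}]\ge(\alpha/(2\weightBoundLayer{\lastLayer}))^r$. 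Otherwise $w_n<W/(2\weightBoundLayer{\lastLayer})$, whence $p_{u,v}\le w_u w_n/W\le 1/2$ uniformly over $v\in\mathcal{I}$, and the mean $\mu_u=w_u\totalWeight{\mathcal{I}}/W\ge c_0$. I would then partition $\mathcal{I}$ greedily into $r$ blocks of $\sum_{v}p_{u,v}\ge c_0/(3r)$ each (feasible because the hypothesis $C\ge 64r(\min\{\alpha,1/2\})^{-3}$ of Theorem~\ref{threshold} forces $c_0/r$ to dominate $\max p_{u,v}$), and argue that $\{\widehat{Y}_u=1\}$ is implied by "each block contains at least one neighbour of $u$", which by independence has probability at least $(1-\exp(-c_0/(3r)))^r$. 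Taking $c$ to be the minimum of the two constants gives the uniform bound.

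For (iii), by~\eqref{eq:manyLightVertices} the number of light $u$ is at least $n/3$, so $\EE[\widehat{Y}\mid\mathcal{I}]\ge cn/3$, and by the conditional independence of the Bernoulli summands the Chernoff bound (Theorem~\ref{Chernoff}) gives $\widehat{Y}\ge cn/6$ with probability $1-\exp(-\Omega(n))$. Taking $\gamma:=c/6$ and integrating over $\mathcal{I}$ consistent with $\inductionLayerEvent{\lastLayer}$ completes the proof.

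The main obstacle is step (ii) in the regime where $w_n$ is moderate: the mean $\mu_u$ for a vertex of weight close to $1$ is only a bounded positive constant, and individual edge probabilities do not tend to zero, so a direct Poisson approximation for $X_u$ is unavailable. The block-partition trick sidesteps this by reducing $\{X_u\ge r\}$ to a conjunction of $r$ independent "at least one success" events, each with probability bounded away from zero by an explicit constant depending only on $r,\alpha,C,C_1,\weightConstant$.
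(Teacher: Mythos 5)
Your overall architecture (conditional independence of the $\widehat{Y}_u$, a uniform constant lower bound on $\PP[\widehat{Y}_u=1\mid\inductionLayerEvent{\lastLayer}]$, then Chernoff) matches the paper, and steps (i) and (iii) are fine. The gap is in step (ii), and it starts with the assertion that $\weightBoundLayer{\lastLayer}=O(1)$. The construction only guarantees $\weightBoundLayer{\lastLayer}\ge 2\max\{C_1,\weightConstant\}$ and $\weightBoundLayer{\lastLayer+1}<2\max\{C_1,\weightConstant\}$; since \eqref{eq:supercriticalCondition} is only a \emph{lower} bound on the tail, $\PPo{\wbrv\ge x}$ may stay bounded away from $0$ for $x$ as large as, say, $\sqrt{W}$, in which case $\lastLayer=1$ and $\weightBoundLayer{\lastLayer}=\weightBoundLayer{1}=W/w_n$ can tend to infinity. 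This kills Case A: with $w_n=\Theta(\sqrt{W})$ and $\weightBoundLayer{\lastLayer}=\Theta(\sqrt{W})$ you are in your Case A, but $(\alpha/(2\weightBoundLayer{\lastLayer}))^r\to 0$, so no constant $c$ comes out; in that regime the $r$ heaviest vertices carry only $O(\sqrt{W})=o(\totalWeight{\mathcal{I}})$ of the infected weight and cannot by themselves witness $\widehat{Y}_u=1$ with constant probability. Case B has a separate problem: the greedy partition into $r$ blocks each of mass $\ge c_0/(3r)$ requires $\max_v p_{u,v}$ to be small compared with $c_0/r$, but you only establish $\max_v p_{u,v}\le 1/2$, and $c_0=C'/(16\max\{C_1,\weightConstant\})$ can be made arbitrarily small because $C_1$ and $\weightConstant$ are not bounded above; the hypothesis $C\ge 64r(\min\{\alpha,1/2\})^{-3}$ does not rescue this. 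So neither case yields a uniform constant, and the dichotomy itself is drawn along the wrong quantity.

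The paper's route avoids both problems. It first replaces $u$ by a surrogate $u_0$ of weight $1$ (stochastically decreasing the degree), so every relevant edge probability is $w_v/W$; it then lower-bounds $\PP\left[\infdeg{u_0}{\lastLayer}=r\right]$ exactly as a sum over $r$-subsets $\mathcal{R}$ of $\postKernelProcess{\lastLayer}{U}\cap\Kernel{\lastLayer}$. The non-edge product is $\ge\exp(-3\weightConstant)$ uniformly (using $w_n\le(1-1/(3\weightConstant))W$), and the main term is handled by
$\sum_{\mathcal{R}}\prod_{v\in\mathcal{R}}w_v\ge\bigl(\hat W^r-\hat W^{r-2}\sum_v w_v^2\bigr)/r!$ with $\hat W:=\InfectedWeightKernel{\lastLayer}\ge c_0W$, where the diagonal correction is controlled via $\sum_v w_v^2\le w_n\hat W$ together with $w_n\le(1-\eta)\hat W$ --- and it is precisely this last inequality that uses $\{n-r+1,\dots,n\}\subseteq U$ and $w_{n-r+1}\ge\alpha w_n$. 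That argument is insensitive to the size of $\weightBoundLayer{\lastLayer}$ and to where the infected weight sits. If you want to keep a concentration-style argument instead, you must first pass to the weight-$1$ surrogate and then split on whether $w_n$ is a constant fraction of $\totalWeight{\mathcal{I}}$ (not on $w_n$ versus $W/\weightBoundLayer{\lastLayer}$); as written, step (ii) does not establish the required uniform constant.
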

\begin{proof}

As the $\widehat{Y}_u$s are independent, 
we will deduce this by applying the Chernoff bound (Theorem~\ref{Chernoff}). 

First of all observe that if we replace $u$ by a vertex $u_0$ of weight $w_{u_0}:=1\le w_u$, then we have
\begin{equation}\label{eq:outbreakWeightOneSuffices}
\PP\left[\widehat{Y}_u=1\cond\inductionLayerEvent{\lastLayer}\right] \ge 
\PP\left[\infdeg{u}{ \lastLayer} \ge r\cond\inductionLayerEvent{\lastLayer}\right]
\ge \PP\left[\infdeg{u_0}{ \lastLayer}=r\cond\inductionLayerEvent{\lastLayer}\right].
\end{equation}
Because $w_{u_0}=1$ and also 
\begin{equation}\label{eq:condUpperMaxWeight}
w_n\stackrel{\eqref{eq:manyLightVertices}, \ w_1\ge 1}{\le} W-n/3\stackrel{\eqref{Wcondition}}{=}(1- 1/(3\weightConstant))W,
\end{equation}
 we can drop the minimum in~\eqref{eq:pijCL}, and thus the above probability can be computed as
\begin{align}
\PP\left[\infdeg{u_0}{ \lastLayer}=r\cond\inductionLayerEvent{\lastLayer}\right]=\sum_{\mathcal{R}\in
\binom{\postKernelProcess{\lastLayer}{U}\cap\Kernel{\lastLayer}}{r}}\prod_{v\in\mathcal{R}}\frac{w_v}{W}
\prod_{v'\in(\postKernelProcess{\lastLayer}{U}\cap\Kernel{\lastLayer}) \setminus\mathcal{R}}\left(1-\frac{w_v'}{W}\right).\label{eq:outbreakInfectionProbability}
\end{align}

Because $1-x\ge \exp(-x/(1-x))$, for any $x<1$, for the innermost product we obtain
\begin{align}\nonumber
\prod_{v'\in(\postKernelProcess{\lastLayer}{U}\cap\Kernel{\lastLayer}) \setminus\mathcal{R}}\left(1-\frac{w_{v'}}{W}\right)&\ge \exp\left(-\frac{\sum_{v'\in\Kernel{\lastLayer}}w_{v'}}{W\left(1-\frac{w_n}{W}\right)}\right)\\
&\ge\exp\left(-1\Big/\left(1-\frac{w_n}{W}\right)\right)\stackrel{\eqref{eq:condUpperMaxWeight}}{\ge} \exp(-3\weightConstant), \label{eq:outbreakInnerProduct}
\end{align}
independently of $u$. Moreover, we have
\begin{equation}\label{eq:outbreakMainTerm}
\sum_{\mathcal{R}\in\binom{\postKernelProcess{\lastLayer}{U}\cap\Kernel{\lastLayer}}{r}}\prod_{v\in\mathcal{R}}w_v\ge \left.\left((\postKernelProcess{\lastLayer}{U}\cap\Kernel{\lastLayer})^r-(\postKernelProcess{\lastLayer}{U}\cap\Kernel{\lastLayer})^{r-2}\sum_{v\in\postKernelProcess{\lastLayer}{U}\cap\Kernel{\lastLayer}}w_v^2\right)\right/r!
\end{equation}
since the sum on the left-hand side ranges only over monomials corresponding to $r$ distinct vertices. Furthermore, we have 
\begin{equation}\label{eq:outbreakSumOfSquares}
\sum_{v\in\postKernelProcess{\lastLayer}{U}\cap\Kernel{\lastLayer}}w_v^2\le w_n \InfectedWeightKernel{\lastLayer},
\end{equation}
and thus in order for the right-hand side of~\eqref{eq:outbreakInfectionProbability} to be at least a (small) positive constant, it suffices to show that 
\begin{equation}\label{eq:outbreakMaxWeightSmall}
w_n\le (1-\eta)\InfectedWeightKernel{\lastLayer},
\end{equation}
for $\eta:=1-1/(1+(r-1)\alpha)$ (and note that $0 < \eta < 1$). 

We distinguish two cases, first assume that 
$$
\WeightLayers{0}-\totalWeight{\{n-r+1,\dots,n\}}< w_{n-r+1}/2.
$$
Then we observe that $w_{n-r+1}\ge\weightBoundLayer{0}\ge\weightBoundLayer{\lastLayer}$ by~\eqref{eq:weighBoundLayerNonIncreasing}  and~\eqref{eq:kernelWeightBoundProp}. 
Since $w_n\ge\ldots\ge w_{n-r+1}\ge \weightBoundLayer{0}$ and the $r$ vertices of largest weight are infected, i.e.,\ $\{n-r+1,\dots,n\}\subseteq U$,  we obtain
$$
\InfectedWeightKernel{\lastLayer}\ge\totalWeight{U} \ge \totalWeight{\{n-r+1,\dots,n\}}\ge (1+(r-1)\alpha)w_n,
$$
implying~\eqref{eq:outbreakMaxWeightSmall}. 

Otherwise we have
$$
\WeightLayers{0}\ge\totalWeight{\{n-r+1,\dots,n\}}+w_{n-r+1}/2\ge (1+(r-1/2)\alpha)w_n,
$$
and~\eqref{eq:kernelAlmostInfection} implies
$$
\InfectedWeightKernel{\lastLayer}\ge \totalWeight{U} \ge (1-o(1))(1+(r-1/2)\alpha)w_n\ge (1+(r-1)\alpha)w_n,
$$
for any sufficiently large $n$. Hence~\eqref{eq:outbreakMaxWeightSmall} also holds in this case.

Combining the bounds~\eqref{eq:outbreakInnerProduct},~\eqref{eq:outbreakMainTerm},~\eqref{eq:outbreakSumOfSquares}, and~\eqref{eq:outbreakMaxWeightSmall}, it follows from~\eqref{eq:outbreakWeightOneSuffices} and~\eqref{eq:outbreakInfectionProbability} that 
\begin{align*}
\PP\left[\widehat{Y}_u=1\cond \inductionLayerEvent{\lastLayer}\right]&\ge\exp(-3\weightConstant) \eta \frac{(\InfectedWeightKernel{\lastLayer})^r}{r! W^r}\\
&\stackrel{\eqref{eq:lastLayerLowerBound}}{\ge}\exp(-3\weightConstant) \eta\left.\left(\frac{C'}{16\max\{C_1,\weightConstant \}}\right)^r\right/r!, 
\end{align*}

where we define the right-hand side to be $4\gamma$, which is positive and independent of $U$. Thus by~\eqref{eq:manyLightVertices} and ~\eqref{eq:totalnumberofinfections} we have 
$$
 \frac{4\gamma n}{3}\le\EE\left[\widehat{Y}\cond\inductionLayerEvent{\lastLayer}\right]\le n,
$$
and hence applying the Chernoff bound (Theorem~\ref{Chernoff}) on $\widehat{Y}$ yields
\begin{equation*}
\PP\left[\widehat{Y}\le \gamma n\cond\inductionLayerEvent{\lastLayer}\right]\le \exp\left(-\frac{\left(\gamma n/3\right)^2}{2n}\right)=o(1).\qedhere
\end{equation*}
\end{proof}

\begin{remark}\label{rem:outbreak}
Observe that in the proof of Lemma~\ref{lem:outbreak} we only exposed edge-indicator random variables corresponding to edges in $(\VertexSetRestricted{<\weightBoundLayer{\lastLayer}}\setminus \infecCoupling{0})\times \Kernel{\lastLayer}$.
\end{remark}

Now, we combine the above two lemmas and prove Theorem~\ref{thm:outbreak}
\begin{proof}[Proof of Theorem~\ref{thm:outbreak}]
Let $\KernelDense$ be the event of Theorem~\ref{thm:kernelDense}, and  $\KernelSparse$ denote the event of Theorem~\ref{thm:kernelSparse}. For a given set $U\subseteq\VertexSetRestricted{\ge \weightBoundKernel}$ satisfying~\eqref{eq:kernelAlmostInfection} let $\LayersEvent{U}$ be the event that the random variable $\widehat{Y}>\gamma n$, where $\gamma>0$ is as in Lemma~\ref{lem:outbreak}. Now Lemmas~\ref{lem:almostCompleteInfection} and~\ref{lem:outbreak} imply that if $U$ satisfies~\eqref{eq:kernelAlmostInfection} and $\{n-r+1,\ldots, n\} \subseteq U$, then 
$\PP[\LayersEvent{U}]=1-o(1)$.

Recall that we are aiming to use the subsubsequence principle. Hence, consider an infinite sequence $N_0\subseteq\mathbb{N}$.

If $N_0\in\mathcal{N}_d$, then whp (over a subsequence of $N_0$) the event $\KernelDense$ is realised, i.e., we have complete 
infection of the nucleus $V_{\ge \weightBoundKernel}$. Using $U=\VertexSetRestricted{\ge\weightBoundKernel}$ 
and $\{w_{n-r+1},\ldots, w_n \} \subseteq{U}$, a union bound implies
$$
\PP[\KernelDense \cap \LayersEvent{\VertexSetRestricted{\geq \weightBoundKernel}}]= 1-o(1).
$$
The definition of $\widehat{Y}$ implies that if the event $\KernelDense \cap \LayersEvent{\VertexSetRestricted{\geq \weightBoundKernel}}$ is realised, then every vertex contributing to $\widehat{Y}$ will eventually be infected and thus we have $|\infec{F}|\ge \widehat{Y}>\gamma n$, in other words, there is an outbreak.

On the other hand, let $N_0\in\mathcal{N}_s.$ Then, whp (over a subsequence of $N_0$) the event $\KernelSparse$ is realised, i.e., we have shown almost complete
infection of the nucleus $\VertexSetRestricted{\ge \weightBoundKernel}$ through the breeding ground 
$\breedingGround \subseteq \breedingGround'$. 
Let $\mathcal{U}_K \subseteq \VertexSetRestricted{\ge \weightBoundKernel}$ be the random subset of 
$\VertexSetRestricted{\ge \weightBoundKernel}$ as in Theorem~\ref{thm:kernelSparse}.

Next we observe that the sets $\infecCoupling{0}\cup \breedingGround' \cup  \Kernel{0}$ and
 $(\cup_{i=1}^{\lastLayer} \Layer{i}) \cup \VertexSetRestricted{< \psi_i*})\setminus \infecCoupling{0}$
 are disjoint. Moreover, note that $\{ \mathcal{U}_K = U\}$ depends only on edges in $\breedingGround'\times (\infecCoupling{0}\cup \breedingGround' \cup \Kernel{0})$, whereas $\LayersEvent{U}$ depends on edges in $\Kernel{\lastLayer}\times ((\cup_{i=1}^{\lastLayer} \Layer{i}) \cup \VertexSetRestricted{< \psi_i*})\setminus \infecCoupling{0})$ (cf.\ Remarks~\ref{rem:almostCompleteInfection} and~\ref{rem:outbreak}). 
 Therefore, the two events are independent.
Thus, we have 
\begin{eqnarray*} 
\PP [\KernelSparse \cap \LayersEvent{\mathcal{U}_K}]  
&\geq & \sum\limits_{\substack{U \subseteq \VertexSetRestricted{\ge \weightBoundKernel} : 
\\ \totalWeight{U} \ge \frac{1}{2}\wcore{0}} }
\PP[\LayersEvent{U}\cap \{\mathcal{U}_K = U\}] \\
&= & \sum\limits_{\substack{U \subseteq \VertexSetRestricted{\ge \weightBoundKernel} : 
\\ \totalWeight{U} \ge \frac{1}{2}\wcore{0}} }
\PP[\LayersEvent{U}] \PP [\mathcal{U}_K = U]  \\
&=& (1-o(1))  \sum\limits_{\substack{U \subseteq \VertexSetRestricted{\ge \weightBoundKernel} : 
\\ \totalWeight{U} \ge \frac{1}{2}\wcore{0}} }\PP [\mathcal{U}_K = U]  = 1-o(1).
\end{eqnarray*}
Similarly to the previous case, if the event $\KernelSparse \cap \LayersEvent{\mathcal{U}_K}$ is realised, then every vertex contributing to $\widehat{Y}$ will eventually be infected and thus $|\infec{F}|\ge \widehat{Y}>\gamma n$, i.e.\, there is an outbreak
completing the proof of Theorem~\ref{thm:outbreak}.
\end{proof}

\section{Subcritical Regime: No Linear Outbreak}\label{subcritical}

The goal of this section is to show that in the subcritical regime whp there is no outbreak.
\begin{theorem}\label{thm:nooutbreak}
Under the assumptions of Theorem~\ref{threshold}, if additionally either
$$
\weightBoundHeavy\le w_n\qquad\text{and}\qquad\infectionRate\ll\min\{\candidateThresholdSparse,\candidateThresholdDense\}$$
or 
$$
\weightBoundHeavy> w_n\qquad\text{and}\qquad\infectionRate\ll\candidateThresholdSparse,
$$
 then there is no outbreak, that is, \ whp we have
 $$
 |\infec{F}|=o(n).
 $$
\end{theorem}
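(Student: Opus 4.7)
The approach is to dominate the bootstrap process by a subcritical branching process, analyse its total weighted progeny, and translate the weight bound into a bound on $|\infec{F}|$. I will reveal the edges of $G$ generation by generation in sync with the bootstrap dynamics: at the end of step $t$ all edges between $V$ and $\infec{t-1}$ have been exposed and therefore $\infec{t}$ is determined, while in step $t+1$ I reveal the previously unexposed edges from $\infec{t}\setminus\infec{t-1}$ to $V\setminus\infec{t}$. For each pair $(u,v)$ with $u\in\infec{t}\setminus\infec{t-1}$ and $v\notin\infec{t}$ I declare $v$ to be an \emph{offspring} of $u$ whenever $\{u,v\}\in E$ and $v\in\infec{t+1}$. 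Every vertex of $\infec{t+1}\setminus\infec{t}$ must have at least one neighbour in $\infec{t}\setminus\infec{t-1}$ (otherwise it would have been infected at step $t$), so the number of (parent, offspring) pairs in generation $t+1$ dominates $|\infec{t+1}\setminus\infec{t}|$.

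The next step is to estimate the offspring probabilities. Fix $u\in\infec{t}\setminus\infec{t-1}$ and $v\notin\infec{t}$; given the history $\mathcal{H}_t$ the quantity $k_v:=|N(v)\cap\infec{t-1}|<r$ is known, and the only remaining randomness lies in the edges from $v$ to $\infec{t}\setminus\infec{t-1}$, which are independent of $\mathcal{H}_t$. The event that $v$ is offspring of $u$ is the intersection of the non-decreasing event $\{\{u,v\}\in E\}\cap\{|N(v)\cap(\infec{t}\setminus\infec{t-1})|\ge r-k_v\}$ with the non-increasing event $\{v\notin\infec{t}\}$, and by the FKG inequality (Theorem~\ref{FKG}) its probability is bounded above by that of the non-decreasing event alone. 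Factoring out the edge $\{u,v\}$, applying a union bound over $(r-1-k_v)$-subsets of $\infec{t}\setminus\infec{t-1}$, and summing over $k_v$ yields
\begin{equation*}
\EE\bigl[\text{offspring of }u\mid\mathcal{H}_t\bigr]\;\le\;\frac{w_u}{r!\,W^r}\,\totalWeight{\infec{t-1}}^{\,r-1}\sum_{v\in V}w_v^{r+1}.
\end{equation*}
Summing over $u\in\infec{t}\setminus\infec{t-1}$ gives a recursion $\EE[\totalWeight{\infec{t+1}\setminus\infec{t}}\mid\mathcal{H}_t]\le\rho_t\,\totalWeight{\infec{t}\setminus\infec{t-1}}$ with a multiplicative factor $\rho_t$ governed by $\totalWeight{\infec{t-1}}$ and the moment sum above.

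Subcriticality then follows from the defining identities of the candidate thresholds. A direct first-moment argument shows that whenever $\weightBoundHeavy\le w_n$ and $\infectionRate\ll\candidateThresholdDense$, the expected number of heavy vertices $v\in\VertexSetRestricted{\ge\weightBoundHeavy}$ with $|N(v)\cap\infec{0}|\ge r$ is at most $\infectionRate^r\sum_{v\ge\weightBoundHeavy}w_v^r/r!=o(1)$, and this propagates inductively so that whp no heavy vertex is ever infected. The moment sum in the display therefore reduces essentially to $\sum_{v<\weightBoundHeavy}w_v^{r+1}$, and inserting~\eqref{eq:candidateThresholdSparse} shows that $\rho_t$ is proportional to $(\totalWeight{\infec{t-1}}/(\candidateThresholdSparse W))^{r-1}$, which stays bounded away from $1$ as long as $\totalWeight{\infec{t-1}}\ll\candidateThresholdSparse W$. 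A Chernoff bound yields $\totalWeight{\infec{0}}=O(\infectionRate W)$ whp; since the expected generation weights then form a geometric series, a stopping-time combined with Markov's inequality ensures that whp $\totalWeight{\infec{F}}=O(\infectionRate W)$. Because $W=\Theta(n)$ by~\eqref{Wcondition} and $\infectionRate=o(1)$ (as $\threshold=o(1)$ by Corollary~\ref{cor:candidatethresholdsSmall}), this equals $o(n)$, and since all weights are at least $1$ we obtain $|\infec{F}|\le\totalWeight{\infec{F}}=o(n)$ whp.

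The main obstacle will be closing the induction simultaneously over all generations: one has to verify that $\totalWeight{\infec{t-1}}$ stays small enough to keep $\rho_t$ subcritical for every $t$, and at the same time preclude the infection of any heavy vertex which by itself could re-ignite a supercritical dense cascade. This requires a careful stopping time together with a union bound made affordable by the geometric decay of the expected generation weights, and it is precisely where the two subcritical conditions $\infectionRate\ll\candidateThresholdSparse$ and (when applicable) $\infectionRate\ll\candidateThresholdDense$ are invoked in tandem.
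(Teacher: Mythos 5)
Your overall strategy coincides with the paper's: bound each generation's offspring probabilities via the FKG inequality, obtain a subcritical recursion for the expected generation weights from the defining identity of $\candidateThresholdSparse$, control the total progeny with a stopping time and Markov's inequality, and use $\candidateThresholdDense$ to rule out the infection of heavy vertices. Two small index slips aside (after factoring out the edge $\{u,v\}$ the remaining $r-1$ infected neighbours of $v$ lie in $\infec{t}$, not $\infec{t-1}$, so the correct factor is $\totalWeight{\infec{t}}^{r-1}$; this is harmless since both are controlled by the stopping time), the sparse part of your argument is the paper's Lemma~\ref{lem:prob_offspring} and Lemma~\ref{lem:cloneProcessSmall}.

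The one substantive issue is exactly the point you defer to your final paragraph: running the branching bound on the \emph{full} vertex set while asserting ``inductively'' that no heavy vertex is ever infected is circular as stated. To keep $\rho_t$ subcritical you need the moment sum to range only over $\VertexSetRestricted{<\weightBoundHeavy}$, which requires that no heavy vertex has entered the process; to rule out heavy infections at step $t$ you need $\totalWeight{\infec{t-1}}\ll\candidateThresholdDense W$, which comes from the subcritical recursion you are trying to establish. A simultaneous induction with a joint stopping time can be made to work, but you have not exhibited it. The paper sidesteps the circularity entirely with a two-stage decomposition: it first runs the bootstrap process \emph{restricted to} $G[\VertexSetRestricted{<\weightBoundHeavy}]$ to completion, obtaining a terminal set $\infecNonHeavy{F}$ of total weight $o(\mu W\infectionRate)$ whp; it then restarts the \emph{unrestricted} process from $\infec{0}':=\infecNonHeavy{F}$ (which whp contains $\infec{0}$, by Claim~\ref{eq:largeWeightsInitial}, so by monotonicity $\infec{F}\subseteq\infec{F}'$) and observes that the only candidates for new infection in the first step are heavy vertices, of which the expected number is $o(1)$ since $\mu\infectionRate\le\candidateThresholdDense$. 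Thus $\infec{F}'=\infec{0}'$ whp and the conclusion follows. If you adopt this restriction-then-extend device, your sketch becomes a complete proof; as written, the inline treatment of heavy vertices is the missing step.
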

We will use the parameter 
\begin{equation}\label{def:scaledTotalWeightSubcriticalExpectationInverse}
\scaledTotalWeightSubcriticalExpectationInverse:=
\begin{cases}
\min\{\candidateThresholdSparse,\candidateThresholdDense\}/\infectionRate& \text{if  } \weightBoundHeavy\le w_n,\\
\candidateThresholdSparse/\infectionRate&\text{if } \weightBoundHeavy>w_n.
\end{cases}
\end{equation}
Since we consider the subcritical regime, we have 
\begin{equation}\label{eq:scaledTotalWeigthSubcrititcalExpectationInverse}
\scaledTotalWeightSubcriticalExpectationInverse\to\infty.
\end{equation}

Instead of tracking the infection process we relate it to a branching process, 
motivated by the following observation: if vertex $v$ becomes infected at time $t>0$ it must have at least one neighbour $u$ which became infected at time $t-1$; actually one can show that typically it has exactly one such neighbour. Hence, we may consider the vertex $v$ a child of this unique neighbour $u$ (and in case there are several, choose the smallest amongst them). Then for each vertex the number of its children is a random variable, however they may have different distributions and be dependent on each other. 

We will show that the condition on $p_0$ implies that this process is subcritical and whp it dies out quickly, thereby proving that the 
total infected population remains small. 

Some of these arguments require us to work on the subgraph spanned by all non-heavy vertices, and then argue separately for the heavy vertices. We will show that whp no heavy vertex becomes infected during the process and thus the relevant part of the proof is to analyse the behaviour of bootstrap process on the subgraph spanned by the non-heavy vertices.

We run the bootstrap process on $G[\VertexSetRestricted{<\weightBoundHeavy}]$ in the usual way (cf.\ Section~\ref{sec:bootstrapPercolation}) and we denote the set of vertices that have become infected by time $t\ge0$ by $\infecNonHeavy{t}\subseteq\VertexSetRestricted{<\weightBoundHeavy}$. Let $\infecNonHeavy{F}$ denote the set of infected vertices at the end of the process. 

\subsection{A branching process approximation}\label{sec:branchingProcess}

In this section we will prove that the total infected weight of the $\infecNonHeavy{}$-process will not be significantly larger than the total weight of the initially infected vertices.   

\begin{lemma}\label{lem:cloneProcessSmall}
	Whp we have $\totalWeight{\infecNonHeavy{F}}=o\left(\scaledTotalWeightSubcriticalExpectationInverse W\infectionRate\right)$.
\end{lemma}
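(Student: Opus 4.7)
The plan is to set up a sub-critical recursion for the generation weights $L_t:=\totalWeight{\infecNonHeavy{t}\setminus\infecNonHeavy{t-1}}$ (with $\infecNonHeavy{-1}:=\emptyset$) and conclude via a stopping-time argument and Markov's inequality. Writing $\totalWeight{\infecNonHeavy{F}}=\sum_{t\ge 0}L_t$, linearity of expectation immediately yields $\EE[L_0]=\infectionRate\totalWeightRestricted{<\weightBoundHeavy}\le \infectionRate W$.

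To pass from generation $t$ to generation $t+1$ for $t\ge 1$, I would condition on the history of the process up through step $t$ and observe that any $v\in\VertexSetRestricted{<\weightBoundHeavy}\setminus\infecNonHeavy{t}$ joining $\infecNonHeavy{t+1}\setminus\infecNonHeavy{t}$ must (i) acquire at least one new neighbour in $\infecNonHeavy{t}\setminus\infecNonHeavy{t-1}$ (a non-decreasing event in the still-unexposed $v$-edges to $\infecNonHeavy{t}\setminus\infecNonHeavy{t-1}$) and, together with the non-increasing constraint $v\notin\infecNonHeavy{t}$ on the $v$-edges to $\infecNonHeavy{t-1}$, the threshold requirement forces (ii) at least $r-1$ neighbours in $\infecNonHeavy{t-1}$. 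Events (i) and (ii) concern disjoint sets of edges and are therefore independent, while (ii) is decorrelated from the constraint $v\notin\infecNonHeavy{t}$ by Theorem~\ref{FKG}. Using Remark~\ref{rem:dropMinimum} to replace $p_{u,v}$ by $w_uw_v/W$ throughout, one obtains
\[
\PP\bigl[v\in\infecNonHeavy{t+1}\setminus\infecNonHeavy{t}\bigm| \infecNonHeavy{t},\infecNonHeavy{t-1}\bigr]\le \frac{w_v L_t}{W}\cdot\frac{1}{(r-1)!}\Bigl(\frac{w_v\totalWeight{\infecNonHeavy{t-1}}}{W}\Bigr)^{r-1}.
\]
Multiplying by $w_v$, summing over $v\in\VertexSetRestricted{<\weightBoundHeavy}$, and inserting $\sum_{v\in\VertexSetRestricted{<\weightBoundHeavy}}w_v^{r+1}=W\candidateThresholdSparse^{1-r}$ from~\eqref{eq:candidateThresholdSparse} produces the one-step recursion
\[
\EE[L_{t+1}\mid \infecNonHeavy{t},\infecNonHeavy{t-1}]\le \frac{L_t}{(r-1)!}\Bigl(\frac{\totalWeight{\infecNonHeavy{t-1}}}{\candidateThresholdSparse W}\Bigr)^{r-1},
\]
with an analogous calculation handling the base case $\EE[L_1\mid\infecNonHeavy{0}]$.

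The main obstacle is that $\totalWeight{\infecNonHeavy{t-1}}$ in the multiplicative factor is itself random, so one cannot simply iterate expectations. I would resolve this by choosing $K=K(n)\to\infty$ with $K=o(\scaledTotalWeightSubcriticalExpectationInverse)$---for instance $K:=\scaledTotalWeightSubcriticalExpectationInverse^{1/2}$---and introducing the stopping time $\tau:=\inf\{t\ge 0\colon\totalWeight{\infecNonHeavy{t}}\ge K\infectionRate W\}$. On the event $\{\tau\ge s\}$, which is determined by the process up to step $s-1$, we have $\totalWeight{\infecNonHeavy{s-1}}<K\infectionRate W$, so the multiplicative factor in the recursion is at most $K^{r-1}\scaledTotalWeightSubcriticalExpectationInverse^{-(r-1)}/(r-1)!=o(1)$ by~\eqref{eq:scaledTotalWeigthSubcrititcalExpectationInverse}. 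Hence $\EE[L_s\Ind{\tau\ge s}]$ decays geometrically in $s$, summing to $O(\infectionRate W\scaledTotalWeightSubcriticalExpectationInverse^{-(r-1)})=o(\infectionRate W)$. Applying Markov's inequality to $\totalWeight{\infecNonHeavy{\tau\wedge t}}$ then shows $\PP[\tau<\infty]=O(1/K)=o(1)$, so whp $\tau=\infty$ and therefore $\totalWeight{\infecNonHeavy{F}}\le K\infectionRate W=o(\scaledTotalWeightSubcriticalExpectationInverse\infectionRate W)$, as claimed.
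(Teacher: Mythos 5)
Your strategy coincides with the paper's own proof: bound the conditional expected weight of each new generation via the FKG inequality, obtain a one-step recursion whose ratio is $o(1)$ up to a stopping time at level $\mu^{1/2}Wp_0$ (your $K\infectionRate W$ with $K=\mu^{1/2}$ is exactly the paper's choice), and conclude with Markov's inequality applied to the stopped total weight. The paper's only structural difference is that it first partitions each generation into offspring sets $\mathcal{X}_y$ indexed by a parent $y$ in the previous generation (Lemma~\ref{lem:prob_offspring}) and then sums over parents; after that summation one recovers precisely your union bound, so the two arguments are interchangeable.

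There is, however, one step that is false as written. For $v$ to join $\infecNonHeavy{t+1}\setminus\infecNonHeavy{t}$ it must have at least $r$ neighbours in $\infecNonHeavy{t}$, at least one of which lies in $\infecNonHeavy{t}\setminus\infecNonHeavy{t-1}$; the remaining $r-1$ may lie anywhere in $\infecNonHeavy{t}$, not necessarily in $\infecNonHeavy{t-1}$. For $r=2$, for instance, a vertex whose only two infected neighbours both belong to the newest generation $\infecNonHeavy{t}\setminus\infecNonHeavy{t-1}$ is infected at step $t+1$ yet has no neighbour in $\infecNonHeavy{t-1}$, so your condition (ii) is not forced by the threshold rule and the displayed bound carrying the factor $\totalWeight{\infecNonHeavy{t-1}}^{r-1}$ is not a valid upper bound on the infection probability. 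The repair is immediate: require the $r-1$ additional neighbours to lie in $\infecNonHeavy{t}$ and replace $\totalWeight{\infecNonHeavy{t-1}}$ by $\totalWeight{\infecNonHeavy{t}}$ throughout (this is what the paper's Lemma~\ref{lem:prob_offspring} states, with indices shifted by one). The FKG decoupling of the non-decreasing adjacency events from the non-increasing event $\{v\notin\infecNonHeavy{t}\}$ is unaffected, and since your stopping time $\tau$ controls $\totalWeight{\infecNonHeavy{t}}$ on $\{\tau>t\}$ just as well as it controls $\totalWeight{\infecNonHeavy{t-1}}$, the geometric decay of $\EE[L_s\Ind{\tau\ge s}]$ and the final Markov step go through verbatim.
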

We will prove this lemma by coupling the evolution of the bootstrap process with a stochastic process 
that is reminiscent of a branching process, where the offspring distribution depends on the current 
state. 

For $t\geq 0$ let $\cI{t}:=\infecNonHeavy{t}\setminus \infecNonHeavy{t-1}$ denote the set of vertices that belong to the $t$-th generation and let 
$\I{t} = \totalWeight{\cI{t}}$. In other words, the set 
$\cI{t}$ consists of the vertices that become infected in step $t$. 
Thus, $\cI{0}= \infecNonHeavy{0}$ and $\infecNonHeavy{t}= \cup_{s=0}^t \cI{s}$. 

Now, let us condition on $\infecNonHeavy{t-1}$. For every vertex $y \in \cI{t-1}$, following the 
ordering of the vertices, we expose its 
neighbours in $\VertexSetRestricted{<\weightBoundHeavy} \setminus \infecNonHeavy{t-1}$. For every $v \in \VertexSetRestricted{<\weightBoundHeavy} \setminus \infecNonHeavy{t-1}$ 
adjacent to $y$, we expose whether or not there are at least $r-1$ other 
edges between $v$ and $\infecNonHeavy{t-1}$. If this is the case, then we include $v$ into $\cI{t}$ and, in particular, we 
include the vertex $v$ among the offspring of $y$ -- we write $v \in \mathcal{X}_y$, where $\mathcal{X}_y$ denotes the set of 
offspring of $y$. This leads to a partition of $\cI{t}$ into sets
$\mathcal{X}_y$ of \emph{children} for $y\in\cI{t-1}$, i.e.,\ we have
$$
\cI{t}=\dot{\bigcup}_{y\in\infecNonHeavy{t-1}\setminus\infecNonHeavy{t-2}}\mathcal{X}_y.
$$

Using the FKG inequality (Theorem \ref{FKG}), we will show the following lemma, which bounds the probability that $v\in \mathcal{X}_y$.
\begin{lemma} \label{lem:prob_offspring}
	For any $t\geq 0$, let $v\in 
	\VertexSetRestricted{\leq \weightBoundHeavy} \setminus \infecNonHeavy{t-1}$. We have 
	$$\PP\left[v \in \mathcal{X}_y \cond \infecNonHeavy{t-1}\right]\le 
	w_y \totalWeight{\infecNonHeavy{t-1}}^{r-1}\left(\frac{w_v}{W}\right)^r.$$
\end{lemma}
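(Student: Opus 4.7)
The plan is to decompose the event $\{v\in\mathcal{X}_y\}$ into two sub-events on disjoint edge sets, estimate each one separately, and use the FKG inequality to handle the conditioning on $\infecNonHeavy{t-1}$. By the construction of the branching coupling, $v\in\mathcal{X}_y$ requires both that the edge $\{y,v\}$ is present---an increasing event $A$ supported on that single edge---and that $v$ has at least $r-1$ further neighbours in $\infecNonHeavy{t-1}\setminus\{y\}$---an increasing event $B$ supported on the edges from $v$ into $\infecNonHeavy{t-1}\setminus\{y\}$. These two edge sets are disjoint, so unconditionally $A$ and $B$ are independent, and $\{v\in\mathcal{X}_y\}\subseteq A\cap B$.

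The two factors are then straightforward to estimate. Since $y\in\infecNonHeavy{t-1}\subseteq\VertexSetRestricted{<\weightBoundHeavy}$ and $w_v\le\weightBoundHeavy$, Claim~\ref{claim:weightBoundHeavyProperties} gives $w_yw_v<W$, so Remark~\ref{rem:dropMinimum} applies and $\PP[A]=w_yw_v/W$. For $B$, a union bound over all $(r-1)$-element subsets $R$ of $\infecNonHeavy{t-1}\setminus\{y\}$ yields
\[
\PP[B]\le\sum_{R\in\binom{\infecNonHeavy{t-1}\setminus\{y\}}{r-1}}\prod_{u\in R}\frac{w_vw_u}{W}\le\frac{w_v^{r-1}\totalWeight{\infecNonHeavy{t-1}}^{r-1}}{(r-1)!\,W^{r-1}}.
\]
Multiplying these two bounds produces the desired estimate, in fact with an extra factor $1/(r-1)!$ to spare.

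The delicate point is the conditioning on $\infecNonHeavy{t-1}$, which is a function of precisely the kind of edges that $B$ depends on. I would handle this by writing the conditioning event $\{\infecNonHeavy{t-1}=T\}$ as the intersection of an increasing event---each vertex in $T$ accumulated at least $r$ neighbours in the previous generation at the appropriate step---and a decreasing event, namely that no vertex outside $T$ ever acquired $r$ such neighbours. Since $A\cap B$ is increasing, applying Theorem~\ref{FKG} to the decreasing half of the conditioning bounds $\PP[A\cap B\mid\infecNonHeavy{t-1}=T]$ by the product of the marginal probabilities computed above.

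The main obstacle is precisely this last FKG step: one must verify that the edges governing $A$ and $B$ interact only with the \emph{decreasing} half of the decomposition of $\{\infecNonHeavy{t-1}=T\}$, for otherwise the FKG inequality would yield a lower bound on the conditional probability rather than the desired upper bound. Once this monotonicity bookkeeping is in place, the rest of the proof is a routine edge-probability computation.
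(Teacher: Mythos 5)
Your proposal is correct and is essentially the paper's own argument: the paper likewise drops the ``no earlier parent'' condition, treats the presence of $\{y,v\}$ together with the existence of $r-1$ further infected neighbours of $v$ as a single non-decreasing event, and applies Theorem~\ref{FKG} against the non-increasing event that $v$ failed to accumulate $r$ infected neighbours before step $t$. The monotonicity bookkeeping you flag as the main obstacle does go through exactly as you hope: since $v\notin\infecNonHeavy{t-1}$, the ``increasing half'' of the conditioning (other vertices becoming infected) never examines an edge incident to $v$, so it is independent of your events $A$ and $B$ and can simply be dropped from the conditioning, leaving only the decreasing event about $v$'s own edges for the FKG step.
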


\begin{proof}
	We will condition on a realisation of $\infecNonHeavy{t-1}$.
	Consider the conditional space where $\cI{i} =S_i$, for $i=0,\ldots, t-1$, and $v \not \in \cup_{i=0}^{t-1}S_{i}=\infecNonHeavy{t-1}$. 
	In order for the event $v\in \mathcal{X}_y$ to hold we need the following three conditions to hold: 
	\begin{enumerate}[label=$(\roman*)$]
	\item $y$ is a neighbour of $v$; 
	\item $v$ has $r-1$ additional infected neighbours;
	\item there exists no $z \in \infecNonHeavy{t-1}\backslash \infecNonHeavy{t-2}$ with $z<y$ such that $v \in \mathcal{X}_z$.
	\end{enumerate}
	We write $\mathcal{D}(v)$ for the event that $\sum_{j=0}^{t-2} d_{S_j} (v) < r$
	, where $d_{S_j}(v)$ is the 
	degree of $v$ in $S_j$. 
	So, if we ignore the third condition, the conditional probability of the event $v\in \mathcal{X}_y$ can be bounded as:
	\begin{eqnarray*}\lefteqn{\PP\left[v \in \mathcal{X}_y \cond \cap_{i=0}^{t-1} \{ \cI{i} =S_i \}\right]} \\
		& \leq \PP\left[ \{y,v\}\in\EdgeSet, \ \exists y_{1},\ldots, y_{r-1} \in \cup_{i=0}^{t-1}S_i: \ \cap_{i=0}^{r-1}\left\{ \{y_i,v\}\in\EdgeSet \right\}\cond \mathcal{D}(v), \ \Gamma\right], \end{eqnarray*}
	where $\Gamma$ is some event which does not depend on edges incident to $v$. Thus, we can omit $\Gamma$ from the conditioning. 

	We bound the latter probability from above using the FKG inequality (Theorem~\ref{FKG}). Note that the event 
	$\{y,v\}\in\EdgeSet, \ \exists y_{1},\ldots, y_{r-1} \in \cup_{i=0}^{t-1}S_i: \  \cap_{i=0}^{t-1} \{ \{y_i,v\}\in\EdgeSet$ is non-decreasing, whereas 
	the event $\mathcal{D}(v)$ is non-increasing. Therefore, Theorem~\ref{FKG} implies that 
	\begin{eqnarray*} 
		\lefteqn{\PP\left[ \{y,v\}\in\EdgeSet, \ \exists y_{1},\ldots, y_{r-1} \in \cup_{i=0}^{t-1}S_i: \ \cap_{i=0}^{r-1} \{\{y_i,v\}\in\EdgeSet\} \cond \mathcal{D}(v)\right]}
		\\&\leq \PP\left[ \{y,v\}\in\EdgeSet, \ \exists y_{1},\ldots, y_{r-1} \in \cup_{i=0}^{t-1}S_i: \  \cap_{i=0}^{r-1} \{ \{y_i,v\}\in\EdgeSet\}\right] \\
		& \leq w_y w_v^{r} \left(\sum_{i=0}^{t-1}\totalWeight{S_i}\right)^{r-1} \left( \frac{1}{W}\right)^{r}.
	\end{eqnarray*}
	In other words, we have
	$$\PP\left[v\in\mathcal{X}_y \cond \cap_{i=0}^{t-1} \{\cI{i} =S_i\}\right] \leq  w_y \left(\sum_{i=0}^{t-1}\totalWeight{S_i}\right)^{r-1} \left( \frac{w_v}{W}\right)^{r}$$
	and the lemma follows.
\end{proof}

\begin{proof}[Proof of Lemma \ref{lem:cloneProcessSmall}]
We now provide a stochastic upper bound on $\I{t}$ using a process that is very similar to a branching process except the offspring distribution  depends on 
the history of the process. Moreover, the number of offspring of each individual in each generation are not independent. 

Consider the family of 
Bernoulli random variables $I_{y,v}(t)$, where $y\in\cI{t-1}$ and $v$ is any vertex, which  
satisfies $I_{y,v}(t)=1$ if and only if  $v\in \mathcal{X}_y$. 
Hence, by Lemma~\ref{lem:prob_offspring}
$$ \EE \left[I_{y,v}(t) \cond \infecNonHeavy{t-1}\right] \leq w_y \totalWeight{\infecNonHeavy{t-1}}^{r-1}\left(\frac{w_v}{W}\right)^r. $$
Given $\infecNonHeavy{t-1}$ we write
$$\I{t} = \sum_{y \in \cI{t-1}} \sum_{v \in \VertexSetRestricted{<\weightBoundHeavy} \setminus \infecNonHeavy{t-1}} I_{y,v}(t) w_v. $$ 
This implies that 
\begin{eqnarray}\label{eq:expectations_evol}
\EE \left[\I{t} \cond \infecNonHeavy{t-1}\right] 
& \leq \I{t-1} \totalWeight{\infecNonHeavy{t-1}}^{r-1}W^{-r}  \sum_{ v \in \VertexSetRestricted{<\weightBoundHeavy}} w_v^{r+1}.
\end{eqnarray}
Now, we introduce the stopping time $T$ which is the first step $t$ where either $\I{t}=0$ or $\totalWeight{\infecNonHeavy{t}} > 
\mu^{1/2} Wp_0$. Note that $T < \mu^{1/2}Wp_0 $, since if $\I{t}>0$, then in fact $\I{t} \geq 1$. 

Now, let $\hat{I}(t)$ be equal to $\I{t}$, if $t\leq T$ and equal to 0 otherwise. In other words, $\hat{I}(t) = \I{t} \Ind{t\leq T}$. 
Let 
$$\offspringWeightRatio(t) := \totalWeight{\infecNonHeavy{t-1}}^{r-1}W^{-r}  
\sum_{ v \in \VertexSetRestricted{<\weightBoundHeavy}} w_v^{r+1}.$$
If $t \leq T$, then 
\begin{equation}\label{eq:upperBoundOffspringWeightRatio}
 \offspringWeightRatio(t) \leq (\mu^{1/2}Wp_0)^{r-1} W^{-r} \sum_{ v \in \VertexSetRestricted{<\weightBoundHeavy}} w_v^{r+1} =:
\hat{\offspringWeightRatio}.
\end{equation}
and furthermore this implies
\begin{equation}\label{eq:exp_recursion}
\EE \left[\hat{I}(t) \cond \infecNonHeavy{t-1}\right] \leq \hat{\offspringWeightRatio} \hat{I}(t-1).
\end{equation}

Next we show that
\begin{equation} \label{eq:gamma_bound}
\hat{\offspringWeightRatio} \leq \mu^{-1/2}.
\end{equation}
Indeed, since $p_0\leq \mu^{-1}p_s$ we have that
\begin{align*}
	(Wp_0)^{r-1}& \leq \mu^{-(r-1)}W^{r-1} p_s^{r-1}\\
	&=\mu^{-(r-1)}W^{r-1}  \frac{W}{\sum_{ v \in \VertexSetRestricted{<\weightBoundHeavy}} w_v^{r+1}}\\
	&= \mu^{-(r-1)} \frac{W^r}{\sum_{ v \in \VertexSetRestricted{<\weightBoundHeavy}} w_v^{r+1}}. 
\end{align*}
This together with \eqref{eq:upperBoundOffspringWeightRatio} imply
$$ 
\hat{\offspringWeightRatio} \leq \mu^{-(r-1)/2},
$$
and (\ref{eq:gamma_bound}) follows since $r\ge 2$.

Therefore, taking expectations on both sides of (\ref{eq:exp_recursion}) we deduce that 
$$ \EE \left[\hat{I}(t) \right] \leq \hat{\offspringWeightRatio} \EE\left[\hat{I}(t-1)\right]. $$
Repeating this inequality, we obtain
$$	\EE \left[\hat{I}(t) \right] \leq \hat{\offspringWeightRatio}^t \EE\left[\hat{I}(0)\right] 
	= \hat{\offspringWeightRatio}^t \EE\left[\totalWeight{\infecNonHeavy{0}}\right].$$
Note that $\EE[\totalWeight{\infecNonHeavy{0}}] \leq Wp_0$ and thus
$$\EE \left[\hat{I}(t) \right] \leq \hat{\offspringWeightRatio}^t W p_0,$$
implying
\begin{equation} \label{eq:total_exp}
\EE\left[\totalWeight{\infecNonHeavy{T}}\right]=\EE \left[ \sum_{t=0}^T \hat{I}(t) \right] \leq \left( \sum_{t=0}^T \hat{\offspringWeightRatio}^t \right) 
W p_0  \leq 
\frac{1}{1-\hat{\offspringWeightRatio}} W p_0.
\end{equation}

Now, let $\mathcal{B}$ be the event $\sum_{t=0}^T \hat{I}(t) \geq \mu^{1/2}W p_0 $.
We have 
$$ \EE \left[ \Ind{\mathcal{B}} \sum_{t=0}^T \hat{I}(t) \right]  \geq \mu^{1/2}W p_0  \EE\left[\Ind{\mathcal{B}} \right]. $$
This together with (\ref{eq:total_exp}) imply that 
$$ \EE [\Ind{\mathcal{B}}] =O(\mu^{-1/2})=o(1).  $$ 
Therefore Markov's inequality implies that whp the process stops before the total weight of the infected vertices reaches $\mu^{1/2}Wp_0$, as desired.

\end{proof}

\subsection{No outbreak: proof of Theorem~\ref{thm:nooutbreak}}\label{sec:nooutbreak}

Now we consider the process on the whole vertex set $V$. 
Until this point we showed that if we restrict ourselves to the non-heavy vertices, then no linear outbreak occurs. We now have to take care of the heavy vertices. The first observation is that initially whp none of them are infected.
\begin{claim}\label{eq:largeWeightsInitial}
Whp $\mathcal{A}_0\cap\VertexSetRestricted{\ge\weightBoundHeavy}=\emptyset$.
\end{claim}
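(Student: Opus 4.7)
The plan is to apply a simple union bound. Since each vertex belongs to $\mathcal{A}_0$ independently with probability $\infectionRate$, we have
$$
\PP[\mathcal{A}_0\cap\VertexSetRestricted{\ge\weightBoundHeavy}\neq\emptyset]\le \infectionRate\,|\VertexSetRestricted{\ge\weightBoundHeavy}|,
$$
so it is enough to show that this expression is $o(1)$. If $\weightBoundHeavy>w_n$, the right-hand side vanishes by the convention in~\eqref{eq:defHeavy}, so I would then focus on the case $\weightBoundHeavy\le w_n$, under which the hypothesis of Theorem~\ref{thm:nooutbreak} guarantees $\infectionRate\ll\candidateThresholdDense$.

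Next I would derive two elementary inequalities. Firstly, since every vertex in $\VertexSetRestricted{\ge\weightBoundHeavy}$ has weight at least $\weightBoundHeavy$ while the total weight is $W$, one immediately gets $|\VertexSetRestricted{\ge\weightBoundHeavy}|\le W/\weightBoundHeavy$. Secondly, to control $\candidateThresholdDense$ from above, I would combine the trivial estimate $w_u^r\ge\weightBoundHeavy^r$ for every heavy vertex $u$ with the defining lower bound $|\VertexSetRestricted{\ge\weightBoundHeavy}|\ge (W/(4\weightBoundHeavy^2))^r$ from~\eqref{eq:defHeavy} to obtain
$$
\sum_{u\in\VertexSetRestricted{\ge\weightBoundHeavy}}w_u^r\ge |\VertexSetRestricted{\ge\weightBoundHeavy}|\weightBoundHeavy^r\ge \left(\frac{W}{4\weightBoundHeavy^2}\right)^{\!r}\weightBoundHeavy^r=\left(\frac{W}{4\weightBoundHeavy}\right)^{\!r}.
$$
Taking $-1/r$-th powers and invoking the definition~\eqref{eq:candidateThresholdDense} of $\candidateThresholdDense$ then yields $\candidateThresholdDense\le 4\weightBoundHeavy/W$.

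Combining these two bounds with the subcritical hypothesis gives
$$
\infectionRate\,|\VertexSetRestricted{\ge\weightBoundHeavy}|\le \infectionRate\cdot\frac{W}{\weightBoundHeavy}\ll \candidateThresholdDense\cdot\frac{W}{\weightBoundHeavy}\le 4,
$$
which is in fact $o(1)$ since $\infectionRate/\candidateThresholdDense\to 0$ by assumption. I do not foresee any serious obstacle: the only non-immediate step is the inequality $\candidateThresholdDense\le 4\weightBoundHeavy/W$, which falls out of pairing the lower bound on $|\VertexSetRestricted{\ge\weightBoundHeavy}|$ coming from the definition of $\weightBoundHeavy$ with the pointwise inequality $w_u\ge\weightBoundHeavy$ for heavy vertices.
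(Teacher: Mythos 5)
Your proposal is correct and follows essentially the same route as the paper: a first-moment/union bound over the heavy vertices, combined with $|\VertexSetRestricted{\ge\weightBoundHeavy}|\le W/\weightBoundHeavy$ and the bound $\candidateThresholdDense\le 4\weightBoundHeavy/W$ extracted from the definition of $\weightBoundHeavy$ in~\eqref{eq:defHeavy}, together with the subcritical hypothesis $\infectionRate\ll\candidateThresholdDense$. The paper phrases the final step via Markov's inequality applied to $\EE[|\infec{0}\cap\VertexSetRestricted{\ge\weightBoundHeavy}|]$, which is the same estimate as your union bound.
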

\begin{proof}
In the case $\weightBoundHeavy>w_n$ there is nothing to prove, hence assume $\weightBoundHeavy\leq w_n$. 
Recall that in this case $\candidateThresholdDense$ is well-defined and by its definition ~\eqref{eq:candidateThresholdDense} we have
\begin{equation}\label{eq:candidateThresholdDenseEstimate}
\candidateThresholdDense^{-r}=\sum_{u\in\VertexSetRestricted{\ge\weightBoundHeavy}}w_u^r\ge|\VertexSetRestricted{\ge\weightBoundHeavy}|\weightBoundHeavy^r\stackrel{\eqref{eq:defHeavy}}{\ge}\left(\frac{W}{4\weightBoundHeavy}\right)^r.
\end{equation}
Thus, since  $|\VertexSetRestricted{\ge\weightBoundHeavy}|\le W/\weightBoundHeavy$, we obtain
$$
\EE[|\infec{0}\cap\VertexSetRestricted{\ge\weightBoundHeavy}|]=|\VertexSetRestricted{\ge\weightBoundHeavy}|\infectionRate
\stackrel{\eqref{def:scaledTotalWeightSubcriticalExpectationInverse}}{\le}\frac{\candidateThresholdDense W}{\scaledTotalWeightSubcriticalExpectationInverse\weightBoundHeavy}
\stackrel{\eqref{eq:candidateThresholdDenseEstimate}}{\leq}\frac{4}{\scaledTotalWeightSubcriticalExpectationInverse}
\stackrel{\eqref{eq:scaledTotalWeigthSubcrititcalExpectationInverse}}{=}o(1).
$$
Therefore the claim follows from Markov's inequality.
\end{proof}

\begin{proof}[Proof of Theorem~\ref{thm:nooutbreak}]
Now we consider the unrestricted process as described in Section~\ref{sec:bootstrapPercolation}, but starting with $\infec{0}':=\infecNonHeavy{F}$ as the initial set of infected vertices. This defines a sequence of sets $\infec{\tau}'$ for $\tau\ge 0$ and a final set $\infec{F}'$. Now Claim~\ref{eq:largeWeightsInitial} implies that whp $\infec{0}\subseteq\infecNonHeavy{F}=\infec{0}'$ and thus 
\begin{equation}\label{eq:finalSets}
\infec{F}\subseteq\infec{F}'.
\end{equation}

Lemma~\ref{lem:cloneProcessSmall} implies that whp the total weight of $\infecNonHeavy{F}$ satisfies
\begin{align}\label{eq:defrostingTotalWeightUpperBound}
\totalWeight{\infec{0}'}=\totalWeight{\infecNonHeavy{F}}=o\left(\scaledTotalWeightSubcriticalExpectationInverse W\infectionRate\right).
\end{align}
Moreover, this shows that $\infecNonHeavy{F}$ contains only few vertices
\begin{equation}\label{eq:smallFinalSet}
|\infec{0}'|\le\totalWeight{\infec{0}'}/w_1\stackrel{w_1\ge 1,\eqref{def:scaledTotalWeightSubcriticalExpectationInverse},Cor.\ref{cor:candidatethresholdsSmall}}{=}o(W)\stackrel{\eqref{Wcondition}}{=}o(n).
\end{equation}

The last step is to show that whp 
\begin{equation}\label{eq:largeWeights}
\infec{1}'=\infec{0}',
\end{equation}
 because due to~\eqref{eq:finalSets} and~\eqref{eq:smallFinalSet} this will imply 
 $$
 |\infec{F}|\le|\infec{F}'|=|\infec{0}'|=o(n).
 $$

It remains to prove~\eqref{eq:largeWeights}. In other words, we have to show that whp in the next step none of the 
heavy vertices become infected. 
Once again in case $\weightBoundHeavy>w_n$ there is nothing to be shown, so assume $\weightBoundHeavy\le w_n$. For any vertex $v\in\VertexSetRestricted{\ge\weightBoundHeavy}$ we have 
$$
\PP\left[v\in\infec{1}'\cond \infec{0}'\right] 
\stackrel{L.\ref{infecprobu}}{\le}\frac{w_v^r\totalWeight{\infec{0}'}^r}{r!W^r}
$$
and note that only heavy vertices can become infected in this step. Thus, summing over all heavy vertices and using \eqref{eq:defrostingTotalWeightUpperBound}, we obtain
$$
\EE\left[\infec{1}'\setminus\infec{0}'\cond\infec{0}'\right]=o\left((\scaledTotalWeightSubcriticalExpectationInverse\infectionRate)^r\sum_{v\in\VertexSetRestricted{\ge\weightBoundHeavy}}w_v^r\right).
$$
Moreover we have
$$
\scaledTotalWeightSubcriticalExpectationInverse\infectionRate\stackrel{\eqref{def:scaledTotalWeightSubcriticalExpectationInverse}}{\le}\candidateThresholdDense\stackrel{\eqref{eq:candidateThresholdDense}}{=}\frac{1}{\sum_{u\in\VertexSetRestricted{\ge\weightBoundHeavy}}w_u^r},
$$
implying 
$$
\EE\left[\infec{1}'\setminus\infec{0}'\cond\infec{0}'\right]=o(1).
$$
Therefore Markov's inequality implies that whp~\eqref{eq:largeWeights} holds, and as argued previously this completes the proof of Theorem~\ref{thm:nooutbreak}.
\end{proof}

\section{Proof of main results}
\subsection{Proof of Theorem~\ref{threshold}}\label{sec:threshold}
Theorem~\ref{threshold} follows directly from Theorems~\ref{thm:kernel} and~\ref{thm:outbreak}. 
\subsection{Proof of Theorem~\ref{nothreshold}}\label{sec:nothreshold}
Recall that the assumption on the weight sequence in Theorem \ref{nothreshold} is that there exist constants $c < 1/30$, $c_1$ and a function $h=\omega(1)$ such that for  $c_1\leq f \leq h$ we have $\PP[\wbrv\geq f]\leq c/f$.

For the remainder of the section assume that $\infectionRate\geq h^{-1}$ and $\infectionRate=\omega(W^{-1/2})$. 
Clearly, if whp there is no outbreak for such a $\infectionRate$, then this is the case for any $\infectionRate =o(1)$. 
Moreover, note that if there is no linear outbreak for $r=2$, then there is no linear outbreak for $r>2$. So we may restrict ourselves to the $r=2$ case.

We aim to apply the branching process argument from Section \ref{subcritical} here as well. 
Similarly as before we could construct the branching process, but unlike the previous case the typical vertex would have multiple parents during the early stages of the process. We use Theorem \ref{mcdct} instead to track the total weight of the infected vertices during the early stages of the process. After a point, due to the rapid decrease in the sum of the weight of the vertices which become 
infected in a given step, a typical vertex which becomes infected in step $t-1$ has on average less than one 
child in the next generation (in the sense we discussed in section \ref{subcritical}), giving rise to a subcritical 
process which dies out quickly. In turn, this implies that the bootstrap process stops quickly.  

We modify the initial step of the process slightly which leads to a stochastic upper bound: in the initial step, we infect every vertex with weight at least $\infectionRate^{-1}$ and in addition we infect every vertex with weight less than $\infectionRate^{-1}$ with probability $\infectionRate$ independently. We denote this set 
by $\infeclast{0}$. More generally, for any $t\geq 0$ we let $\infeclast{t}$ be the set of infected vertices 
after the $t$-th step. As usual, we set $\infeclast{-1}:=\emptyset$.

Let
$$\sbwsmoment{k}:= W^{-1}\sum_{v\in\VertexSetRestricted{<\infectionRate^{-1}}}w_v^{k+1}.$$ 
Using Proposition~\ref{prop:restrmom}, we bound $\sbwsmoment{k}$ from above by
\begin{align} 
\sbwsmoment{k}
&\leq  k\int_{0}^{\infectionRate^{-1}}\mu^{k-1}\PP\left[\wbrv\geq \mu\right]\mathrm{d}\mu \nonumber\\
&\leq k\int_{c_1}^{\infectionRate^{-1}}\mu^{k-1}\PP\left[\wbrv\geq \mu\right]\mathrm{d}\mu +k\int_{0}^{c_1}\mu^{k-1}\mathrm{d}\mu \nonumber\\
&\leq \label{moments}\begin{cases}
c\ln\left(\infectionRate^{-1}\right)+c_1\, ,& k=1\\
c\frac{k}{k-1}\infectionRate^{-k+1}+c_1^k\, , &k\geq 2.
\end{cases}
\end{align}

\begin{lemma}\label{lem:initial_infected_weight}
	Whp $\totalWeight{\infeclast{0}}\leq (1+2c) W\infectionRate$.
\end{lemma}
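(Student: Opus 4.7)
The proof plan is to split $\totalWeight{\infeclast{0}}$ into a deterministic piece (the total weight of vertices with $w_v\ge \infectionRate^{-1}$, which are infected with probability $1$) and a random piece (the sum $\sum_{v\in\VertexSetRestricted{<\infectionRate^{-1}}} w_v X_v$, where the $X_v$ are independent $\Be{\infectionRate}$ random variables), then control the deterministic piece in expectation via the size-biased tail bound and the random piece via concentration.

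For the deterministic piece, since $\infectionRate\ge h^{-1}$ we have $\infectionRate^{-1}\le h$, so the hypothesis on the size-biased distribution yields
$$
\totalWeightRestricted{\ge \infectionRate^{-1}} \;=\; W\,\PP\bigl[\wbrv\ge \infectionRate^{-1}\bigr] \;\le\; cW\infectionRate .
$$
Therefore $\EE[\totalWeight{\infeclast{0}}]\le cW\infectionRate + \infectionRate\totalWeightRestricted{<\infectionRate^{-1}} \le (1+c)W\infectionRate$.

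For the random piece, I will apply McDiarmid's inequality (Theorem~\ref{mcdct}) to the independent bounded random variables $Y_v:=w_v X_v$ for $v\in\VertexSetRestricted{<\infectionRate^{-1}}$. Each satisfies $0\le Y_v\le w_v<\infectionRate^{-1}$, so I can take $M=\infectionRate^{-1}$. The variance is bounded by
$$
\sum_{v\in\VertexSetRestricted{<\infectionRate^{-1}}} \VV[Y_v] \;\le\; \infectionRate\sum_{v\in\VertexSetRestricted{<\infectionRate^{-1}}} w_v^2 \;=\; \infectionRate W\,\sbwsmoment{1} \;\le\; \infectionRate W\bigl(c\ln(\infectionRate^{-1})+c_1\bigr),
$$
where the last inequality is (\ref{moments}) with $k=1$. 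Applying Theorem~\ref{mcdct} with $s=cW\infectionRate$ gives a failure probability at most
$$
\exp\!\left(-\frac{(cW\infectionRate)^2}{2\bigl(\infectionRate W(c\ln(\infectionRate^{-1})+c_1)+cW/3\bigr)}\right).
$$
Since $\infectionRate=o(1)$ we have $\infectionRate\ln(\infectionRate^{-1})=o(1)$, so the denominator is $\Theta(W)$, and the exponent is $-\Omega(W\infectionRate^{2})$. Because $\infectionRate=\omega(W^{-1/2})$, we have $W\infectionRate^{2}\to\infty$, and the failure probability is $o(1)$. Combining the two pieces yields whp $\totalWeight{\infeclast{0}}\le (1+c)W\infectionRate+cW\infectionRate=(1+2c)W\infectionRate$.

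The only slightly delicate point is matching the error term $cW\infectionRate$ in the deviation against the available concentration: the moment bound only gives $\sbwsmoment{1}=O(\log(\infectionRate^{-1}))$, not $O(1)$, but this logarithm is absorbed once multiplied by $\infectionRate$. The crucial input is the lower bound $\infectionRate=\omega(W^{-1/2})$ assumed at the start of this subsection, without which the concentration argument would fail.
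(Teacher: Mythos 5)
Your proof is correct and follows essentially the same route as the paper: the same split into the deterministic weight of $\VertexSetRestricted{\ge\infectionRate^{-1}}$ (bounded by $cW\infectionRate$ via the size-biased tail hypothesis, using $\infectionRate^{-1}\le h$) and the random weight of the lighter vertices, controlled by Theorem~\ref{mcdct} with $M=\infectionRate^{-1}$, the variance bound from~\eqref{moments} with $k=1$, and the assumption $\infectionRate=\omega(W^{-1/2})$ to make the exponent diverge. No gaps.
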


\begin{proof}
	Since $\infectionRate^{-1}\leq h$ and $\infectionRate^{-1} \to \infty$, we have  $\sum_{u\in\VertexSetRestricted{\ge\infectionRate^{-1}}} w_u \leq cW/\infectionRate^{-1}=cW \infectionRate$. Clearly, $\EE[\totalWeight{\infeclast{0}\cap\VertexSetRestricted{<\infectionRate^{-1}}}]\leq W\infectionRate$. 
	Theorem \ref{mcdct} implies that for large $n$ we have
	\begin{align*}
	\PP\left[\totalWeight{\infeclast{0}\cap\VertexSetRestricted{<\infectionRate^{-1}}}\geq W\infectionRate+cW\infectionRate\right]&\leq \exp\left(-\frac{(cW\infectionRate)^2}{2(\sum_{u\in\VertexSetRestricted{\le\infectionRate^{-1}}}w_u^2 \infectionRate+\infectionRate^{-1}cW\infectionRate/3)}\right)\\
	&\stackrel{\eqref{moments}}{\leq} \exp\left(-\frac{(cW\infectionRate)^2}{2(c(1+o(1))W\infectionRate\ln{\infectionRate^{-1}}+\infectionRate^{-1}cW\infectionRate/3)}\right)\\
	&\leq \exp\left(-\frac{(cW\infectionRate)^2}{cW}\right)\\
	&\leq \exp\left(-cW\infectionRate^2\right)=o(1),
	\end{align*}
	because $\infectionRate^2=\omega(W^{-1})$.
\end{proof}

\begin{lemma}\label{lem:vnotinAt}
	If $v\not\in \infeclast{t}$, then
	$$\PP[v\in \infeclast{t+1}| \infeclast{t},\infeclast{t-1}]\leq \frac{w_v^2}{W^2}\frac{\totalWeight{\infeclast{t}}^2-\totalWeight{\infeclast{t-1}}^2}{2}.$$
\end{lemma}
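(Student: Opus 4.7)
\medskip

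\noindent\textbf{Proof plan.} Since we are in the $r=2$ setting, $v\in\widetilde{\mathcal{A}}_{t+1}$ is equivalent to $v$ having at least $2$ neighbours in $\widetilde{\mathcal{A}}_t$. First I would observe that conditioning on $v\notin\widetilde{\mathcal{A}}_t$ forces $v$ to have at most $1$ neighbour in $\widetilde{\mathcal{A}}_{t-1}$ (otherwise $v$ would already be in $\widetilde{\mathcal{A}}_t$). Therefore at least one of the two required neighbours must lie in $\widetilde{\mathcal{A}}_t\setminus\widetilde{\mathcal{A}}_{t-1}$, and so by a union bound
\[
\PP\bigl[v\in\widetilde{\mathcal{A}}_{t+1}\cond\widetilde{\mathcal{A}}_t,\widetilde{\mathcal{A}}_{t-1}\bigr]\le\sum_{\substack{\{u_1,u_2\}\subseteq\widetilde{\mathcal{A}}_t\\ \{u_1,u_2\}\not\subseteq\widetilde{\mathcal{A}}_{t-1}}}\PP\bigl[v\sim u_1,\,v\sim u_2\cond\widetilde{\mathcal{A}}_t,\widetilde{\mathcal{A}}_{t-1}\bigr].
\]

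Next I would bound each pairwise probability by $w_v^2 w_{u_1}w_{u_2}/W^2$. If both $u_1,u_2$ lie in $\widetilde{\mathcal{A}}_t\setminus\widetilde{\mathcal{A}}_{t-1}$, the edges $\{v,u_1\}$ and $\{v,u_2\}$ are unconstrained by the conditioning and the bound is immediate from~\eqref{eq:pijCL}. For pairs involving a vertex $u_1\in\widetilde{\mathcal{A}}_{t-1}$, the event $\{v\sim u_1\}$ is increasing while the conditioning event $\{v\notin\widetilde{\mathcal{A}}_t\}$ (restricted to edges incident to $v$) contains the decreasing event that $v$ has at most $1$ neighbour in $\widetilde{\mathcal{A}}_{t-1}$; exactly as in the proof of Lemma~\ref{lem:prob_offspring}, the FKG inequality (Theorem~\ref{FKG}) allows me to drop this conditioning, again yielding $\PP[v\sim u_1,v\sim u_2\mid\cdot]\le w_v^2 w_{u_1}w_{u_2}/W^2$.

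Finally, I would evaluate the sum of products of weights:
\[
\sum_{\substack{\{u_1,u_2\}\subseteq\widetilde{\mathcal{A}}_t\\ \{u_1,u_2\}\not\subseteq\widetilde{\mathcal{A}}_{t-1}}}w_{u_1}w_{u_2}=\frac{1}{2}\Bigl(\totalWeight{\widetilde{\mathcal{A}}_t}^2-\totalWeight{\widetilde{\mathcal{A}}_{t-1}}^2-\sum_{u\in\widetilde{\mathcal{A}}_t\setminus\widetilde{\mathcal{A}}_{t-1}}w_u^2\Bigr)\le\frac{\totalWeight{\widetilde{\mathcal{A}}_t}^2-\totalWeight{\widetilde{\mathcal{A}}_{t-1}}^2}{2},
\]
and combining this with the previous display gives the claim. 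The main obstacle is not algebraic but rather the careful bookkeeping of the conditioning: the constraint ``$v\notin\widetilde{\mathcal{A}}_t$'' biases edges from $v$ to $\widetilde{\mathcal{A}}_{t-1}$ downward, and without invoking FKG one could not naively factor the pair-edge probabilities. Once FKG is applied, exactly as in Lemma~\ref{lem:prob_offspring}, the rest of the proof reduces to the elementary weight computation above.
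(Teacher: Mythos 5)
Your proposal is correct and follows essentially the same route as the paper's proof: a union bound over pairs of potential infected neighbours with at least one in $\infeclast{t}\setminus\infeclast{t-1}$, independence for the freshly exposed edges, the FKG inequality to remove the (non-increasing) conditioning on edges from $v$ to $\infeclast{t-1}$, and the elementary identity relating the sum of weight products to $\bigl(\totalWeight{\infeclast{t}}^2-\totalWeight{\infeclast{t-1}}^2\bigr)/2$. The only cosmetic difference is that you sum over unordered pairs directly (and drop the diagonal correction $\sum w_u^2$), whereas the paper indexes the pairs by the number $k$ of new-generation vertices they contain.
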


\begin{proof}
	Given a vertex $v$, in order for $v$
	to become infected at step $t+1$ it must neighbour at least $2$ vertices in $\infeclast{t}$ and at least one of these vertices must be contained in $\infeclast{t}\setminus\infeclast{t-1}$. For a set of vertices $U$ let $\mathcal{E}_U$ be the event that for every $u\in U$ we have that the pair $(u,v)$ is an edge. If $v\not\in \infeclast{t}$, then we have
	$$\PP[v\in \infeclast{t+1}| \infeclast{t},\infeclast{t-1}]\leq \sum_{k=1}^2 \sum_{U\in \binom{\infeclast{t}\setminus \infeclast{t-1}}{k}}\sum_{U'\in\binom{\infeclast{t-1}}{2-k}} 
	\PP[\mathcal{E}_U,\mathcal{E}_{U'}|\infeclast{t},\infeclast{t-1}].$$
	The event $\mathcal{E}_U$ is independent of $\mathcal{E}_{U'},\infeclast{t},\infeclast{t-1}$, because it depends on edges that have not been exposed up to time $t$. 
	Now, the event $\mathcal{E}_{U'}$ is non-decreasing, whereas $\infeclast{t},\infeclast{t-1}$'s only connection to $\mathcal{E}_{U'}$ is the event that 
	$v$ has at most one neighbour in $\mathcal{A}_{t-1}$. But the latter is a non-increasing event. Hence, the FKG inequality (Theorem~\ref{FKG}) implies that 
	$\PP[\mathcal{E}_{U'}|\infeclast{t},\infeclast{t-1}]\leq \PP[\mathcal{E}_{U'}]$.
	Thus, we have
	$$\PP[\mathcal{E}_U,\mathcal{E}_{U'}|\infeclast{t},\infeclast{t-1}]\leq \PP[\mathcal{E}_U] \PP[\mathcal{E}_{U'}]=\frac{w_v^2}{W^2} \prod_{u\in U}w_u \prod_{u'\in U'}w_{u'}.$$
	Therefore
	\begin{align*}
	\PP[v\in \infeclast{t+1}\mid \infeclast{t},\infeclast{t-1}]
	&\leq \sum_{k=1}^2 \sum_{U\in \binom{\infeclast{t}\setminus \infeclast{t-1}}{k}}\sum_{U'\in\binom{\infeclast{t-1}}{2-k}} \frac{w_v^2}{W^2} \prod_{u\in U}w_u \prod_{u'\in U'}w_{u'}\\
	&\leq\frac{w_v^2}{W^2} \sum_{k=1}^2 \frac{\totalWeight{\infeclast{t}\setminus\infeclast{t-1}}^k}{k!} \frac{\totalWeight{\infeclast{t-1}}^{2-k}}{(2-k)!}\\
	&=\frac{w_v^2}{W^2} \frac{\totalWeight{\infeclast{t}}^2-\totalWeight{\infeclast{t-1}}^{2}}{2},\\
	\end{align*}
	where the last equality follows, by writing $\totalWeight{\infeclast{t}}= \totalWeight{\infeclast{t}\setminus\infeclast{t-1}} + \totalWeight{\infeclast{t-1}}$.
\end{proof}

Since $c<1/30$ we have that $3c(1+2c) < 1/8$.
Set $\beta=1/4$ and note that
\begin{equation}\label{betacond}
(1-2\beta)\beta=1/8 > 3c(1+2c).
\end{equation}

\begin{lemma}\label{indsubcrit}
	Assume that $\totalWeight{\infeclast{\tau}\setminus\infeclast{\tau-1}}\leq (1+2c)(2\beta)^{\tau}W\infectionRate$ for every $\tau\leq t$. 
	Conditional on $\infeclast{t},\infeclast{t-1}$, with probability at least
	$$1-\exp\left(-\frac{\beta(2\beta)^{t}(1+2c)W\infectionRate^2}{3}\right),$$
	we have 
	$$\totalWeight{\infeclast{t+1}\setminus\infeclast{t}}\leq (2\beta)^{t+1}(1+2c)W\infectionRate.$$ 
\end{lemma}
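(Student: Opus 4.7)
The plan is to apply McDiarmid's inequality (Theorem~\ref{mcdct}) to
$$X:=\totalWeight{\infeclast{t+1}\setminus\infeclast{t}}=\sum_{v\notin\infeclast{t}}w_v\Ind{v\in\infeclast{t+1}},$$
conditional on $\infeclast{t}$ and $\infeclast{t-1}$. Under the modified initial infection, every vertex of weight at least $\infectionRate^{-1}$ is automatically placed into $\infeclast{0}\subseteq\infeclast{t}$, so the sum actually ranges over $v\in\VertexSetRestricted{<\infectionRate^{-1}}$ and each summand is bounded by $M:=\infectionRate^{-1}$. Moreover, conditional on the history the event $\{v\in\infeclast{t+1}\}$ depends only on edges incident to $v$ (the constraint that $v$ had fewer than $2$ neighbours in $\infeclast{t-1}$ likewise involves only edges incident to $v$), so these indicator events are conditionally independent across different $v\notin\infeclast{t}$.

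First I would extract from the inductive hypothesis the bound
$$\totalWeight{\infeclast{t}}\le\sum_{\tau=0}^{t}(1+2c)(2\beta)^{\tau}W\infectionRate\le\frac{(1+2c)W\infectionRate}{1-2\beta}=2(1+2c)W\infectionRate,$$
and the same for $\totalWeight{\infeclast{t-1}}$. The identity $a^2-b^2=(a-b)(a+b)$ combined with Lemma~\ref{lem:vnotinAt} then yields, for every $v\notin\infeclast{t}$,
$$\PP\bigl[v\in\infeclast{t+1}\cond\infeclast{t},\infeclast{t-1}\bigr]\le 2(1+2c)^2(2\beta)^{t}w_v^2\infectionRate^2.$$
Summing $w_v$ and $w_v^2$ against this bound, and invoking~\eqref{moments} with $k=2$ and $k=3$ respectively to control $\sbwsmoment{2}\le 2c\infectionRate^{-1}(1+o(1))$ and $\sbwsmoment{3}\le\tfrac{3c}{2}\infectionRate^{-2}(1+o(1))$, yields the conditional estimates
$$\EE[X]\le(4c+o(1))(1+2c)^2(2\beta)^{t}W\infectionRate\quad\text{and}\quad\VV[X]\le(3c+o(1))(1+2c)^2(2\beta)^{t}W.$$

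Plugging everything into Theorem~\ref{mcdct} with $s:=(2\beta)^{t+1}(1+2c)W\infectionRate-\EE[X]$ is the final step. Condition~\eqref{betacond} together with $1-2\beta=1/2$ forces $\beta>6c(1+2c)$, so the slack satisfies $s\ge(1+2c)(2\beta)^{t}W\infectionRate\bigl[2\beta-4c(1+2c)-o(1)\bigr]=\Theta((2\beta)^{t}W\infectionRate)$. Since $Ms/3$ is of order $(2\beta)^{t}W$ and dominates $\VV[X]$ once $c$ is small, the exponent $s^2/(2(\VV[X]+Ms/3))$ simplifies to a constant times $(2\beta)^{t}W\infectionRate^{2}$; the algebraic check that this constant is at least $\beta(1+2c)/3$ reduces to the inequality $3\eta^2\ge 2\beta\bigl[3c(1+2c)+2\beta/3\bigr]$ with $\eta:=2\beta-4c(1+2c)$, which~\eqref{betacond} guarantees with room to spare. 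The main obstacle is really just this final piece of bookkeeping---tracking how $s$, $\VV[X]$ and $Ms/3$ interact with enough precision to retain the factor $1/3$ in the stated exponent---since the structural ingredients are all already provided by Lemma~\ref{lem:vnotinAt} and~\eqref{moments}.
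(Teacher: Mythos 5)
Your proposal is correct and follows essentially the same route as the paper: bound the conditional mean and variance of $\totalWeight{\infeclast{t+1}\setminus\infeclast{t}}$ via Lemma~\ref{lem:vnotinAt} and the moment estimates~\eqref{moments}, then apply Theorem~\ref{mcdct} with $M=\infectionRate^{-1}$. The only difference is organisational: the paper keeps the increment $\totalWeight{\infeclast{t}}-\totalWeight{\infeclast{t-1}}$ symbolic so that $\EE$ is bounded directly by $\beta(2\beta)^{t}(1+2c)W\infectionRate$ and the deviation $s$ can be taken to be exactly that quantity, which makes the final exponent immediate, whereas your per-vertex substitution defers the numerics to the closing inequality, which I checked does hold under~\eqref{betacond}.
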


\begin{proof}
	From Lemma~\ref{lem:vnotinAt} we have  
	$$
	\EE\left[\totalWeight{\infeclast{t+1}\setminus\infeclast{t}}\cond \infeclast{t} , \infeclast{t-1} \right]\leq \sum_{v\in \VertexSetRestricted{<\infectionRate^{-1}}} \frac{w_v^{3}}{W^2}\frac{\totalWeight{\infeclast{t}}^2-\totalWeight{\infeclast{t-1}}^2}{2}.
	$$
	
	Clearly, for $n$ sufficiently large, by \eqref{moments} we have
	$$
	\sum_{v\in \VertexSetRestricted{<\infectionRate^{-1}}} w_v^{3}\leq (1+o(1)) 2 c W \infectionRate^{-1}$$
	and thus
	\begin{align*}
	\EE[\totalWeight{\infeclast{t+1}\setminus\infeclast{t}} \mid \infeclast{t}, \infeclast{t-1}] &\leq (1+o(1))2c\, (W\infectionRate)^{-1} \frac{\totalWeight{\infeclast{t}}^2-\totalWeight{\infeclast{t-1}}^2}{2}\\
	&\leq (1+o(1))2c\, (W\infectionRate)^{-1} (\totalWeight{\infeclast{t}}-\totalWeight{\infeclast{t-1}})\totalWeight{\infeclast{t}},
	\end{align*}
	because $\totalWeight{\infeclast{t-1}}+\totalWeight{\infeclast{t}}\leq 2 \totalWeight{\infeclast{t}}$.
	
	From the assumptions we have 
	\begin{align*}
	\EE[\totalWeight{\infeclast{t+1}\setminus \infeclast{t}}\mid \infeclast{t}, \infeclast{t-1}] & \leq (1+o(1))2c\, (W\infectionRate)^{-1} (\totalWeight{\infeclast{t}}-\totalWeight{\infeclast{t-1}})\totalWeight{\infeclast{t}}\\
	& \hspace{-8ex}\stackrel{\infeclast{-1} =\emptyset}{=} (1+o(1))2c\, (W\infectionRate)^{-1} (\totalWeight{\infeclast{t}}-\totalWeight{\infeclast{t-1}})\sum_{\tau=0}^t \totalWeight{\infeclast{\tau }\setminus \infeclast{\tau-1}}\\
	&\hspace{-8ex}\leq (1+o(1))2c\, (W\infectionRate)^{-1} (\totalWeight{\infeclast{t}}-\totalWeight{\infeclast{t-1}})\left((1+2c)\, W\infectionRate\sum_{\tau=0}^{t}(2\beta)^\tau\right)\\
	&\hspace{-8ex}\leq (\totalWeight{\infeclast{t}}-\totalWeight{\infeclast{t-1}})3c(1+2c)/(1-2\beta)\\
	&\hspace{-8ex}\stackrel{\eqref{betacond}}{\leq} \beta (\totalWeight{\infeclast{t}}-\totalWeight{\infeclast{t-1}})\\
	&\hspace{-8ex}\leq \beta (2\beta)^t(1+2c)W\infectionRate.
	\end{align*}
	
	A similar argument provides
	\begin{align*}
	\VV[\totalWeight{\infeclast{t+1}\setminus\infeclast{t}}]&\leq \sum_{v\in \VertexSetRestricted{<\infectionRate^{-1}}} \frac{w_v^{4}}{W^2}\frac{\totalWeight{\infeclast{t}}^2-\totalWeight{\infeclast{t-1}}^2}{2}
	\leq \beta (2\beta)^t(1+2c)W.
	\end{align*}
	
	Now all we have to show is concentration. We will use Theorem~\ref{mcdct} 
	for the sum of the weighted Bernoulli-distributed random variables, which gives 
	$\totalWeight{\infeclast{t+1}\setminus\infeclast{t}}$. Each summand is bounded from above by $\infectionRate^{-1}$. 
	Hence, we can take $M=\infectionRate^{-1}$ in Theorem~\ref{mcdct} and deduce that
	\begin{equation*}
	\begin{split}
	\lefteqn{\PP\left[\totalWeight{\infeclast{t+1}\setminus\infeclast{t}}\geq (2\beta)^{t+1}(1+2c)\, W\infectionRate  \mid \infeclast{t}, \infeclast{t-1}\right]}\\
	&\leq \exp\left(-\frac{(\beta(2\beta)^{t}(1+2c)W\infectionRate)^2}{2(\beta(2\beta)^{t}(1+2c)\, W+\infectionRate^{-1}\beta(2\beta)^{t}(1+2c)W\infectionRate/3)}\right)\\
	&\leq \exp\left(-\frac{(\beta(2\beta)^{t}(1+2c)W\infectionRate)^2}{3\beta(2\beta)^{t}(1+2c)W}\right)\\
	&\leq \exp\left(-\frac{\beta(2\beta)^{t}(1+2c)W\infectionRate^2}{3}\right). 
	\end{split}
	\end{equation*}
\end{proof}

\begin{proof}[Proof of Theorem \ref{nothreshold}]
	Fix a function $T:=T(n)$ satisfying $T\to\infty$, but $(2\beta)^{T}W\infectionRate^2=\omega(1)$. Note that such a function exists, because $W\infectionRate^2=\omega(1)$. Let $\event$ denote the event $\totalWeight{\infeclast{T-1}}\leq (1+2c)W\infectionRate/(1-2\beta)$ and $\totalWeight{\infeclast{T}\setminus \infeclast{T-1}}=o(W\infectionRate)$.
	
	Now, by Lemma $\ref{indsubcrit}$, with probability 
	$$1-O\left(\exp\left(-\frac{\beta(2\beta)^{T}(1+2c)W\infectionRate^2}{3}\right)\right)=1-o(1)$$
	we have $\totalWeight{\infeclast{\tau}\setminus \infeclast{\tau-1}}\leq (2\beta)^{\tau}(1+2c)W\infectionRate$ for every $\tau\leq T$ and thus $\event$ holds whp as well. From this point on we will condition on the event $\event$.

We will give a stochastic upper bound on the evolution of the bootstrap process after step $T$, 
using the branching process framework we introduced in Section~\ref{subcritical}. 

Recall that $\mathcal{X}_v$ denotes the children of $v$. Consider the family of Bernoulli random variables $I_{v,u}(t)$, where $v\in\cI{t-1}$ and $u$ is any vertex, which  satisfies $I_{v,u}(t)=1$ if and only if  $u\in \mathcal{X}_v$. 

Following the steps of Lemma~\ref{lem:prob_offspring}, one can show that 
$$ \PP\left[u\in \mathcal{X}_v \mid \infeclast{t-1},\event\right] \leq  w_v\totalWeight{\infeclast{t-1}}\left(\frac{w_u}{W}\right)^2.$$
Hence
\begin{equation*}
\EE \left[I_{v,u}(t) \mid \infeclast{t-1},\event\right] \leq  w_v\totalWeight{\infeclast{t-1}}\left(\frac{w_u}{W}\right)^2.
\end{equation*}

Let $S$ be the stopping time which is the first step $t$ after $T$ where either $\infeclast{t} = \infeclast{t-1}$ 
or $\totalWeight{\infeclast{t}} > 4W\infectionRate$. 
In other words, $S$ is the first time where either the process dies out or the total weight of the set $\infeclast{t}$ exceeds 
$4W\infectionRate$.  

Denote by $I_v(t)$ the total weight of the offspring of $v$, that is,
\begin{equation*}
I_v(t) = \sum_{u \in \VertexSetRestricted{<\infectionRate^{-1}} \setminus \infeclast{t-1}} I_{v,u}(t) w_u.
\end{equation*}
Now, we show that if $t \leq S$, then provided that $c$ is small enough, the expected value of $I_v (t)$ is smaller than 1. 
We have conditional on $\event$
\begin{eqnarray*} 
	\Ex{I_v(t) \cond \infeclast{t-1}} \leq  w_v\frac{ \totalWeight{\infeclast{t-1}}}{W^2} 
	\sum_{u \in \VertexSetRestricted{<\infectionRate^{-1}}} w_u^3
	\stackrel{(\ref{moments})}{\leq} 3 w_v c\, \totalWeight{\infeclast{t-1}} (W\infectionRate)^{-1}.
\end{eqnarray*}
Now, as long as $t\leq S$, we have $\totalWeight{\infeclast{t-1}} \leq 4 W\infectionRate$. 
We deduce that $$ \Ex{ I_v(t) \Ind{t\leq S} \cond \infeclast{t-1}, \event} \leq 12 c w_v.$$

Set $z=12 c$. Since $12 c < 1$ (from our assumption that $c< 1/30$), we have $z<1$, 
for any $n$ sufficiently large. 
Denote by $I(t)$ the total weight of the vertices in the $t$-th generation. Then this is equal to the sum of the random variables $I_v(t)$ over all $v$ which 
belong to the $(t-1)$-th generation. 
Therefore, we have
$$  \Ex{I(t)\Ind{t\leq S} \cond \infeclast{t-1}, \event} \leq z I(t-1)\Ind{t\leq S}.$$ 
Now, we set $\hat{I}(t)=I(t)\Ind{t\leq S}$. 
Taking expectations on both sides of the above inequality implies that for $t>T$
$$\Ex{\hat{I}(t) \cond \event} \leq z \Ex{\hat{I}(t-1) \cond \event} \leq  \cdots \leq z^{t-T} \EE\left[\hat{I}(T)\cond \event\right].$$ 
The random variable
$$ X_S = \sum_{t=T}^\infty \hat{I}(t)$$ 
is the total progeny (but without the first $T$ generations) until the stopping time $S$. 
We deduce that 
$$ \Ex{X_S \mid \event} \leq \EE\left[\hat{I}(T) \cond \event\right]\sum_{t=T}^\infty z^{t-T} = \frac{1}{1-z}\EE[\hat{I}(T)\mid \event]=o(W p_0),$$
as $\hat{I}(T)\leq I(T)=\totalWeight{\infeclast{T}\setminus \infeclast{T-1}}=o(W\infectionRate)$ when $\event$ holds.

Markov's inequality implies
$\PPo{X_S > Wp_0 \mid \event} = o(1)$. 
Recall that $\event$ also implies that $\totalWeight{\infeclast{T-1}}\leq 3W\infectionRate$. The result follows as $\event$ holds whp.

\end{proof}

\section{Concluding remarks}\label{sec:discussion}

To sum up, we have considered the evolution of the classical bootstrap percolation processes on a general class of inhomogeneous random graphs, 
which is known as the Chung-Lu model. In this model, the vertices are equipped with positive weights and each potential edge is present with 
probability proportional to the product of the weights. Essentially the typical properties of the resulting random graph are determined by the sequence of the weights and its asymptotic properties. 

We gave an approximate characterisation of those weight sequences for which the evolution of a bootstrap process exhibits a critical phenomenon. 
This is connected to the existence of a critical density of the initial set of infected vertices such that when the initial density crosses this value 
then an outbreak occurs whp even if the initial set is small. 
Our main finding has to do with the existence of constants $c$ and $C$ such that if $\PP[\wbrv\geq f]< c/f$, then no such critical value exists 
whereas if $\PP[\wbrv\geq f]> C/f$, then it does exist. 

The results we have shown assume that $W=\weightConstant n$ and the minimum weight is at least 1. However the results also hold under the more general assumption that $W= (1+o(1)) \weightConstant n$ and the minimal weight is bounded away from 0. However if we remove the condition that the minimum weight is at least 1, then the lower bound on $C$ in Theorem \ref{threshold} should also depend on this quantity.

As mentioned earlier the smaller of the two candidate thresholds gives the critical threshold. In the following example we demonstrate that either of the candidate thresholds can be the minimum.

\begin{example}\label{ex:example} 
	Fix $r=2$ and take a  weight sequence such that it contains $W^{1/9}$ vertices of weight $W^{7/12}$, $W^{2/3}/20$ vertices of weight $W^{1/3}$ and each of the remaining vertices has weight $1$. Note that $|\VertexSetRestricted{\ge W^{1/3}}|=(1+o(1))W^{2/3}/20< W^{2/3}/2^{4}=W^2/(2W^{1/3})^{4}$ and $|\VertexSetRestricted{\ge 1}|=n=o(W^2)$. In addition $|\VertexSetRestricted{\ge\sqrt{W}}|=W^{1/9}\geq 2^{-4}= W^2/(2\sqrt{W})^{4}$.  Therefore $W^{1/2}>\weightBoundHeavy>W^{1/3}$. Thus
	$$\sum_{u\in\VertexSetRestricted{<\weightBoundHeavy}}w_u^{3}=\Theta\left( W^{5/3} \right).$$
	Clearly $\sum_{u\in\VertexSetRestricted{\ge \weightBoundHeavy}} w_u^{2}=W^{23/18}$. This gives us 
	$$\frac{W}{\sum_{u\in\VertexSetRestricted{<\weightBoundHeavy}}w_u^3}=\Theta(W^{-2/3})=o(W^{-23/36})=o\left(\left(\frac{1}{\sum_{u\in\VertexSetRestricted{\ge \weightBoundHeavy}}w_u^2}\right)^{1/2}\right).$$
	Thus the minimum in Theorem~\ref{threshold} is achieved by the vertices of weight less than $\weightBoundHeavy$. 
	
	Now replace the $W^{1/9}$ vertices of weight $W^{7/12}$ by $W^{1/9}$ vertices of weight $W^{3/4}$. Note that this has no effect on the value of $\weightBoundHeavy$. However, we have 
	$$\left(\frac{1}{\sum_{u\in\VertexSetRestricted{\ge \weightBoundHeavy}}w_u^2}\right)^{1/2}=\Theta(W^{-29/36})=o(W^{-2/3}),$$
	so this time the minimum is achieved by the vertices of weight at least $\weightBoundHeavy$. 
\end{example}

Several open questions still remain. The first concerns the exact values of $c$ and $C$. In particular, is $c=C$? 
If not, then what happens in the case where the weight sequences satisfies $\PP[\wbrv\geq f]=c'/f$ for $c<c'<C$.

Furthermore, our analysis does not consider the case where the initial density has the same order of magnitude as the critical density. We believe 
that in this case an outbreak occurs with probability that is asymptotically bounded away from 0 and 1. If this is the case, it would be interesting to
know whether a limiting value exists for this probability and how it depends upon the parameters of the model. 

\subsection*{Acknowledgement} We would like to thank an anonymous referee for making helpful comments 
on the presentation of the paper.

\bibliographystyle{plain}
\bibliography{btsrp}

\end{document}